\documentclass[a4paper,12pt,leqno]{amsart}
\usepackage{latexsym}
\usepackage[all]{xy}
\usepackage{amssymb,bm} 
\usepackage{amsmath} 
\usepackage{tikz}
\usetikzlibrary{arrows,cd}  
\usepackage{amsthm}

\usepackage{CJKutf8} 

\def\Z{{\mathbb{Z}}}
\def\K{{\mathbb{K}}}
\def\R{{\mathbb{R}}}
\def\A{{\mathcal{A}}}
\def\B{{\mathcal{B}}}
\def\F{{\mathbb{F}}}
\def\G{{\mathcal{G}}}

\DeclareMathOperator{\codim}{codim}

\DeclareMathOperator{\Der}{Der}

\DeclareMathOperator{\pd}{pd}
\DeclareMathOperator{\depth}{depth}

\DeclareMathOperator{\res}{res}

\DeclareMathOperator{\Hom}{Hom}

\DeclareMathOperator{\Ass}{Ass}  
\DeclareMathOperator{\Min}{Min}  
\DeclareMathOperator{\NFT}{NFT}  
\DeclareMathOperator{\reg}{reg}  
\newcommand{\mideal}{\ensuremath{\mathfrak{m}}} 
\DeclareMathOperator{\Hilb}{Hilb}  
\newcommand{\lk}{\ensuremath{\mathrm{lk}}} 
\DeclareMathOperator{\Tor}{Tor}

\numberwithin{equation}{section}

\newtheorem{theorem}{Theorem}[section]
\newtheorem{prop}[theorem]{Proposition}
\newtheorem{cor}[theorem]{Corollary}
\newtheorem{lemma}[theorem]{Lemma}

\theoremstyle{definition}

\newtheorem{problem}[theorem]{Problem}
\newtheorem{question}[theorem]{Question}

\newtheorem{example}[theorem]{Example}
\theoremstyle{remark}
\newtheorem{rem}[theorem]{Remark}

\title{Residue ideals of hyperplane arrangements}

\author{Takuro Abe}
\address{
Takuro Abe,
Department of Mathematics,
Rikkyo University, 3-34-1 Nishi Ikebukuro, Toshima-ku, 1718501 Tokyo,
Japan.
}
\email{abetaku@rikkyo.ac.jp}

\author{Satoshi Murai}
\address{
Satoshi Murai,
Department of Mathematics
Faculty of Education
Waseda University,
1-6-1 Nishi-Waseda, Shinjuku, Tokyo 169-8050, Japan}
\email{s-murai@waseda.jp}

\date{\today}

\begin{document}

\begin{abstract}
In this paper, we introduce a new idea to study modules of logarithmic differential forms of hyperplane arrangements, which we call residue ideals.
We first establish basic properties of these ideals, including their radicals and primary decompositions, and obtain applications for freeness of restrictions of arrangements.
Then we apply these ideals to the study of modules of logarithmic differential $1$-forms for graphic arrangements. We give an explicit generating set for these modules and find a new connection to cover ideals of graphs studied in combinatorial commutative algebra. As a consequence we establish several new connections between arrangement theory and Stanley--Reisner theory.
\end{abstract}

\maketitle

\section{Introduction}
A linear hyperplane arrangement $\A$ is a finite set of linear hyperplanes in a vector space $V$. The logarithmic vector field $D(\A)$ of $\A$ is one of the central objects of study in the theory of hyperplane arrangements. A general theory and an explicit description of 
the basis for Coxeter arrangements based on invariant theory were introduced by K. Saito in \cite{Sa}, and Terao established fundamental results, especially for free arrangements in \cite{T1}, \cite{T2} and so on. An arrangement $\A$ is free if $D(\A)$ is a free module. This notion is closely related to the classical fact that the invariant ring by the Weyl group action is again a polynomial ring. Since the fundamental invariants can be recovered from the basis for the free Weyl arrangements, and the degrees of a free basis (called the exponents) describe several invariants of arrangements as the exponents of the Weyl group do, logarithmic vector fields and free arrangements can be regarded as a generalization of (co)invariant rings and Weyl groups from geometric and algebraic viewpoints. Recently, by using $D(\A)$, a direct relation between $D(\A)$ and the cohomology groups of regular nilpotent and semisimple Hessenberg varieties, which contains the flag variety, was established in \cite{AHMMS}. So these algebraic objects of arrangements are increasingly important at the intersection of algebra, combinatorics, topology, geometry and representation theory.

The module $\Omega^1(\A)$ of logarithmic differential $1$-forms along a linear hyperplane arrangement $\A$ is the dual of $D(\A)$. So we can investigate the freeness of $\A$ both from $D(\A)$ and $\Omega^1(\A)$. 
However, algebraic properties of $\Omega^1(\A)$ have been studied less extensively than those of the module $D(\A)$ probably because these modules are considered to be difficult to describe. Also, though $D(\A)$ and $\Omega^1(\A)$ are dual to each other, recent developments show that they are very different when $\A$ is not free.

In this paper, to facilitate the study of $\Omega^1(\A)$, we introduce a new ideal $J_\A^H$, which we call the {\bf residue ideal of $\A$ along $H$}\footnote{The terminology ``residue ideal" is motivated by the residue map and should not be confused with the residual ideals introduced in \cite{MN} which is motivated from liaison theory}.
Roughly speaking, the residue ideal captures the behavior of logarithmic $1$-forms around the pole $H \in \A$.
We establish basic properties of these ideals and consider several applications.

Let us quickly recall the module $\Omega^1(\A)$ and define our ideal $J_\A^H$.
Let $\K$ be a field, $V$ an $\ell$-dimensional $\K$-vector space
and $S=\mathrm{sym}(V^*)=\K[x_1,\dots,x_\ell]$,
where $x_1,\dots,x_\ell$ is a basis of the dual space $V^*$.
For a hyperplane $H$ in $V$ we write $\alpha_H$ for a defining linear form of $H$, $Q(\A):=\prod_{H \in \A} \alpha_H$, 
and for a non-zero linear form $\alpha \in S$ we write $H_\alpha$ for the hyperplane defined by the equation $\alpha=0$.
Let
\[
\textstyle \Omega_V^p=\bigwedge^p
\Omega_V^1= \bigoplus_{1 \leq j_1< \cdots <j_p \leq \ell} S (dx_{j_1} \wedge \cdots \wedge dx_{j_p})
\]
be the module of regular $p$-forms on $V$,
where $\Omega_V^1$ is considered to be the free $S$-module with basis $dx_1,\dots,dx_\ell$.
For a linear hyperplane arrangement $\A$,
{\bf the module of logarithmic differential $p$-forms along $\A$} is the submodule of $\frac{1}{Q(\A)}\Omega_V^p$ defined by
\[
\textstyle
\Omega^p(\A)=\{\omega \in \frac 1 {Q(\A)} \Omega_V^p \mid Q(\A) \omega \wedge (d \alpha_H) \in \alpha_H \Omega^{p+1}_V \mbox{ for all } 
H \in \A\}.
\]
Let $\A$ be a linear hyperplane arrangement in $V$
and $H \in \A$.
It is known that we have an exact sequence (\cite{A14})
\begin{align}
    \label{0-1}
0 \longrightarrow
\Omega^1(\A \setminus H)
\lhook\joinrel\longrightarrow \Omega^1(\A)
\stackrel{\mathrm{res}}{\longrightarrow}
\Omega^0(\A^H)=S/(\alpha_H),
\end{align}
where $\A\setminus H$ and $\A^H$ are the deletion of $H$ from $\A$ and the restriction of $\A$ to $H$.
See section 2 for more details on this exact sequence.
We call the image of the map res in \eqref{0-1},
which is an ideal of $S/ (\alpha_H)$,
the {\bf residue ideal $J_\A^H$ of $\A$ along $H$}.
By definition, we have the exact sequence
\begin{align}
    \label{0-2}
0 \longrightarrow
\Omega^1(\A \setminus H)
\lhook\joinrel\longrightarrow \Omega^1(\A)
\stackrel{\mathrm{res}}{\longrightarrow}
J_\A^H \longrightarrow 0,
\end{align}
and it is clear from the exact sequence that we can study $\Omega^1(\A)$ inductively if we know properties of $J_\A^H$.
We now describe the main themes and results of the paper.
\subsection*{Properties of residue ideals}
One of the main goals of this paper is to investigate the basic properties of residue ideals.
Indeed, we prove basic results on residue ideals in \S2, \S3, \S4 and \S5.
Here we introduce two properties of these ideals that we consider particularly important.

Our original motivation of considering the ideal $J_\A^H$ is 
the study of Orlik's conjecture asserting that restrictions of free arrangements are free, which was disproved by Edelman and Reiner in \cite{ER}, 
see Example \ref{ERexample} too. 
However, for almost all known free arrangements, their restrictions are still free. Therefore, it is a natural question to study under which conditions the restriction of a free arrangement is free. To understand what causes the restriction to remain free, we can use the residue ideal.
Indeed, we have the following statement.

\begin{theorem}[{Theorem \ref{thm:3.4}}]
\label{1.2}
Let $\A$ be a free hyperplane arrangement in $V$, $H \in \A$ and $\overline S=S/(\alpha_H)$.
Then
\begin{itemize}
    \item [(1)] $\A\setminus H$ is free $\Leftrightarrow$ $\pd_{\overline S}(J_\A^H)=0$ $\Leftrightarrow$  $J_\A^H=S/(\alpha_H)$.
    \item [(2)] $\A^H$ is free if and only if $\pd_{\overline S} (J_\A^H) \leq 1$.
\end{itemize}
\end{theorem}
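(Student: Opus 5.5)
The plan is to read both parts off the exact sequence \eqref{0-2}, using the fact that $\Omega^1(\A)\cong D(\A)^*$ is a free $S$-module whenever $\A$ is free. Set $F=\Omega^1(\A)$. Then
\[
0\to\Omega^1(\A\setminus H)\to F\to J_\A^H\to 0
\]
is a presentation of the $\overline S$-module $J_\A^H$ by $S$-modules. Two standard tools will carry most of the argument.

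\begin{itemize}
\item[(a)] For a nonzero finitely generated $\overline S$-module $M$ we have $\pd_S M=\pd_{\overline S}M+1$, since $\alpha_H$ is a nonzerodivisor on $S$ and by Auslander--Buchsbaum.
\item[(b)] The sequence above gives $\pd_S\Omega^1(\A\setminus H)=\max\{0,\pd_S J_\A^H-1\}$. Hence $\pd_S\Omega^1(\A\setminus H)=\pd_{\overline S}J_\A^H$.
\end{itemize}

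For (1): $\Omega^1(\A\setminus H)$ is reflexive with dual $D(\A\setminus H)$, so it is free exactly when $\A\setminus H$ is free. By (b), $\A\setminus H$ is free if and only if $\pd_{\overline S}J_\A^H=0$. It remains to see that $\pd_{\overline S}J_\A^H=0$ forces $J_\A^H=\overline S$. A projective ideal of the domain $\overline S$ is principal, say $J_\A^H=(f)$. I would then show that $J_\A^H$ has height at least $2$ in $\overline S$, which gives $f\in\K^\times$.

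To get the height bound, localize at a height-one prime $\mathfrak p$ of $\overline S$. This reduces to the localization $\A_X$ at a flat $X\subseteq H$ of codimension at most $2$. Residues commute with localization by the construction of $\res$ in \S2, so $(J_\A^H)_{\mathfrak p}=(J_{\A_X}^H)_{\mathfrak p}$. For an arrangement of rank $\le 2$, one exhibits directly a logarithmic form with unit residue; for example, one uses a basis element of $\Omega^1(\A_X)$ not lying in $\Omega^1(\A_X\setminus H)$, since rank-$2$ arrangements and their deletions are free with exponents differing by one. This gives $(J_\A^H)_{\mathfrak p}=\overline S_{\mathfrak p}$. The converse direction, $J_\A^H=\overline S\Rightarrow\pd=0$, is trivial.

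For (2): by (b), $\pd_{\overline S}J_\A^H\le1$ is equivalent to $\pd_S\Omega^1(\A\setminus H)\le 1$. Equivalently, by dualizing the sequence over $S$, we get
\[
0\to D(\A)\to D(\A\setminus H)\to \mathrm{Ext}^1_S(J_\A^H,S)\to0,
\]
using $\mathrm{Hom}_S(J_\A^H,S)=0$ and $\mathrm{Ext}^1_S(F,S)=0$. Also
\[
\mathrm{Ext}^{i+1}_S(J_\A^H,S)\cong\mathrm{Ext}^i_{\overline S}(J_\A^H,\overline S)
\]
up to shift. The last map in the dualized sequence should be identified with the Euler/Ziegler restriction map $D(\A\setminus H)\to D(\A^H)$. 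Its image is therefore the $\overline S$-dual $(J_\A^H)^*$, which is reflexive, and I expect it to coincide with $D(\A^H)$ as a graded module. Granting that identification, $\A^H$ is free if and only if $(J_\A^H)^*$ is $\overline S$-free.

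The final step, which I expect to be the main obstacle, is to match this with $\pd_{\overline S}J_\A^H\le1$. One direction is standard. If $0\to G_1\to G_0\to J_\A^H\to0$ is a free resolution over $\overline S$, then since $J_\A^H$ has height $\ge2$ by the argument in (1), we get $\mathrm{Ext}^0_{\overline S}(S/J,\overline S)=\mathrm{Ext}^1_{\overline S}(\overline S/J,\overline S)=0$. It follows that $(J_\A^H)^*=\overline S$ up to the identification, and, after tracking the $\mathrm{Ext}$ terms, that the restriction map is surjective onto a free module.

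For the converse I would try first to use the known criterion that, for $\A$ free, $\A^H$ is free if and only if $\pd_S D(\A\setminus H)\le1$; equivalently, if and only if the restriction map $D(\A\setminus H)\to D(\A^H)$ is surjective. I would then transport this statement through the duality $\mathrm{Ext}^1_S(\Omega^1(\A\setminus H),S)\cong\mathrm{Ext}^2_S(J_\A^H,S)$, since vanishing of $\mathrm{Ext}^{\ge2}_{\overline S}(J_\A^H,\overline S)$ is exactly $\pd_{\overline S}J_\A^H\le1$.
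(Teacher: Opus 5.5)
Your part (1) is correct but argues differently from the paper. The paper gets $J_\A^H=\overline S$ from two facts. First, $\rho^H_{\ell-1}\colon D^{\ell-1}(\A)\to D^{\ell-1}(\A^H)=Q(\A^H)\,\overline S\,\partial_{y_1}\wedge\cdots\wedge\partial_{y_{\ell-1}}$ is surjective when $\A\setminus H$ is free (Lemma \ref{P4}). Second, its image is identified with $Q(\A^H)J_\A^H$. You instead use that a free ideal is principal and show that $J_\A^H$ has height $\ge2$ by localizing at rank-$\le2$ flats (Lemma \ref{2.12}). This avoids the surjectivity input and proves Lemma \ref{lem:4.5} on its own.

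One small correction in the rank-$2$ step: not every basis element outside $\Omega^1(\A_X\setminus H)$ has unit residue. For example, $d\alpha_H/\alpha_H$ has residue $\overline{Q(\A_X\setminus H)}/Q(\A_X^H)$, of degree $\#\A_X-2$. Take instead the generator of degree $-(\#\A_X-1)$ of $\Omega^1_0$ of the essentialization, whose residue has degree $0$, or compare Hilbert series.

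Part (2) has a genuine gap: dualizing to $D$ cannot detect freeness of $\A^H$. By the isomorphism you quote, $\mathrm{Ext}^1_S(J_\A^H,S)\cong\Hom_{\overline S}(J_\A^H,\overline S)$ up to shift. As you observe yourself, this is $\cong\overline S$ because $J_\A^H$ has height $\ge2$, and that holds for every free $\A$ and every $H$. So the cokernel $D(\A\setminus H)/D(\A)$ has rank one; it is the ideal $T_\A^H$ of \S8, via $\theta\mapsto\overline{\theta(\alpha_H)}$. It cannot be identified with $D(\A^H)$, which has rank $\ell-1$.

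For the same reason, your dualized sequence gives $\pd_S D(\A\setminus H)\le1$ for every $H\in\A$; this is the plus-one generated phenomenon cited in Example \ref{ex:maximalPD}. So the ``known criterion'' $\A^H$ free $\Leftrightarrow$ $\pd_S D(\A\setminus H)\le1$ is false. It would make every restriction of a free arrangement free, contradicting Example \ref{ERexample}. Dualizing throws away exactly the higher $\mathrm{Ext}$ information that measures $\pd_{\overline S}J_\A^H$, and your final step only restates (b).

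The missing ingredient is a direct link between $\Omega^1(\A\setminus H)$ and $\Omega^1(\A^H)$. This is the dual Euler sequence $0\to\Omega^1(\A)\xrightarrow{\cdot\alpha_H}\Omega^1(\A\setminus H)\to\Omega^1(\A^H)\to0$, which is right exact because $\A$ is free (Lemma \ref{P4}). Since $\Omega^1(\A)$ is free, it gives
$\pd_S\Omega^1(\A\setminus H)\le1\Leftrightarrow\pd_S\Omega^1(\A^H)\le1\Leftrightarrow\pd_{\overline S}\Omega^1(\A^H)=0$, that is, $\A^H$ is free. Combined with your (b), this is exactly (2), and it is how the paper argues.
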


Theorem \ref{1.2} enables us to translate the freeness problem of $\A^H$ to the problem on projective dimension of the ideal $J_\A^H$ assuming that $\A$ is free.
We note that, by Terao's restriction theorem \cite{T1}, if both $\A$ and $\A \setminus H$ are free, then $\A^H$ is free.
Our second main result on residue ideals is about their primary decompositions.
The motivation is our computational experience that $J_\A^H$ often has complicated generators
but it often has a rather simple primary decomposition.
For a hyperplane arrangement $\A$ and a subspace $X$ in its intersection lattice $L(\A)$, 
we write
$
\A_X=\{L \in \A \mid X \subseteq L\}$
for the {\bf localization} of $\A$ at $X$. Then by 
summarizing Theorem \ref{thm:4.1}, Lemma \ref{5.1}
and Proposition \ref{5.2}, we obtain the fundamental theorem for primary decomposition of residue ideals.

\begin{theorem}
    \label{thm1.3}
    Let $\A$ be a hyperplane arrangement in $V$,
    $H \in \A$ and
    \[\Xi(\A,H)=\{X \in L(\A^H)\mid J_{\A_X}^H \ne (1) \}.\]
    For a subspace $X$ of $H$,
    let $P_X$ be the defining ideal of $X$ in $\overline S=S/(\alpha_H)$.
    Then
    \begin{itemize}
        \item[(1)] $\sqrt {J_\A^H}= \bigcap_{X \in \Xi(\A,H)} P_X$.
        \item[(2)] the set $\mathrm{Ass}(J_\A^H)$ of associated primes of $J_\A^H$ is contained in $\{P_X \mid X \in \Xi(\A,H)\}$.
        \item[(3)] we have
        $$J_\A^H=\bigcap_{P_X \in \mathrm{Ass}(J_\A^H)} J_{\A_X}^H.$$
        In particular, if $J_\A^H$ has no embedded primes, then the above presentation is a primary decomposition of $J_\A^H$.
        \end{itemize}
\end{theorem}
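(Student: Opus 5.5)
The strategy is to prove that forming residue ideals commutes with localization, and then derive all three parts from that fact.

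\emph{Step 1: localization compatibility.} Let $P$ be a prime of $\overline S$ whose contraction to $S$ is $\mathfrak p \supseteq (\alpha_H)$. Let $X$ be the largest element of $L(\A^H)$ such that $P \supseteq P_X$; concretely, $X$ is the intersection of the hyperplanes $K\cap H$ whose defining forms lie in $\mathfrak p$. I claim
\[
(J_\A^H)_P=(J_{\A_X}^H)_P .
\]
The argument starts from the fact that logarithmic forms localize: $\Omega^1(\A)_{\mathfrak p}=\Omega^1(\A_{X'})_{\mathfrak p}$, where $X'$ is the intersection of the hyperplanes of $\A$ whose forms lie in $\mathfrak p$. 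This holds because every hyperplane $K$ with $\alpha_K\notin\mathfrak p$ becomes a unit after localizing, so the conditions at $K$ disappear and the factor $\alpha_K$ in the denominator can be absorbed. Next, residue is compatible with localization, since the sequence \eqref{0-2} is exact and localization is exact. A hyperplane $K\ne H$ satisfies $\alpha_K\in\mathfrak p$ exactly when $K\cap H \supseteq X$, which is the same as $K\supseteq X$. So $X'$ is just $X$, read through $L(\A)$.

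\emph{Step 2: part (1) and part (2).} Choose $P\in\Ass(J_\A^H)$ and let $X$ be as in Step 1. Since $(J_\A^H)_P\neq (1)$, Step 1 gives $J_{\A_X}^H\neq(1)$, so $X\in\Xi(\A,H)$. The arrangement $\A_X$ is a product of an essential arrangement with the vector space $X$. Hence $J_{\A_X}^H$ is generated by polynomials in the linear forms that cut out $X$; that is, it is extended from $\K[X^\perp]$. It is also homogeneous. Together these imply that its radical, when proper, is exactly $P_X$; here I would show that $\res$ sends the $\deg 0$ part to constants, so the only graded prime in the coordinates of $H/X$ containing it is the irrelevant one. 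Now take any associated prime $P$. Since $J_{\A_X}^H$ is $P_X$-primary in the relevant sense and has $P_X\subseteq P$, localizing at $P$ shows that $P$ is associated to an ideal extended from a $P_X$-primary ideal. Therefore $P=P_X$, which proves (2). Part (1) follows: the minimal primes lie among the $P_X$ with $X\in\Xi$, and conversely each $P_X$ with $X\in\Xi(\A,H)$ contains $J_\A^H$. The converse uses the inclusion $J_\A^H\subseteq J_{\A_X}^H$, which comes from $\Omega^1(\A)\subseteq\Omega^1(\A_X)$ because multiplying by units changes nothing. One more check is needed: $\sqrt{J}$ equals the intersection of all the $P_X$ with $X\in\Xi$, and not only of the minimal ones. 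This is immediate, since each such $P_X$ contains $J$ and the minimal ones are among them.

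\emph{Step 3: part (3).} The inclusion ``$\subseteq$'' was shown in Step 2. For ``$\supseteq$'', a standard fact says an ideal $I$ equals $\bigcap_{P\in\Ass I}(I_P\cap \overline S)$. By Step 1 each $I_P$ equals $(J_{\A_X}^H)_P$. Since $J_{\A_X}^H$ is $P_X$-primary and $P=P_X$, its contraction back to $\overline S$ is $J_{\A_X}^H$ itself. The statement about primary decompositions follows at once.

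The main obstacle is Step 1, the careful localization of $\Omega^1$ and of the residue map. The key point is checking that the relation $\alpha_K\in\mathfrak p \iff K\supseteq X$ holds for hyperplanes $K$ of $\A$, not only for their restrictions to $H$. The claim that $J_{\A_X}^H$ is $P_X$-primary also needs an argument; I would obtain it by induction on $\dim$, applying Step 1 to the primes strictly between $P_X$ and the maximal ideal.
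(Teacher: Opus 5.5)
There is a genuine gap. In Step 2 you claim that $J_{\A_X}^H$, when proper, has radical exactly $P_X$, so that it is $P_X$-primary. This is false, so the induction on dimension you propose cannot establish it. Homogeneity together with being extended from $\mathrm{sym}((H/X)^*)$ only places $J_{\A_X}^H$ inside the extension of the irrelevant ideal, namely $P_X$. It can also lie in many smaller graded primes.

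For example, for an essential $\A$ and $X=\{\bm 0\}$ one has $\A_X=\A$, and the ideal $J_\A^L$ of Example \ref{ERexample} has twelve minimal primes of height two. The claim also fails in exactly the situation where you invoke it. In the example with an embedded prime at the end of \S5.3, $P=(x,y,z)$ is an associated prime of $J_\A^H$ and the attached subspace is $X=\{\bm 0\}$. There $J_{\A_X}^H=J_\A^H$ has minimal primes $(x,y),(x,z),(y,z)$. So both the deduction $P=P_X$ in Step 2 and the contraction step in Step 3 rest on a false lemma.

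Relatedly, $\Omega^1(\A)\not\subseteq\Omega^1(\A_X)$; consider $d\alpha_K/\alpha_K$ with $K\notin\A_X$. So the inclusion $J_\A^H\subseteq J_{\A_X}^H$ does not follow that way. One only has $\frac{Q(\A)}{Q(\A_X)}\Omega^1(\A)\subseteq\Omega^1(\A_X)$, and a factor must then be cancelled.

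What you actually need is a weaker, true consequence of your own observation; this is what the paper uses (Corollary \ref{2.6}, Lemma \ref{2.8}, Lemma \ref{5.1}(2), Proposition \ref{5.2}). Since $J_{\A_X}^H$ is extended from a homogeneous ideal of $\mathrm{sym}((H/X)^*)$, and $\overline S$ is a polynomial ring over that ring, every associated prime of $J_{\A_X}^H$ is extended from a homogeneous prime. Hence every associated prime is contained in $P_X$. Equivalently, elements of $\overline S\setminus P_X$ are nonzerodivisors modulo $J_{\A_X}^H$. This repairs each step:
\begin{itemize}
\item In (2), localization gives $P\in\Ass(\overline S/J_{\A_X}^H)$, hence $P\subseteq P_X$, hence $P=P_X$.
\item In (3), one gets $(J_{\A_X}^H)_{P_X}\cap\overline S=J_{\A_X}^H$. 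Together with Step 1 this yields $J_\A^H\subseteq(J_\A^H)_{P_X}\cap\overline S=J_{\A_X}^H$. It also yields the reverse inclusion via $I=\bigcap_{P\in\Ass(I)}(I_P\cap\overline S)$.
\item Primariness of $J_{\A_X}^H$ holds only when $P_X$ is a minimal prime of $J_\A^H$. Then $J_{\A_X}^H=(J_\A^H)_{P_X}\cap\overline S$ is the $P_X$-primary component, which is exactly what the final clause of (3) requires.
\end{itemize}

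Once repaired, your derivation of (1) from (2) is correct, and it differs from the paper. The paper proves (1) directly over an algebraically closed field: a point $\bm p\in H$ lies in the zero set of $J_\A^H$ if and only if the smallest flat through $\bm p$ lies in $\Xi(\A,H)$. Your route avoids the field extension and the Nullstellensatz.
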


See \S 4.2 for basics on primary decompositions including the definition of associated primes and embedded primes.
An important point of Theorem \ref{thm1.3} is that, if all the associated primes of $J_\A^H$ are of small height (we believe that this often happens, see Proposition \ref{5.5}), then one can write $J_\A^H$ as an intersection of residue ideals of smaller arrangements.
If $\A$ is free, we can say more. Indeed, for a free arrangement $\A$ we have
\begin{itemize}
    \item $\Xi(\A,H)=\{ X \in L(\A^H) \mid \A_X\setminus H \mbox{ is not free}\}.$
    \item If $P_X$ is a minimal prime of $J_\A^H$ then $J_{\A_X}^H$ is a complete intersection.
    \item If moreover $\A^H$ is free, then 
    \[
\textstyle    J_\A^H=\bigcap_{X \in \Xi(\A,H),\ \dim X=\ell-3} J_{\A_X}^H.
    \]
\end{itemize}
See Corollary \ref{4.3}, Lemma \ref{5.1} and Corollary \ref{cor:newprimarydecompfree}.

\subsection*{Applications to graphic arrangements}
Another goal of this paper is to investigate modules of logarithmic differential $1$-forms for graphic arrangements using residue ideals.

We recall graphic arrangements.
Let $[\ell]=\{1,\ldots,\ell\}$ and let $G=(V(G),E(G))$ be a simple graph with $V(G)=[\ell]$. Then the \textbf{graphic arrangement $\A_G$} of $G$ in $\K^\ell$ is defined by 
$$
\A_G:=\{x_i=x_j\mid \{i,j\} \in E(G)\}.
$$
Clearly there is a one-to-one correspondence between graphs and subarrangements of $\prod_{1 \le i<j\le \ell} (x_i-x_j)=0$. Graphic arrangements have been extensively studied from several aspects. The main topic is how to relate the graph combinatorics with several properties of $\A_G$. For example, it was proved by Stanley that $\A_G$ is free if and only if $G$ is chordal. However, up to now, from the algebraic point of view, existing work on graphic arrangements has focused mainly on logarithmic vector fields; comparatively little is known about logarithmic differential forms. The aim of this part is to establish a clear relationship between $\Omega^1(\A_G)$ and a very well-studied algebro-combinatorial object in combinatorial commutative algebra, a cover ideal.
A  \textbf{cover ideal} $J(G)$ of a graph $G$ is defined by 
$$
J(G):=\bigcap_{\{i,j\}\in E(G)} (x_i,x_j).
$$
Using Theorem \ref{thm1.3} we prove the following result, which shows that  for a graphic arrangement $\A_G$ and $H \in \A_G$ the residue ideal $J_{\A_G}^H$ is radical and gives its prime decomposition.

\begin{theorem}[Theorem \ref{thm:RDforGraph}]
\label{thm:graphic}
Let $\A$ be a graphic arrangement in $\K^\ell$ defined by a graph $G$ and $H=H_{x_i-x_j} \in \A$. 
    Let $W$ be the set of vertices which are adjacent to both $i$ and $j$ in $G$ and let $y_k=x_k-x_i$ for $k \in W$. Then one has
    \[ J_\A^H= \bigcap_{\{u,v\} \subset W, \{u,v\} \not \in E(G)} (y_u,y_v).\]
\end{theorem}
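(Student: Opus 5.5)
Our plan is to combine Theorem \ref{thm1.3} with a local computation for rank-$3$ subarrangements, and then to show that no embedded primes occur. We identify $\overline S=S/(x_i-x_j)$ with a polynomial ring in which, for $k\ne i,j$, we use the coordinates $x_k-x_i$; in particular $y_k$ for $k\in W$ are among them. Throughout we use Theorem \ref{thm1.3}(3),
\[
J_\A^H=\bigcap_{P_X\in \mathrm{Ass}(J_\A^H)} J_{\A_X}^H .
\]
So two things are needed: the set of $X\in L(\A^H)$ that can carry an associated prime, and the local residue ideals $J_{\A_X}^H$.

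First we treat small localizations. For $X$ of codimension $2$ in $V$ (a hyperplane of $\A^H$), $\A_X$ is either $\{H\}$ or the triangle $\{x_i=x_j,x_i=x_k,x_j=x_k\}$ with $k\in W$. Both are free, and deleting $H$ still gives a free rank-$\le 2$ arrangement, so by Theorem \ref{1.2}(1) $J_{\A_X}^H=(1)$. Next take $X$ of codimension $3$ with $\A_X$ irreducible of rank $3$. Then $\A_X$ is the graphic arrangement of a graph on $\{i,j\}$ together with two more vertices $u,v$, or one more vertex. The relevant case has $u,v\in W$. If $\{u,v\}\in E(G)$, then $\A_X$ is the braid arrangement on $4$ vertices and $\A_X\setminus H$ is $K_4$ minus an edge, which is chordal. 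Hence $J=(1)$ by Stanley's theorem and Theorem \ref{1.2}(1). If $\{u,v\}\notin E(G)$, then $\A_X$ is $K_4$ minus an edge, which is chordal and free, while $\A_X\setminus H$ is a $4$-cycle, which is not free. The restriction $\A_X^H$ is rank $2$ and so free. By Theorem \ref{1.2}(2) the ideal $J$ has $\pd\le1$ and is a proper ideal supported on $P_X=(y_u,y_v)$. A direct residue computation then gives exactly $(y_u,y_v)$. We would exhibit the forms
\[
\omega_u=\frac{y_u\,d(x_i-x_j)}{(x_i-x_j)(x_j-x_u)(x_i-x_u)}+\cdots
\]
explicitly, with residues $y_u$ and $y_v$, and note that $1$ is not a residue because $\A_X\setminus H$ is not free. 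Localizations that are reducible, or in which one of the extra vertices is not in $W$, reduce to the previous cases by a product decomposition. This gives $J=(1)$ or ideals already accounted for.

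The main step, and the expected obstacle, is ruling out associated primes $P_X$ with $\operatorname{codim} X\ge 4$, equivalently showing that $J_\A^H$ is radical. We plan to argue by induction on the number of vertices. Fix $X$ of higher codimension and localize at $P_X$; by the localization properties behind Theorem \ref{thm1.3}, we may replace $\A$ by $\A_X$. If $\A_X$ is chordal (free), we use Corollary \ref{cor:newprimarydecompfree}, together with the freeness of restrictions of chordal graphic arrangements (a graphic contraction of a chordal graph is chordal), to conclude that only codimension-$3$ components occur. In general the vertices outside $W\cup\{i,j\}$ should not matter. We expect to show $J_{\A_G}^H=J_{\A_{G'}}^H$, where $G'$ is the induced graph on $W\cup\{i,j\}$, by constructing residues directly. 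For a form $\omega\in\Omega^1(\A_{G'})$, we pull it back and correct it by forms in $\Omega^1(\A\setminus H)$; this uses the exact sequence \eqref{0-2} and the fact that adding an edge not incident to both $i$ and $j$ does not change $\mathrm{res}$.

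Once the reduction to $G'$ is available, let $F$ be the graph on $W$ induced by $G$. Then $G'$ is the cone over $F$ with apex the contracted edge $\{i,j\}$. Our candidate generators for $J_{\A_{G'}}^H$ are the products $\prod_{k\in C} y_k$ over vertex covers $C$ of the complement graph of $F$. For each such $C$ we produce explicit log forms realizing these residues, generalizing the rank-$3$ forms. These products generate the cover ideal $\bigcap_{\{u,v\}\notin E(F)}(y_u,y_v)$. The reverse inclusion follows from the codimension-$3$ computation via Theorem \ref{thm1.3}(3). Constructing these forms and verifying the logarithmic pole condition at every hyperplane is the part we expect to require the most care.
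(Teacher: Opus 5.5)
There is a genuine gap, and it sits in the substantive half of the theorem: the inclusion $\bigcap_{\{u,v\}}(y_u,y_v)\subset J_\A^H$. In your last paragraph you say that for every vertex cover $C$ of $G^c[W]$ you ``produce explicit log forms'' with residue $\prod_{k\in C}y_k$. No such forms are ever written down or checked. The one rank-$3$ expression you display is a fragment with unspecified further terms, and its displayed part has residue $\pm y_v$ rather than $y_u$.

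The missing idea is the family $\gamma_E=\sum_{u\in E}dx_u/\prod_{v\in E,\,v\ne u}(x_v-x_u)$ attached to cliques $E$ of $G$, where $\A=\A_G$. You need three facts about it:
\begin{itemize}
\item[(i)] $\gamma_E\in\Omega^1(\A_G)$. For $\{p,q\}\not\subset E$ this is automatic. For $\{p,q\}\subset E$ only the $dx_p$ and $dx_q$ terms matter, and their combined $dx_p\wedge dx_q$-coefficient in $Q(\A_G)\gamma_E\wedge(dx_p-dx_q)$ vanishes on $x_p=x_q$.
\item[(ii)] The identity $\sum_{u\in E}1/\prod_{v\ne u}(x_v-x_u)=0$ lets you rewrite $\gamma_E$ in terms of the differences $dx_u-dx_j$, so the $d\alpha_H$-coefficient can be read off.
\item[(iii)] For $E=K\cup\{i,j\}$ with $K$ a clique of $G[W]$, one gets $\mathrm{res}^H(\gamma_E)=\pm\prod_{p\in W\setminus K}y_p$. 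This uses $Q(\A_G)=(x_j-x_i)\prod_{p\in W}(x_p-x_i)(x_p-x_j)\cdot g$ and $Q(\A_G^H)=\prod_{p\in W}y_p\cdot\overline g$.
\end{itemize}
By Lemma \ref{lem:genscoverideal}, these products are exactly the generators of the cover ideal of $G^c[W]$.

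With that in hand, the step you single out as the main obstacle becomes unnecessary. For each non-edge $\{u,v\}\subset W$ you correctly observe that $X=\{x_i=x_j=x_u=x_v\}$ has $\A_X$ equal to $K_4$ minus an edge and $\A_X\setminus H$ a $4$-cycle. Hence $X\in\Xi(\A,H)$, and Theorem \ref{thm:4.1} gives $J_\A^H\subset\sqrt{J_\A^H}\subset P_X=(y_u,y_v)$; you do not need to compute $J_{\A_X}^H$ exactly. The two inclusions give the equality, and radicality of $J_\A^H$ follows as a consequence, so no analysis of embedded primes is required.

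This matters, because your plan for excluding embedded primes does not go through as stated. The reduction to $G'=G[W\cup\{i,j\}]$ is already Proposition \ref{prop:2.10}, since here $X_{\mathrm{core}}^{(\A,H)}=\{x_i=x_j=x_k,\ k\in W\}$. But $G'$ is chordal exactly when $G[W]$ is, because $i$ and $j$ are adjacent to every other vertex of $G'$. So Corollary \ref{cor:newprimarydecompfree} is unavailable in general, and you give no argument for the non-chordal case.
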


In particular, for a graphic arrangement $\A_G$ and $H \in \A_G$,
the ideal $J_{\A_G}^H$ is nothing but the cover ideal of the complementary graph of the induced subgraph of $G$ on $W$.
This theorem actually induces various new results on $\Omega^1(\A_G)$,
as we explain below.

First, using Theorem \ref{thm:graphic} we find explicit generators of $\Omega^1(\A_G)$.
For a non-empty subset $F \subset [\ell]$, define
\[\gamma_F=\sum_{u \in F} \frac {dx_u} {\prod_{v \in F, v \ne u} (x_v-x_u)}. \]

\begin{theorem}[Theorem \ref{thm:GensOmega}]
\label{thm:graphicGen}    Let $\A_G$ be a graphic arrangement in $\K^\ell$ defined by a graph $G$. 
The set $\{\gamma_F \mid F \ne \varnothing \mbox{ is a clique of $G$}\}$ generates $\Omega^1(\A_G)$.
\end{theorem}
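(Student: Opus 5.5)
We argue by induction on the number of edges of $G$, using the exact sequence \eqref{0-2}. First we check that each $\gamma_F$ with $F$ a clique lies in $\Omega^1(\A_G)$. The denominator of $\gamma_F$ divides $Q(\A_G)$ because $F$ is a clique. For the condition along an edge $\{a,b\}$, only the hyperplanes $x_a=x_b$ with $a,b\in F$ appear as poles, so suppose $a,b\in F$. The terms with poles along $x_a-x_b$ are those indexed by $u=a$ and $u=b$. Modulo $x_a-x_b$ these have equal and opposite residues, and their combined contribution is proportional to $(dx_a-dx_b)/(x_a-x_b)$ plus regular terms. This is exactly the logarithmic condition $Q\omega\wedge d(x_a-x_b)\in (x_a-x_b)\Omega^2_V$.

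Equivalently, $\gamma_F$ is the divided difference of $dx$, which is the standard way to see the cancellation. A short computation also gives the residue of $\gamma_F$ along $H=H_{x_i-x_j}$ when $i,j\in F$: it is, up to sign,
\[
\res(\gamma_F)=\frac{1}{\prod_{v\in F\setminus\{i,j\}}(x_v-x_i)}
\]
restricted to $H$, up to the normalization of $\res$ from \S2. This is a rational function, not a polynomial. The useful elements are therefore of the form $f\gamma_F$ with $f$ a polynomial, and the residue of $\prod_{v\in C}(x_v-x_i)\,\gamma_F$ for suitable sets $C$ produces the monomials $\prod_{v\in C} y_v$.

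For the generating statement, the base case $E(G)=\varnothing$ is immediate: $\Omega^1$ is the free module on $dx_u=\gamma_{\{u\}}$. For the inductive step, fix an edge $\{i,j\}$ and set $H=H_{x_i-x_j}$. By induction, $\Omega^1(\A_G\setminus H)=\Omega^1(\A_{G\setminus\{i,j\}})$ is generated by the $\gamma_F$ with $F$ a clique of $G\setminus\{i,j\}$, and these are cliques of $G$. By \eqref{0-2} it then suffices to show that the residues of the elements of the $S$-span of $\{\gamma_F\}$ generate $J_{\A_G}^H$.

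By Theorem \ref{thm:graphic}, $J_{\A_G}^H$ is the cover ideal of the complement of $G[W]$ in the variables $y_k$. A cover ideal is generated by the monomials $\prod_{v\in C}y_v$ where $C\subseteq W$ is a minimal vertex cover of the complement graph, that is, $W\setminus C$ is a clique $K$ of $G[W]$. For such $K$, the set $F=K\cup\{i,j\}$ is a clique of $G$. We then compare $\res(\gamma_F)=\pm1/\prod_{v\in K}y_v$ with the required generator $\prod_{v\in W\setminus K}y_v$. These are related by the factor $\prod_{v\in W}y_v$, up to the normalization. The main step is therefore to determine the exact residue normalization: $\res$ is defined on $\Omega^1(\A_G)$, whose elements carry the full $Q$. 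I expect the residue map to produce $\res(\gamma_F)\cdot\prod_{v\in W}y_v$ after accounting for the other factors of $Q$ that are nonvanishing on $H$. Equivalently, the identification $\Omega^0(\A^H)=\overline S$ is normalized by multiplying by $Q(\A^H)$-type factors, so that $\res(\gamma_F)$ becomes exactly the monomial $\prod_{v\in W\setminus K}y_v$, possibly times a unit.

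Verifying this normalization against the definition in \S2 is where I expect the main difficulty. The vertices adjacent to only one of $i,j$ merge with other hyperplanes under restriction, and one must check that they contribute only units or cancel. If this identification holds, the residues of the $\gamma_F$ hit every monomial generator of $J_{\A_G}^H$. Exactness of \eqref{0-2} then completes the induction.
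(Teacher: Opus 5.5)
Your outline is the paper's proof: induction on $\#\A_G$ through \eqref{0-2}, the induction hypothesis for $\A_G\setminus H=\A_{G'}$, and the fact that for a clique $K$ of $G[W]$ the residue of $\gamma_{K\cup\{i,j\}}$ is the cover-ideal generator $\prod_{p\in W\setminus K}y_p$ from Theorem \ref{thm:graphic}. That last fact is \eqref{66} together with Remark \ref{remgenerate}. Your membership argument for $\gamma_F$ is essentially Lemma \ref{lem:gensOmega}(2).

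However, the one step that carries real content is left conditional (``if this identification holds''). The value you assert along the way is also wrong. Since $\gamma_F\in\Omega^1(\A_G)$, Lemma \ref{2.2} already forces $\res^H(\gamma_F)\in\overline S$, so it cannot equal $\pm 1/\prod_{v\in K}y_v$. That rational function is the naive residue $\overline{\alpha_H f}$, where $f$ is the coefficient of $d\alpha_H$. The map in \eqref{2-1} is instead $\res^H(\omega)=\frac{1}{Q(\A^H)}\overline{Q(\A)f}=\frac{\overline{Q(\A\setminus H)}}{Q(\A^H)}\,\overline{\alpha_H f}$. In particular, the detour through polynomial multiples of $\gamma_F$ is not needed.

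Your guess that the normalization contributes exactly $\prod_{p\in W}y_p$ is correct, and checking it is short. Extend the notation to $y_k=\overline{x_k-x_i}$ for all $k\ne i,j$, and write $Q(\A_G)=(x_j-x_i)\prod_{p\in W}(x_p-x_i)(x_p-x_j)\cdot g$. Each factor of $g$ restricts to a hyperplane of $\A_G^H$ of multiplicity one:
\begin{itemize}
\item an edge $\{i,u\}$ or $\{j,u\}$ with $u\notin W\cup\{i,j\}$ gives $H_{y_u}$, which is hit only once, since exactly one of $x_u-x_i$, $x_u-x_j$ defines a hyperplane of $\A_G$;
\item an edge $\{a,b\}$ disjoint from $\{i,j\}$ gives $H_{y_a-y_b}$, which is distinct from all the others.
\end{itemize}
So nothing merges, and $Q(\A_G^H)=\pm\prod_{p\in W}y_p\cdot\overline g$. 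Each $p\in W$ contributes $y_p^2$ to $\overline{Q(\A_G\setminus H)}$ and $y_p$ to $Q(\A_G^H)$, so $\overline{Q(\A_G\setminus H)}/Q(\A_G^H)=\pm\prod_{p\in W}y_p$.

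Your naive residue $\pm 1/\prod_{v\in K}y_v$ can be read off from Lemma \ref{lem:gensOmega}(1) with base point $i$. Multiplying it by this factor gives $\res^H(\gamma_{K\cup\{i,j\}})=\pm\prod_{p\in W\setminus K}y_p$, which is \eqref{66}. With this inserted, Lemma \ref{lem:genscoverideal} and Theorem \ref{thm:graphic} show that these residues generate $J_{\A_G}^H$. Exactness of \eqref{0-2} then closes the induction as you describe.
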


Second, we find a direct relation between $\Omega^1(\A_G)$ and $J(G^c)$.
Recall that all the hyperplanes in $\A_G$ contain the line $x_1=\cdots=x_\ell$. So we can consider $$
\Omega^1_R(\A_G):=\Omega^1(\A_G) \cap \left(\bigoplus_{i=1}^\ell 
\frac{1}{Q(\A_G)}R dx_i\right),
$$
where 
$$
R:=\K[x_i-x_j\mid 1 \le i <j \le \ell].
$$
We note that $\Omega^1_R(\A_G) \otimes_R S \cong \Omega^1(\A_G)$. 

\begin{theorem}[Theorem \ref{thm:6.6}]
\label{omegacoverR}
If $\A_G$ is a graphic arrangement in $\K^\ell$, then
$$
\Omega^1_R(\A_G) \simeq J(G^c)/(x_1\cdots x_\ell).
$$
as $R$-modules.
\end{theorem}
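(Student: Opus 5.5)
The plan is to construct an explicit $R$-linear map from $J(G^c)$ onto $\Omega^1_R(\A_G)$ whose kernel is exactly $(x_1\cdots x_\ell)$. Here $J(G^c)$ is an ideal of $S$, regarded as an $R$-module by restriction along $R\subset S$. As a sanity check on ranks: $x_1\cdots x_\ell=\prod_k\bigl(x_1+(x_k-x_1)\bigr)$ is monic of degree $\ell$ in $x_1$ over $R$, and $S=R[x_1]$. Hence $S/(x_1\cdots x_\ell)$ is $R$-free of rank $\ell$, matching the rank of $\Omega^1_R(\A_G)$.

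\emph{Step 1: the map.} For $f\in S$ and $u\in[\ell]$, let $f^{(u)}\in R$ be the polynomial obtained by substituting $x_k\mapsto x_k-x_u$ for all $k$; this lands in $R$ because it is translation invariant and sends $x_u$ to $0$. Define
\[
\Phi(f)=\sum_{u=1}^{\ell}\frac{f^{(u)}}{\prod_{v\neq u}(x_v-x_u)}\,dx_u .
\]
Since substitution fixes every $x_i-x_j$, $\Phi$ is $R$-linear. For a clique $F$ of $G$, the complement $[\ell]\setminus F$ is a vertex cover of $G^c$, and $J(G^c)$ is generated by the monomials $x_{[\ell]\setminus F}=\prod_{k\notin F}x_k$. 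A direct computation gives $\Phi(x_{[\ell]\setminus F})=\gamma_F$:
\begin{itemize}
\item for $u\notin F$ the factor $x_u-x_u=0$ kills the coefficient;
\item for $u\in F$ the factors $\prod_{k\notin F}(x_k-x_u)$ cancel against the denominator.
\end{itemize}
In particular $\Phi(x_1\cdots x_\ell)=\gamma_\varnothing=0$.

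\emph{Step 2: kernel.} I claim $\Phi(f)=0$ if and only if $f^{(u)}=0$ for every $u$. Now $f^{(u)}=0$ means $f$ vanishes on the translates of $\{x_u=0\}$ along the diagonal, i.e.\ on $\{x_u=0\}$ itself, so $x_u\mid f$. Hence $\ker\Phi=(x_1\cdots x_\ell)$ on all of $S$, and therefore also on $J(G^c)\supseteq(x_1\cdots x_\ell)$.

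\emph{Step 3: image lies in $\Omega^1_R(\A_G)$.} The coefficients visibly lie in $\frac1{Q}R$ for the braid arrangement. For a non-edge $\{u,v\}$ and $f\in J(G^c)\subseteq(x_u,x_v)$, write $f=x_ua+x_vb$. Then $f^{(u)}=(x_v-x_u)b^{(u)}$ and $f^{(v)}=(x_u-x_v)a^{(v)}$, so the pole along $x_u=x_v$ cancels in both coefficients where it appears. Thus $\Phi(f)\in\frac1{Q(\A_G)}\Omega_V^1$.

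It remains to verify the logarithmic condition along each edge hyperplane $x_i=x_j$. This is the step I expect to be the main obstacle. I will try to prove $\Phi(S)\subseteq\Omega^1(\A_{K_\ell})$ for the complete graph; then the condition along edges of $G$ is inherited, since it is local at each hyperplane and the poles along non-edges have been removed. For the braid case, the condition along $x_i=x_j$ amounts to checking that $(x_i-x_j)\Phi(f)\wedge d(x_i-x_j)$ has no pole along $x_i=x_j$. Only the $dx_i,dx_j$ coefficients carry that pole, so this reduces to $f^{(i)}\equiv f^{(j)}$ modulo $x_i-x_j$ up to the sign of the denominators. That congruence holds because the substitutions $x\mapsto x-x_i\mathbf 1$ and $x\mapsto x-x_j\mathbf 1$ agree on $x_i=x_j$.

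\emph{Step 4: surjectivity.} By Theorem~\ref{thm:graphicGen}, $\Omega^1(\A_G)$ is generated over $S$ by the $\gamma_F$, and these have coefficients in $\operatorname{Frac}(R)$. Since $S=\bigoplus_k x_1^kR$ and $\Omega^1(\A_G)=\bigoplus_k x_1^k\,\Omega^1_R(\A_G)$, comparing $x_1$-degree-zero parts shows that $\Omega^1_R(\A_G)$ is generated over $R$ by the $\gamma_F=\Phi(x_{[\ell]\setminus F})$. Hence $\Phi\colon J(G^c)\to\Omega^1_R(\A_G)$ is surjective, and together with Step 2 it induces the isomorphism $J(G^c)/(x_1\cdots x_\ell)\cong\Omega^1_R(\A_G)$.
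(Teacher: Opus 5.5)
Your proof is correct, and it reaches the isomorphism by a different route from the paper's: the paper builds a map in the opposite direction, out of $\Omega^1_R(\A_{K_\ell})$.

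\textbf{The paper's route.} It shows via Saito's criterion that $\Omega^1_R(\A_{K_\ell})$ is $R$-free with basis $\gamma_{[\ell]\setminus[k]}$ ($0\le k\le \ell-1$). It notes that $A=S/(x_1\cdots x_\ell)$ is $R$-free with basis $x_1\cdots x_k$, and defines $\Psi\colon\Omega^1_R(\A_{K_\ell})\to A$ by matching these bases. It then proves $\Psi(\gamma_F)=x^{[\ell]\setminus F}$ by downward induction on $\#F$, using $(x_i-x_j)\gamma_F=\gamma_{F\setminus\{i\}}-\gamma_{F\setminus\{j\}}$ (Lemmas~\ref{lem:6.8}--\ref{lem:6.10}). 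Injectivity is then automatic. Theorem~\ref{thm:GensOmega} gives $\Psi(\Omega^1_R(\A_G))\subseteq J(G^c)/(x_1\cdots x_\ell)$. The reverse inclusion is a combinatorial induction over the Stanley--Reisner $\K$-basis, via $x^Kx_sm'=x^Km'(x_s-x_t)+x^{K\cup\{t\}}m'$, showing that the $R$-span of the monomials $x^{[\ell]\setminus F}$ already equals their $S$-span.

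\textbf{How your route compares.} Your $\Phi$ is exactly $\Psi^{-1}$ lifted to $S$, but written in closed form, so the kernel becomes a one-line computation. The two containments correspond exactly. Your Step~3, $\Phi(J(G^c))\subseteq\Omega^1_R(\A_G)$, is equivalent to the paper's Stanley--Reisner induction. You prove it by a pole check instead, which is Lemma~\ref{lem:gensOmega}(2) extended from the $\gamma_F$ to arbitrary $\Phi(f)$. Your Step~4 uses Theorem~\ref{thm:GensOmega} for the same containment the paper uses it for. It also spells out the $x_1$-grading argument (from $S$-generation of $\Omega^1(\A_G)$ to $R$-generation of $\Omega^1_R(\A_G)$), which the paper uses tacitly.

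Your route gives an explicit formula and bypasses Saito's criterion and the basis bookkeeping. It also makes the paper's later remark on the induced $S$-module structure immediate, since $(x_if)^{(u)}=(x_i-x_u)f^{(u)}$. The paper's route instead exhibits the $R$-freeness of $\Omega^1_R(\A_{K_\ell})$ and of $A$, and stays within Stanley--Reisner combinatorics.

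\textbf{Two small fixes to the write-up.}

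\emph{Step~3.} The condition along $x_i=x_j$ is that $\Phi(f)\wedge d(x_i-x_j)$ has no pole there. As you wrote it, with the extra factor $x_i-x_j$, the condition is vacuous. The reduction you then carry out is nonetheless the right one: $f^{(i)}/\prod_{v\ne i,j}(x_v-x_i)\equiv f^{(j)}/\prod_{v\ne i,j}(x_v-x_j)$ modulo $x_i-x_j$.

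\emph{Step~2.} Arguing through vanishing on $\{x_u=0\}$ tacitly needs an infinite field. Instead, note that $x_k\mapsto x_k-x_u$ is a ring endomorphism of $S$ that kills $x_u$ and is injective on $\K[x_k\mid k\ne u]$. Hence its kernel is exactly $(x_u)$, over any field.
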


Theorem \ref{omegacoverR} enables us to give a combinatorial formula of the Hilbert series 
for $\Omega^1(\A_G)$, local cohomology modules of $\Omega^1(\A_G)$, and $D(\A_G)$
(Theorem \ref{hilb:omega} and Corollaries \ref{cor:HilbLocalcohomologyOmega} and \ref{cor:HilbD}).
It also shows that $\pd_S(\Omega^1(\A_G))$ coincides with the Castelnuovo--Mumford regularity of the edge ideal $I(G^c)$,
which is the dual object of $J(G^c)$ (Theorem \ref{thm:6.13}). In particular, applying known results on edge ideals, we have the following strong upper bound for the projective dimension of $\Omega^1(\A_G)$.

\begin{theorem}[Corollary \ref{graphpd}]
\label{pdupper}
If $\A_G$ is a graphic arrangement in $\K^\ell$ with $\ell \geq 2$, then
$$
\pd_S \Omega^1(\A_G) \le \frac{\ell}{2}-1.
$$
\end{theorem}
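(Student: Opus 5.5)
The plan is to derive the bound from the identification of $\pd_S \Omega^1(\A_G)$ with the Castelnuovo--Mumford regularity of the edge ideal $I(G^c)$ (Theorem \ref{thm:6.13}), and then to invoke a known upper bound for the regularity of edge ideals. The first step is to pin down the precise form of this identification, including any shift, using Theorem \ref{omegacoverR}.

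Since $\Omega^1_R(\A_G)\otimes_R S\cong \Omega^1(\A_G)$ and $S$ is a polynomial extension of $R$ (hence flat), we have $\pd_S\Omega^1(\A_G)=\pd_R \Omega^1_R(\A_G)$. By Theorem \ref{omegacoverR}, the latter equals the projective dimension of $J(G^c)/(x_1\cdots x_\ell)$, viewed over the corresponding polynomial ring. The short exact sequence
\[
0\to (x_1\cdots x_\ell)\to J(G^c)\to J(G^c)/(x_1\cdots x_\ell)\to 0,
\]
whose first term is free, gives
\[
\pd\bigl(J(G^c)/(x_1\cdots x_\ell)\bigr)\le \max\{\pd J(G^c),1\}.
\]
Equality holds except in degenerate cases, which I would treat separately. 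By Terai's Alexander duality, $\pd J(G^c)=\reg I(G^c)-1$, since $J(G^c)$ is the Alexander dual of $I(G^c)$ and $\pd(S/J)=\reg(I)$. Hence $\pd_S\Omega^1(\A_G)$ is essentially $\reg(S/I(G^c))-1$, which is consistent with the statement of Theorem \ref{thm:6.13}.

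It therefore suffices to show $\reg(S/I(H))\le \ell/2$ for any graph $H=G^c$ on $\ell$ vertices, where $\reg(S/I)=\reg(I)-1$. This follows from the standard bound $\reg(S/I(H))\le \nu(H)$, the matching number (Hà--Van Tuyl), together with $\nu(H)\le \lfloor \ell/2\rfloor$. Combining the two gives
\[
\pd_S\Omega^1(\A_G)\le \reg(S/I(G^c))-1\le \frac{\ell}{2}-1.
\]
The edge cases need checking. If $G$ is complete, then $G^c$ has no edges and $J(G^c)$ is the unit ideal. In that case $\Omega^1(\A_G)$ is free, since $\A_G$ is free for chordal $G$, so its projective dimension is $0\le \ell/2-1$ for $\ell\ge 2$.

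The main obstacle is getting the index shift in Theorem \ref{thm:6.13} exactly right. One has to be careful about whether quotienting by $x_1\cdots x_\ell$ and passing from $R$ to $S$ shifts the projective dimension by one. If Theorem \ref{thm:6.13} states $\pd_S\Omega^1(\A_G)=\reg(I(G^c))-2$, the argument above goes through directly; if instead the identity is off by one, I would sharpen the input and use that $\reg(S/I(H))\le \nu(H)$ with equality only in special cases.
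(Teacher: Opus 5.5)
Your proposal is correct and is essentially the paper's proof: Theorem \ref{thm:6.13} states exactly $\pd_S\Omega^1(\A_G)=\reg(I(G^c))-2=\reg(S/I(G^c))-1$ for $G\neq K_\ell$, and the paper combines it with the known bound $\reg(I(G^c))\le \frac{\ell}{2}+1$ (the matching-number estimate you use). The case $G=K_\ell$ is free, so you may simply cite that theorem.

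The off-by-one you could not pin down in your own derivation comes from $\depth_S M(G)=\depth_R M(G)$ for $M(G)=J(G^c)/(x_1\cdots x_\ell)$. By Auslander--Buchsbaum this gives $\pd_S M(G)=\pd_R M(G)+1$, and your chain does not distinguish these two. Your fallback of sharpening the matching bound could not have worked anyway, since $\reg(S/I(G^c))=\ell/2$ is attained when $G^c$ is a perfect matching.
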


In general, since the module of logarithmic differential forms is reflexive, we know an upper bound of the projective dimension of the module of logarithmic differential $1$-forms:
$$
\pd_S \Omega^1(\A_G) \le \ell-3.
$$
Theorem \ref{pdupper} says that for a graphic arrangement, 
the upper bound of $\pd_S \Omega^1(\A_G)$ is far lower. This gives us the first large class of arrangements whose modules of logarithmic differential $1$-forms cannot attain the maximal possible projective dimensions.
There exist non-graphic arrangements that do not satisfy
the upper bound in Theorem \ref{pdupper}, see Example \ref{exmaxpd}.

By these theorems, 
it turns out that the study of modules of logarithmic differential $1$-forms 
of graphic arrangements is closely related to 
the research of cover ideals and edge ideals of graphs.

This paper is organized as follows. In \S2 we recall the fundamental definitions and results on hyperplane arrangements. Using them, our new ``residue ideal'' is introduced, and several basic results and properties are shown. In \S3 we focus on the residue ideals of free arrangements, and give a characterization of the freeness of the restriction of free arrangements in terms of residue ideals.  \S4 and \S5 are devoted to the study of commutative algebraic properties of residue ideals including the radicality, primary decompositions and unmixedness. In \S6, we recall
fundamental results and definitions on cover ideals, edge ideals and Stanley--Reisner theory. By using them, in \S7, 
we make a direct connection between modules of differential $1$-forms of graphic arrangements and cover ideals of graphs, by proving theorems in the latter half of \S1. We conclude this paper in \S8 by posing several problems and possible applications.
\bigskip

\noindent
\textbf{Acknowledgments}:
We thank Uwe Nagel for letting us know a similarity between Theorem \ref{thm1.3} and primary decompositions of Jacobians. 
We also thank Huy T\`ai H\`a for helpful comments on earlier version of this paper, in particular, for letting us know a connection between strongly chordal graphs and forests in \S 8.
Problem \ref{findBettiFormula} is also inspired from his comments.
The first author
is partially supported by JSPS KAKENHI Grant Numbers JP23K17298 and JP25K24692.
The second author is partially supported by JSPS KAKENHI Grant Numbers JP25K06943.

\section{Preliminaries and basic properties of residue ideals}

In this section,
after recalling some basic facts on hyperplane arrangements and their modules of logarithmic  differential forms,
we prove basic properties of residue ideals.

\subsection{Hyperplane arrangements and residue maps}

Throughout the paper, $V$ is an $\ell$-dimensional $\K$-vector space, and $S=\mathrm{sym}(V^*)=\K[x_1,\dots,x_\ell]$ is the symmetric algebra of the dual space $V^*$.
We consider $S$ as a graded ring by setting $\deg x_i=1$ for all $i$.
A {\bf hyperplane arrangement} in $V$ is a finite collection of hyperplanes in $V$.
A hyperplane arrangement $\A$ is said to be {\bf linear} if every hyperplane in $\A$ contains the origin.
In this paper, we only consider linear hyperplane arrangements and simply call them {\bf arrangements}.
Let $\A$ be an arrangement and $H \in \A$.
The arrangement $\A \setminus H=\A \setminus \{H\}$ is called the {\bf deletion} of $H$ from $\A$ and the arrangement $\A^H=\{H \cap L \mid L \in \A\setminus H\}$ in $H \cong \K^{\ell-1}$ is called the {\bf restriction} of $\A$ to $H$.
The {\bf defining polynomial} $Q(\A)$ of $\A$ is a polynomial 
\[
\textstyle Q(\A)= \prod_{H \in \A} \alpha_H,
\]
and
the {\bf intersection lattice} $L(\A)$ of $\A$ is the poset of all subspaces of $V$ that can be obtained as the intersection of hyperplanes in $\A$ ordered by the reverse inclusion.

For a graded $\K$-algebra $A$ and $f_1,\dots,f_r \in A$,
we write $(f_1,\dots,f_r)$ for the ideal of $A$ generated by $f_1,\dots,f_r$.
Also, $\mideal_A$ will denote the graded maximal ideal of $A$.
From now on,
when we fix an arrangement $\A$ in $V$ and $H \in \A$,
the symbol $\overline S$ denotes the ring $S/(\alpha_H)=\mathrm{sym}(H^*)$,
and for $f \in S$ the symbol $\overline f$ denotes its image to $\overline S$.
If $L$ is a hyperplane in $\A^H$, then $\alpha_L$ is considered to be an element of $\overline S=\mathrm{sym}(H^*)$ and we have $\alpha_L=\overline {\alpha_{L'}}$ for a hyperplane $L'$ of $\A$ with $L=H \cap L'$.

Next, we recall a few basic facts on the logarithmic vector field $D(\A)$ and $\Omega^1(\A)$.
Let $\Der(S)=\bigoplus_{i=1}^\ell S \cdot \partial_{x_i}$ be the module of derivations of $S$.
Algebraically, $\Der(S)$ is the free module generated by $\partial_{x_1},\dots,\partial_{x_\ell}$.
For an arrangement $\A$ in $V$, the {\bf logarithmic vector field} $D(\A)$ of $\A$ is the $S$-module
\[D(\A)=\{\theta \in \Der(S) \mid \theta (\alpha_H) \in (\alpha_H) \mbox{ for any }H \in \A\}.\]
The modules $D(\A)$ and $\Omega^1(\A)$ are $S$-duals of each other by the inner product $\langle \partial_{x_i},dx_j \rangle =\delta_{i,j}$ (see \cite[Theorem 4.75]{OT}).
For any $L \in \A$,
the module $\Omega^1(\A)$ can be decomposed as
\begin{align}
\label{decomp:2}
\Omega^1(\A)=\left( S \cdot \frac {d \alpha_L}{\alpha_L} \right) \oplus \Omega_0^1(\A), 
\end{align}
where
$\Omega_0^1(\A)=\{ \omega \in \Omega^1(\A) \mid \langle \theta_E,\omega \rangle =0\}$
and 
where $\theta_E=\sum_{i=1}^\ell x_i \partial_{x_i}$ is the Euler derivation.
See e.g.\ \cite[\S 3]{Yuz}.
We also note that both $D(\A)$ and $\Omega^1(\A)$ are graded modules
with $\deg (\partial_{x_i})=\deg (dx_i)=0$ for all $i$.

We say that an arrangement $\A$ is \textbf{free} if $D(\A)$ (equivalently $\Omega^1(\A)$) is a free $S$-module.
Let $\A$ be a free arrangement in $V$.
Since $D(\A)$ is of rank $\ell$,
there is a homogeneous $S$-basis $\theta_1,\dots,\theta_\ell$ of $D(\A)$.
The sequence $(d_1,\dots,d_\ell)=(\deg \theta_1,\dots,\deg \theta_\ell)$
is called the {\bf exponents} of $\A$.
Unless $\A = \varnothing$,
a minimal generating set of $D(\A)$ can be chosen so that it contains the Euler derivation $\theta_E$. Hence we may assume $\theta_1=\theta_E$ and $d_1=1$.
We also note that when $\A$ is free with exponents 
$(d_1,d_2,\dots,d_\ell)$,
$\Omega^1(\A)$ is generated by elements of degrees $-d_1,-d_2,\dots,-d_\ell$ since it is the $S$-dual of $D(\A)$.

Now, we explain the exact sequence \eqref{0-1} in the introduction.
Recall that for an arrangement $\A$ in $V$, the module of logarithmic differential $1$-forms along $\A$ is
\[
\Omega^1(\A)=\left\{ \omega \in \frac 1 {Q(\A)} \Omega_V^1 \mid Q(\A) \omega \wedge d\alpha_L \in \alpha_L \Omega^2_V \ \ \mbox{for any }L \in \A
\right\}.
\]
The following fact is immediate from the definition of $\Omega^1(\A).$

\begin{lemma}
\label{2.1}
Let $\A$ be an arrangement in $V$ and $H \in \A$.
If $\alpha_H,\alpha_2,\dots,\alpha_\ell$ form a basis of $V^*$, then for any $\omega =f(d\alpha_H)+f_2(d\alpha_2) + \cdots + f_\ell(d\alpha_\ell) \in \Omega^1(\A)$ one has
$Q(\A) f_2,\dots,Q(\A)f_\ell \in (\alpha_H)$.
\end{lemma}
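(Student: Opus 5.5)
The plan is to unwind the defining condition of $\Omega^1(\A)$ using only the hyperplane $L=H$, after writing the $2$-forms in the basis coming from $\alpha_H,\alpha_2,\dots,\alpha_\ell$.

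First, since $\alpha_H,\alpha_2,\dots,\alpha_\ell$ form a basis of $V^*$, we have $S=\K[\alpha_H,\alpha_2,\dots,\alpha_\ell]$. The differentials $d\alpha_H,d\alpha_2,\dots,d\alpha_\ell$ therefore form an $S$-basis of $\Omega_V^1$, since they are obtained from $dx_1,\dots,dx_\ell$ by an invertible constant matrix. Consequently the wedges $d\alpha_i\wedge d\alpha_j$, with the convention $\alpha_1=\alpha_H$ and $i<j$, form an $S$-basis of $\Omega_V^2$. Writing $\omega$ as in the statement, the coefficients $f,f_2,\dots,f_\ell$ lie in $\frac{1}{Q(\A)}S$, so each $Q(\A)f_k$ lies in $S$.

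Next, I would apply the defining condition of $\Omega^1(\A)$ with $L=H$. A direct computation gives
\[
Q(\A)\,\omega\wedge d\alpha_H=\sum_{k=2}^{\ell} Q(\A)f_k\, d\alpha_k\wedge d\alpha_H,
\]
because the term $f\,d\alpha_H\wedge d\alpha_H$ vanishes. The condition says that this form lies in $\alpha_H\Omega_V^2$.

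Finally, $\alpha_H\Omega_V^2$ is the free submodule consisting of $2$-forms all of whose coefficients in the basis $\{d\alpha_i\wedge d\alpha_j\}$ lie in $(\alpha_H)$. The elements $d\alpha_k\wedge d\alpha_H$ for $k\ge 2$ are, up to sign, distinct members of this basis. Comparing coefficients therefore gives $Q(\A)f_k\in(\alpha_H)$ for every $k=2,\dots,\ell$.

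There is no real obstacle here. The only point needing care is the claim that membership in $\alpha_H\Omega_V^2$ can be tested coefficientwise in the new basis. This holds because the change of basis is an invertible constant matrix, so $\alpha_H\Omega_V^2$ does not depend on the choice of basis.
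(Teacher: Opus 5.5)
Your proof is correct and is exactly the unwinding of the definition the paper has in mind. The paper simply states that this fact is immediate from the definition of $\Omega^1(\A)$, meaning the condition for $L=H$ read coefficientwise in the basis $d\alpha_H,d\alpha_2,\dots,d\alpha_\ell$, and gives no further argument.
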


Let $\A$ be an arrangement in $V$ and $H \in \A$.
The {\bf residue map} $\mathrm{res}^H_\A: \Omega^1(\A) \to \overline S$ of $\A$ along $H$ is defined by\footnote{One may define $\mathrm{res}^H_\A$ by
$\mathrm{res}_\A^H(f (\frac {d\alpha_H}{\alpha_H})+\cdots)=\frac 1 {Q(\A^H)}\overline{Q(\A\setminus H) f}$ since this is a ``residue" map but we use \eqref{2-1} because this formula is convenient when we prove statements.}
\begin{align}
    \label{2-1}
\mathrm{res}^H_\A \big(f(d\alpha_H)+f_2(d\alpha_2) + \cdots + f_\ell(d\alpha_\ell)\big)= \frac 1 {Q(\A^H)} \overline {Q(\A) f},
\end{align}
where $\alpha_H,\alpha_2,\dots,\alpha_\ell$ is a basis of $V^*$ and $f,f_2,\dots,f_\ell \in \frac 1{Q(\A)} S$.
Lemma \ref{2.1} guarantees that the RHS of \eqref{2-1} is independent of the choice of $\alpha_2,\dots,\alpha_\ell$.
It is not trivial that RHS of \eqref{2-1} is a polynomial, but the following fact is known.

\begin{lemma}[{\cite[Theorem 5.3]{A14}}]
    \label{2.2}
    The RHS of \eqref{2-1} is contained in $\overline S$ and we have the exact sequence
\[
0 \longrightarrow
\Omega^1(\A \setminus H)
\lhook\joinrel\longrightarrow \Omega^1(\A)
\stackrel{\mathrm{res}^H_\A}{\longrightarrow}
\overline S.
\]   
\end{lemma}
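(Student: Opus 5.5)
We work in coordinates adapted to $H$. Choose a basis $x_1=\alpha_H, x_2,\dots,x_\ell$ of $V^*$; by the independence remark after \eqref{2-1} this costs no generality. Write $\omega=f\,dx_1+\sum_{j\ge2}f_j\,dx_j\in\Omega^1(\A)$ and set $g=Q(\A)f\in S$ and $g_j=Q(\A)f_j\in S$. By Lemma \ref{2.1}, $g_j\in(x_1)$ for all $j\ge 2$.

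The proof has two parts. First we show $\overline g\in Q(\A^H)\,\overline S$. Then we identify the kernel of $\mathrm{res}^H_\A$ with $\Omega^1(\A\setminus H)$.

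\emph{Polynomiality.} Let $L\in\A^H$ and pick $L'\in\A\setminus H$ with $L=H\cap L'$.
\begin{itemize}
\item Write $\alpha_{L'}=c x_1+\beta$ with $\beta=\sum_{j\ge2}b_jx_j\neq 0$, since $L'\neq H$. Fix $j$ with $b_j\ne0$.
\item The defining condition at $L'$, $Q(\A)\omega\wedge d\alpha_{L'}\in\alpha_{L'}\Omega^2_V$, read on the coefficient of $dx_1\wedge dx_j$, gives $b_j g- c\, g_j\in(\alpha_{L'})$.
\item Reducing modulo $x_1$ kills $g_j$. Since $b_j$ is a nonzero constant, we get $\overline g\in(\overline{\alpha_{L'}})=(\alpha_L)$ in $\overline S$.
\end{itemize}
The hyperplanes of $\A^H$ are distinct, so their defining forms are pairwise non-associate primes in the UFD $\overline S$. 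Hence their product $Q(\A^H)$ divides $\overline g$, and the RHS of \eqref{2-1} lies in $\overline S$. This divisibility step is the only real content, and we expect it to be the main obstacle. The point is to choose a coordinate $x_j$ on which $\alpha_{L'}$ genuinely depends, and to use Lemma \ref{2.1} to discard the $c\,g_j$ term.

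\emph{Inclusion.} Let $\eta\in\Omega^1(\A\setminus H)$. Then $Q(\A)\eta=\alpha_H Q(\A\setminus H)\eta\in\alpha_H\Omega^1_V$, so the conditions for $L\neq H$ persist after multiplying by $\alpha_H$, and the condition at $H$ holds trivially. Thus $\Omega^1(\A\setminus H)\subseteq\Omega^1(\A)$. Moreover $Q(\A)f$ is divisible by $x_1$, so $\mathrm{res}^H_\A(\eta)=0$.

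\emph{Kernel.} Conversely, suppose $\mathrm{res}^H_\A(\omega)=0$. Then $x_1\mid g$, and together with $x_1\mid g_j$ this gives $Q(\A)\omega\in x_1\Omega^1_V$. Hence $Q(\A\setminus H)\omega\in\Omega^1_V$, i.e. $\omega\in\frac1{Q(\A\setminus H)}\Omega^1_V$. For $L'\in\A\setminus H$ we know
\[
\alpha_H\cdot\bigl(Q(\A\setminus H)\omega\wedge d\alpha_{L'}\bigr)=Q(\A)\omega\wedge d\alpha_{L'}\in\alpha_{L'}\Omega^2_V .
\]
Since $\alpha_H$ and $\alpha_{L'}$ are coprime and $\Omega^2_V$ is free, this forces $Q(\A\setminus H)\omega\wedge d\alpha_{L'}\in\alpha_{L'}\Omega^2_V$. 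So $\omega\in\Omega^1(\A\setminus H)$, which gives exactness.
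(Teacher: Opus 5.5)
Your proposal is correct and follows essentially the same route as the paper: it gets polynomiality from a per-hyperplane divisibility $\alpha_L\mid\overline{Q(\A)f}$, and it gets exactness by showing that both $\Omega^1(\A\setminus H)$ and $\ker \mathrm{res}^H_\A$ are cut out by $Q(\A)\omega\in\alpha_H\Omega^1_V$. The only differences are cosmetic. The paper picks a basis containing both $\alpha_H$ and $\alpha_{L'}$ and applies Lemma \ref{2.1} at $L'$. You instead fix one basis, read off the $dx_1\wedge dx_j$ coefficient, and use Lemma \ref{2.1} at $H$. You also make explicit the coprimality steps that the paper leaves implicit.
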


\begin{proof}
The lemma is a special case of \cite[Theorem 5.3]{A14}, but we include the proof for the sake of completeness.
The statement is obvious when $\ell=1$ so we assume $\ell\geq 2$
(note that if $\A = \varnothing$ then we consider that $\Omega^1(\A)=\Omega_V^1$).
Let $\omega \in \Omega^1(\A)$ and $L \in \A^H$.
To prove the first assertion,
what we must prove is that $Q(\A^H)\mathrm{res}_\A^H(\omega) \in \overline S$ is divisible by $\alpha_L$.
Take $L' \in \A$ such that $L'\cap H=L$
and take a basis $\alpha_H,\alpha_{L'},\alpha_3,\dots,\alpha_\ell$ of $V^*$.
If we write $\omega = f(d\alpha_H)+f_2(d\alpha_{L'})+f_3(d\alpha_3)+\cdots +f_\ell(d\alpha_\ell)$ then $\alpha_{L'}$ must divide $Q(\A)f$ by Lemma \ref{2.1}
and hence
\[
Q(\A^H)\mathrm{res}_\A^H(\omega)= \overline {Q(\A)f}
\]
is divisible by $\alpha_L=\overline {\alpha_{L'}}$ as desired.

Next, we prove the exactness.
Let $\omega \in \Omega^1(\A)$.
Observe that by the definition of $\Omega^1(\A)$ we have
\[
\omega \in \Omega^1(\A \setminus H) \ \Leftrightarrow\  Q(\A\setminus H) \omega \in \Omega_V^1 \ \Leftrightarrow \ Q(\A) \omega \in \alpha_H \Omega_V^1.
\]
On the other hand, by the definition of $\mathrm{res}_\A^H$ and Lemma \ref{2.1},
we also have
\[
\res_\A^H(\omega)=0 \ \Leftrightarrow \ Q(\A) \omega \in \alpha_H \Omega_V^1.
\]
These prove the desired equality $\Omega^1(\A\setminus H) = \mathrm{ker}(\mathrm{res}^H_\A)$.
\end{proof}

From now on, to simplify the notation, we write $\mathrm{res}^H=\mathrm{res}^H_\A$ when the arrangement $\A$ is clear.

\begin{example}
    \label{ex:2.3}
    Let $\A$ be the arrangement in $\R^4$ with
    \[Q(\A)=(x_1-x_3)(x_1-x_2)(x_2-x_3)(x_3-x_4)(x_1-x_4)\]
    and $H=H_{x_1-x_3} \in \A$.
    In this case, $Q(\A^H)=(x_1-x_2)(x_1-x_4) \in \overline S=S/(x_1-x_3)$
    and $\Omega^1(\A)$ is the free module generated by the following four elements
    (these generators are computed using Theorem \ref{thm:graphicGen} (see also Lemma \ref{lem:gensOmega})):
    \begin{align*}
        \omega_1&= dx_1,\\
        \omega_2&= \frac 1 {x_1-x_2} (dx_2-dx_1),\\
        \omega_3&= \frac 1 {(x_1-x_2)(x_3-x_2)} (dx_2-dx_1)+\frac 1 {(x_1-x_3)(x_2-x_3)} (dx_3-dx_1),\\
 \omega_4&= \frac 1 {(x_1-x_3)(x_4-x_3)} (dx_3-dx_1)+\frac 1 {(x_1-x_4)(x_3-x_4)} (dx_4-dx_1).
    \end{align*}
The residue ideal of $\A$ along $H$ is given by
{\small
\begin{align*}
J_\A^H&=\big(\mathrm{res}^H(\omega_1),\mathrm{res}^H(\omega_2),\mathrm{res}^H(\omega_3),\mathrm{res}^H(\omega_4)\big)\\
    &=\left(0,0,\frac 1 {Q(\A^H)} \!\left( \overline {Q(\A) \frac 1 {(x_1\!-\!x_3)(x_2\!-\!x_3)}}
\right),
\frac 1 {Q(\A^H)}\!\left( \overline {Q(\A) \frac 1 {(x_1\!-\!x_3)(x_4\!-\!x_3)}}
\right)
\right)\\
&=(x_4-x_1,x_2-x_1),
\end{align*}
}

\noindent
where we use the fact that $x_1=x_3$ in $S/(x_1-x_3)$ for the last equality.
\end{example}

\subsection{Basic properties of residue ideals}
In the rest of this section,
we discuss basic properties of residue ideals.
First, we consider relations between $J_\A^H$ and $J_{\A \setminus H'}^H$.
Let $\A$ be an arrangement and $H \in \A$.
For $L \in \A^H$, we call the number
\[ \mathrm{mult}^\A(L)=\# \{ H' \in \A \mid H' \cap H=L\}\]
the \textbf{multiplicity} of $L$ with respect to \ $\A$,
where $\#A$ denotes the cardinality of a finite set $A$.
The following fact easily follows from the definition of residue maps.

\begin{lemma}
\label{lem:deletionIAH}
Let $\A$ be an arrangement and $H,H' \in \A$.
If $H\cap H' \in \A^H$ has multiplicity $1$ with respect to\ $\A$, then $J_{\A \setminus H'}^H \subset J_\A^H$.
\end{lemma}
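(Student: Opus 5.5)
The plan is to compare the two residue maps directly on a single form. We have $\Omega^1(\A\setminus H') \subset \Omega^1(\A)$; this inclusion follows from the definitions and is also the inclusion of Lemma \ref{2.2} applied to $H'$. So we take $\omega \in \Omega^1(\A\setminus H')$ and compute both $\mathrm{res}^H_{\A\setminus H'}(\omega)$ and $\mathrm{res}^H_\A(\omega)$. We then show that they agree up to a unit, and in fact that they are equal.

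Fix a basis $\alpha_H,\alpha_2,\dots,\alpha_\ell$ of $V^*$ and write $\omega=f(d\alpha_H)+f_2(d\alpha_2)+\cdots+f_\ell(d\alpha_\ell)$ with $f,f_i \in \frac{1}{Q(\A\setminus H')}S \subset \frac1{Q(\A)}S$. By \eqref{2-1},
\[
\mathrm{res}^H_{\A\setminus H'}(\omega)=\frac{1}{Q((\A\setminus H')^H)}\overline{Q(\A\setminus H')f},\qquad
\mathrm{res}^H_{\A}(\omega)=\frac{1}{Q(\A^H)}\overline{Q(\A)f}=\frac{1}{Q(\A^H)}\overline{\alpha_{H'}}\,\overline{Q(\A\setminus H')f}.
\]

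The key combinatorial input is the multiplicity hypothesis. Since $L:=H\cap H'$ has multiplicity $1$ with respect to $\A$, $H'$ is the only hyperplane of $\A$ meeting $H$ in $L$. Hence $(\A\setminus H')^H=\A^H\setminus\{L\}$, and therefore $Q(\A^H)=\alpha_L\, Q((\A\setminus H')^H)$ with $\alpha_L=\overline{\alpha_{H'}}$, up to a nonzero scalar. Substituting this into the second expression cancels $\overline{\alpha_{H'}}$ and gives
\[
\mathrm{res}^H_\A(\omega)=c\cdot \mathrm{res}^H_{\A\setminus H'}(\omega)
\]
for a nonzero constant $c$. Consequently $J^H_{\A\setminus H'}=\mathrm{res}^H_{\A\setminus H'}(\Omega^1(\A\setminus H'))\subset \mathrm{res}^H_\A(\Omega^1(\A))=J^H_\A$.

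There is no real obstacle. The only points needing care are the following. First, the inclusion $\Omega^1(\A\setminus H')\subset\Omega^1(\A)$ must hold. It does: if $Q(\A\setminus H')\omega\wedge d\alpha_{L''}\in \alpha_{L''}\Omega^2_V$, then multiplying by $\alpha_{H'}$ gives the condition for $\A$. For $L''=H'$ itself the condition is automatic, because $Q(\A\setminus H')\omega$ is regular. Second, we need to check that $\overline{\alpha_{H'}}$ is a nonzero divisor, so the cancellation is legitimate; it is, because $H'\neq H$ and $\overline S$ is a domain. Finally, the multiplicity-one hypothesis is used exactly once, to ensure that removing $H'$ removes $L$ from the restriction.
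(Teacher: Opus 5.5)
Your proposal is correct and follows the same route as the paper. The multiplicity-one hypothesis gives $(\A\setminus H')^H=\A^H\setminus\{L\}$, so $Q(\A^H)=\alpha_L\,Q((\A\setminus H')^H)$ with $\alpha_L=\overline{\alpha_{H'}}$. Hence $\mathrm{res}^H_\A$ restricted to $\Omega^1(\A\setminus H')$ agrees with $\mathrm{res}^H_{\A\setminus H'}$, and the inclusion $\Omega^1(\A\setminus H')\subset\Omega^1(\A)$ then gives the result, exactly as in the paper.
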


\begin{proof}
Let $L=H \cap H'$.
Since $L$ has multiplicity $1$ with respect to \ $\A$, we have
\[
\frac {\overline{Q(\A)}} {Q(\A^H)} = \frac 
{\overline{Q(\A\setminus H') \cdot \alpha_{H'}} } {Q((\A\setminus H')^H) \cdot \alpha_L}
=
\frac 
{\overline{Q(\A\setminus H')} } {Q((\A\setminus H')^H)},
\]
where we use $\overline {\alpha_{H'}}=\alpha_L$
for the second equality.
Then the statement follows from the definition of the residue map together with the fact that $\Omega^1(\A\setminus H') \subset \Omega^1(\A)$.\end{proof}

\begin{rem}
One has $\alpha_{H'} J_{\A\setminus H'}^H \subset J_\A^H$
even if the multiplicity of $H \cap H'$ is not $1$.
However, we will not use this fact in the paper.
\end{rem}

Next, we show that residue ideals are essentially unchanged under essentialization.
For an arrangement $\A$, let $X_\A=\cap_{H \in \A} H$ denote the maximal element of $L(\A)$.
We say that an arrangement $\A$ is {\bf essential} if $X_\A=\{0\}$.
For an arrangement $\A$ in $V$ and a subspace $X$ of $X_\A$,
we write $\A^{V/X}=\{H/X \mid H \in \A\}$,
which is an arrangement in $V/X$.
The arrangement $\A^e=\A^{V/X_\A}$, which is an essential arrangement, is called the {\bf essentialization} of $\A$.

\begin{lemma}
\label{lem:2.5}
Let $\A$ be an arrangement in $V$ and $X$ a subspace of $X_\A$.
Let $\alpha_1,\dots,\alpha_\ell$ be a basis of $V^*$ such that $\alpha_1,\dots,\alpha_m$ is a basis of $(V/X)^*$. Then 
\begin{itemize}
    \item[(1)] $\Omega^1(\A^{V /X})=\Omega^1(\A) \cap \big( \bigoplus_{i=1}^m \big(\mathrm{sym}((V/X)^*)\cdot \frac 1 {Q(\A)} d \alpha_i \big) \big).$
    \item[(2)]
    $\Omega^1(\A)$ is generated by elements of $\Omega^1(\A^{V/X})$ and $d\alpha_{m+1},\dots,d\alpha_\ell$.
\end{itemize}
\end{lemma}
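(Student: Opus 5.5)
Since $X \subseteq X_\A$, every hyperplane $H\in\A$ contains $X$, so each $\alpha_H$ lies in $(V/X)^*=\langle \alpha_1,\dots,\alpha_m\rangle$. Write $T=\mathrm{sym}((V/X)^*)=\K[\alpha_1,\dots,\alpha_m]\subset S=T[\alpha_{m+1},\dots,\alpha_\ell]$. Then $Q(\A)=Q(\A^{V/X})\in T$. We work throughout in the coordinates $d\alpha_1,\dots,d\alpha_\ell$ and test the defining condition of $\Omega^1(\A)$ hyperplane by hyperplane.

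For (1), take $\omega=\sum_{i=1}^\ell \frac{g_i}{Q(\A)}d\alpha_i$ with $g_i\in S$. For $L\in\A$ we have $d\alpha_L=\sum_{j\le m}c_j\,d\alpha_j$, so
\[
Q(\A)\omega\wedge d\alpha_L=\sum_{i<j\le m}(g_ic_j-g_jc_i)\,d\alpha_i\wedge d\alpha_j+\sum_{i>m,\ j\le m} g_ic_j\, d\alpha_i\wedge d\alpha_j .
\]
The forms $d\alpha_i\wedge d\alpha_j$ are a basis of $\Omega^2_V$, so the condition ``$\in\alpha_L\Omega^2_V$'' splits into two parts. The first part involves only the coefficients $g_1,\dots,g_m$. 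The second part says $\alpha_L\mid g_i$ for all $i>m$ (some $c_j\neq0$). Now suppose $g_i=0$ for $i>m$ and all $g_i\in T$. Then the first part is exactly the defining condition of $\Omega^1(\A^{V/X})$ over $T$, because for $f\in T$ divisibility by $\alpha_L$ in $S$ and in $T$ agree ($S$ is a polynomial extension of $T$, so $\alpha_LS\cap T=\alpha_LT$). This proves (1).

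For (2), take $\omega\in\Omega^1(\A)$ as above. First, each $d\alpha_i$ with $i>m$ lies in $\Omega^1(\A)$: it is regular, and $Q(\A)\,d\alpha_i\wedge d\alpha_L$ is divisible by $\alpha_L$. By the second part of the computation, $Q(\A)\mid g_i$ for $i>m$, since the $\alpha_L$ are pairwise non-associate primes. Hence $\frac{g_i}{Q(\A)}d\alpha_i\in S\,d\alpha_{m+1}+\cdots+S\,d\alpha_\ell$, and subtracting these terms we may assume $g_i=0$ for $i>m$.

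Next expand each remaining $g_i\in T[\alpha_{m+1},\dots,\alpha_\ell]$ in monomials $\alpha^{\mathbf a}$ of the variables $\alpha_{m+1},\dots,\alpha_\ell$, with coefficients $g_{i,\mathbf a}\in T$. The first-part conditions are $T$-linear in $(g_1,\dots,g_m)$: they require certain $T$-combinations of the $g_i$ to be divisible by $\alpha_L\in T$. The ideal $\alpha_LS$ is homogeneous with respect to the monomial decomposition $S=\bigoplus_{\mathbf a} T\alpha^{\mathbf a}$, i.e.\ $f\in\alpha_L S$ iff every coefficient of $f$ lies in $\alpha_LT$. Therefore each coefficient form $\omega_{\mathbf a}=\sum_{i\le m}\frac{g_{i,\mathbf a}}{Q(\A)}d\alpha_i$ satisfies the conditions, so $\omega_{\mathbf a}\in\Omega^1(\A^{V/X})$ by (1). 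Then $\omega=\sum_{\mathbf a}\alpha^{\mathbf a}\omega_{\mathbf a}$ is an $S$-combination of elements of $\Omega^1(\A^{V/X})$, which gives (2).

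The main point requiring care is this monomial-coefficient extraction: the divisibility conditions must be checked to hold coefficientwise, which rests on $\alpha_L$ and the $c_j$ lying in $T$. The rest is bookkeeping.
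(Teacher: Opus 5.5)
Your proof is correct and follows essentially the same idea as the paper. The paper writes $Q(\A)\Omega^1(\A)=\bigcap_{H\in\A}\bigl(S\,d\alpha_H+\alpha_H\Omega^1_V\bigr)$ and observes that, because every $\alpha_H$ lies in $T=\mathrm{sym}((V/X)^*)$, each of these local modules is generated over $T$ except for the summands $\alpha_H\,d\alpha_i$ with $i>m$. Your splitting of the coordinate conditions and your coefficientwise extraction in the monomials $\alpha^{\mathbf a}$ make explicit the step the paper leaves to the reader, namely that this intersection commutes with passing from $T$ to $S$.
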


\begin{proof}
Observe that by the definition of $\Omega^1(\A)$ one has
\[Q(\A) \Omega^1(\A)= \bigcap_{H \in \A} Q(\{H\}) \Omega^1(\{H\}). \]
The desired statements follow from the fact that $Q(\{H\})\Omega^1(\{H\})$ is generated by $d(\alpha_H)$ and $\alpha_H(d\alpha_1),\dots,\alpha_H(d\alpha_\ell)$
together with the observation that $\alpha_H$ is contained in $\mathrm{span}_\F\{\alpha_1,\dots,\alpha_m\}$ for any $H \in \A$.
\end{proof}

The next corollary follows immediately from the above lemma and the definition of residue ideals.

\begin{cor}
\label{2.6}
Let $\A$ be an arrangement in $V$ and $X$ a subspace of $X_\A$. One has $J_\A^{H}=J_{\A^{V/X}}^{H/X} \! \cdot \overline S$. In particular, if we write $\tilde H=H/X_\A$ then we have
\[ J_\A^H = J_{\A^e}^{\tilde H} \!\cdot  \overline S.\]
\end{cor}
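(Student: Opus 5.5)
The plan is to compute residues directly using the generating set from Lemma \ref{lem:2.5}(2). Fix a basis $\alpha_1,\dots,\alpha_\ell$ of $V^*$ with $\alpha_1,\dots,\alpha_m$ a basis of $(V/X)^*$. Since $X\subseteq X_\A\subseteq H$, the form $\alpha_H$ lies in $\mathrm{span}\{\alpha_1,\dots,\alpha_m\}$, so we may choose the basis with $\alpha_1=\alpha_H$. Write $T=\mathrm{sym}((V/X)^*)=\K[\alpha_1,\dots,\alpha_m]\subset S$. Then $S=T[\alpha_{m+1},\dots,\alpha_\ell]$ and $\overline S=\overline T[\alpha_{m+1},\dots,\alpha_\ell]$, where $\overline T=T/(\alpha_H)=\mathrm{sym}((H/X)^*)$. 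All defining forms of hyperplanes of $\A$ lie in $T$. Hence $Q(\A)=Q(\A^{V/X})$ as polynomials in $T$, and likewise $Q(\A^H)=Q((\A^{V/X})^{H/X})$ in $\overline T$, because $(\A^{V/X})^{H/X}=(\A^H)^{H/X}$ has the same defining forms.

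First, compute the residues of the generators of $\Omega^1(\A)$ given by Lemma \ref{lem:2.5}(2). For $\omega\in\Omega^1(\A^{V/X})$, Lemma \ref{lem:2.5}(1) lets us write $\omega=f\,d\alpha_H+\sum_{i=2}^m f_i\,d\alpha_i$ with $f,f_i\in\frac1{Q(\A)}T$. We then apply formula \eqref{2-1} in both settings, using the basis $\alpha_H,\alpha_2,\dots,\alpha_\ell$ for $V$ and $\alpha_H,\dots,\alpha_m$ for $V/X$. This gives the same expression $\frac1{Q(\A^H)}\overline{Q(\A)f}$, so $\mathrm{res}^H_\A(\omega)=\mathrm{res}^{H/X}_{\A^{V/X}}(\omega)$ under the inclusion $\overline T\subset\overline S$. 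For the remaining generators $d\alpha_{m+1},\dots,d\alpha_\ell$, the $d\alpha_H$-coefficient is $0$, so their residue is $0$.

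Next, use $S$-linearity of $\mathrm{res}^H_\A$: for $g\in S$ we have $\mathrm{res}(g\omega)=\overline g\,\mathrm{res}(\omega)$, which is immediate from \eqref{2-1}. It follows that $J_\A^H$ is the $\overline S$-ideal generated by $\mathrm{res}^{H/X}_{\A^{V/X}}(\Omega^1(\A^{V/X}))=J^{H/X}_{\A^{V/X}}$, that is, $J_\A^H=J^{H/X}_{\A^{V/X}}\cdot\overline S$. The statement about essentialization is the special case $X=X_\A$.

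The only delicate point is the bookkeeping in the first step: one must check that the two residue maps, computed with different ambient bases, agree on $\Omega^1(\A^{V/X})$. This reduces to the basis-independence guaranteed by Lemma \ref{2.1} and the equalities of defining polynomials noted above. I expect no real obstacle beyond this.
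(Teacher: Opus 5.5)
Your proposal is correct and follows the paper's route. The paper derives this corollary directly from Lemma~\ref{lem:2.5}(2) and the definition of the residue map, which is exactly your argument. You also spell out the bookkeeping the paper leaves implicit:
\begin{itemize}
\item choosing $\alpha_1=\alpha_H$;
\item checking that $Q(\A)$ and $Q(\A^H)$ agree with their counterparts for $\A^{V/X}$;
\item using the $S$-linearity of $\mathrm{res}^H$;
\item noting that $d\alpha_{m+1},\dots,d\alpha_\ell$ have zero residue.
\end{itemize}
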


Note that in the above corollary,
$J_{\A^{V/X}}^{H/X}$ is an ideal of $\mathrm{sym}((H/X)^*)$ which is a subring of $\mathrm{sym} (H^*)=\overline S$.

\begin{example}
    \label{ex:2.8}
    Consider the arrangement $\A$ in Example \eqref{ex:2.3}.
    In this case $X_\A=\{(x_1,\dots,x_4)\in \mathbb R^4 \mid x_1=x_2=x_3=x_4\}$ and $\Omega^1(\A^e)$ is generated by $\omega_2,\omega_3$ and $\omega_4$. Let $R=\mathrm{sym}((\mathbb R^4/X_\A)^*)=\R[x_2-x_1,x_3-x_1,x_4-x_1]$. For $H=H_{x_1-x_3}$, we have
    \[
    J_{\A^e}^{\tilde H}=(\res^H(\omega_2),\res^H(\omega_3),\res^H(\omega_4))=(x_4-x_1,x_2-x_1) \subset R/(x_3-x_1).
    \]
\end{example}

For the arrangement $\A$ in Example \ref{ex:2.3},
we see that $\Omega^1(\A)$ is generated by four elements $\omega_1,\dots,\omega_4$ but residue ideal $J_\A^H$ is generated by just two elements.
Corollary \ref{2.6} explains why $\omega_1$ does not contribute to $J_\A^H$.
The next lemma explains why one more generator disappears by the map $\res^H$.

\begin{lemma}
    \label{lem:3.3}
Let $\A$ be an arrangement in $V$ with $\# \A \geq 2$ and $H \in \A$.
Then
\[J_\A^H=\{ \mathrm{res}^H(\omega) \mid \omega \in \Omega_0^1(\A)\}.\]
\end{lemma}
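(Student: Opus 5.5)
Use the decomposition \eqref{decomp:2}, choosing the distinguished hyperplane $L$ to be some hyperplane of $\A$ different from $H$; this is possible because $\#\A\geq 2$. Then every $\omega\in\Omega^1(\A)$ can be written as
\[
\omega = g\,\frac{d\alpha_L}{\alpha_L}+\omega_0, \qquad g\in S,\ \omega_0\in\Omega^1_0(\A).
\]
Since $\mathrm{res}^H$ is $S$-linear (it is $\overline S$-valued and $S$ acts through $S\to\overline S$), we get $\mathrm{res}^H(\omega)=\overline g\,\mathrm{res}^H\!\left(\frac{d\alpha_L}{\alpha_L}\right)+\mathrm{res}^H(\omega_0)$. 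It therefore suffices to show that $\mathrm{res}^H\!\left(\frac{d\alpha_L}{\alpha_L}\right)=0$. The inclusion ``$\supseteq$'' is trivial, because $\Omega^1_0(\A)\subset\Omega^1(\A)$.

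The vanishing is immediate from Lemma \ref{2.2}. Indeed, $L\neq H$ means $L\in\A\setminus H$, so $\frac{d\alpha_L}{\alpha_L}\in\Omega^1(\A\setminus H)$, which is the kernel of $\mathrm{res}^H$. One can also see this directly from \eqref{2-1}. Since $\alpha_L$ is not proportional to $\alpha_H$, we may complete $\alpha_H,\alpha_L$ to a basis $\alpha_H,\alpha_L,\alpha_3,\dots,\alpha_\ell$ of $V^*$. In this basis the coefficient of $d\alpha_H$ in $\frac{d\alpha_L}{\alpha_L}$ is $f=0$, so the residue is $0$.

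Hence $J_\A^H=\mathrm{res}^H(\Omega^1(\A))=\mathrm{res}^H(\Omega^1_0(\A))$. The only point requiring care is that the hyperplane $L$ in \eqref{decomp:2} must be chosen different from $H$; this is exactly where the hypothesis $\#\A\geq2$ is used. Had we taken $L=H$, the form $\frac{d\alpha_H}{\alpha_H}$ would typically have nonzero residue.
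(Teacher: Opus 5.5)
Your proposal is correct and follows the paper's proof exactly. You pick $L \in \A$ with $L \ne H$ (this is where $\#\A \geq 2$ is used), apply the decomposition \eqref{decomp:2}, and use $\mathrm{res}^H(d\alpha_L/\alpha_L)=0$. Your justification of that vanishing via Lemma \ref{2.2} (or directly from \eqref{2-1}), together with the $S$-linearity of $\mathrm{res}^H$, simply spells out what the paper leaves implicit.
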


\begin{proof}
Let $L \in \A$ with $L \ne H$.
Since $\mathrm{res}^H(d\alpha_L/\alpha_L)=0$,
the statement follows from the decomposition \eqref{decomp:2}.
\end{proof}

\begin{example}
Consider the arrangement $\A$ in Example \ref{ex:2.3}.
Let $R=\R[x_2-x_1,x_3-x_1,x_4-x_1]$.
Then $\Omega^1(\A^e)=(R \cdot \omega_2) \oplus \Omega_0^1(\A^e)$
with $\Omega^1_0(\A^e)=R \cdot \omega_3+R \cdot \omega_4$,
and Lemma \ref{lem:3.3} guarantees
\[
J_\A^H=\big(\mathrm{res}^H(\omega_3),\mathrm{res}^H(\omega_4)\big).\]
\end{example}

For free arrangements we can get more detailed information on generating sets of residue ideals.
Recall that if $\A$ is a free arrangement with exponents 
$(d_1=1,d_2,\dots,d_\ell)$,
then $\Omega^1(\A)$ is generated by elements of degrees $-d_1=-1,-d_2,\dots,-d_\ell$.
Applying Lemma \ref{lem:3.3} to free arrangements,
we get the following statement.

\begin{lemma}
    \label{lem:3.3sononi}
With the same notation as in Lemma \ref{lem:3.3},
if $\A$ is free with exponents $(d_1,d_2,\dots,d_\ell)$ 
with $d_1=1$, then 
$J_\A^H=(f_2,\dots,f_\ell)$ for some polynomials $f_2,\dots,f_\ell$ with $\deg f_i=\#\A -\# \A^H -d_i$ for $i=2,3,\dots,\ell$.
\end{lemma}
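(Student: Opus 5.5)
We will choose a homogeneous basis of $\Omega^1(\A)$ that is adapted to the decomposition \eqref{decomp:2} and then apply Lemma \ref{lem:3.3}. Since $\A$ is free with exponents $(1,d_2,\dots,d_\ell)$, its dual $\Omega^1(\A)$ is free with a homogeneous basis of degrees $-1,-d_2,\dots,-d_\ell$. Pick $L\in\A$ with $L\neq H$, which is possible because $\#\A\ge 2$. By \eqref{decomp:2} we have $\Omega^1(\A)=S\cdot\frac{d\alpha_L}{\alpha_L}\oplus\Omega^1_0(\A)$, and $\frac{d\alpha_L}{\alpha_L}$ has degree $-1$. Thus $\Omega^1_0(\A)$ is a graded direct summand of a free graded module, hence projective and graded, and therefore free. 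Comparing ranks and Hilbert series, $\Omega^1_0(\A)$ is free with a homogeneous basis $\omega_2,\dots,\omega_\ell$ of degrees $-d_2,\dots,-d_\ell$. Equivalently, a minimal homogeneous generating set of the whole module has degree multiset $\{-1,-d_2,\dots,-d_\ell\}$, and we remove one copy of $-1$.

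Put $f_i=\res^H(\omega_i)$. The map $\res^H$ is $S$-linear, with $S$ acting on $\overline S$ through the quotient. Lemma \ref{lem:3.3} therefore gives
\[
J_\A^H=\res^H(\Omega^1_0(\A))=(f_2,\dots,f_\ell).
\]

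It remains to compute degrees, and here we must check the degree shift of $\res^H$. Write $\omega=f\,d\alpha_H+\sum_{k\ge 2} f_k\,d\alpha_k$ with $\omega$ homogeneous of degree $-d$. Since $\deg d\alpha_k=0$, the coefficient $f\in\frac1{Q(\A)}S$ is homogeneous of degree $-d$. Then $Q(\A)f$ is a homogeneous polynomial of degree $\#\A-d$. Its image in $\overline S$ has the same degree (or is zero), and dividing by $Q(\A^H)$ gives degree $\#\A-\#\A^H-d$. Hence $\res^H$ is homogeneous of degree $\#\A-\#\A^H$, and $\deg f_i=\#\A-\#\A^H-d_i$, where a zero $f_i$ is regarded as homogeneous of any degree.

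The main subtle point is the first step, namely that $\Omega^1_0(\A)$ is free with exactly the exponents $d_2,\dots,d_\ell$. This follows from the graded Krull–Schmidt property for graded free modules over a polynomial ring: graded projective modules are free, and Hilbert series cancellation removes one copy of $S(1)$.
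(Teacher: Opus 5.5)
Your proof is correct and is the argument the paper leaves implicit when it says ``applying Lemma \ref{lem:3.3} to free arrangements'': split off the degree $-1$ summand $S\cdot d\alpha_L/\alpha_L$ via \eqref{decomp:2}, so that $\Omega^1_0(\A)$ is free with generators in degrees $-d_2,\dots,-d_\ell$, then apply $\res^H$, which shifts degrees by $\#\A-\#\A^H$. Your remark that some $f_i$ may vanish, so that their degree is only formal (this really happens, e.g.\ for non-essential $\A$ or when $\A\setminus H$ is free), is a useful precision that the paper does not spell out.
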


Finally, we show that residue ideals behave nicely with respect to localizations.

\begin{lemma}
\label{2.8}
Let $\A$ be an arrangement in $V$ and $H \in \A$.
For every $X \in L(\A^H)$, one has
\[J_\A^H \subset J_{\A_X}^H.\]
\end{lemma}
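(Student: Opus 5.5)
The plan is to map $\Omega^1(\A)$ into $\Omega^1(\A_X)$ by clearing the denominators coming from hyperplanes outside $\A_X$. Then compare the two residue maps, and finally cancel the extra factor using a regularity argument.

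\emph{Step 1.} Note that $X\subset H$, so $H\in\A_X$. Put $P=Q(\A\setminus\A_X)$. For $\omega\in\Omega^1(\A)$ we have $Q(\A_X)\cdot P\omega=Q(\A)\omega$. The defining condition of $\Omega^1(\A)$ therefore gives $Q(\A_X)(P\omega)\wedge d\alpha_L\in\alpha_L\Omega^2_V$ for every $L\in\A_X$. Hence $P\omega\in\Omega^1(\A_X)$.

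\emph{Step 2.} Write $\omega=f(d\alpha_H)+\sum_{i\ge 2} f_i(d\alpha_i)$. Then \eqref{2-1} gives
\[
\res^H_{\A_X}(P\omega)=\frac{\overline{Q(\A_X)Pf}}{Q(\A_X^H)}=\frac{\overline{Q(\A)f}}{Q(\A_X^H)}=g\cdot\res^H_\A(\omega),
\qquad g=\frac{Q(\A^H)}{Q(\A_X^H)}.
\]
Let $L'\in\A\setminus H$ and $L=H\cap L'$. Since $X\subset H$, we have $X\subset L$ if and only if $X\subset L'$. Thus $(\A_X)^H=\{L\in\A^H\mid X\subset L\}$, and $g$ is the product of those $\alpha_L$ with $L\in\A^H$ and $X\not\subset L$. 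We conclude $g\cdot J_\A^H\subset J_{\A_X}^H$.

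\emph{Step 3.} The main point is that $g$ is a nonzerodivisor on $\overline S/J_{\A_X}^H$. Since $X\subset X_{\A_X}$, Corollary \ref{2.6} gives $J_{\A_X}^H=J'\cdot\overline S$, where $J'$ is an ideal of $T=\mathrm{sym}((H/X)^*)$. Choosing coordinates $z_1,\dots,z_k$ on $X$, we get $\overline S=T[z_1,\dots,z_k]$ and
\[
\overline S/J_{\A_X}^H\cong (T/J')[z_1,\dots,z_k].
\]
Each factor $\alpha_L$ of $g$ does not vanish on $X$, so its image has nonzero $z$-part. After a linear change of the $z$-variables we may write $\alpha_L=z_1+t$ with $t\in T_1$. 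This is monic of degree $1$ in $z_1$ over $(T/J')[z_2,\dots,z_k]$, hence a nonzerodivisor. A product of nonzerodivisors is a nonzerodivisor, so $g$ is regular on $\overline S/J_{\A_X}^H$.

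Combining Steps 2 and 3, $g\,\res^H_\A(\omega)\in J_{\A_X}^H$ implies $\res^H_\A(\omega)\in J_{\A_X}^H$, which proves $J_\A^H\subset J_{\A_X}^H$.
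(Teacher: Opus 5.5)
Your proposal is correct and follows essentially the same route as the paper. You multiply by $Q(\A)/Q(\A_X)$ to land in $\Omega^1(\A_X)$, note that the residue picks up the factor $Q(\A^H)/Q((\A_X)^H)=\prod_{L\in\A^H,\,X\not\subset L}\alpha_L$, and cancel that factor because, by Corollary \ref{2.6}, it is a nonzerodivisor on $\overline S/J_{\A_X}^H$. Your Step 3 only makes explicit the nonzerodivisor claim that the paper asserts without detail, via $\overline S/J_{\A_X}^H\cong (T/J')[z_1,\dots,z_k]$ and monicity in $z_1$.
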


\begin{proof}
Let $f \in J_\A^H$.
We claim $f \in J_{\A_X}^H$.
Let $\omega \in \Omega^1(\A)$ with $\mathrm{res}^H_\A(\omega)=f$.
Since $Q(\A) \Omega^1(\A) \subset Q(\A_X) \Omega^1(\A_X)$,
we have $\frac {Q(\A)} {Q(\A_X)} \omega \in \Omega^1(\A_X)$.
Thus we have
\begin{align}
    \label{eq2-2}
\mathrm{res}^H_{\A_X}
\left( \frac {Q(\A)} {Q(\A_X)} \omega \right)
 =
\frac {\overline{Q(\A_X) }} {\overline{Q(\A)}} \frac {Q(\A^H)} {Q\big((\A_X)^H\big)} \mathrm{res}_\A^H\left( \frac {Q(\A)} {Q(\A_X)} \omega \right)
= \frac {Q(\A^H)} {Q\big((\A_X)^H\big)} f
\end{align}
is contained in $J_{\A_X}^H$.
On the other hand, by Corollary \ref{2.6},
$J_{\A_X}^H$ is generated by polynomials in $\mathrm{sym}((H/X)^*)\subset \overline S$.
Thus, if a linear form $\alpha_L \in \overline S$ is not in $(H/X)^*$, equivalently if $X \not \subset L$, then $\alpha_L$ is a non-zero divisor of $\overline S/J_{\A_X}^H$.
This says that
$$\frac {Q(\A^H)} {Q\big((\A_X)^H\big)} =\prod_{L \in \A^H,\ X \not \subset L} \alpha_L$$
is a non-zero divisor  of $S/J_{\A_X}^H$.
Then, since we have already proved that $\frac {Q(\A^H)} {Q((\A_X)^H)}f \in J_{\A_X}^H$,
we have $f \in J_{\A_X}^H$ as desired.
\end{proof}

For an arrangement $\A$ and $H \in \A$, we define
\[
\textstyle
\A_{\mathrm{core}}^H=\{L \in \A^H \mid \mathrm{mult}^\A(L) \geq 2\}
\ \mbox{ and }\ 
X_{\mathrm{core}}^{(\A,H)}=\bigcap_{L \in \A_{\mathrm{core}}^H}L 
.\]
The next proposition, which follows from Lemmas \ref{lem:deletionIAH} and \ref{2.8},
is often convenient to compute residue ideals efficiently.

\begin{prop}
\label{prop:2.10}
Let $\A$ be an arrangement, $H \in \A$ and $X=X_{\mathrm{core}}^{(\A,H)}$. Then
\[ J_\A^H=J_{\A_X}^H.\]
\end{prop}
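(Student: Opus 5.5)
The plan is to prove the two inclusions separately. The inclusion $J_\A^H \subset J_{\A_X}^H$ is exactly Lemma \ref{2.8}, once we note that $X=X_{\mathrm{core}}^{(\A,H)}$ lies in $L(\A^H)$. It is an intersection of hyperplanes of $\A^H$; if $\A_{\mathrm{core}}^H=\varnothing$ it is the empty intersection $H$, which we regard as the minimal element of $L(\A^H)$. In that degenerate case $\A_X=\{H\}$ and everything below still applies.

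For the reverse inclusion $J_{\A_X}^H \subset J_\A^H$, pass from $\A_X$ to $\A$ by adding back, one at a time, the hyperplanes $H'\in\A\setminus\A_X$. Any $H'\in\A$ with $H'\neq H$ and $X\not\subset H'$ has $L=H\cap H'\in\A^H$ with $X\not\subset L$. Since every $L\in\A_{\mathrm{core}}^H$ contains $X$, such an $L$ is not in $\A_{\mathrm{core}}^H$, so $\mathrm{mult}^\A(L)=1$. Hence $H'$ is the unique hyperplane of $\A$ meeting $H$ in $L$.

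Enumerate $\A\setminus\A_X=\{H_1,\dots,H_k\}$ and set $\B_i=\A_X\cup\{H_1,\dots,H_i\}$, so that $\B_0=\A_X$ and $\B_k=\A$. In $\B_i$, the hyperplane $H_i$ is still the unique hyperplane meeting $H$ in $H\cap H_i$, because multiplicities only drop when we pass to subarrangements. Lemma \ref{lem:deletionIAH}, applied to $\B_i$ and $H_i$, then gives $J_{\B_{i-1}}^H=J_{\B_i\setminus H_i}^H\subset J_{\B_i}^H$. Chaining these inclusions yields $J_{\A_X}^H\subset J_\A^H$.

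The only step needing care is this multiplicity bookkeeping. We need multiplicity $1$ with respect to each intermediate $\B_i$, not only with respect to $\A$. This follows from $\mathrm{mult}^{\B_i}(L)\le \mathrm{mult}^{\A}(L)$ together with the fact that $H_i\in\B_i$ realizes $L$. Beyond that, the argument is a direct combination of Lemmas \ref{lem:deletionIAH} and \ref{2.8}.
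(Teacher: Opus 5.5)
Your proposal is correct and is essentially the paper's proof: Lemma \ref{2.8} gives $J_\A^H\subset J_{\A_X}^H$, and Lemma \ref{lem:deletionIAH} gives the reverse inclusion, because every hyperplane in $\A\setminus\A_X$ meets $H$ in a hyperplane of $\A^H$ of multiplicity $1$. The paper invokes Lemma \ref{lem:deletionIAH} in a single step. You instead carry out the one-at-a-time deletion through the intermediate arrangements $\B_i$ and correctly observe that multiplicities can only drop in subarrangements, which is the bookkeeping the paper leaves implicit.
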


\begin{proof}
Since $\mathrm{mult}^\A(L \cap H)=1$ for any $L \in \A \setminus \A_X$ by the definition of $X$,
we have $J_{\A_X}^H \subset J_\A^H$ by Lemma \ref{lem:deletionIAH}. Since Lemma \ref{2.8} gives the reverse inclusion,
we have $J_{\A_X}^H=J_\A^H$ as desired.
\end{proof}

\begin{example}
    Let $\B$ be the arrangement in $\R^6$ with
    \[Q(\B)=(x_1\!-\!x_3)(x_1\!-\!x_2)(x_2\!-\!x_3)(x_3\!-\!x_4)(x_1\!-\!x_4)(x_1\!-\!x_5)(x_2\!-\!x_5)(x_3\!-\!x_6),\]
    $H=H_{x_1-x_3}$ and $\overline S=\R[x_1,\dots,x_6]/(x_1-x_3)$.
    Then 
    \[\B_{\mathrm{core}}^H=\{H_{\overline {x_1-x_2}}=H\cap H_{x_1-x_2},H_{\overline {x_1-x_4}}=H\cap H_{x_1-x_4}\}\] and
    \[X_{\mathrm{core}}^{(\B,H)}=\{(x_1,\dots,x_6) \in \mathbb R^6 \mid x_1=x_2=x_3=x_4\}.\]
    Then, since $\B_{X_{\mathrm{core}}^{(\B,H)}}$ has the same defining polynomial as the arrangement $\A$ in Example \ref{ex:2.3}, the ideal $J_\B^H$ has the same generating set as $J_\A^H$ by Proposition \ref{prop:2.10}. Hence we conclude 
    \[J_\B^H=(x_4-x_1,x_2-x_1).\]
\end{example}

For a commutative ring $A$ and its prime ideal $P$,
we denote by $M_P$ the localization of an $A$-module $M$ at the prime ideal $P$.
Let $\A$ be an arrangement in $V$, $H \in \A$ and $\overline S=S/(\alpha_H)$.
For a prime ideal $P$ in $\overline S$,
we write $\tilde P$ for the prime ideal in $S$ with $S/\tilde P=\overline S/P$
and write $\A_P=\{ L \in \A \mid \alpha_L \in \tilde P\}.$
Note that $\A_P$ is nothing but the localization of the arrangement $\A$ with respect to the subspace $X_{\A_P}=\bigcap_{H' \in \A_P} H' \in L(\A^H)$.
Also, for an ideal $J$ of $\overline S$,
the localization $J_P$ at $P$ as an $\overline S$-module
and the localization $J_{\tilde P}$ as an $S$-module
coincide naturally. Let us recall the following.

\begin{lemma}[{\cite[Example 4.122]{OT}}]
\label{localfunctor}
The functor associating an arrangement $\A$ to the $S$-module $\Omega^1(\A)$ is local, i.e., for $X \in L(\A)$ and a prime ideal $P \subset S$ with $\{H \in \A \mid \alpha_H \in P \}= \A_X$,
it holds that 
$$
\Omega^1(\A)_P=\Omega^1(\A_X)_P.
$$
\end{lemma}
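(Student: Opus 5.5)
The plan is to work directly from the definition of $\Omega^1(\A)$ as an intersection of local conditions, and to use that localization commutes with finite intersections of submodules of a common ambient module. Write $Q=Q(\A)$ and $Q_X=Q(\A_X)$. As observed in the proof of Lemma \ref{lem:2.5},
\[
Q\,\Omega^1(\A)=\bigcap_{H'\in\A} Q(\{H'\})\Omega^1(\{H'\}) =: \bigcap_{H'\in \A} M_{H'},
\]
where each $M_{H'}$ is the submodule of $\Omega^1_V$ generated by $d\alpha_{H'}$ and $\alpha_{H'}\,d\alpha_1,\dots,\alpha_{H'}\,d\alpha_\ell$. In particular $\Omega^1(\A)=\frac{1}{Q}\bigcap_{H'\in\A} M_{H'}$ inside $\frac1Q\Omega^1_V$.

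The first step is to localize at $P$. Since localization is exact, it commutes with finite intersections of submodules of $\Omega_V^1$, so
\[
(Q\,\Omega^1(\A))_P=\bigcap_{H'\in\A}(M_{H'})_P .
\]
For $H'\notin\A_X$ we have $\alpha_{H'}\notin P$ by hypothesis, so $\alpha_{H'}$ is a unit in $S_P$. Then $(M_{H'})_P$ contains $\alpha_{H'}\,d\alpha_i$ for every $i$, and hence equals $(\Omega^1_V)_P$. Such factors therefore drop out of the intersection, leaving $(Q\,\Omega^1(\A))_P=\bigcap_{H'\in\A_X}(M_{H'})_P=(Q_X\,\Omega^1(\A_X))_P$.

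The second step is to divide back. Inside $\frac1Q(\Omega_V^1)_P$ we get
\[
\Omega^1(\A)_P=\frac{1}{Q}\,(Q_X\,\Omega^1(\A_X))_P=\frac{Q_X}{Q}\,\Omega^1(\A_X)_P .
\]
The factor $Q/Q_X=\prod_{H'\notin\A_X}\alpha_{H'}$ is a unit in $S_P$, so this equals $\Omega^1(\A_X)_P$, with both modules viewed as submodules of $\Omega^1_V\otimes_S \mathrm{Frac}(S)$. That is the claimed equality.

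The only point needing care is that the equality is meant as submodules of the common ambient space $\Omega^1_V\otimes_S\mathrm{Frac}(S)$, and that localization commutes with intersections there. Both are standard consequences of flatness of $S_P$ over $S$, so I expect no real obstacle.
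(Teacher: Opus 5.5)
Your proof is correct. The paper does not prove this lemma at all; it imports it from \cite[Example 4.122]{OT}, where it appears as an instance of Orlik--Terao's general notion of a local functor.

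You instead give a self-contained verification inside the paper's own setup. It rests on four ingredients:
\begin{itemize}
\item the identity $Q(\A)\Omega^1(\A)=\bigcap_{H\in\A}Q(\{H\})\Omega^1(\{H\})$, already recorded in the proof of Lemma~\ref{lem:2.5};
\item the fact that localization at $P$ commutes with finite intersections of submodules of $\Omega^1_V$, by flatness of $S_P$;
\item the inclusion $\alpha_H\Omega^1_V\subseteq Q(\{H\})\Omega^1(\{H\})$, which makes that factor all of $(\Omega^1_V)_P$ whenever $\alpha_H\notin P$;
\item primality of $P$, which makes $Q(\A)/Q(\A_X)$ a unit in $S_P$.
\end{itemize}
Each step is sound. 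You also read the conclusion correctly, as an equality of submodules of $\Omega^1_V\otimes_S\mathrm{Frac}(S)$.

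The citation buys brevity and places the fact in a general framework. Your argument buys transparency. It also shows that the same reasoning works verbatim for $\Omega^p(\A)$, replacing $\Omega^1_V,\Omega^2_V$ by $\Omega^p_V,\Omega^{p+1}_V$. Via $D(\A)=\bigcap_{H\in\A}D(\{H\})$, it works for $D(\A)$ as well.
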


\begin{lemma}
    \label{2.12}
    With the same notation as above,
    for any prime ideal $P$ in $\overline S$
    one has 
    \[(J_\A^H)_P=(J_{\A_P}^H)_P.\]
\end{lemma}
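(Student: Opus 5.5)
The plan is to prove $(J_\A^H)_P=(J_{\A_P}^H)_P$ by combining the inclusion of Lemma \ref{2.8} with the locality of $\Omega^1$ from Lemma \ref{localfunctor}. Put $X=X_{\A_P}\in L(\A^H)$. Since $\alpha_H\in\tilde P$ (because $\tilde P\supset(\alpha_H)$), we have $H\in\A_P$ and $\A_P=\A_X$.

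\emph{The inclusion $\subseteq$.} Lemma \ref{2.8} gives $J_\A^H\subset J_{\A_X}^H=J_{\A_P}^H$. Localizing at $P$ yields $(J_\A^H)_P\subseteq(J_{\A_P}^H)_P$.

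\emph{The inclusion $\supseteq$.} Take $g\in J_{\A_P}^H$ and write $g=\res^H_{\A_P}(\eta)$ with $\eta\in\Omega^1(\A_P)$.
\begin{itemize}
\item The set of hyperplanes of $\A$ whose forms lie in $\tilde P$ is exactly $\A_X$. So Lemma \ref{localfunctor} applies to the prime $\tilde P\subset S$ and gives $\Omega^1(\A)_{\tilde P}=\Omega^1(\A_P)_{\tilde P}$.
\item Hence there are $s\in S\setminus\tilde P$ and $\omega\in\Omega^1(\A)$ with $s\eta=\omega$ inside $\frac{1}{Q(\A)}\Omega^1_V$ (after clearing the denominator into $s$).
\item Apply $\res^H_\A$ to $\omega$, using the comparison formula \eqref{eq2-2} from the proof of Lemma \ref{2.8} read in reverse. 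Write $\omega=f\,d\alpha_H+\cdots$, so that $\eta=(f/s)\,d\alpha_H+\cdots$. Then
\[
\res^H_\A(\omega)=\frac{\overline{Q(\A)f}}{Q(\A^H)}
=\overline{s}\,\frac{\overline{Q(\A)}\,Q((\A_P)^H)}{\overline{Q(\A_P)}\,Q(\A^H)}\,\res^H_{\A_P}(\eta)
=\overline s\,u\,g,
\]
where $u=\frac{\overline{Q(\A\setminus\A_P)}}{\prod_{L\in\A^H,\,X\not\subset L}\alpha_L}$.
\end{itemize}

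\emph{The main obstacle} is checking that this last identity makes sense in $\overline S_P$ and that the resulting factor is a unit there. Both the numerator and the denominator of $u$ are products of linear forms $\overline{\alpha_{L'}}$ with $\alpha_{L'}\notin\tilde P$, that is, with $\overline{\alpha_{L'}}\notin P$. Indeed, for $L\in\A^H$ with $X\not\subset L$, any $L'$ with $L'\cap H=L$ satisfies $L'\notin\A_P$. Also $\overline s\notin P$, because $s\notin\tilde P$. So $\overline s\,u$ is a unit of $\overline S_P$.

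Therefore $g=(\overline s u)^{-1}\res^H_\A(\omega)\in(J_\A^H)_P$, which proves $\supseteq$. It remains only to confirm that the identity holds literally in $\overline S_P$ once all factors are units there, i.e. that it is not merely a formal identity of fractions.
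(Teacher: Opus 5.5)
Your proof is correct, but it takes a different route from the paper's. The paper localizes the exact sequence $0\to\Omega^1(\A\setminus H)\to\Omega^1(\A)\to J_\A^H\to 0$ at $\tilde P$ and applies Lemma~\ref{localfunctor} to get both inclusions at once. In doing so it tacitly uses that, on $\Omega^1(\A_P)\subseteq\Omega^1(\A)$, the two residue maps differ by a factor that is a unit at $P$. You instead get $\subseteq$ globally from Lemma~\ref{2.8}, and $\supseteq$ from an explicit comparison of the two residues.

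Your first bullet is unnecessary. Since $\A_P\subseteq\A$, you already have $\Omega^1(\A_P)\subseteq\Omega^1(\A)$, so you can take $s=1$ and $\omega=\eta$. Your argument therefore never really uses locality.

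The identity you flag at the end is not merely formal. Write $\eta=h'\,d\alpha_H+\cdots$ with $h=Q(\A_P)h'\in S$. Then $Q(\A)h'=Q(\A\setminus\A_P)\,h$ in $S$, and this factor is not divisible by $\alpha_H$ because $H\in\A_P$. Hence $Q(\A^H)\res^H_\A(\eta)=\overline{Q(\A\setminus\A_P)}\,Q((\A_P)^H)\,g$ holds in $\overline S$. Cancelling $Q((\A_P)^H)$ in the domain $\overline S$ gives $\bigl(\prod_{L\in\A^H,\,X\not\subset L}\alpha_L\bigr)\res^H_\A(\eta)=\overline{Q(\A\setminus\A_P)}\,g$.

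Moreover, each such $\alpha_L$ is, up to a scalar, some $\overline{\alpha_{L'}}$ with $L'\in\A\setminus\A_P$. So $u$ is actually a polynomial not in $P$, and $u\,J_{\A_P}^H\subseteq J_\A^H$ already holds in $\overline S$. Your middle expression with $\overline{Q(\A)}/\overline{Q(\A_P)}$ is literally $0/0$; only the version with $\overline{Q(\A\setminus\A_P)}$, as in your definition of $u$, is meaningful.

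What your route buys is the sharper global sandwich $u\,J_{\A_P}^H\subseteq J_\A^H\subseteq J_{\A_P}^H$ with $u\notin P$. This gives equality after localizing at any prime not containing $u$, and it makes explicit the unit that the paper's one-line argument leaves implicit. What it costs is reliance on Lemma~\ref{2.8}, whose proof needs Corollary~\ref{2.6} and a non-zero-divisor argument. The paper needs only the locality of $\Omega^1$.
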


\begin{proof}
This follows immediately from the exact sequence
\[
0 \longrightarrow \Omega^1(\A\setminus H)
\lhook\joinrel\longrightarrow \Omega^1(\A) \stackrel {\mathrm{res}^H} \longrightarrow J_\A^H \longrightarrow 0
\]
together with Lemma \ref{localfunctor}.
\end{proof}

\section{Residue ideals of free arrangements}

In this section, we study properties of $J_\A^H$ when $\A$ is a free arrangement.
To do this, we need some tools on logarithmic vector fields and modules of logarithmic differential forms of higher orders.

\subsection{Vector fields and differential forms of higher orders}

Recall that $V$ is an $\ell$-dimensional $\K$-vector space, $x_1,\dots,x_\ell$ a basis of $V^*$
and $S=\K[x_1,\dots,x_\ell]$.
Let
\[
\Der ^p(S)=\bigwedge^p
\Der(S)= \bigoplus_{1 \leq j_1< \cdots <j_p \leq \ell} S (\partial_{x_{j_1}} \wedge \cdots \wedge \partial_{x_{j_p}}).
\]
For an arrangement $\A$ in $V$,
its {\bf logarithmic vector field of order $p$} is the module
\begin{align*}
D^p(\A)
\!=\!\{\theta \!\in\! \mathrm{Der}^p(S) \mid\theta(\alpha_H,f_2,\dots,f_p) \!\in\! (\alpha_H) \mbox{ for any }H \!\in\! \A \mbox{ and }f_2,\dots,f_p \!\in\! S\},
\end{align*}
where $\theta(g_1,\dots,g_p)$ denotes the substitution of $g_1,\dots,g_p \in S$ to $\theta$.
Alternatively, if we write
$d_\alpha$ with $\alpha \in V^*$ for the $S$-linear map from $\Der^\bullet (S)$ to $\Der^{\bullet -1}(S)$ induced by
\[
d_\alpha(\partial_{x_{j_1}}\wedge \cdots \wedge \partial_{x_{j_p}})
= \sum_{k=1}^p ((-1)^{k-1}(\partial_{x_{j_k}} \alpha_H)) (\partial_{x_{j_1}}\wedge \cdots \wedge \widehat {\partial_{x_{j_k}}}\wedge \cdots \wedge \partial_{x_{j_p}}) ),
\]
then $D^p(\A)$ can be written as
\[
D^p(\A)=\{\theta \in \Der^p(S) \mid d_{\alpha_H} (\theta) \in \alpha_H \Der^{p-1}(S) \ \mbox{ for all } H \in \A\}.
\]
Below we list some properties of $D^p(\A)$ and $\Omega^p(\A)$ which we need.
The first lemma is immediate from the definition, so we omit a reference.

\begin{lemma}
\label{P1}
For an arrangement $\A$ in $V$,
one has
    $\Omega^\ell(\A)=\frac 1 {Q(\A)} \Omega_V^\ell$  and $D^\ell(\A)=Q(\A) \Der^\ell(S)$.
\end{lemma}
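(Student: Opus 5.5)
We prove the two equalities of Lemma \ref{P1} separately, each by a direct computation from the definitions. For $\Omega^\ell(\A)$, we check that the defining condition is vacuous. Let $\omega \in \frac 1 {Q(\A)}\Omega_V^\ell$. Then $Q(\A)\omega \wedge d\alpha_H$ lies in $\Omega_V^{\ell+1}=0$, so the condition $Q(\A)\omega\wedge d\alpha_H \in \alpha_H\Omega_V^{\ell+1}$ holds automatically for every $H\in\A$. Hence $\Omega^\ell(\A)=\frac1{Q(\A)}\Omega^\ell_V$.

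For $D^\ell(\A)$, we write $\theta = f\,\partial_{x_1}\wedge\cdots\wedge\partial_{x_\ell}$ and compute $d_{\alpha_H}(\theta)$. If $\alpha_H=\sum_k a_k x_k$, then
\[ d_{\alpha_H}(\theta)=f\sum_{k}(-1)^{k-1}a_k\,\partial_{x_1}\wedge\cdots\wedge\widehat{\partial_{x_k}}\wedge\cdots\wedge\partial_{x_\ell}. \]
Some $a_k$ is a nonzero constant and the $(\ell-1)$-fold wedges form a free basis of $\Der^{\ell-1}(S)$. So the condition $d_{\alpha_H}(\theta)\in\alpha_H\Der^{\ell-1}(S)$ is equivalent to $\alpha_H\mid f$. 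Equivalently, one can evaluate $\theta(\alpha_H,x_{j_2},\dots,x_{j_\ell})$ on suitable coordinates, which gives $\pm a_k f$.

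Since the $\alpha_H$ for $H\in\A$ are pairwise non-associate primes of the UFD $S$, requiring $\alpha_H \mid f$ for all $H\in\A$ is equivalent to $Q(\A)\mid f$. This gives $D^\ell(\A)=Q(\A)\Der^\ell(S)$.

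There is no real obstacle here. The only point needing care is that distinct hyperplanes have non-proportional defining forms, which is what allows the separate divisibilities to combine into divisibility by the product $Q(\A)$.
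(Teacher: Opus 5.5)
Your proof is correct and is exactly the direct check from the definitions that the paper has in mind; the paper states this lemma without proof, calling it immediate from the definitions. For $\Omega^\ell(\A)$ the defining condition is vacuous because $\Omega_V^{\ell+1}=0$, and for $D^\ell(\A)$ your computation of $d_{\alpha_H}$ on the rank-one module $\Der^\ell(S)$, combined with unique factorization for the pairwise non-associate forms $\alpha_H$, gives the claim.
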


For $F=\{j_1,\dots,j_p\} \subset [\ell]$ with $j_1< \cdots < j_p$,
we write $\partial_F= \partial_{x_{j_1}}\wedge \cdots \wedge \partial_{x_{j_p}}$ and $d_F=dx_{j_1}\wedge \cdots \wedge d{x_{j_p}}$. 
Also, we define $\mathrm{sign}(F) \in \{ \pm 1\}$ by the equation $\mathrm{sign}(F) \partial_F \wedge \partial _{[\ell] \setminus F}=\partial_{x_1} \wedge \cdots \wedge \partial_{x_\ell}$.

\begin{lemma}[{\cite[Remark 2.3]{AY}}]
\label{P2}
For an arrangement $\A$ in $V$, we have an isomorphism
\[Q(\A) \Omega^p(\A) \cong D^{\ell-p}(\A)\]
which sends each $d_F$ to $ \mathrm{sign}(F) \partial_{[\ell]\setminus F}$.
\end{lemma}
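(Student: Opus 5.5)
The map is the Hodge-star type isomorphism $\star:\Omega_V^p\to \Der^{\ell-p}(S)$, $d_F\mapsto \mathrm{sign}(F)\partial_{[\ell]\setminus F}$, extended $S$-linearly. My plan is to show that it carries $Q(\A)\Omega^p(\A)$ exactly onto $D^{\ell-p}(\A)$. It is an isomorphism of free modules $\Omega^p_V\cong \Der^{\ell-p}(S)$. By definition,
\[
Q(\A)\Omega^p(\A)=\{\eta\in\Omega_V^p \mid \eta\wedge d\alpha_H\in \alpha_H\Omega_V^{p+1}\ \text{for all }H\in\A\},
\]
while $D^{\ell-p}(\A)=\{\theta \mid d_{\alpha_H}(\theta)\in \alpha_H\Der^{\ell-p-1}(S)\ \text{for all } H\}$. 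So it suffices to prove, for any linear form $\alpha$, the compatibility
\[
\star(\eta\wedge d\alpha)=\pm\, d_\alpha(\star\eta),
\]
with a sign depending only on $p$. Then each defining condition for a single hyperplane is matched, because $\star$ is an isomorphism $\Omega_V^{p+1}\cong\Der^{\ell-p-1}(S)$ and therefore preserves the submodules $\alpha_H(\cdot)$.

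To prove the compatibility, both sides are $S$-linear in $\eta$ and $\K$-linear in $\alpha$, so I reduce to $\eta=d_F$ and $\alpha=x_j$. There are two cases.

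First, if $j\in F$, then $d_F\wedge dx_j=0$. On the other side, $\partial_{x_j}$ does not occur in $\partial_{[\ell]\setminus F}$, so $d_{x_j}(\partial_{[\ell]\setminus F})=0$ as well.

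Second, if $j\notin F$, then $d_F\wedge dx_j=\epsilon\, d_{F\cup\{j\}}$, where $\epsilon$ is the sign of moving $j$ into place. Its image is $\epsilon\,\mathrm{sign}(F\cup\{j\})\,\partial_{[\ell]\setminus(F\cup j)}$. Contracting $\mathrm{sign}(F)\partial_{[\ell]\setminus F}$ with $x_j$ gives $(-1)^{k-1}\mathrm{sign}(F)\partial_{[\ell]\setminus(F\cup j)}$, where $k$ is the position of $j$ in $[\ell]\setminus F$.

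The main (only) obstacle is this sign bookkeeping: I need $\epsilon\,\mathrm{sign}(F\cup j)=\pm(-1)^{k-1}\mathrm{sign}(F)$ with a uniform sign depending only on $p$. I would verify it by writing
\[
\partial_{[\ell]}=\mathrm{sign}(F)\,\partial_F\wedge\partial_{[\ell]\setminus F}
\]
and moving $\partial_{x_j}$ out of the second factor: this costs $(-1)^{k-1}$ to bring it to the front of that factor, and then it is placed after $\partial_F$ in sorted position, costing $\epsilon$. Comparing the result with $\mathrm{sign}(F\cup j)\,\partial_{F\cup j}\wedge\partial_{[\ell]\setminus(F\cup j)}$ gives the identity, in fact with sign $+1$ for this convention.

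With the compatibility established, $\eta\wedge d\alpha_H\in\alpha_H\Omega^{p+1}_V$ if and only if $d_{\alpha_H}(\star\eta)\in\alpha_H\Der^{\ell-p-1}(S)$. Hence $\star$ restricts to the claimed isomorphism $Q(\A)\Omega^p(\A)\cong D^{\ell-p}(\A)$. The case $p=\ell$ recovers Lemma \ref{P1}, which serves as a consistency check.
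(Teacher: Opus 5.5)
Your proof is correct, and the sign is indeed $+1$ under the paper's conventions. In the case $j\notin F$, both $d_F\wedge dx_j$ and $\partial_F\wedge\partial_{x_j}$ acquire the same sign $\epsilon=(-1)^{\#\{f\in F\mid f>j\}}$ when put into sorted order. So $\partial_{[\ell]}=\mathrm{sign}(F)(-1)^{k-1}\epsilon\,\partial_{F\cup\{j\}}\wedge\partial_{[\ell]\setminus(F\cup\{j\})}$, and comparing with the definition of $\mathrm{sign}(F\cup\{j\})$ gives $\epsilon\,\mathrm{sign}(F\cup\{j\})=(-1)^{k-1}\mathrm{sign}(F)$, i.e.\ $\star(d_F\wedge dx_j)=d_{x_j}(\star d_F)$. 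Because $\star$ is an $S$-linear bijection, it carries $\alpha_H\Omega_V^{p+1}$ onto $\alpha_H\Der^{\ell-p-1}(S)$. Hence the defining conditions of $Q(\A)\Omega^p(\A)$ and of $D^{\ell-p}(\A)$ (in the paper's $d_{\alpha_H}$-description) match hyperplane by hyperplane.

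The paper gives no proof of this lemma; it simply imports it from \cite[Remark 2.3]{AY}. What you have written is the standard direct verification: your $\star$ is, up to sign, the inverse of contracting polyvector fields into $dx_1\wedge\cdots\wedge dx_\ell$. It is self-contained given the paper's definitions.

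One small slip in your closing consistency check. The case $p=\ell$ only recovers $\Omega^\ell(\A)=\frac{1}{Q(\A)}\Omega_V^\ell$, using $D^0(\A)=S$. The other half of Lemma \ref{P1}, $D^\ell(\A)=Q(\A)\Der^\ell(S)$, is the case $p=0$, using $\Omega^0(\A)=S$. This does not affect the proof.
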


\begin{lemma}[{\cite[\S 1]{Sa}}]
\label{P3}
For an arrangement $\A$ in $V$, 
one has 
\[
\mbox{
$\Hom_S(D^p(\A),S) \cong \Omega^p(\A)$
\ and \ 
$\Hom_S(\Omega^p(\A),S) \cong D^p(\A)$}
\]
for all $p$.
In particular, the modules $D^p(\A)$ and $\Omega^p(\A)$ are reflexive.
\end{lemma}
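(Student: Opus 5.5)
This is Saito's duality, so the plan is to deduce it from Lemma \ref{P2} together with the natural perfect pairings between exterior powers. The key input is the intrinsic description of both sides as intersections of simple local pieces. From the definitions,
\[Q(\A)\Omega^p(\A)=\bigcap_{H\in\A}Q(\{H\})\Omega^p(\{H\})\quad\text{and}\quad D^p(\A)=\bigcap_{H\in\A}D^p(\{H\})\]
inside the free modules $\Omega^p_V$ and $\Der^p(S)$ respectively. For a single hyperplane, choose coordinates with $\alpha_H=x_1$. Then $D^p(\{H\})$ is free with basis $\{x_1\partial_F \mid 1\in F\}\cup\{\partial_F\mid 1\notin F\}$. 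Likewise $\Omega^p(\{H\})$ is free with basis $\{\frac1{x_1}d_F\mid 1\in F\}\cup\{d_F\mid 1\notin F\}$. These two bases are dual under the pairing $\langle \partial_F,d_G\rangle=\delta_{F,G}$, which extends $\langle\partial_{x_i},dx_j\rangle=\delta_{ij}$.

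Step one is to show that the pairing $\Omega^p(\A)\times D^p(\A)\to S$ is well defined, that is, lands in $S$. Both modules become free after localizing at any height one prime $\mathfrak p$ of $S$. If $\mathfrak p$ contains some $\alpha_H$, the localizations are $\Omega^p(\{H\})_{\mathfrak p}$ and $D^p(\{H\})_{\mathfrak p}$, by the locality in Lemma \ref{localfunctor}, whose proof applies verbatim to $p$-forms and $p$-vector fields. Otherwise both localizations equal $\Omega^p_{V,\mathfrak p}$ and $\Der^p(S)_{\mathfrak p}$. In either case the pairing is perfect, by the dual bases above. Since $S=\bigcap_{\mathrm{ht}\,\mathfrak p=1}S_{\mathfrak p}$, the pairing values lie in $S$, and we obtain natural maps $\Omega^p(\A)\to\Hom_S(D^p(\A),S)$ and $D^p(\A)\to\Hom_S(\Omega^p(\A),S)$.

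Step two is to show these maps are isomorphisms. Both maps are injective, since the modules are torsion-free and the pairing is nondegenerate over the fraction field. For surjectivity, take $\phi\in\Hom_S(D^p(\A),S)$. Because $Q(\A)\Der^p(S)\subset D^p(\A)$, $\phi$ extends to a rational $p$-form $\omega\in\frac1{Q(\A)}\Omega^p_V$ with $\phi=\langle-,\omega\rangle$. At each height one prime, $\omega$ pairs $D^p(\A)_{\mathfrak p}$ into $S_{\mathfrak p}$, so the local perfectness gives $\omega\in\Omega^p(\A)_{\mathfrak p}$. It remains to conclude $\omega\in\Omega^p(\A)$. The condition $Q(\A)\omega\wedge d\alpha_H\in\alpha_H\Omega^{p+1}_V$ is a divisibility by a prime element, so it can be checked at the height one prime $(\alpha_H)$. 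Hence $\omega\in\Omega^p(\A)$. The symmetric argument, with $\frac1{Q(\A)}\Omega^p_V$ and $\Der^p(S)$ as ambient modules, handles the other isomorphism.

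Reflexivity then follows formally: $D^p(\A)\cong\Hom_S(\Omega^p(\A),S)\cong\Hom_S(\Hom_S(D^p(\A),S),S)$, and likewise for $\Omega^p(\A)$. The main obstacle is the local analysis at height one primes. One must verify that membership in $\Omega^p(\A)$ and in $D^p(\A)$ is governed exactly by the codimension one conditions along each $H$, and that the single-hyperplane bases really are dual. The divisibility criterion should make this routine once coordinates with $\alpha_H=x_1$ are fixed.
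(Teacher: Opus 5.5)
Your argument is correct. The paper gives no proof of this lemma; it only quotes Saito \cite{Sa}. What you have written is a self-contained version of the standard codimension-one argument behind that citation:
\begin{itemize}
\item $Q(\A)\Omega^p(\A)\subset\Omega^p_V$ and $D^p(\A)\subset\Der^p(S)$ are cut out by divisibility conditions along the primes $(\alpha_H)$.
\item At each such height-one prime both modules are free with dual bases, which is your single-hyperplane computation. At all other height-one primes they coincide with the ambient free modules.
\item Normality of $S$, that is $S=\bigcap_{\mathrm{ht}\,\mathfrak p=1}S_{\mathfrak p}$, globalizes the pairing.
\item The inclusions $Q(\A)\Der^p(S)\subset D^p(\A)$ and $\Omega^p_V\subset\Omega^p(\A)$ let every functional be represented by a rational form or a multivector field.
\end{itemize}

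Two minor remarks.

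First, you do not need to extend the locality lemma to $p$-forms. $\Omega^p(\A)$ is the kernel of the map $\tfrac{1}{Q(\A)}\Omega^p_V\to\bigoplus_{H'\in\A}\Omega^{p+1}_V/\alpha_{H'}\Omega^{p+1}_V$, $\omega\mapsto(Q(\A)\omega\wedge d\alpha_{H'})_{H'}$, and $D^p(\A)$ is a similar kernel using $d_{\alpha_{H'}}$. Every $\alpha_{H'}$ with $H'\neq H$ is a unit in $S_{(\alpha_H)}$. So exactness of localization gives $\Omega^p(\A)_{(\alpha_H)}=\Omega^p(\{H\})_{(\alpha_H)}$, and likewise for $D^p$, directly.

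Second, reflexivity refers to the canonical map $\mathrm{ev}\colon D^p(\A)\to\Hom_S(\Hom_S(D^p(\A),S),S)$ being an isomorphism, not just to an abstract isomorphism. This holds here because your two isomorphisms, $\iota\colon\theta\mapsto\langle\theta,-\rangle$ and $\kappa\colon\omega\mapsto\langle-,\omega\rangle$, come from the same pairing. Hence $\kappa^{*}\circ\mathrm{ev}=\iota$, so $\mathrm{ev}=(\kappa^{*})^{-1}\circ\iota$ is an isomorphism. The argument for $\Omega^p(\A)$ is symmetric.
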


\begin{lemma}[{\cite[Proposition 4.81]{OT}}]
\label{P5}
If $\A$ is a free arrangement,
then $D^p(\A) = \bigwedge^p D(\A)$
and $\Omega^p(\A)=\bigwedge^p \Omega^1(\A)$ for all $p$.
\end{lemma}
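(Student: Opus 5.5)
The claim is Lemma \ref{P5}: if $\A$ is free, then $D^p(\A)=\bigwedge^p D(\A)$ and $\Omega^p(\A)=\bigwedge^p\Omega^1(\A)$. I will prove it through Saito's criterion applied to the wedge products of a basis, and then pass to forms using Lemma \ref{P2} and Lemma \ref{P3}.

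Let $\theta_1,\dots,\theta_\ell$ be a homogeneous basis of $D(\A)$. By Saito's criterion, $\det[\theta_i(x_j)]=c\,Q(\A)$ with $c\in\K^\times$.

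\emph{Containment.} First I show $\bigwedge^pD(\A)\subset D^p(\A)$. Take $\theta=\theta_{i_1}\wedge\cdots\wedge\theta_{i_p}$ and $H\in\A$. Expanding $\theta(\alpha_H,f_2,\dots,f_p)$ along the first argument gives a sum of terms $\pm\theta_{i_k}(\alpha_H)\cdot(\cdots)$. Each $\theta_{i_k}(\alpha_H)$ lies in $(\alpha_H)$, so the whole expression does.

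\emph{Reverse inclusion.} The elements $\theta_I=\theta_{i_1}\wedge\cdots\wedge\theta_{i_p}$, for $I=\{i_1<\cdots<i_p\}$, are linearly independent over $S$, because the $\theta_i$ are independent over the fraction field. Now take $\eta\in D^p(\A)$ and write $\eta=\sum_I g_I\theta_I$ with $g_I\in\frac1{Q(\A)}S$. This is possible since the matrix $[\theta_i(x_j)]$ is invertible with inverse in $\frac1{Q}\,\mathrm{Mat}(S)$. To see that $g_I\in S$, wedge $\eta$ with the complementary $\theta_{I^c}$:
\[
\eta\wedge\theta_{I^c}=\pm g_I\,\theta_1\wedge\cdots\wedge\theta_\ell=\pm c\,g_I\,Q(\A)\,\partial_{x_1}\wedge\cdots\wedge\partial_{x_\ell}.
\]
So it suffices to show that $\eta\wedge\theta_{I^c}\in D^\ell(\A)=Q(\A)\Der^\ell(S)$, which is Lemma \ref{P1}.

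\emph{Wedge closure.} That last step reduces to showing $D^p(\A)\wedge D^q(\A)\subset D^{p+q}(\A)$. This is the main obstacle. I would check it locally at each $H$ by choosing coordinates with $\alpha_H=x_1$. In those coordinates the condition defining $D^p(\A)$ says exactly that every coefficient of $\partial_F$ with $1\in F$ is divisible by $x_1$. This property is visibly preserved under wedge products, since every term of the product containing $\partial_{x_1}$ comes from a factor containing $\partial_{x_1}$. Hence $D^p(\A)=\bigwedge^pD(\A)$, and it is free.

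\emph{Forms.} By Lemma \ref{P3}, $\Omega^p(\A)=\Hom_S(D^p(\A),S)$. For free $D(\A)$, the dual of $\bigwedge^p D(\A)$ is $\bigwedge^p\Hom_S(D(\A),S)=\bigwedge^p\Omega^1(\A)$, with the pairing given by the determinant of pairings. The natural inclusion $\bigwedge^p\Omega^1(\A)\subset\Omega^p(\A)$ is compatible with this pairing, since the pairing is the restriction of the standard pairing on $\Omega_V^\bullet\otimes\mathrm{Frac}(S)$. Both sides are then identified with the same dual module inside $\frac1{Q}\Omega^p_V$, which gives equality.

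If the duality bookkeeping is awkward, I would instead use Lemma \ref{P2}: $Q(\A)\Omega^p(\A)\cong D^{\ell-p}(\A)$. Together with the Hodge-star identity relating the dual basis $\omega_i$ to $\theta_{[\ell]\setminus\{i\}}$, this shows directly that the $\omega_I$ form a basis.
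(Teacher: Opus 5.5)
Your proof is correct. The paper gives no argument beyond citing \cite[Proposition 4.81]{OT}. The standard proof there (going back to Saito \cite{Sa}) uses the same complementary-wedge trick, but runs it directly on forms. One writes $\omega=\sum_I f_I\,\omega_I$ with $f_I\in\mathrm{Frac}(S)$. Then $\omega\wedge\omega_{[\ell]\setminus I}=\pm f_I\,\omega_1\wedge\cdots\wedge\omega_\ell$, which is $f_I$ times a nonzero scalar multiple of $\frac{1}{Q(\A)}dx_1\wedge\cdots\wedge dx_\ell$. One concludes $f_I\in S$ from closure of logarithmic forms under $\wedge$ together with $\Omega^\ell(\A)=\frac{1}{Q(\A)}\Omega^\ell_V$.

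You run the argument on polyvector fields instead and then transfer to forms. There the closure $D^p(\A)\wedge D^q(\A)\subset D^{p+q}(\A)$ is the elementary coefficient check you give in adapted coordinates, so your ``main obstacle'' is really the easy step. Your route buys an explicit proof of the $D^p$ half and avoids any pole-order bookkeeping. The price is that the transfer needs the \emph{concrete} duality. Lemma~\ref{P3} read as an abstract isomorphism is not enough, since a free module can sit properly inside an isomorphic one (e.g.\ $x_1S\subsetneq S$). You need either that $\Omega^p(\A)$ is exactly the set of rational $p$-forms pairing into $S$ with $D^p(\A)$, or Lemma~\ref{P2} as the explicit map $d_F\mapsto\mathrm{sign}(F)\partial_{[\ell]\setminus F}$.

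Two details should be tidied.

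First, $g_I\in\frac{1}{Q(\A)}S$ does not follow from the inverse of $[\theta_i(x_j)]$ having entries in $\frac{1}{Q(\A)}S$. Expressing $\partial_F$ through the $\theta_I$ uses $p\times p$ minors of that inverse, which a priori only lie in $\frac{1}{Q(\A)^p}S$; the sharper bound comes from Jacobi's complementary-minor identity. This is harmless, because your argument only uses $g_I\in\mathrm{Frac}(S)$.

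Second, the ``natural inclusion'' $\bigwedge^p\Omega^1(\A)\subset\Omega^p(\A)$ is not automatic. A priori $\omega_{i_1}\wedge\cdots\wedge\omega_{i_p}$ only lies in $\frac{1}{Q(\A)^p}\Omega^p_V$, and this inclusion is exactly the wedge-closure of logarithmic forms that the standard proof relies on. You do not need it as an input:
\begin{itemize}
\item With the concrete duality and $\langle\theta_I,\omega_J\rangle=\delta_{I,J}$ (where $\omega_1,\dots,\omega_\ell$ is the basis dual to $\theta_1,\dots,\theta_\ell$), every $\omega\in\Omega^p(\A)$ equals $\sum_J\langle\theta_J,\omega\rangle\,\omega_J$ with coefficients in $S$. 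Conversely, the functional dual to $\theta_J$ is represented by some element of $\Omega^p(\A)$. That element must be $\omega_J$, because the $\theta_I$ span all rational $p$-vector fields.
\item Alternatively, your Lemma~\ref{P2} fallback is already complete. Jacobi's identity shows that $Q(\A)\omega_I$ is sent to $\pm c^{-1}\theta_{[\ell]\setminus I}$. Hence $Q(\A)\Omega^p(\A)\cong D^{\ell-p}(\A)=\bigwedge^{\ell-p}D(\A)$ directly exhibits $\{\omega_I\}$ as a basis of $\Omega^p(\A)$.
\end{itemize}
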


\begin{lemma}[{\cite[Theorem 4.37]{OT}}]
If $\A$ is free, then $\A_X$ is free for $X \in L(\A)$.
\label{localizationfree}
\end{lemma}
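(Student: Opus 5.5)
We plan to reduce freeness of $\A_X$ to a local statement at the prime ideal of $X$, and then use the fact that $D(\A_X)$ is essentially defined over a smaller polynomial ring whose homogeneous maximal ideal corresponds to that prime. Let $P\subset S$ be the prime ideal generated by all linear forms vanishing on $X$. Then $\{H\in\A\mid \alpha_H\in P\}=\A_X$, because a hyperplane contains $X$ exactly when its defining form lies in $P$. The dual statement of Lemma \ref{localfunctor} (locality of $D(\A)$, obtained from Lemma \ref{P3} since dualizing commutes with localization) gives
\[D(\A)_P=D(\A_X)_P.\]
Since $D(\A)$ is free, $D(\A_X)_P$ is a free $S_P$-module.

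Next, choose a basis $\alpha_1,\dots,\alpha_\ell$ of $V^*$ such that $\alpha_1,\dots,\alpha_m$ is a basis of $(V/X)^*$, and set $T=\mathrm{sym}((V/X)^*)=\K[\alpha_1,\dots,\alpha_m]$. Every $\alpha_H$ with $H\in\A_X$ lies in $T$. Hence, as for $\Omega^1$ in Lemma \ref{lem:2.5}, we get the decomposition
\[D(\A_X)\cong \big(D(\A_X^{V/X})\otimes_T S\big)\oplus \bigoplus_{i>m} S\,\partial_{\alpha_i},\]
where $D(\A_X^{V/X})$ is a graded $T$-module. It therefore suffices to show that $M:=D(\A_X^{V/X})$ is a free $T$-module.

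We have $P\cap T=\mideal_T$. Moreover, $S=T[\alpha_{m+1},\dots,\alpha_\ell]$ is flat over $T$, so $T_{\mideal_T}\to S_P$ is a flat local homomorphism and hence faithfully flat. The module $M_{\mideal_T}\otimes_{T_{\mideal_T}} S_P$ is a direct summand of $D(\A_X)_P$, so it is projective and therefore free over the local ring $S_P$. Faithful flatness then gives
\[\Tor_1^{T_{\mideal_T}}(M_{\mideal_T},\K)\otimes S_P=\Tor_1^{S_P}(M\otimes S_P,\ S_P/\mideal_T S_P)=0,\]
so $\Tor_1^{T_{\mideal_T}}(M_{\mideal_T},\K)=0$. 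Since $M_{\mideal_T}$ is finitely generated over a local ring, it is free.

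Finally, $M$ is a finitely generated graded $T$-module. Its graded Betti number $\dim_\K\Tor_1^T(M,\K)$ can be computed after localizing at $\mideal_T$, so it vanishes, and graded Nakayama shows that $M$ is free. Consequently $D(\A_X)$ is free, i.e.\ $\A_X$ is free.

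The main obstacle is the descent step: the localization $S_P$ is not at a homogeneous maximal ideal of $S$, so the graded Nakayama lemma cannot be applied to $D(\A_X)$ directly. The passage to $T$, where $P$ contracts to the homogeneous maximal ideal, is exactly what repairs this.
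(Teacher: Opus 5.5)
The paper gives no proof of this lemma; it only quotes \cite[Theorem 4.37]{OT}. Your proof is correct and complete. All of the following steps check out:
the locality $D(\A)_P=D(\A_X)_P$, which you can also see directly from $D(\A)\subset D(\A_X)$ and $Q''D(\A_X)\subset D(\A)$ with $Q''=Q(\A\setminus\A_X)\notin P$;
the splitting of $D(\A_X)$ as $D(\A_X^{V/X})\otimes_T S$ plus a free summand;
the equality $P\cap T=\mideal_T$;
faithful flatness of $T_{\mideal_T}\to S_P$;
and the final graded Nakayama step.

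A more classical argument differs in how it returns from a local statement to a graded one. It picks a $\K$-rational point $v\in X$ lying on no hyperplane of $\A\setminus\A_X$ and translates coefficientwise by $v$. Translation by $v$ fixes each $\alpha_H$ with $H\in\A_X$, so the translates of a basis $\theta_1,\dots,\theta_\ell$ of $D(\A)$ lie in $D(\A_X)$. Their Saito determinant is $c\,Q(\A_X)\,Q''(x+v)$ with $Q''(v)\neq0$. Expand this determinant multilinearly in the homogeneous components of the translates. Each term is a Saito determinant of elements of $D(\A_X)$, hence either zero or of degree at least $\#\A_X$. The degree-$\#\A_X$ part of the whole determinant is nonzero, so some term gives homogeneous $\eta_1,\dots,\eta_\ell\in D(\A_X)$ with $\det M(\eta_1,\dots,\eta_\ell)\in\K^{\times}Q(\A_X)$, and Saito's criterion finishes.

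That route is elementary and produces an explicit basis. However, it needs the rational point $v$, which can fail to exist over a finite field, so one would have to extend scalars first.

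Your proof replaces the closed point $v$ by the generic point $P$ of $X$. It replaces translation by the observation that $\A_X$ is defined over $T$ with $P\cap T=\mideal_T$. As a result it works over any $\K$ and uses only locality and the fact that $D(\A_X)$ is extended from $T$.

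Your descent step can also be shortened. A minimal graded free resolution of $M$ over $T$ has differentials with entries in $\mideal_T\subset PS_P$. After applying $-\otimes_T S_P$ it is therefore a minimal free resolution over the local ring $S_P$. Hence $\pd_T M=\pd_{S_P}(M\otimes_T S_P)=0$, with no need to pass through $T_{\mideal_T}$ or faithful flatness.
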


For an arrangement $\A$ and $H \in \A$,
we have the following two types of exact sequences \eqref{3-1} (see \cite[page 1195]{DSSWW}) and \eqref{3-2} (see \cite[Proposition 2.4]{AD}),
which we call the Euler sequence and the dual Euler sequence, respectively:
\begin{align}
    \label{3-1}
    0 \longrightarrow D^p(\A\setminus H) \stackrel {\cdot \alpha_H} \longrightarrow D^p(\A) \stackrel {\rho_p^H} \longrightarrow D^p(\A^H),\\
    \label{3-2}
    0 \longrightarrow \Omega^p(\A) \stackrel {\cdot \alpha_H}  \longrightarrow \Omega^p(\A \setminus H) \stackrel {j_H^p}  \longrightarrow \Omega^p(\A^H),
\end{align}
where $\rho^H_p(\sum_{F \subset [\ell], |F|=p} f_F \partial_F)=\sum_{F \subset [\ell], |F|=p} \overline {f_F} \partial_F$ is induced by the projection $S \to \overline S=S/(\alpha_H)$
(we omit the description of the map $j_H^p$ since we do not need it).
About the surjectivity of the rightmost maps in these sequences,
the following result is known.
See \cite[Theorem 1.13]{A9} and \cite[Theorem 3.3 and Proposition 3.4]{A14}.

\begin{lemma}
\label{P4}
The map $\rho^H_p$ in the Euler sequence \eqref{3-1} is surjective when $\A\setminus H$ is free and the map $j_H^p$ in the dual Euler sequence \eqref{3-2} is surjective when $\A$ is free.
\end{lemma}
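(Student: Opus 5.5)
The plan is to prove both surjectivity statements with the same local-cohomology (depth) argument. We compare the image of the rightmost map with the target module, which is reflexive over $\overline S$. It then suffices to show two things: the image has depth at least $2$, and the image agrees with the target after localizing at every height-one prime of $\overline S$. We assume $\ell \geq 3$, since for $\ell \leq 2$ every arrangement is free and the statement can be checked by hand; this hand check for rank at most $2$ is also the base input for the codimension-one comparison below.

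\emph{Euler sequence.} Assume $\A\setminus H$ is free and let $I=\rho^H_p(D^p(\A))\subset D^p(\A^H)$. The sequence \eqref{3-1} gives a short exact sequence
$0\to D^p(\A\setminus H)\to D^p(\A)\to I\to 0$ of graded $S$-modules. Take local cohomology with respect to $\mideal_S$; this agrees with local cohomology over $\overline S$ for $\overline S$-modules. We get the exact piece
\[H^1_{\mideal}(D^p(\A))\to H^1_{\mideal}(I)\to H^2_{\mideal}(D^p(\A\setminus H)).\]
The right-hand term vanishes because $D^p(\A\setminus H)=\bigwedge^pD(\A\setminus H)$ is free by Lemma \ref{P5} and $\ell\ge3$. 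The left-hand term vanishes because $D^p(\A)$ is reflexive (Lemma \ref{P3}) and hence has depth at least $2$. Therefore $H^1_{\mideal}(I)=0$. Also $H^0_{\mideal}(I)=0$, since $I$ sits inside the torsion-free $\overline S$-module $D^p(\A^H)$. So $\depth I\ge 2$. The same argument works after localizing at any prime of $\overline S$, so $I$ satisfies Serre's condition $S_2$. Together with torsion-freeness, this makes $I$ reflexive over $\overline S$.

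Next, let $P$ be a height-one prime of $\overline S$ and apply Lemma \ref{2.12}-type localization. This replaces $\A$ by the localization $\A_P$, which lives at an $X\in L(\A)$ of codimension $2$ contained in $H$. Localization commutes with $D^p$ by the analogue of Lemma \ref{localfunctor}. The arrangement $\A_P$ has rank $2$. After essentializing (Lemma \ref{lem:2.5}-type reduction), $\rho_p^H$ becomes the restriction map for a rank-$2$ arrangement times a free factor. There, surjectivity is a direct computation using the explicit bases $\theta_E$ and $Q(\B)/\alpha\cdot(\text{a complementary derivation})$ of a rank-$2$ arrangement $\B$. Hence $I_P=D^p(\A^H)_P$ for all primes of height at most one. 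Two reflexive modules with $I\subset D^p(\A^H)$ that agree in codimension one are equal, which gives surjectivity.

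\emph{Dual Euler sequence.} Now assume $\A$ is free and set $I'=j^p_H(\Omega^p(\A\setminus H))$. From \eqref{3-2} we get
\[H^1_{\mideal}(\Omega^p(\A\setminus H))\to H^1_{\mideal}(I')\to H^2_{\mideal}(\Omega^p(\A)).\]
The first term vanishes by reflexivity (Lemma \ref{P3}). The last term vanishes because $\Omega^p(\A)=\bigwedge^p\Omega^1(\A)$ is free (Lemma \ref{P5}). So again $\depth I'\ge2$. The codimension-one comparison is the rank-$2$ case as before. Since $\Omega^p(\A^H)$ is reflexive over $\overline S$, we conclude $I'=\Omega^p(\A^H)$.

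The main obstacle is the codimension-one step. One must check carefully that localization at a height-one prime of $\overline S$ is compatible with $\rho_p^H$ and $j_H^p$, and that these maps commute with the reduction to the essential rank-$2$ part, so that the explicit rank-$2$ computation really applies. The depth estimates themselves are routine once freeness is fed in through Lemma \ref{P5}.
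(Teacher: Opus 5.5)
The paper does not prove this lemma; it only cites [A9, Theorem 1.13] and [A14, Theorem 3.3, Proposition 3.4]. Your argument is therefore a self-contained alternative, and its skeleton is sound.

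From $0\to K\to M\to I\to 0$ with $K$ free ($\ell\ge 3$) and $M$ reflexive, you get $H^0_{\mideal}(I)=H^1_{\mideal}(I)=0$. The same holds after localizing at any prime of $\overline S$ of height $\ge 2$. So any minimal prime of the support of the cokernel of $I\subset D^p(\A^H)$ (resp.\ $\Omega^p(\A^H)$) has height $\le 1$, and checking equality in codimension one suffices.

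For the Euler sequence your codimension-one check also works:
\begin{itemize}
\item $D^p$ and coefficientwise reduction commute with localization, and $(\A^H)_P=(\A_P)^H$.
\item $\A_P$ is either $\{H\}$ or of rank $2$. It is not always rank $2$ as you wrote, but the case $\{H\}$ is trivial.
\item In rank $2$ with $\alpha_H=x_1$, the basis $x_1\partial_{x_1}+x_2\partial_{x_2}$, $\frac{Q(\B)}{x_1}\partial_{x_2}$, $\partial_{x_3},\dots,\partial_{x_\ell}$ has Saito determinant $Q(\B)$. It reduces to $x_2\partial_{x_2}$, $c\,x_2^{\#\B-1}\partial_{x_2}$, $\partial_{x_3},\dots,\partial_{x_\ell}$, whose $p$-fold wedges generate $D^p(\B^H)$.
\end{itemize}

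The gap is the codimension-one step in the dual half. \emph{As before} does not apply there. You checked surjectivity of $\rho^H_p$ on derivations, but here you need surjectivity of a different map $j^p_H$ on forms, and you never say what $j^p_H$ is.

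Exactness of \eqref{3-2} alone cannot settle this. Composing $j^p_H$ with multiplication by a nonzero linear form of $\overline S$ keeps the sequence exact, since $\Omega^p(\A^H)$ is torsion-free, but destroys surjectivity. So you must use the actual map.

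The map $j^p_H$ is restriction of forms to $H$, that is, pullback along $H\hookrightarrow V$. With $\alpha_H=x_1$, one checks directly that its kernel on $\Omega^p(\A\setminus H)$ is $x_1\Omega^p(\A)$. This map plainly commutes with localization, and the rank-$2$ case is then immediate:
\begin{itemize}
\item If every hyperplane of $\B$ contains $\{x_1=x_2=0\}$, then for $L\in\B\setminus H$ the form $d\alpha_L/\alpha_L$ pulls back to $dx_2/x_2$.
\item Hence the generators $\frac{dx_2}{x_2}\wedge dx_F$ and $dx_F$ ($F\subset\{3,\dots,\ell\}$) of $\Omega^p(\B^H)$ lie in the image.
\item For $\B=\{H\}$, the pullback $\Omega^p_V\to\Omega^p_H$ is onto.
\end{itemize}
With this supplied, your depth argument finishes the dual half exactly as it did the Euler half.
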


\subsection{Determinant formula for residue ideals.}
Next, we show that when $\A$ is free then the residue ideal can be computed from $D(\A)=D^1(\A)$.

Let $\A$ be an arrangement in $V$.
Fix a basis $\alpha_1,\dots,\alpha_\ell$ of $V^*$.
For $\theta_1,\dots,\theta_\ell \in \Der(S)$,
where $\theta_i=\sum_{j=1}^\ell f_{ij} \partial_{\alpha_j}$,
its {\bf Saito Matrix} $M(\theta_1,\dots,\theta_\ell)$ with respect to a basis $\alpha_1,\dots,\alpha_\ell$ is the $(\ell \times \ell)$-matrix whose $(i,j)$th entry is $f_{ji}$.
It is well-known that if $\theta_1,\dots,\theta_\ell \in D(\A)$,
then the determinant of $M(\theta_1,\dots,\theta_\ell)$ 
is divisible by the defining polynomial $Q(\A)$ of $\A$ (see e.g.\ \cite[Proposition 4.12]{OT}). Also, for the later use, let us recall the most fundamental result for the freeness.

\begin{theorem}[{Saito's criterion \cite{Sa}. See also \cite[Theorem 4.19]{OT}}]
\label{Saitocriterion}
Let $\theta_1,\ldots,\theta_\ell \in D(\A)$ be homogeneous derivations. Then they form a basis for $D(\A)$ (so $\A$ is free) if and only if 
\begin{itemize}
\item[(1)]
$\theta_1,\ldots,\theta_\ell$ are $S$-independent, and 
\item[(2)]
$\sum_{i=1}^\ell \deg \theta_i=|\A|$.
\end{itemize}
In particular, $\A$ is free if and only if $\det M(\theta_1,\ldots,\theta_\ell)=c Q(\A)$ for some $c \in \K \setminus \{0\}$.

The same holds true for the module of logarithmic differential $1$-forms, i.e., 
if $\omega_1,\ldots,\omega_\ell \in \Omega^1(\A)$ are homogeneous forms, then they form a basis for $\Omega^1(\A)$ (so $\A$ is free) if and only if 
\begin{itemize}
\item[(1)]
$\omega_1,\ldots,\omega_\ell$ are $S$-independent, and 
\item[(2)]
$\sum_{i=1}^\ell \deg \omega_i=-|\A|$.
\end{itemize}

\end{theorem}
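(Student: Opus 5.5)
The plan is to treat the derivation statement first and then reduce the statement about forms to a parallel wedge-product argument, using duality for the converse.

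\textbf{Derivations, ``if''.} Let $\theta_1,\dots,\theta_\ell\in D(\A)$ be $S$-independent and homogeneous with $\sum\deg\theta_i=|\A|$, and put $M=M(\theta_1,\dots,\theta_\ell)$.
\begin{itemize}
\item As recalled above, $\det M$ is divisible by $Q(\A)$. The reason is that $\theta_i(\alpha_H)\in(\alpha_H)$ for all $i$: after a change of basis with $\alpha_1=\alpha_H$, the first column of $M$ is divisible by $\alpha_H$. Distinct $\alpha_H$ are coprime, so $Q(\A)$ divides $\det M$.
\item Independence gives $\det M\neq 0$. Since $\det M$ is homogeneous of degree $\sum\deg\theta_i=|\A|$, we get $\det M=cQ(\A)$ with $c\in\K\setminus\{0\}$.
\item For any $\theta\in D(\A)$, Cramer's rule gives $\det M\cdot\theta=\sum_i D_i\theta_i$. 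Here $D_i$ is the determinant of $M$ with the $i$th column replaced by the coefficients of $\theta$.
\item That replaced matrix is again a Saito matrix of elements of $D(\A)$, so $Q(\A)$ divides $D_i$. Hence $\theta=\sum_i (D_i/cQ(\A))\theta_i$ with polynomial coefficients, and $\theta_1,\dots,\theta_\ell$ form a basis.
\end{itemize}

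\textbf{Derivations, ``only if''.} Let $\theta_1,\dots,\theta_\ell$ be a homogeneous basis. Independence is automatic, so only $\det M=cQ(\A)$ remains.
\begin{itemize}
\item Since $Q(\A)\partial_{x_j}\in D(\A)$ for every $j$, we can write $Q(\A)I=MG$ for a polynomial matrix $G$. So $\det M$ divides $Q(\A)^\ell$, and up to a unit $\det M$ is a product of the $\alpha_H$. It is also divisible by $Q(\A)$.
\item The main obstacle is showing each $\alpha_H$ occurs in $\det M$ with multiplicity exactly one. I would localize at the height-one prime $(\alpha_H)$. By locality (Lemma~\ref{localfunctor}), $D(\A)_{(\alpha_H)}=D(\{H\})_{(\alpha_H)}$.
\item In coordinates with $\alpha_1=\alpha_H$, the module $D(\{H\})$ is free on $\alpha_H\partial_{\alpha_1},\partial_{\alpha_2},\dots,\partial_{\alpha_\ell}$, whose Saito determinant is $\alpha_H$.
\item Two bases of the same free localized module differ by an invertible matrix. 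Therefore $\det M$ equals $\alpha_H$ times a unit of $S_{(\alpha_H)}$, so the $\alpha_H$-adic valuation of $\det M$ is $1$.
\item Hence $\det M=cQ(\A)$, and comparing degrees gives $\sum\deg\theta_i=|\A|$.
\end{itemize}

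\textbf{Forms, ``if''.} Let $\omega_1,\dots,\omega_\ell\in\Omega^1(\A)$ be independent and homogeneous with $\sum\deg\omega_i=-|\A|$.
\begin{itemize}
\item First check that wedge products of logarithmic forms are logarithmic, so that $\omega_1\wedge\cdots\wedge\omega_\ell\in\Omega^\ell(\A)$. Locally along each $H$, every element of $\Omega^1(\A)$ is an $S_{(\alpha_H)}$-combination of $d\alpha_H/\alpha_H$ and regular forms. Lemma~\ref{localfunctor} together with reflexivity (Lemma~\ref{P3}) lets us check the defining condition at height-one primes, which suffices.
\item By Lemma~\ref{P1}, $\omega_1\wedge\cdots\wedge\omega_\ell=\frac{f}{Q(\A)}\,dx_1\wedge\cdots\wedge dx_\ell$ with $f\in S$. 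It is nonzero by independence and has degree $-|\A|$, so $f=c\in\K\setminus\{0\}$.
\item For $\omega\in\Omega^1(\A)$, Cramer's rule in exterior form gives $\omega=\sum_i g_i\omega_i$. Here $g_i$ is the wedge with $\omega$ in the $i$th slot divided by $\omega_1\wedge\cdots\wedge\omega_\ell$.
\item The numerator lies in $\frac1{Q(\A)}\Omega_V^\ell$ and the denominator is $\frac{c}{Q(\A)}dx_1\wedge\cdots\wedge dx_\ell$. So each $g_i$ is a polynomial and the $\omega_i$ form a basis.
\end{itemize}

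\textbf{Forms, ``only if''.} Let $\omega_1,\dots,\omega_\ell$ be a homogeneous basis of $\Omega^1(\A)$. Since $D(\A)=\Hom_S(\Omega^1(\A),S)$ (Lemma~\ref{P3}), the dual basis $\theta_1,\dots,\theta_\ell$ is a homogeneous basis of $D(\A)$ with $\deg\theta_i=-\deg\omega_i$. The derivation case then gives $\sum\deg\omega_i=-|\A|$.

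Finally, the ``in particular'' clause is the combination of the two derivation directions: a homogeneous basis has $\det M=cQ(\A)$, and conversely $\det M=cQ(\A)$ forces independence and the correct degree sum.
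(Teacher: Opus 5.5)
The paper gives no proof of its own here (it simply cites \cite{Sa} and \cite[Theorem 4.19]{OT}). Your argument is correct and is essentially the standard one from those sources: Cramer's rule for sufficiency, the exact $\alpha_H$-adic valuation of the Saito determinant via the local model $D(\{H\})$ for necessity, and, for forms, the exterior Cramer rule together with the duality $D(\A)\cong\Hom_S(\Omega^1(\A),S)$.

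The only thing to tidy up is that Lemma~\ref{localfunctor} is stated for $\Omega^1$ rather than for $D$. The equality $D(\A)_{(\alpha_H)}=D(\{H\})_{(\alpha_H)}$ still follows at once, either by dualizing that lemma, or directly from $D(\A)=\bigcap_{L\in\A}D(\{L\})$, the inclusion $\alpha_L\Der(S)\subset D(\{L\})$, and the fact that $\alpha_L$ is a unit in $S_{(\alpha_H)}$ for $L\neq H$.
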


\begin{lemma}
\label{lem:detformula}
Let $\A$ be a free arrangement in $V$ and
$H \in \A$.
For $\theta=\sum_{i=1}^\ell f_i \partial_{x_i} \in D(\A)$, we write $\overline \theta= \rho_1^H(\theta)=\sum_{i=1}^\ell \overline f_i \partial_{x_i} \in D(\A^H)$.
Then
\[J_\A^H= \left\{ \frac 1 {Q(\A^H)} \det M(\overline \theta_1,\dots,\overline \theta_{\ell-1}) \mid \theta_1,\dots,\theta_{\ell-1} \in D(\A) \right\},\]
where Saito matrices are taken with respect to a fixed basis of $H^*$.
\end{lemma}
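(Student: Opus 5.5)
We would prove Lemma \ref{lem:detformula} by identifying $\Omega^1(\A)$ with $Q(\A)^{-1}D^{\ell-1}(\A)$ via Lemma \ref{P2}, and then reading off the residue map as a coefficient.

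\emph{Choice of basis.} Choose a basis $\alpha_1=\alpha_H,\alpha_2,\dots,\alpha_\ell$ of $V^*$ with $\overline{\alpha_2},\dots,\overline{\alpha_\ell}$ the fixed basis of $H^*$. Work with the corresponding dual derivations $\partial_{\alpha_i}$.

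\emph{Translating the residue.} Under Lemma \ref{P2}, a form $\omega=f\,d\alpha_1+\sum_{i\ge2} f_i\,d\alpha_i$ corresponds, up to sign and a unit, to
\[
\Theta=Q(\A)\Big(f\,\partial_{\alpha_2}\wedge\cdots\wedge\partial_{\alpha_\ell}+\sum_{i\ge2}\pm f_i\,\partial_{\alpha_1}\wedge\cdots\widehat{\partial_{\alpha_i}}\cdots\wedge\partial_{\alpha_\ell}\Big)\in D^{\ell-1}(\A).
\]
Hence $\mathrm{res}^H(\omega)=\overline{Q(\A)f}/Q(\A^H)$ is exactly the image in $\overline S$ of the coefficient of $\partial_{\alpha_2}\wedge\cdots\wedge\partial_{\alpha_\ell}$ in $\Theta$, divided by $Q(\A^H)$. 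Since $\rho^H_{\ell-1}$ reduces coefficients modulo $\alpha_H$, the terms containing $\partial_{\alpha_1}$ are simply discarded by this reading. Thus $J_\A^H$ is the set of all such coefficients $\overline{c}/Q(\A^H)$, as $\Theta$ ranges over $D^{\ell-1}(\A)$.

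\emph{Using freeness.} Because $\A$ is free, Lemma \ref{P5} gives $D^{\ell-1}(\A)=\bigwedge^{\ell-1}D(\A)$. So every $\Theta$ is an $S$-linear combination of wedges $\theta_1\wedge\cdots\wedge\theta_{\ell-1}$ with $\theta_i\in D(\A)$. For such a wedge, the coefficient of $\partial_{\alpha_2}\wedge\cdots\wedge\partial_{\alpha_\ell}$ is the $(\ell-1)$-minor of the coefficient matrix obtained by deleting the $\partial_{\alpha_1}$-row. Its reduction mod $\alpha_H$ is $\det M(\overline\theta_1,\dots,\overline\theta_{\ell-1})$ computed in the basis $\overline{\alpha_2},\dots,\overline{\alpha_\ell}$. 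This follows because $\rho^H_1$ just reduces coefficients, and the $\partial_{\alpha_1}$-component is dropped when viewing $\overline\theta$ as a derivation of $\overline S$.

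\emph{Passing from combinations to single determinants.} The lemma is stated as a set of single determinants, so we must absorb linear combinations. Multilinearity handles this: $g\cdot\det M(\overline\theta_1,\dots)=\det M(\overline{g\theta_1},\dots)$, and sums of determinants can be combined using a free basis $\theta_E=\eta_1,\dots,\eta_\ell$. In that basis, $\bigwedge^{\ell-1}D(\A)$ is free on the wedges $\eta_{\hat k}=\eta_1\wedge\cdots\widehat{\eta_k}\cdots\wedge\eta_\ell$. A general element $\sum_k g_k\eta_{\hat k}$ can be written as the single wedge $\big(\sum_k \pm g_k\eta_k\big)\wedge\eta'$, where $\eta'$ is a suitable wedge of the remaining basis elements. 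More precisely, for any $\ell$-dimensional free module, every element of $\bigwedge^{\ell-1}$ is decomposable, via the isomorphism $\bigwedge^{\ell-1}F\cong F^*$ and the corresponding construction over $S$. We expect this decomposability step, together with bookkeeping of signs and the identification in Lemma \ref{P2}, to be the main obstacle.

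If decomposability over $S$ proves awkward, the fallback is to note that the right-hand side of the lemma is automatically closed under addition. This is because it equals the image of the $\K$-linear, indeed $S$-linear, coefficient map on $\bigwedge^{\ell-1}D(\A)$ restricted to decomposables. One then checks directly that the coefficient map sends the sum $g_k\eta_{\hat k}+g_j\eta_{\hat j}$ to a single determinant, by replacing one vector with a combination of two basis vectors.
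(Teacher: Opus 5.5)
Your translation of $\res^H$ into the coefficient of $\partial_{\alpha_2}\wedge\cdots\wedge\partial_{\alpha_\ell}$ via Lemma \ref{P2}, your use of $D^{\ell-1}(\A)=\bigwedge^{\ell-1}D(\A)$, and your identification of that coefficient with $\det M(\overline\theta_1,\dots,\overline\theta_{\ell-1})$ are exactly the paper's proof. The step that fails is the passage from linear combinations to single determinants.

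It is not true that every element of $\bigwedge^{\ell-1}F$ is decomposable for a free $S$-module $F$ of rank $\ell$. This holds over a field, and the isomorphism $\bigwedge^{\ell-1}F\cong F^*$ does not give it. For a counterexample, take $\ell=3$ and a basis $\eta_1,\eta_2,\eta_3$ of $D(\A)$. The element $x_1\,\eta_2\wedge\eta_3+x_2\,\eta_3\wedge\eta_1+x_3\,\eta_1\wedge\eta_2$ is not of the form $u\wedge v$. Indeed, the coordinates of $u\wedge v$ are the $2\times 2$ minors of a $3\times 2$ matrix, which generate an ideal of height at most $2$ (Eagon--Northcott), while $(x_1,x_2,x_3)$ has height $3$.

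Your fallback does not close the gap either. A linear map restricted to the non-additive set of decomposables need not have additively closed image. The two-term identity $g_k\eta_{\hat k}+g_j\eta_{\hat j}=(\pm g_k\eta_j\pm g_j\eta_k)\wedge\bigwedge_{i\ne j,k}\eta_i$ cannot be iterated, because after one step the combined summand is no longer a multiple of a basis wedge.

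The paper's own proof writes $\rho^H_{\ell-1}\big(\bigwedge^{\ell-1}D(\A)\big)$ directly as the set of decomposable images. So if the lemma is read as ``$Q(\A^H)J_\A^H$ is generated by these determinants'', your first three steps are already the paper's argument. The extra justification you give for the literal set equality, however, is wrong.

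What you are missing is that decomposability upstairs is never needed; you only need single determinants to exhaust the ideal, and this comes from $\theta_E$. Take a basis $\eta_1=\theta_E,\eta_2,\dots,\eta_\ell$ of $D(\A)$ and set $\Delta_k=\det M(\overline\eta_1,\dots,\widehat{\overline\eta_k},\dots,\overline\eta_\ell)$. By your argument, $Q(\A^H)J_\A^H=(\Delta_1,\dots,\Delta_\ell)$. For $\theta_j=\eta_{j+1}+r_j\theta_E$ with arbitrary $r_j\in S$, multilinearity gives (terms containing $\overline{\theta_E}$ twice vanish)
\[
\det M(\overline\theta_1,\dots,\overline\theta_{\ell-1})=\Delta_1+\sum_{j=1}^{\ell-1}\pm\,\overline{r_j}\,\Delta_{j+1}.
\]
So single determinants fill the coset $\Delta_1+(\Delta_2,\dots,\Delta_\ell)$. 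This coset is the whole ideal because $\Delta_1\in(\Delta_2,\dots,\Delta_\ell)$, which you can see in either of two ways:
\begin{itemize}
\item It is Lemma \ref{lem:3.3} read through your dictionary. The basis of $\Omega^1(\A)$ dual to the $\eta_k$ corresponds, up to signs and a nonzero constant, to the $\eta_{\hat k}$, and its last $\ell-1$ members span $\Omega^1_0(\A)$.
\item Directly: for $L\in\A\setminus\{H\}$, write $\overline\eta_k(\overline{\alpha_L})=\overline{\alpha_L}\,p_k$ with $p_1=1$, and place the row $(p_1,\dots,p_\ell)$ on top of the Saito matrix of $\overline\eta_1,\dots,\overline\eta_\ell$. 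Since $\overline{\alpha_L}$ times this row is a $\K$-linear combination of the other rows and $\overline{\alpha_L}\neq 0$ in the domain $\overline S$, the determinant vanishes. Expanding along the top row then gives $\Delta_1=\sum_{k\ge 2}\pm p_k\Delta_k$.
\end{itemize}
When $\A=\{H\}$ the statement is trivial.
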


\begin{proof}
Recall that $J_\A^H$ is the image of the residue map $\mathrm{res}^H$ in the exact sequence
\[
0 \longrightarrow \Omega^1(\A \setminus H)
\lhook\joinrel\longrightarrow
\Omega^1(\A) 
\stackrel{\mathrm{res}^H} \longrightarrow \overline S=S/(\alpha_H)=\Omega^0(\A^H).
\]
The above exact sequence can be re-written as the exact sequence
\[
0 \longrightarrow  Q(\A\setminus H) \cdot  \Omega^1(\A \setminus H)
\stackrel{\times \alpha_H}{\longrightarrow}
Q(\A) \Omega^1(\A) 
\stackrel{\psi} \longrightarrow Q(\A^H) \overline S,
\]
where $\psi(\omega)=Q(\A^H) \mathrm{res}^H(  {Q(\A)}^{-1} \omega )$
for $\omega \in Q(\A) \Omega^1(\A)$.
Let $y_1,\dots,y_{\ell-1}$ be a basis of $H^*$.
By the isomorphism in Lemma \ref{P2},
the above exact sequence agrees with the exact sequence
\[
0 \longrightarrow D^{\ell-1}(\A \setminus H)
\stackrel {\cdot \alpha_H} \longrightarrow
D^{\ell-1}(\A) 
\stackrel{\rho_{\ell-1}^H} \longrightarrow 
D^{\ell-1}(\A^H)=Q(\A^H) \overline S \cdot (\partial_{y_1} \wedge \cdots \wedge \partial_{y_{\ell-1}}).
\]
Hence
\[
J_\A^H\cdot (\partial_{y_1} \wedge \cdots \wedge \partial_{y_{\ell-1}})
=  \frac 1 {Q(\A^H)} \mathrm{Im}\ \! \psi \cdot (\partial_{y_1} \wedge \cdots \wedge \partial_{y_{\ell-1}}) = \frac 1{Q(\A^H)} \mathrm{Im}\ \! \rho_{\ell-1}^H.
\]
Since Lemma \ref{P5} says $D^{\ell-1}(\A)=\bigwedge ^{\ell-1} D(\A)$ by the freeness of $\A$,
we have
\begin{align}
    \label{eq:3.1}
J_\A^H\cdot (\partial_{y_1} \wedge \cdots \wedge \partial_{y_{\ell-1}})
=\frac 1 {Q(\A^H)} \{ \overline \theta_1 \wedge \cdots \wedge \overline \theta_{\ell-1} \mid \theta_1,\dots,\theta_{\ell-1} \in D(\A)\}.
\end{align}
On the other hand,
for $\theta_1,\dots,\theta_{\ell-1} \in D(\A)$,
we have
\begin{align}
    \label{eq:3.2}
    \overline \theta_1 \wedge \cdots \wedge \overline \theta_{\ell-1} = \det M(\overline \theta_1,\dots,\overline \theta_{\ell-1}) \partial_{y_1}\wedge \cdots \wedge \partial_{y_{\ell-1}},
\end{align}
where Saito matrices are taken with respect to $y_1,\dots,y_{\ell-1}$.
Then the equations \eqref{eq:3.1} and \eqref{eq:3.2} prove the desired statement.
\end{proof}

\begin{example}
    \label{ex:3-2}
    Let $\A$ be an arrangement in $\mathbb R^3$ with $Q(\A)=x(y^2-x^2)(z^2-x^2)$.
    Using Theorem \ref{Saitocriterion}, one can easily check that $D(\A)$ is a free $S$-module generated by
    \begin{align*}
        \theta_1 & = x\partial_x + y \partial_y + z \partial_z,\\
        \theta_2 & = (y^2-x^2)\partial_y,\\
        \theta_3 & = (z^2-x^2)\partial_z.
    \end{align*}
    Let $H=H_x$ and $S=\R[x,y,z]$. Then $Q(\A^H)=yz \in S/(x)$ and Lemma \ref{lem:detformula} says
    \[
    J_\A^H=\frac 1 {yz} \cdot I_2 \begin{pmatrix}
        y & y^2-x^2 & 0 \\ z & 0 & z^2-x^2 & 
    \end{pmatrix},
    \]
where $I_k(M)$ denotes the ideal generated by $k$-minors of a matrix $M$.
Since $y^2-x^2=y^2$ and $z^2-x^2=z^2$ in $S/(x)$ we have
\[
\textstyle
J_\A^H=\frac 1 {yz} (y^2z^2,yz^2,-zy^2)=(y,z) \subset S/(x).
\]
Similarly, if we set $L=H_{y-x} \in \A$, then we have $Q(\A^H)=x(z+x)(z-x) \in S/(y-x)$ and
\[
J_\A^L= \frac 1 {x(z+x)(z-x)} \cdot I_2 \begin{pmatrix}
    y=x & y^2-x^2=0 & 0 \\ z & 0 & z^2-x^2
\end{pmatrix}
=S/(y-x).\]
\end{example}

\subsection{Connection to the freeness of $\A \setminus H$ and $\A^H$}

As we explained in the introduction,
if $\A$ is a free arrangement, 
then the projective dimension of the ideal $J_\A^H$ is closely related to the freeness of $\A\setminus H$ and $\A^H$.

\begin{theorem}
    \label{thm:3.4}
    Let $\A$ be a free arrangement in $V$ and $H \in \A$. Then
    \begin{itemize}
        \item [(1)] 
        $\A \setminus H$ is free $\Leftrightarrow$ $\mathrm{pd}_{\overline S} (J_\A^H)=0$ $\Leftrightarrow$ $J_\A^H=\overline S$. 
        \item[(2)] $\A^H$ is free $\Leftrightarrow$ $\pd_{\overline S}(J_\A^H) \leq 1$.
    \end{itemize}
\end{theorem}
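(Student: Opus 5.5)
Throughout, the tool is the short exact sequence \eqref{0-2}, $0\to\Omega^1(\A\setminus H)\to\Omega^1(\A)\to J_\A^H\to 0$, in which $\Omega^1(\A)$ is free. For (2) we will also use the dual Euler sequence \eqref{3-2} and the surjectivity statement of Lemma \ref{P4}.

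\textbf{Part (1).} We prove the three conditions equivalent in a cycle.

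First, suppose $\A\setminus H$ is free. View \eqref{0-2} as a sequence of $S$-modules. The ideal $J_\A^H$ is an $\overline S$-module, so it is annihilated by $\alpha_H$ and $\pd_S J_\A^H\ge 1$. On the other hand, the sequence presents $J_\A^H$ as the cokernel of an injective map between free $S$-modules, both of rank $\ell$, so $\pd_S J_\A^H\le 1$. By the change-of-rings formula $\pd_S M=\pd_{\overline S}M+1$ for $\overline S$-modules, with $\alpha_H$ a nonzerodivisor on $S$, we get $\pd_{\overline S}J_\A^H=0$.

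Second, if $\pd_{\overline S}J_\A^H=0$, then $J_\A^H$ is a graded projective, hence free, $\overline S$-module of rank one. So it is principal, say $J_\A^H=(g)$. We will argue that $g$ is a unit by a height bound: the localization Lemma \ref{2.12} gives $(J_\A^H)_P=(J_{\A_P}^H)_P$. For a height-one prime $P=(\alpha_L)$ of $\overline S$, with $L\in\A^H$, the arrangement $\A_P$ has rank $2$. Rank-$2$ arrangements are free, so their deletions are free, and Theorem \ref{thm:3.4}(1) in rank $2$ can be checked directly from Lemma \ref{lem:detformula}: that lemma gives $J=\overline S$ there. Hence $J_\A^H$ is not contained in any height-one prime, so $\mathrm{ht}\,J_\A^H\ge 2$ unless $J_\A^H=\overline S$. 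A principal ideal has height at most $1$, so $J_\A^H=\overline S$.

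Third, if $J_\A^H=\overline S$, then \eqref{0-2} gives $\pd_S\Omega^1(\A\setminus H)\le \max(0,\pd_S\overline S-1)=0$, so $\A\setminus H$ is free.

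The rank-$2$ verification is the only real computation in part (1). Writing $H=H_{\alpha_1}$, the matrix $M(\overline\theta_E,\overline\theta_2)$ restricted to $H$ has determinant equal to a nonzero constant times $Q(\A^H)$. This follows because $\A^H$ in rank $2$ is the single point (a hyperplane in $H$), together with Saito's criterion on degrees.

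\textbf{Part (2).} Combining the dual Euler sequence \eqref{3-2} for $p=1$ with Lemma \ref{P4} (freeness of $\A$ makes $j_H^1$ surjective) gives a short exact sequence
\[0\to\Omega^1(\A)\xrightarrow{\cdot\alpha_H}\Omega^1(\A\setminus H)\to\Omega^1(\A^H)\to 0.\]
Hence $\pd_S\Omega^1(\A^H)\le \max(1,\pd_S\Omega^1(\A\setminus H)+1)$ and $\pd_S\Omega^1(\A\setminus H)\le\max(0,\pd_S\Omega^1(\A^H)-1)$. Change of rings again gives $\pd_S\Omega^1(\A^H)=\pd_{\overline S}\Omega^1(\A^H)+1$. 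Together these yield
\[\A^H\text{ free}\iff \pd_S\Omega^1(\A\setminus H)\le 1.\]
The reverse direction needs a little care when $\pd_S\Omega^1(\A\setminus H)=1$. We handle it with the long exact $\Tor$ sequence: $\Tor_2^S(\Omega^1(\A^H),\K)$ sits between $\Tor_2^S(\Omega^1(\A\setminus H),\K)=0$ and $\Tor_1^S(\Omega^1(\A),\K)=0$, so it vanishes.

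Finally, from \eqref{0-2} with $\Omega^1(\A)$ free, $\pd_S J_\A^H=\pd_S\Omega^1(\A\setminus H)+1$ whenever the latter is at least $1$, again via $\Tor$. Using $\pd_S J=\pd_{\overline S}J+1$, we get $\pd_{\overline S}J_\A^H\le1\iff\pd_S\Omega^1(\A\setminus H)\le 1$, which proves (2).

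I expect the main obstacle to be the middle step of (1), namely turning projectivity of $J_\A^H$ into $J_\A^H=\overline S$. If the height argument turns out to be awkward, I would instead avoid it by a depth and Auslander--Buchsbaum computation. If $J_\A^H\cong\overline S(-d)$ is free, then \eqref{0-2} gives $\pd_S\Omega^1(\A\setminus H)\le 0$, because $\Tor_2^S(J,\K)=0$ and the connecting map analysis goes through. So $\A\setminus H$ is free, which closes the cycle without the height bound.
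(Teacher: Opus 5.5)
Your proof is correct in its main line, and part (2) is the paper's own argument: the dual Euler sequence made right-exact by Lemma~\ref{P4}, followed by $\pd_{\overline S}J_\A^H=\pd_S\Omega^1(\A\setminus H)$ (Lemma~\ref{lem:3.5}).

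Part (1) takes a different route at its one non-formal step, $\pd_{\overline S}J_\A^H=0\Rightarrow J_\A^H=\overline S$. The paper assumes $\A\setminus H$ is free and uses the other half of Lemma~\ref{P4}. That is the surjectivity of $\rho^H_{\ell-1}\colon D^{\ell-1}(\A)\to D^{\ell-1}(\A^H)=Q(\A^H)\overline S\,\partial_{y_1}\wedge\cdots\wedge\partial_{y_{\ell-1}}$, a cited result from \cite{A9}. It combines this with $\mathrm{Im}\,\rho^H_{\ell-1}=Q(\A^H)J_\A^H\,\partial_{y_1}\wedge\cdots\wedge\partial_{y_{\ell-1}}$ from the proof of Lemma~\ref{lem:detformula}. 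That argument is two lines, but it rests on the external theorem. You argue local-to-global instead, using Lemma~\ref{2.12}, the trivial rank-$2$ case, and Krull's principal ideal theorem. This avoids $\rho^H_{\ell-1}$ entirely and proves more, with no freeness hypothesis on $\A$: $J_\A^H$ is always either $\overline S$ or of height $\ge 2$, so it is never a proper principal ideal. In the paper this fact only appears later, as Lemma~\ref{lem:4.5}, and is deduced from Theorem~\ref{thm:3.4}(1) via Lemma~\ref{lem:4.2}. You reverse that logical order.

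A few repairs are needed.

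\emph{The second inequality in (2) is false.} You wrote $\pd_S\Omega^1(\A\setminus H)\le\max(0,\pd_S\Omega^1(\A^H)-1)$. Here the free module $\Omega^1(\A)$ is the submodule, not the middle term. The correct bounds are $\pd_S\Omega^1(\A\setminus H)\le\max(0,\pd_S\Omega^1(\A^H))$ and $\pd_S\Omega^1(\A^H)\le\max(1,\pd_S\Omega^1(\A\setminus H))$. Your version would make $\A\setminus H$ free whenever $\A^H$ is free, which fails for Example~\ref{ex:3-2} with $H=H_x$. The correct bounds give your stated equivalence directly, with no need for the $\Tor$ patch.

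\emph{Other height-one primes.} In (1) you must also handle height-one primes $P$ not of the form $(\alpha_L)$ with $L\in\A^H$. For these, $\A_P=\{H\}$ and $J^H_{\{H\}}=\overline S$, so the case is trivial, but it should be stated.

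\emph{The rank-$2$ check.} Present it as a direct computation, not as ``Theorem~\ref{thm:3.4}(1) in rank $2$''. After Corollary~\ref{2.6}, reduce to $\ell=2$. There $\overline{\theta_E}=y\,\partial_y$ and $Q(\A_P^H)=y$, so Lemma~\ref{lem:detformula} gives $1\in J_{\A_P}^H$. Only $\theta_E$ is needed, and Saito's degree condition plays no role.

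\emph{The fallback does not close the cycle.} Showing that a free $J_\A^H\cong\overline S(-d)$ forces $\A\setminus H$ to be free only recovers the converse of your first step. It does not show that the principal ideal $J_\A^H$ is the unit ideal, which is the entire content of the second equivalence. So the height argument, or the paper's appeal to Lemma~\ref{P4}, cannot be dropped.
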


Before the proof, we note the following fact.

\begin{lemma} 
\label{lem:3.5}
Let $\A$ be a free arrangement in $V$ and $H \in \A$. Then
\[\pd_{\overline S} (J_\A^H)=\pd_S(\Omega^1(\A\setminus H)).\]
\end{lemma}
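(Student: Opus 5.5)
We use the short exact sequence \eqref{0-2},
\[
0 \longrightarrow \Omega^1(\A\setminus H) \longrightarrow \Omega^1(\A) \stackrel{\res^H}{\longrightarrow} J_\A^H \longrightarrow 0,
\]
together with the freeness of $\Omega^1(\A)$. The plan is to compare projective dimensions over $S$ and over $\overline S$, and then run the long exact sequence of $\Tor$.

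First, since $\alpha_H$ is a nonzero divisor on $S$ and $J_\A^H$ is an $\overline S$-module, we have $\pd_S(J_\A^H)=\pd_{\overline S}(J_\A^H)+1$. This is the standard change of rings: an $\overline S$-free resolution of a finitely generated graded $\overline S$-module $M$ lifts to an $S$-resolution by tensoring with the Koszul complex $0\to S\xrightarrow{\alpha_H} S\to 0$. Equivalently, $\Tor^S_i(M,\K)$ fits into the sequence computed from $\Tor^{\overline S}$ after base change, and because $\alpha_H$ is a linear form the graded Betti numbers satisfy $\beta^S_i(M)=\beta^{\overline S}_i(M)+\beta^{\overline S}_{i-1}(M)$. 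Since $J_\A^H\neq 0$ is a nonzero $\overline S$-module (it is nonzero, e.g., because $\Omega^1(\A\setminus H)\subsetneq \Omega^1(\A)$, as $d\alpha_H/\alpha_H$ has nonzero residue), this gives $\pd_S J_\A^H = \pd_{\overline S}J_\A^H+1 \geq 1$.

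Next, apply $\Tor^S_\bullet(-,\K)$ to the short exact sequence. Because $\Omega^1(\A)$ is free, $\Tor^S_i(\Omega^1(\A),\K)=0$ for $i\ge 1$, so
\[
\Tor^S_{i+1}(J_\A^H,\K)\cong \Tor^S_i(\Omega^1(\A\setminus H),\K)\quad (i\ge 1),
\]
and there is an injection $\Tor^S_1(J_\A^H,\K)\hookrightarrow \Tor^S_0(\Omega^1(\A\setminus H),\K)$, which is nonzero because $J_\A^H$ is not $S$-free (it is torsion, being killed by $\alpha_H$). Hence $\pd_S J_\A^H = \pd_S \Omega^1(\A\setminus H)+1$ in all cases. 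For the case $\pd_S\Omega^1(\A\setminus H)=0$, the isomorphisms give $\Tor^S_{i}(J_\A^H,\K)=0$ for $i\ge2$ while $\Tor_1^S\neq0$, so $\pd_S J_\A^H=1$, consistent with the formula. Combining this with the first step yields $\pd_{\overline S}(J_\A^H)=\pd_S(\Omega^1(\A\setminus H))$.

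The only delicate point is the change-of-rings equality $\pd_S M=\pd_{\overline S}M+1$ for graded $\overline S$-modules $M$. If needed, this can be justified directly: take a minimal graded $\overline S$-resolution $F_\bullet$ of $M$. Then $\Tor^S_i(M,\K)$ is computed by $F_\bullet\otimes_S \K$ via the spectral sequence from $\Tor^S_q(F_p,\K)=\Tor^S_q(\overline S,\K)^{\beta_p}$, which is nonzero only for $q=0,1$. The top term $\Tor^S_{p+1}$ with $p=\pd_{\overline S}M$ survives, because the differentials of a minimal resolution vanish after tensoring with $\K$. Alternatively, one can cite the standard fact from Auslander–Buchsbaum, $\depth_S M=\depth_{\overline S}M$, which gives $\pd_S M=\ell-\depth M=(\ell-1-\depth M)+1$. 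This depth argument is the cleanest route and the one I would use.
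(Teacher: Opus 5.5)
Your proof is correct and follows essentially the paper's route: the paper also deduces the lemma from the exact sequence $0\to\Omega^1(\A\setminus H)\to\Omega^1(\A)\to J_\A^H\to 0$ with $\Omega^1(\A)$ free, combined with $\pd_S M=\pd_{\overline S}M+1$ for finitely generated $\overline S$-modules $M$. You additionally spell out the $\Tor$ long exact sequence bookkeeping (including the case $\pd_S\Omega^1(\A\setminus H)=0$ and the check that $J_\A^H\neq 0$), which the paper leaves implicit.
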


\begin{proof}
The statement follows from the exact sequence
\[0 \longrightarrow \Omega^1(\A\setminus H)
\lhook\joinrel\longrightarrow
\Omega^1(\A)
\longrightarrow
J_\A^H \longrightarrow 0
\]
together with the fact that $\pd_S(M)=\pd_{\overline S}(M)+1$ for any finitely generated $\overline S$-module $M$.
\end{proof}

\begin{proof}[Proof of Theorem \ref{thm:3.4}]
(1) This is known for experts, but we give a proof for completeness by using results above. The first equivalence is an immediate consequence of Lemma \ref{lem:3.5}.
We assume $\pd_{\overline S}(J_\A^H)=0$, equivalently $\A \setminus H$ is free, and prove $J_\A^H=\overline S$. 
In this case, since we already know that $\A\setminus H$ is free,
by Lemma \ref{P4} we have the exact sequence
\[
0 \longrightarrow D^{\ell-1}(\A \setminus H) \longrightarrow D^{\ell-1}(\A)
\stackrel{\rho_{\ell-1}^H} \longrightarrow D^{\ell-1}(\A^H) \longrightarrow 0.
\]
Let $y_1,\dots,y_{\ell-1}$ be a basis of $H^*$.
By Lemma \ref{P1} we have
\begin{align}
\label{eq.3.9.1}
D^{\ell-1}(\A^H)=Q(\A^H)\mathrm{Der}^{\ell-1}(\overline S)= 
(Q(\A^H) \cdot \overline S) \cdot (\partial_{y_1} \wedge \cdots \wedge \partial_{y_{\ell-1}}).
\end{align}
On the other hand,
when $\A$ is free, we see in the proof of Lemma \ref{lem:detformula} that
\begin{align}
\label{eq.3.9.2}
\frac 1 {Q(\A^H)} \mathrm{Im}(\rho_{\ell-1}^H)
= J_\A^H \cdot (\partial_{y_1} \wedge \cdots \wedge \partial_{y_{\ell-1}}).
\end{align}
Since $\rho_{\ell-1}^H$ is surjective,
the equations \eqref{eq.3.9.1} and \eqref{eq.3.9.2} 
prove $J_\A^H=\overline S$.

(2)
Consider the dual Euler sequence
\begin{align*}
0 \longrightarrow \Omega^1(\A) \longrightarrow \Omega^1(\A\setminus H) \longrightarrow \Omega^1(\A^H) \longrightarrow 0,
\end{align*}
where the right exactness follows from Lemma \ref{P4}.
Since $\Omega^1(\A)$ is a free $S$-module, by the above exact sequence we have
\begin{align}
    \label{3-7}
    \pd_S\big(\Omega^1(\A\setminus H)\big) \leq 1
    \Leftrightarrow \pd_S\big(\Omega^1(\A^H)\big)\leq 1
    \Leftrightarrow \pd_{\overline S}\big(\Omega^1(\A^H)\big)= 0.
\end{align}
Now by Lemma \ref{lem:3.5} it follows that
$\Omega^1(\A^H)$ is a free $\overline S$-module if and only if $\pd_{\overline S} (J_\A^H) \leq 1$ as desired.
\end{proof}

\begin{example}
Consider the arrangement $\A$ with $Q(\A)=x(y^2-x^2)(z^2-x^2)$ in Example \ref{ex:3-2}
and let $H=H_x$ and $L=H_{y-x}$.
We see that $J_\A^H=(y,z)$ and this ideal clearly has projective dimension $1$.
This reflects the fact that $\A \setminus H$ is not free but $\A^H$ is free.
On the other hand, we see that $J_\A^L=\R[x,y,z]/(y-x)$. This reflects the fact that $\A \setminus L$ is free.
\end{example}

\begin{example}
\label{exdim4}
Let $\A$ be the arrangement in $\R^4$ with 
\[
Q(\A)=x(y^2-x^2)(z^2-x^2)(w^2-x^2)(y+z)(y+w)(z+w).
\]
Let $H=H_x$ and $L=H_{y-x}$.
Below is the description of the ideals $J_\A^H$ and $J_\A^L$ (we will explain how we computed these ideals in Example \ref{ex4.7}):
\[
J_\A^H=(y,z,w)
\ \ 
\mbox{ and }
\ \ 
J_\A^L=(x+z,x+w).\]
It is clear that the ideal $J_\A^H$ has projective dimension $2$ and the ideal $J_\A^L$ has projective dimension $1$.
These reflect the fact that $\A^H$ is not free but $\A^L$ is free.
\end{example}

\begin{example}[Edelman--Reiner arrangement]
\label{ERexample}
Let $\A$ be the arrangement in $\R^5$ with
\[
\textstyle
Q(\A)=\big (\prod_{i=1}^5 x_i \big)\big (\prod(x_1 \pm x_2 \pm x_3\pm x_4 \pm x_5)\big),\]
where $\pm$ takes all possible combinations and $\A$ consists of 21 hyperplanes.
Let $H=H_{x_1+\cdots +x_5}$ and $L=H_{x_1}$.
It was shown by Edelman and Reiner \cite{ER} that $\A$ is free but $\A^H$ is not free.
Here are ideals $J_\A^H$ and $J_\A^L$ computed by Macaulay2 \cite{M2}:
\[
J_\A^H=(x_2,x_3,x_4,x_5),
\]
{\small
\[
J_\A^L=\big( (x_2^2\!-\!x_3^2)(x_2^2\!+\!x_3^2\!-\!x_4^2\!-\!x_5^2),
(x_2^2\!-\!x_4^2)(x_2^2\!-\!x_3^2\!+\!x_4^2\!-\!x_5^2),
(x_2^2\!-\!x_5^2)(x_2^2\!-\!x_3^2\!-\!x_4^2\!+\!x_5^2) \big).
\]
}

\noindent
The ideal $J_\A^H$ clearly has projective dimension $3$.
On the other hand,
one can check that
the projective dimension of $J_\A^L$ is equal to $1$, so $\A^L$ is free.
\end{example}

\section{Radicals of residue ideals}

In general, residue ideals could have complicated generators.
However, 
these ideals often admit geometrically meaningful primary decompositions.
Let us explain this by using the Edelman--Reiner arrangement.
Let $\A$ and $L=H_{x_1}$ be as in  Example \ref{ERexample}.
The ideal $J_\A^L$
is generated by the three polynomials
{\small
\[
(x_2^2-x_3^2)(x_2^2+x_3^2-x_4^2-x_5^2),
(x_2^2-x_4^2)(x_2^2-x_3^2+x_4^2-x_5^2),
(x_2^2-x_5^2)(x_2^2-x_3^2-x_4^2+x_5^2).
\]
}

\noindent
This ideal actually has the following primary decomposition
{\Small
\begin{align*}
& \big((x_2^2-x_3^2)(x_2^2+x_3^2-x_4^2-x_5^2),
(x_2^2-x_4^2)(x_2^2-x_3^2+x_4^2-x_5^2),
(x_2^2-x_5^2)(x_2^2-x_3^2-x_4^2+x_5^2)\big)\\
&=(x_2-x_3,x_4-x_5)\cap(x_2-x_3,x_4+x_5)\cap(x_2+x_3,x_4-x_5)\cap(x_2+x_3,x_4+x_5)\\
    &\hspace{14pt} \cap (x_2-x_4,x_3-x_5)\cap(x_2-x_4,x_3+x_5)\cap(x_2+x_4,x_3-x_5)\cap(x_2+x_4,x_3+x_5)\\
    &\hspace{14pt} \cap (x_2-x_5,x_3-x_4)\cap(x_2-x_5,x_3+x_4)\cap(x_2+x_5,x_3-x_4)\cap(x_2+x_5,x_3+x_4),
\end{align*}
}

\noindent
which shows that this ideal is a radical ideal and its zero set is the union of 12 codimension 2 subspaces in $L$.
In this and next sections,
we study radicals and primary decompositions of residue ideals and show that this phenomenon always happens in a certain sense.

\subsection{Radicals of residue ideals}
Recall that, for an arrangement $\A$ in $V$ and $H \in \A$,
we defined
\[\Xi(\A,H)=\{ X \in L(\A^H) \mid J_{\A_X}^H \ne \overline S\}.\]
We also write $\Xi_k(\A,H)=\{X \in \Xi(\A,H) \mid \dim X=\ell-k\}$
and write $\Xi^{\max}(\A,H)$ for the set of maximal elements in $\Xi(\A,H)$ with respect to inclusion.
We note that $\Xi(\A,H)$ is an ideal of $L(\A^H)$, that is, it satisfies,
for $X,Y \in L(\A^H)$, if $X \in \Xi(\A,H)$ and $Y \subset X$ then one has $Y \in \Xi(\A,H)$ by Lemma \ref{2.8}.
Our main result in this section is the following which shows that the zero set of $J_\A^H$ is the union of the subspaces in $\Xi(\A,H)$.

\begin{theorem}
    \label{thm:4.1}
    Let $\A$ be an arrangement in $V$ and $H \in \A$.
    Then
    \[\sqrt{J_\A^H}=\bigcap_{X \in \Xi(\A,H)} P_X=\bigcap_{X \in \Xi^{\max}(\A,H)} P_X.\]
\end{theorem}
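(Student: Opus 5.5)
We prove the two containments of $\sqrt{J_\A^H}=\bigcap_{X\in\Xi(\A,H)}P_X$ separately. The second equality is formal: if $Y\subset X$ then $P_X\subset P_Y$. Since $\Xi(\A,H)$ is finite and closed under passing to smaller subspaces (Lemma \ref{2.8}), every $Y\in\Xi(\A,H)$ lies in some $X\in\Xi^{\max}(\A,H)$. So the intersection over all of $\Xi(\A,H)$ equals the intersection over its maximal elements.

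\emph{The inclusion $\sqrt{J_\A^H}\subset\bigcap P_X$.} It suffices to show $J_\A^H\subset P_X$ for each $X\in\Xi(\A,H)$, because $P_X$ is prime. By Lemma \ref{2.8}, $J_\A^H\subset J_{\A_X}^H$, so we only need $J_{\A_X}^H\subset P_X$. By Corollary \ref{2.6}, $J_{\A_X}^H$ is generated by homogeneous polynomials in $\mathrm{sym}((H/X)^*)$, which is a polynomial ring in coordinates vanishing on $X$. Since $J_{\A_X}^H\neq \overline S$, these generators are homogeneous of positive degree in those coordinates. Hence they lie in the ideal generated by $(H/X)^*$, and that ideal is exactly $P_X$.

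\emph{The inclusion $\bigcap P_X\subset\sqrt{J_\A^H}$.} Let $P$ be a prime of $\overline S$ containing $J_\A^H$. It suffices to find $X\in\Xi(\A,H)$ with $P_X\subset P$; then $\bigcap_X P_X\subset P$ for every minimal prime $P$ of $J_\A^H$, which gives the claim.

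\begin{itemize}
\item[(a)] Set $\A_P=\{L\in\A\mid \alpha_L\in\tilde P\}$ and $X=X_{\A_P}$. Then $H\in\A_P$ and $X\in L(\A^H)$.
\item[(b)] $X\in\Xi(\A,H)$. By Lemma \ref{2.12}, $(J_{\A_P}^H)_P=(J_\A^H)_P\subset P\overline S_P\neq\overline S_P$. So $J_{\A_X}^H=J_{\A_P}^H\neq\overline S$.
\item[(c)] $P_X\subset P$. The ideal $P_X$ is generated by the linear forms $\overline{\alpha_L}$ for $L\in\A_P$. These span $(H/X)^*$, because $X$ is cut out by the $\alpha_L$ with $L\in\A_P$. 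Each such $\overline{\alpha_L}$ lies in $P$, since $\alpha_L\in\tilde P$.
\end{itemize}

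The step needing the most care is the localization identity of Lemma \ref{2.12} in step (b). We must check that $\A_P$ is the full localization at $X$: any hyperplane containing $X$ has $\alpha_L$ in the span of the $\alpha_{L'}$ with $L'\in\A_P$, so $\alpha_L\in\tilde P$. This is what makes $J_{\A_P}^H=J_{\A_X}^H$.
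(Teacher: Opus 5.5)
Your proof is correct. It uses the same ingredients as the paper (Lemma \ref{2.8}, Lemma \ref{2.12} and Corollary \ref{2.6}), but organizes them differently.

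The paper first passes to an algebraically closed field and argues with closed points. For $\bm p\in H$ it takes the smallest flat $X$ through $\bm p$ and shows that $(J_\A^H)_{\bm p}\neq\overline S_{\bm p}$ if and only if $X\in\Xi(\A,H)$. It concludes that the zero set of $J_\A^H$ is $\bigcup_{X\in\Xi(\A,H)}X$, and then reads off the radical via the Nullstellensatz.

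You instead prove the two inclusions separately. The easy inclusion comes directly from Lemma \ref{2.8} and homogeneity, with no localization at all. For the hard inclusion you run the same localization step at an arbitrary prime $P\supseteq J_\A^H$ and the flat $X_{\A_P}$ it determines, and then use $\sqrt{J}=\bigcap_{P\supseteq J}P$. This works over any field. You therefore avoid the base change to $\overline{\K}$ and the facts it tacitly needs: that $J_\A^H$ and $\Xi(\A,H)$ are compatible with extension of scalars, and that the radical contracts back correctly.

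Two of your steps fill in details that the paper only asserts or uses tacitly:
\begin{itemize}
\item the explicit check that $\A_P=\A_{X_{\A_P}}$;
\item the use of homogeneity to place the proper ideal $J_{\A_X}^H$ inside $P_X$.
\end{itemize}

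A minor remark on the second equality: it needs only the finiteness of $\Xi(\A,H)$ and the inclusion $P_X\subset P_Y$ for $Y\subset X$. Downward closure of $\Xi(\A,H)$ plays no role there.
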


\begin{proof}
The second equality is obvious. So we prove the first equation.
Recall that $P_X \subset \overline S$ denotes the defining ideal of the subspace $X$ of $H$.
Since the primeness of $P_X$ is preserved under field extensions,
to prove the statement,
we may assume that $\K$ is algebraically closed.
For a point $\bm p \in H$, we write $M_{\bm p}$ for the localization of an $\overline S$-module $M$ at the maximal ideal $\mideal_{\bm p} \subset \overline S$ corresponding to $\bm p$ and write $\A_{\bm p}=\A_{\mideal_{\bm p}}=\{ L \in \A \mid \bm p \in L\}$.
Since the zero set of $J_\A^H$ is given by
$
\{ \bm p \in H \mid (J_\A^H)_{\bm p} \ne \overline S_{\bm p}\},
$
to prove the statement,
it suffices to prove 
\begin{align}
\label{eq:4-1}
     \left( J_\A^H\right )_{\bm p} \ne \overline S_{\bm p} \ \ \ \mbox{ if and only if }\ \ \ \bm p \in \bigcup_{Y \in \Xi(\A,H)} Y
\end{align}
for any $\bm p \in H$.
We note that $J_{\{H\}}^H=\overline S$ so $H$ cannot be contained in $\Xi(\A,H)$.

Let $\bm p \in H$ and let $X$ be the smallest subspace of $L(\A)$ containing $\bm p$.
Then, since $\Xi(\A,H)$ is an ideal of $L(\A^H)$,
one has $\bm p \in \bigcup_{Y \in \Xi(\A,H)} Y$ if and only if $X \in \Xi(\A,X)$.
Also by the choice of $X$ we have $\A_{\bm p}=\A_X$ and 
\[ (J_\A^H)_{\bm p}=(J_{\A_X}^H)_{\bm p}\]
by Lemma \ref{2.12}.
Also, since $J_{\A_X}^H$ admits a generating set consisting of polynomials $f$ vanishing at $\bm p$ unless $J_{\A_X}^H=\overline S$ by Corollary \ref{2.6},
$J_{\A_X}^H$ does not contain a unit of $\overline S_{\bm p}$ unless $J_{\A_X}^H=\overline S$.
Hence
\[
(J_\A^H)_{\bm p} \ne \overline S_{\bm p}
\Leftrightarrow
(J_{\A_X}^H)_{\bm p} \ne \overline S_{\bm p}
\Leftrightarrow
J_{\A_X}^H \ne \overline S
\Leftrightarrow
X \in \Xi(\A,H).
\]
By the choice of $X$,
this proves the desired equivalence \eqref{eq:4-1}.
\end{proof}

In the rest of this section,
to have a better understanding of the description in Theorem \ref{thm:4.1}, we study properties of $\Xi(\A,H).$
We first give another description of $\Xi(\A,H)$ when $\A$ is a free arrangement.
The next lemma is a reformulation of Theorem \ref{thm:3.4}(1).

\begin{lemma}
    \label{lem:4.2}
    Let $\A$ be an arrangement, $H \in \A$ and $X \in L(\A^H)$.
    If $\A_X$ is free, then $X \in \Xi(\A,H)$ if and only if $\A_X\setminus H$ is not free.
\end{lemma}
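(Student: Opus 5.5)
This follows directly from Theorem \ref{thm:3.4}(1), applied to $\A_X$ in place of $\A$. First we check that $H \in \A_X$. Since $X \in L(\A^H)$, we have $X \subseteq H$, so $H$ belongs to the localization $\A_X=\{L\in\A \mid X\subseteq L\}$. Hence the pair $(\A_X,H)$ consists of an arrangement and one of its hyperplanes, and the residue ideal $J_{\A_X}^H \subset \overline S$ is defined.

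By definition, $X \in \Xi(\A,H)$ means $J_{\A_X}^H \ne \overline S$. Since $\A_X$ is free by hypothesis, Theorem \ref{thm:3.4}(1) applied to $\A_X$ and $H$ gives
\[
\A_X\setminus H \mbox{ is free} \ \Leftrightarrow\ J_{\A_X}^H=\overline S.
\]
Negating both sides yields $X\in\Xi(\A,H) \Leftrightarrow \A_X\setminus H$ is not free, which is the claim.

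One point needs a remark: $\A_X$ is an arrangement in the same space $V$, not in $V/X$, so $\overline S=S/(\alpha_H)$ is the same ring in Theorem \ref{thm:3.4} as in the definition of $\Xi(\A,H)$, and no essentialization (Corollary \ref{2.6}) is required. This bookkeeping is the only possible obstacle. If one preferred to work with an essential arrangement, Corollary \ref{2.6} says that $J_{\A_X}^H=\overline S$ holds if and only if the corresponding equality holds in the essentialization, and freeness is also preserved under essentialization.

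Finally, when $\A$ itself is free, the hypothesis that $\A_X$ is free holds automatically for every $X$ by Lemma \ref{localizationfree}. So in that case $\Xi(\A,H)=\{X\in L(\A^H)\mid \A_X\setminus H \mbox{ is not free}\}$, which is the description announced in the introduction.
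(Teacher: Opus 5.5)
Your proposal is correct and matches the paper, which presents this lemma as a direct reformulation of Theorem~\ref{thm:3.4}(1) applied to the free arrangement $\A_X$, exactly as you do. Your check that $H \in \A_X$ (since $X \subseteq H$) and that $\overline S$ is the same ring in both places is the only bookkeeping needed.
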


For a free arrangement $\A$ and $H \in \A$, define
\[
\NFT(\A,H)=\{X \in L(\A^H) \mid \A_X \setminus H \mbox{ is not free}\}.
\]
We also define $\NFT_k(\A,H)$ and $\NFT^{\max}(\A,H)$ in the same way as for $\Xi(\A,H)$.
Since the freeness of $\A$ and $\A\setminus H$ implies the freeness of $\A^H$ by Terao's restriction theorem or Theorem \ref{thm:3.4},
the set $\NFT(\A)$ is the set of all subspaces $X \in L(\A^H)$ such that $(\A_X\setminus H,\A_X,\A_X^H)$ is \underline not a \underline free \underline triple.
Since a localization of a free arrangement is again free by 
Lemma \ref{localizationfree},
the set $\Xi(\A,H)$ is nothing but the set $\NFT(\A,H)$ by Lemma \ref{lem:4.2}.

\begin{cor}
    \label{4.3}
    If $\A$ is a free arrangement, then for any $H \in \A$, one has
    \[ \Xi(\A,H)=\NFT(\A,H).\]
\end{cor}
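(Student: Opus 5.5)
The plan is to prove the two inclusions $\Xi(\A,H)\subseteq \NFT(\A,H)$ and $\NFT(\A,H)\subseteq\Xi(\A,H)$ together, one element $X \in L(\A^H)$ at a time, by reducing to Lemma \ref{lem:4.2}. By definition, $X \in \Xi(\A,H)$ means $J_{\A_X}^H \ne \overline S$, and $X \in \NFT(\A,H)$ means $\A_X\setminus H$ is not free. So it suffices to show that, for each $X \in L(\A^H)$,
\[
J_{\A_X}^H \ne \overline S \iff \A_X \setminus H \text{ is not free}.
\]

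First, I check that $\A_X$ is free. Since $X \in L(\A^H)$, it is an intersection of hyperplanes of $\A$ (namely $H$ and some $L'$ with $L'\cap H$ in $\A^H$), so $X\in L(\A)$. Because $\A$ is free, Lemma \ref{localizationfree} then gives that $\A_X$ is free. Also $H\in\A_X$, since $X\subseteq H$, so the residue ideal $J_{\A_X}^H$ makes sense.

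Next, I apply Theorem \ref{thm:3.4}(1) to the free arrangement $\A_X$ and the hyperplane $H\in\A_X$. It says that $\A_X\setminus H$ is free if and only if $J_{\A_X}^H=\overline S$. Taking the negation of both sides gives exactly the displayed equivalence, which is the content of Lemma \ref{lem:4.2}. Since $X$ was arbitrary, the two sets coincide.

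None of these steps is a real obstacle. The only point needing care is confirming that $X\in L(\A^H)$ lies in $L(\A)$, so that Lemma \ref{localizationfree} applies; this is immediate from the description of the elements of $L(\A^H)$ as intersections of hyperplanes of $\A$.
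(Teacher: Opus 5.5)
Your proposal is correct and matches the paper's argument: the paper also uses Lemma \ref{localizationfree} to get freeness of $\A_X$, then applies Lemma \ref{lem:4.2} (which is Theorem \ref{thm:3.4}(1) applied to $\A_X$) to identify the two sets elementwise. Your extra check that $X\in L(\A^H)$ lies in $L(\A)$ is a harmless detail the paper leaves implicit.
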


\begin{rem}
    \label{remNFT}
It was proved in \cite{A4} that,
when $\A$ is free, the deletion $\A \setminus H$ is free if and only if $\chi(\A_X^H;t)$ divides $\chi(\A_X;t)$ for all $X \in L(\A^H)$,
where $\chi(\A;t)$ is the characteristic polynomial of $\A$.
Thus $\NFT(\A,H)$ is combinatorially determined when $\A$ is free.
\end{rem}

\begin{lemma}
\label{lem:4.5}
Let $\A$ be an arrangement in $V$ and $H \in \A$.
The set $\Xi(\A,H)$ does not contain any subspace of dimension $\geq \ell-2$, that is, $\Xi(\A,H)=\bigcup_{k=3}^\ell \Xi_k(\A,H).$
\end{lemma}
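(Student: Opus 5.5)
We have to show that every $X \in L(\A^H)$ with $\dim X \geq \ell-2$ satisfies $J_{\A_X}^H=\overline S$. Such an $X$ is either $H$ itself, of dimension $\ell-1$, or a codimension-one subspace of $H$, of dimension $\ell-2$. We treat the two cases separately, and in both we pass to a small localization whose freeness is automatic.

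\emph{Case $X=H$.} Here $\A_X=\{H\}$. For this arrangement $\Omega^1(\{H\})$ contains $d\alpha_H/\alpha_H$, and by \eqref{2-1},
\[
\mathrm{res}^H\bigl(d\alpha_H/\alpha_H\bigr)=\frac{1}{Q(\{H\}^H)}\,\overline{\alpha_H\cdot \alpha_H^{-1}}=1,
\]
since $\{H\}^H$ is empty and $Q(\{H\}^H)=1$. Hence $J^H_{\{H\}}=\overline S$. (The paper already observed this inside the proof of Theorem \ref{thm:4.1}.)

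\emph{Case $\dim X=\ell-2$.} Now $\A_X$ consists of all hyperplanes of $\A$ containing the codimension-two subspace $X$. By Corollary \ref{2.6} we may pass to the essentialization, which is a rank-two arrangement in $V/X\cong \K^2$. Every arrangement of rank at most two is free: in $\K^2$ the module $D(\B)$ is reflexive, hence free over a two-variable polynomial ring. Alternatively one can write the basis down directly: $\theta_E$ together with $Q(\B)/\alpha\cdot$(a suitable derivation), checked via Theorem \ref{Saitocriterion}. Freeness passes back to $\A_X$ because $\A_X$ is a product of its essentialization with an empty arrangement. The same argument shows that $\A_X\setminus H$ is free, being again an arrangement of rank at most two, up to an empty factor. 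Theorem \ref{thm:3.4}(1), equivalently Lemma \ref{lem:4.2}, then gives $J^H_{\A_X}=\overline S$, so $X\notin\Xi(\A,H)$.

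Combining the two cases, $\Xi(\A,H)$ contains only subspaces of dimension $\leq \ell-3$, which is exactly $\Xi(\A,H)=\bigcup_{k=3}^\ell\Xi_k(\A,H)$. The only point that needs care is the freeness of rank-two arrangements, including the passage between $\A_X$ and its essentialization. This is standard: $D(\A)\cong D(\A^e)\otimes S$, which follows from Lemma \ref{lem:2.5}(2) by duality. If one prefers to avoid that step, Lemma \ref{lem:2.5}(2) together with Corollary \ref{2.6} lets one work directly in $V/X$.
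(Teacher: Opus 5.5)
Your proof is correct and follows the paper's argument: for $\dim X\ge \ell-2$, both $\A_X$ and $\A_X\setminus H$ have rank at most two and are therefore free, so Lemma~\ref{lem:4.2} (equivalently Theorem~\ref{thm:3.4}(1)) gives $J^H_{\A_X}=\overline S$. Your direct residue computation for $X=H$ is valid but not needed, because $\{H\}$ and $\varnothing$ are free and the same freeness argument covers that case.
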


\begin{proof}
If $X \in L(\A^H)$ has dimension $\geq \ell-2$,
then both $\A_X$ and $\A_X \setminus H$ are free because any arrangement in $\K^2$ is free.
Then the statement follows from Lemma \ref{lem:4.2}. 
\end{proof}

The next fact will be proved later in Corollary \ref{cor:assprimecore} in a more general form,
but we introduce this here since it is convenient to compute examples.

\begin{lemma}
    \label{lem:4.6}
    Let $\A$ be an arrangement and $H \in \A.$
    Then $\Xi^{\max}(\A,H) \subset L(\A^H_{\mathrm{core}})$.
    In particular, 
    \[ \sqrt {J_\A^H}= \bigcap_{X \in L(\A^H_{\mathrm{core}}),\ J_{\A_X}^H \ne \overline S} P_X.
    \]\end{lemma}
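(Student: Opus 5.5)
We must show that every maximal element $X$ of $\Xi(\A,H)$ lies in $L(\A^H_{\mathrm{core}})$, meaning $X$ is an intersection of hyperplanes $L\in\A^H$ with $\mathrm{mult}^\A(L)\ge 2$. The displayed formula for $\sqrt{J_\A^H}$ then follows from Theorem \ref{thm:4.1}: every $P_X$ with $X\in\Xi(\A,H)$ contains some $P_{X'}$ with $X'\in\Xi^{\max}(\A,H)\subset L(\A^H_{\mathrm{core}})$. The right-hand index set of the display is contained in $\Xi(\A,H)$ and contains $\Xi^{\max}(\A,H)$, so the intersection over it equals $\sqrt{J_\A^H}$.

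The plan is to apply Proposition \ref{prop:2.10} to the localization $\A_X$. Fix $X\in\Xi^{\max}(\A,H)$ and set $\B=\A_X$. Note that $\B^H=(\A^H)_X$ and, for $L\in\B^H$, one has $\mathrm{mult}^\B(L)=\mathrm{mult}^\A(L)$, because every $H'\in\A$ with $H'\cap H=L\supseteq X$ already contains $X$. Hence $\B^H_{\mathrm{core}}=\{L\in\A^H_{\mathrm{core}}\mid X\subseteq L\}$. Put $Y=X^{(\B,H)}_{\mathrm{core}}=\bigcap_{L\in\B^H_{\mathrm{core}}}L$; by construction $Y\supseteq X$ and $Y\in L(\A^H_{\mathrm{core}})$. (If $\B^H_{\mathrm{core}}$ is empty, take $Y=H$; we will see this case cannot occur.)

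By Proposition \ref{prop:2.10} applied to $\B$, we get $J_{\A_X}^H=J_{\B_Y}^H$. Since $Y\supseteq X$, we have $\B_Y=(\A_X)_Y=\A_Y$. Thus $J_{\A_Y}^H=J_{\A_X}^H\neq\overline S$. If $Y=H$, this contradicts $J_{\{H\}}^H=\overline S$, so $Y\in L(\A^H)$ and therefore $Y\in\Xi(\A,H)$. Maximality of $X$ together with $X\subseteq Y$ forces $X=Y$, so $X\in L(\A^H_{\mathrm{core}})$.

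The main point to check carefully is the identification $\B_Y=\A_Y$ and the equality of multiplicities under localization. Both rely only on the fact that every hyperplane involved contains $X$, so this step should be routine. The degenerate case $Y=H$ is the only place needing separate care, and it is handled as above.
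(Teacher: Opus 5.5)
Your proof is correct. Its key step is the same as the paper's: apply Proposition \ref{prop:2.10} to $\A_X$ to get $J_{\A_X}^H=J_{\A_Y}^H$, where $Y=\bigcap_{L\in(\A_X)^H_{\mathrm{core}}}L\supseteq X$. The two arguments differ in how they force $X=Y$.

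The paper does not prove Lemma \ref{lem:4.6} where it is stated. It deduces it later from Corollary \ref{cor:assprimecore}, which says every associated prime $P_X$ of $\overline S/J_\A^H$ has $X\in L(\A^H_{\mathrm{core}})$. This is combined with $\mathrm{Min}(\overline S/J_\A^H)=\{P_X\mid X\in\Xi^{\max}(\A,H)\}$ from Lemma \ref{5.1}(1). In that corollary, $X=Y$ comes from Proposition \ref{5.2}. If $P_X$ is associated, then $P_X$ is an associated prime of $J_{\A_X}^H$. But it is not an associated prime of $J_{\A_Y}^H$ when $Y\supsetneq X$, since the latter's associated primes are the $P_Z$ with $Z\supseteq Y$.

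You instead use only that $J_{\A_Y}^H\neq\overline S$ places $Y$ in $\Xi(\A,H)$, and then invoke maximality of $X$. You also handle the empty-core case $Y=H$ properly.

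Your route is more elementary and self-contained at the point where the lemma appears: it needs no primary decomposition theory and no Proposition \ref{5.2}. The paper's route gives a stronger conclusion, covering embedded associated primes as well, which your maximality argument cannot reach. For a non-maximal $X\in\Xi(\A,H)$, the equality $J_{\A_X}^H=J_{\A_Y}^H$ with $Y\supsetneq X$ is not contradictory by itself. One needs the extra information that $P_X$ is associated to rule it out.
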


\begin{example}
    \label{ex4.7}
    Let $\A$ be the arrangement in Example \ref{exdim4} with
\[
Q(\A)=x(y^2-x^2)(z^2-x^2)(w^2-x^2)(y+z)(y+w)(z+w).
\]
Let $H=H_x$ and $L=H_{y-x}$. We note that $\A$ is a free arrangement with exponents $(1,3,3,3).$ We compute $J_\A^H$ and $J_\A^L$ using results in this and previous sections.

For $H=H_x$, we have
\[
\A_{\mathrm{core}}^H=\{H \cap H_{y-x}=H\cap H_y,H \cap H_{z-x}=H\cap H_z,H \cap H_{w-x}=H\cap H_w\}
\]
and,
since $\Xi(\A,H)$ can contain only subspaces of dimension $\leq 4-3$, by Lemma \ref{lem:4.6} candidates for elements in $\Xi(\A,H)$ are
$$
H\cap H_y\cap H_z,
H\cap H_y\cap H_w,
H\cap H_z\cap H_w,
H\cap H_y\cap H_z\cap H_w=\{\bm 0\}.
$$
One can check that $\A \setminus H$ is not free but $\A_{H\cap H_y\cap H_{z}}\!\!\setminus\! H$ is free. Since $\A$ is symmetric with respect to $y,z,w$,
these guarantee 
$\Xi(\A,H)=\NFT(\A,H)=\{\{\bm  0\}\}$
and hence $\sqrt{J_\A^H}=(y,z,w)$.
On the other hand,
since $\A$ has exponents $(1,3,3,3)$, by Lemma \ref{lem:3.3sononi} the ideal $J_\A^H$ must be generated by three polynomials of degree $1+3+3-|\A^H|=1$. Since an ideal generated by linear forms must coincide with its radical, $J_\A^H$ must coincide with $(y,z,w)$.

For $L=H_{y-x}$, we have
\[
\A_{\mathrm{core}}^L=\{H \cap H_{x},H \cap H_{z+x},H \cap H_{w+x}\}.
\]
and in this case 
$\A_{H\cap H_{z+x}\cap H_{w+x}}\!\!\setminus\! H$ is not free
but 
$\A_{H\cap H_x\cap H_{z+x}}\!\!\setminus \! H$
and
$\A_{H\cap H_x\cap H_{w+x}}\!\!\setminus \! H$ are free.
These say $\Xi^{\mathrm{max}}(\A,L)=\{H \cap H_{z+x}\cap H_{w+x}\}$
and $\sqrt{J_\A^L}=(z+x,w+x)$.
Also, since $J_\A^L$ is generated by polynomials of degree $1$ by Lemma \ref{lem:3.3sononi}, we conclude that 
$J_\A^L=(z+x,w+x)$.
\end{example}

It would be natural to ask when $J_\A^H$ is radical.
We will discuss this problem more in \S 5.4.

\begin{example}
    \label{nonradicalex}
A residue ideal is not necessarily a radical ideal.
If $\A$ is the arrangement in $\R^3$ with $Q(\A)=xyz(y^2-x^2)(z^2-x^2)(z-y)$
and $H=H_x$,
then $D(\A)$ is free with basis $\theta_E,y(y^2-x^2) \partial_y +z(z^2-x^2) \partial_z, (z^2-x^2)(z-y)z \partial_z$ by Theorem \ref{Saitocriterion}.
Thus 
\[
J_\A^H=
\frac 1 {yz(z-y)} I_2 \begin{pmatrix}
    y & y^3 & 0  \\ z & z^3 &(z-y)z^3
\end{pmatrix}
=(y+z,z^2) \ne \sqrt {(y+z,z^2)}=(y,z).\]
\end{example}

\subsection{Some commutative algebra}

To discuss more about algebraic properties of residue ideals,
here we recall basic results on 
commutative algebra relating primary decompositions and the Cohen--Macaulay property.
Although primary decompositions themselves will be discussed mainly in the next section, we need some related properties to explain easy consequences of our formula of $\sqrt{J_\A^H}$ in this section.
We refer the reader to \cite{AM} and \cite{Ei} for basics on primary decompositions as well as undefined terminology on commutative algebra such as depth, height, Krull dimension, etc.

For an ideal $I$ in $S$,
a prime ideal $P \subset S$ is said to be an {\bf associated prime} of $S/I$ (or $I$) if there is an element $g \in S$ such that $P=\{f \in S \mid fg \in I\}$.
The set of all associated primes of $S/I$ is known to be a finite set and will be denoted by $\Ass(S/I)$ (or $\Ass(I)$).
An ideal $Q \subset S$ is said to be {\bf $P$-primary} if $\Ass(S/Q)=\{P\}$, where $P$ is a prime ideal of $S$.
Note that if $Q$ is $P$-primary then $\sqrt Q=P$.
The set $\Ass(S/I)$ contains the set $\Min(S/I)$ of minimal prime ideals of $S$ that contain $I$ (with respect to inclusion).

Lasker--Noether theorem guarantees that
any ideal in $S$ can be written as the intersection of finitely many primary ideals.
More precisely,
let $\Ass(S/I)=\{P_1,\dots,P_t\}$, where $P_1,\dots,P_t$ are distinct.
Then there exist $P_i$-primary ideals $Q_i$ ($i=1,2,\dots,t$) such that 
\begin{align}
\label{primarydec}    
I= Q_1 \cap Q_2 \cap \cdots \cap Q_t.
\end{align}
Moreover if $P_i \in \Min(S/I)$ then the corresponding $P_i$-primary component is uniquely determined and is given by $Q_i=I_{P_i}\cap S$.
We call the presentation \eqref{primarydec} a {\bf canonical primary decomposition} of $I$.
By \eqref{primarydec},
we have a prime decomposition $\sqrt I = \bigcap_{P \in \Ass(S/I)} P$,
but this presentation may not be irredundant, that is, some $P_i$ may contain other $P_j$,
and the unique irredundant prime decomposition of $\sqrt I$ is given by
$\sqrt I=\bigcap_{P \in \Min(S/I)} P$.
A prime ideal in $\Ass(S/I)$ that is not contained in $\Min(S/I)$ is called an {\bf embedded prime} of $S/I$.

Next, we recall a few basic facts on associated primes and the Cohen--Macaulay property.
We denote by $\dim (S/I)$ the Krull dimension of a $\K$-algebra $S/I$.
The following fact is standard in commutative algebra.
See e.g.\ \cite[Proposition 1.2.13]{BH}.

\begin{lemma}
\label{lem:4.9}
Let $I$ be a homogeneous ideal in $S$.
If $P \in \Ass(S/I)$, then $\depth(S/I) \leq \dim (S/P)$.
\end{lemma}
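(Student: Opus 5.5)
We argue by induction on $\dim(S/P)$ for the graded $S$-module $M=S/I$, using the characterization of depth via homogeneous regular sequences in $\mideal_S$.

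\emph{Base case.} Suppose $\dim(S/P)=0$. Since associated primes of a graded module are homogeneous, $P=\mideal_S$. Then $\mideal_S$ is an associated prime, so every element of $\mideal_S$ is a zero divisor on $M$. Hence $\depth(M)=0$ and the inequality holds.

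\emph{Trivial case.} If $\depth(M)=0$ there is nothing to prove. So assume $\depth(M)>0$.

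\emph{Choice of a regular element.} Choose a homogeneous $M$-regular element $x\in\mideal_S$. Since $x$ is a non-zero divisor on $M$, it avoids every associated prime of $M$; in particular $x\notin P$. We also have $\depth(M/xM)=\depth(M)-1$.

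\emph{Main step: a suitable associated prime of $M/xM$.} We want $Q\in\Ass(M/xM)$ with $Q\supseteq P+(x)$.

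1. Put $N=\Hom_S(S/P,M)$. This is a nonzero finitely generated graded module, because $P\in\Ass(M)$ gives an embedding $S/P\hookrightarrow M$.
2. Apply $\Hom_S(S/P,-)$ to the exact sequence $0\to M\xrightarrow{x}M\to M/xM\to 0$. Left exactness gives an injection
\[N/xN\hookrightarrow \Hom_S(S/P,M/xM).\]
3. By the graded Nakayama lemma, $N/xN\neq 0$, so $\Hom_S(S/P,M/xM)\neq 0$.
4. Take a nonzero element of $\Hom_S(S/P,M/xM)$. Its image is a nonzero submodule of $M/xM$ annihilated by $P$, and also by $x$ since $x$ kills $M/xM$.
5. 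Any associated prime $Q$ of this submodule is an associated prime of $M/xM$ and contains $P+(x)$.

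\emph{Dimension drop.} Since $x\notin P$ and $S/P$ is a domain, $x$ is a non-zero divisor on $S/P$. Therefore
\[\dim(S/Q)\le\dim\bigl(S/(P+(x))\bigr)=\dim(S/P)-1.\]

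\emph{Conclusion.} By the induction hypothesis applied to $Q\in\Ass(M/xM)$,
\[\depth(M)-1=\depth(M/xM)\le \dim(S/Q)\le \dim(S/P)-1,\]
and so $\depth(M)\le\dim(S/P)$.

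The main obstacle is the main step, that is, producing an associated prime of $M/xM$ that contains both $P$ and $x$. The $\Hom$/Nakayama argument above handles this. The remaining steps are standard facts about regular elements and dimension in graded rings.
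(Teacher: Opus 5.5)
Your proof is correct and is essentially the standard argument of \cite[Proposition 1.2.13]{BH}, which the paper cites rather than proves: you pass to $M/xM$ for a homogeneous $M$-regular $x$, use $\Hom_S(S/P,-)$ and graded Nakayama to find $Q\in\Ass(M/xM)$ with $Q\supseteq P+(x)$, and then induct (you induct on $\dim S/P$ rather than on $\depth M$, which makes no real difference). One small simplification: the dimension drop needs no argument about $x$ being a non-zero divisor on $S/P$, since $x\in Q\setminus P$ gives $Q\supsetneq P$ and hence $\dim S/Q\le \dim S/P-1$ directly.
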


A graded $\K$-algebra $S/I$ (or a homogeneous ideal $I$) is said to be {\bf Cohen--Macaulay} if $\depth(S/I)=\dim(S/I)$.
The following fact is also well-known and easily follows from Lemma \ref{lem:4.9}.

\begin{lemma}
\label{lem:4.10}
If a homogeneous ideal $I \subset S$ is Cohen--Macaulay, then $I$ is unmixed,
that is, $\dim S/P=\dim S/I$ for all $P \in \Ass(S/I)$.
In particular, $S/I $ has no embedded primes.
\end{lemma}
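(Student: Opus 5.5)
The plan is to combine Lemma \ref{lem:4.9} with the fact that the Krull dimension of $S/I$ is attained at a minimal prime. Assume $I$ is Cohen--Macaulay and let $P \in \Ass(S/I)$.

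First, since $I \subset P$, we have a surjection $S/I \to S/P$. Hence $\dim(S/P) \le \dim(S/I)$.

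Second, Lemma \ref{lem:4.9} gives the reverse bound:
\[
\dim(S/I)=\depth(S/I) \le \dim(S/P).
\]
Here the first equality is exactly the Cohen--Macaulay hypothesis. Combining the two inequalities yields $\dim S/P=\dim S/I$ for every associated prime $P$, which is the unmixedness claim.

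For the ``in particular'' part, suppose $P \in \Ass(S/I)$ were embedded. Then $P$ would strictly contain some $P' \in \Min(S/I)\subset \Ass(S/I)$. Both are homogeneous primes of the polynomial ring $S$, so the strict inclusion $P' \subsetneq P$ forces $\dim S/P < \dim S/P'$; this holds because $S/P'$ is a domain, so killing the nonzero prime $P/P'$ drops the dimension. But unmixedness says both dimensions equal $\dim S/I$, a contradiction. Hence $\Ass(S/I)=\Min(S/I)$.

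No step is a real obstacle. The only point needing care is the strict drop of dimension under a proper inclusion of primes in the last step. This is standard for affine domains, where $\dim S/P' = \dim S/P + \operatorname{ht}(P/P')$, so it suffices to cite it.
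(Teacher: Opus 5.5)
Your proof is correct and is exactly the argument the paper intends: the paper gives no separate proof, saying only that the lemma ``easily follows from Lemma~\ref{lem:4.9},'' which amounts to your chain $\dim S/I=\depth(S/I)\le \dim S/P\le \dim S/I$ for every $P\in\Ass(S/I)$. Your deduction of the ``in particular'' part is also fine, and it needs less than the catenary formula you cite: prepending $P'$ to any chain of primes starting at $P$ already gives $\dim S/P' \ge \dim S/P+1$.
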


\subsection{Algebraic properties of $J_\A^H$ and quick applications}

Our original motivation of considering the residue ideal $J_\A^H$ is to understand when $\A^H$ becomes free assuming that $\A$ is free.
The next proposition is a reformulation of Theorem \ref{thm:3.4} but clarifies what kind of algebraic property of $J_\A^H$ should be considered to study such a problem.

\begin{prop}
\label{prop:4.12}
Let $\A$ be a free arrangement in $V$ and $H \in \A$. 
The arrangement $\A^H$ is free if and only if either $J_\A^H=\overline S$ or
    $\overline S/J_\A^H$ is Cohen--Macaulay of Krull dimension $\ell-3$, equivalently, $J_\A^H$ is a height two Cohen--Macaulay ideal.
\end{prop}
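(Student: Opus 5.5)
By Theorem \ref{thm:3.4}(2), $\A^H$ is free if and only if $\pd_{\overline S}(J_\A^H)\le 1$. The plan is to translate this projective dimension condition into a statement about $\overline S/J_\A^H$ using Auslander--Buchsbaum over the polynomial ring $\overline S$, which has dimension $\ell-1$. If $J_\A^H=\overline S$, then $\pd=0$ and $\A^H$ is free, and conversely that case is one of the two alternatives. So from now on we assume $J_\A^H\ne \overline S$. Then $\pd_{\overline S}(\overline S/J_\A^H)=\pd_{\overline S}(J_\A^H)+1$, and by Auslander--Buchsbaum $\depth(\overline S/J_\A^H)=\ell-1-\pd_{\overline S}(\overline S/J_\A^H)$. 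Hence
\[
\pd_{\overline S}(J_\A^H)\le 1 \iff \depth(\overline S/J_\A^H)\ge \ell-3 .
\]

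Next we compute the Krull dimension. By Theorem \ref{thm:4.1}, the zero set of $J_\A^H$ is the union of the subspaces $X\in\Xi(\A,H)$. By Lemma \ref{lem:4.5}, every such $X$ has $\dim X\le \ell-3$. So $\dim \overline S/J_\A^H\le \ell-3$, i.e.\ $\operatorname{height} J_\A^H\ge 2$. Since depth never exceeds dimension, the condition $\depth\ge\ell-3$ forces $\depth=\dim=\ell-3$. Conversely, if $\overline S/J_\A^H$ is Cohen--Macaulay of dimension $\ell-3$, then its depth is $\ell-3$ and $\pd_{\overline S}(J_\A^H)=1$. This proves the first equivalence.

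For the reformulation as a height two Cohen--Macaulay ideal: in $\overline S$ we have $\operatorname{height} J=\ell-1-\dim\overline S/J$, so dimension $\ell-3$ corresponds exactly to height $2$. The only step needing care is the bound $\dim\overline S/J_\A^H\le \ell-3$. It relies on combining the radical description (Theorem \ref{thm:4.1}) with Lemma \ref{lem:4.5}, and on the fact that $\Xi(\A,H)$ is nonempty whenever $J_\A^H\neq\overline S$, which Theorem \ref{thm:4.1} also gives. Everything else is standard commutative algebra.
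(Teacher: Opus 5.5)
Your proof is correct and follows essentially the same route as the paper: you reduce to $\pd_{\overline S}(J_\A^H)\le 1$ via Theorem~\ref{thm:3.4}, bound $\dim(\overline S/J_\A^H)\le \ell-3$ using Theorem~\ref{thm:4.1} together with Lemma~\ref{lem:4.5}, and conclude with Auslander--Buchsbaum and $\depth\le\dim$. The only cosmetic difference is that the paper gets $\Xi(\A,H)\neq\varnothing$ from the non-freeness of $\A\setminus H$ via $\Xi(\A,H)=\NFT(\A,H)$, whereas you read it off directly from Theorem~\ref{thm:4.1}; in fact only the upper bound on the dimension is needed, so nonemptiness plays no real role.
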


\begin{proof}
Since $J_\A^H=\overline S$ if and only if $\A \setminus H$ is free by Theorem \ref{thm:3.4},
we assume $J_\A^H \ne \overline S$ and prove that $\A^H$ is free if and only if 
$\overline S/J_\A^H$ is Cohen--Macaulay of Krull dimension $\ell-3$.
Since $\A\setminus H$ is not free, we have $\Xi(\A,H)=\NFT(\A,H) \ne \varnothing$.
Then by Lemma \ref{lem:4.5} we have
\[
\dim (\overline S /J_\A^H) = \max \{ \dim X \mid X \in \Xi(\A,H)\} \leq \ell -3,
\]
and hence 
\[
\pd_{\overline S}(J_\A^H)
= (\ell-2)-\depth (\overline S/J_\A^H) \geq 
(\ell-2)-\dim (\overline S/J_\A^H) \geq 1,
\]
where we use the Auslander--Buchsbaum formula for the first equality and use the fact that depth is smaller than or equal to Krull dimension in the second inequality.
Since $\A^H$ is free if and only if $\pd_{\overline S}J_\A^H \leq 1$ by Theorem \ref{thm:3.4},
it follows from the above inequality that 
$\A^H$ is free if and only if $\depth(\overline S/J_\A^H)=\dim(\overline S/J_\A^H)=\ell-3$, proving the desired statement.
\end{proof}

The proposition gives an interesting connection between free arrangement theory and subspace arrangements.
Since Cohen--Macaulay ideals are unmixed,
in the situation of Proposition \ref{prop:4.12} the radical of $J_\A^H$ becomes the ideal of codimension two subspace arrangements of $H$ unless $J_\A^H=\overline S$.

Finally,
we give two quick applications of the results explained in this section.
Let $\A$ be a free arrangement and $H \in \A$.
Suppose $\A\setminus H$ is not free.
Proposition \ref{prop:4.12} says that
if $J_\A^H$ is not a height two unmixed ideal, then $\A^H$ is not free
because Cohen--Macaulay ideals must be unmixed.
This leads to the following statement
which can be used to disprove the freeness of $\A^H$ when $\A$ is free.

\begin{prop}
\label{nonfree}
Let $\A$ be a free arrangement in $V$ and $H \in \A$. If $\NFT^{\max}(\A,H)$ contains a subspace of dimension $\leq \ell-4$, then $\A^H$ is not free.
\end{prop}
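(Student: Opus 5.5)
We argue by contradiction using Proposition \ref{prop:4.12} and the radical formula of Theorem \ref{thm:4.1}. Assume $\A$ is free, that $\NFT^{\max}(\A,H)$ contains a subspace $X$ with $\dim X \le \ell-4$, and suppose $\A^H$ were free.

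First, $\NFT^{\max}(\A,H)\neq\varnothing$, so $\NFT(\A,H)\neq\varnothing$. By Corollary \ref{4.3}, $\Xi(\A,H)=\NFT(\A,H)$, so $\Xi(\A,H)\ne\varnothing$ and $\Xi^{\max}(\A,H)=\NFT^{\max}(\A,H)$. Since $\{\bm 0\}$ lies in $\Xi(\A,H)$ (it is a nonempty order ideal), Lemma \ref{2.8} gives $J_\A^H\subset J_{\A_{\{\bm 0\}}}^H=J_\A^H$. This is trivial, so instead we note directly that Theorem \ref{thm:4.1} gives $\sqrt{J_\A^H}=\bigcap_{Y\in\Xi(\A,H)}P_Y\subset P_X\ne\overline S$. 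Hence $J_\A^H\neq\overline S$, and Proposition \ref{prop:4.12} forces $J_\A^H$ to be a height two Cohen--Macaulay ideal of $\overline S$.

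Next we identify the minimal primes. By Theorem \ref{thm:4.1}, $\sqrt{J_\A^H}=\bigcap_{Y\in\Xi^{\max}(\A,H)}P_Y$. Distinct maximal elements are incomparable subspaces, so the $P_Y$ are pairwise incomparable primes and this intersection is irredundant. Therefore $\Min(\overline S/J_\A^H)=\{P_Y\mid Y\in\Xi^{\max}(\A,H)\}$, and in particular $P_X\in\Min\subset\Ass(\overline S/J_\A^H)$. We may pass to $S$ via $\tilde P$, or apply Lemmas \ref{lem:4.9} and \ref{lem:4.10} verbatim over the polynomial ring $\overline S$.

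Since $X\subset H$ and $\dim X\le\ell-4$, the prime $P_X$ in $\overline S=\mathrm{sym}(H^*)$ has height $(\ell-1)-\dim X\ge 3$, so $\dim \overline S/P_X\le\ell-4$. The Cohen--Macaulay property gives unmixedness by Lemma \ref{lem:4.10}, so $\dim\overline S/P_X=\dim\overline S/J_\A^H=\ell-3$, a contradiction. Hence $\A^H$ is not free.

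The only delicate step is confirming that $P_X$ really is an associated (indeed minimal) prime, i.e.\ the irredundancy of the prime decomposition coming from maximality in $\Xi$. Incomparability of the subspaces handles this.
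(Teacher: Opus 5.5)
Your argument is correct and matches the paper's: the paper also derives this from Proposition \ref{prop:4.12} and the unmixedness of Cohen--Macaulay ideals, using $\Xi(\A,H)=\NFT(\A,H)$ (Corollary \ref{4.3}) and $\Min(\overline S/J_\A^H)=\{P_Y \mid Y\in\Xi^{\max}(\A,H)\}$ (Lemma \ref{5.1}(1)) to see that $P_X$ is a minimal prime of height at least $3$. The discarded aside about $\{\bm 0\}\in\Xi(\A,H)$ is unused and holds only when $\A$ is essential (in general the bottom element of $L(\A^H)$ is $X_\A$), so you can simply delete it.
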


\begin{cor}
Let $\A$ be a free arrangement in $V$ and $H \in \A$. If $\A^H$ is free, then either 
\begin{itemize}
\item[(1)]
$\A_X \setminus H$ is free 
for all $X \in L(\A^H)$, or 
\item[(2)]
if $\A_X \setminus H$ is not free for some $X \in L(\A^H)$, then there is an $(\ell-3)$-dimensional subspace $Y \in L(\A^H)$ such that $X \subset Y$ and that $\A_Y \setminus H$ is not free.
\end{itemize}
\label{freenonfree}
\end{cor}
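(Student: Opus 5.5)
The plan is to argue by contraposition from Proposition \ref{nonfree}, using Corollary \ref{4.3} and Lemma \ref{lem:4.5} to see what $\NFT^{\max}(\A,H)$ looks like. Assume $\A$ and $\A^H$ are free, and suppose (1) fails. Then some $X \in L(\A^H)$ has $\A_X\setminus H$ not free, so $X \in \NFT(\A,H)$. The set $\NFT(\A,H)=\Xi(\A,H)$ is an ideal of the finite poset $L(\A^H)$, as noted after its definition via Lemma \ref{2.8}. Hence $X$ is contained in some maximal element $Y \in \NFT^{\max}(\A,H)$ with $X \subset Y$.

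First, we bound $\dim Y$ from above. By Corollary \ref{4.3} we have $Y \in \Xi(\A,H)$, so Lemma \ref{lem:4.5} gives $\dim Y \le \ell-3$.

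Second, we bound $\dim Y$ from below. Since $\A^H$ is free, Proposition \ref{nonfree} (in contrapositive form) says that $\NFT^{\max}(\A,H)$ contains no subspace of dimension $\le \ell-4$. Therefore $\dim Y \ge \ell-3$, and so $\dim Y=\ell-3$.

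Finally, $Y \in L(\A^H)$, $X\subset Y$, and $\A_Y\setminus H$ is not free because $Y\in \NFT(\A,H)$. This is exactly (2).

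The only point needing care is that Proposition \ref{nonfree} is stated without proof at this point. If it must be justified, the argument is as follows. By Theorem \ref{thm:4.1}, any $Y \in \Xi^{\max}(\A,H)$ gives a minimal prime $P_Y$ of $J_\A^H$ with $\dim \overline S/P_Y=\dim Y$. Freeness of $\A^H$ together with $J_\A^H\neq\overline S$ makes $J_\A^H$ height-two Cohen--Macaulay by Proposition \ref{prop:4.12}. Lemma \ref{lem:4.10} then forces all associated primes to have dimension $\ell-3$, which rules out $\dim Y\le \ell-4$.
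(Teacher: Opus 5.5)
Your proof is correct and is essentially the paper's argument. The paper states this corollary as an immediate consequence of Proposition \ref{nonfree}, applied to a maximal element of $\NFT(\A,H)$ containing $X$; Proposition \ref{nonfree} itself rests, as in your justification, on Proposition \ref{prop:4.12} together with the unmixedness of Cohen--Macaulay ideals. One detail worth stating explicitly: $J_\A^H \neq \overline S$ holds because $X \in \Xi(\A,H)$, so Theorem \ref{thm:4.1} gives $\sqrt{J_\A^H} \subset P_X$.
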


These results show the importance of local freeness for the freeness of $\A^H$. In \cite{A4}, it was proved that if $\A$ is free and $\A\setminus \{H\}$ is locally free along $H$, then 
$\A^H$ and $\A \setminus H$ are free if $\chi(\A';t)$ divides $\chi(\A;t)$, where $\chi(\A;t)$ is the characteristic polynomial of $\A$.
The above corollary is 
a generalization of this result.

\begin{example}
Consider
arrangements $\A$ and $H \in \A$ given in Examples \ref{exdim4} and \ref{ERexample}.
We see that $\A$ is free but its restriction $\A^H$ is not free.
In both cases, the non-freeness can be seen from $\NFT(\A,H)=\Xi(\A,H)$.
Indeed, in both cases, the only element in $\NFT(\A,H)$ is the origin $\bm 0$ and it is clear that the condition of Proposition \ref{nonfree} is satisfied.
\end{example}

Let us show that $\NFT(\A,H)$ can be also used to prove the freeness of $\A^H$ in a certain situation.

\begin{prop}
Let $\A$ be a free arrangement in $V$ with exponents $(1,a,\ldots,a)$ and let $H \in \A$ with $|\A^H|=(\ell-2)a$. If $\NFT_{3}(\A,H) 
\neq \varnothing$, then $\A^H$ is free.
\label{restfree}
\end{prop}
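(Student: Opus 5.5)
The plan is to use the degree count from Lemma \ref{lem:3.3sononi}, which forces $J_\A^H$ to be generated by linear forms. A linear ideal is prime and equals its own radical, so Theorem \ref{thm:4.1} together with the hypothesis $\NFT_3(\A,H)\neq\varnothing$ will pin $J_\A^H$ down to the defining ideal of a codimension-two subspace of $H$. Theorem \ref{thm:3.4}(2) then finishes the argument.

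First, the degrees. The exponents are $(1,a,\dots,a)$, so $|\A|=1+(\ell-1)a$. By Lemma \ref{lem:3.3sononi} we have $J_\A^H=(f_2,\dots,f_\ell)$ with
\[
\deg f_i=|\A|-|\A^H|-a=1+(\ell-1)a-(\ell-2)a-a=1 .
\]
Thus $J_\A^H$ is generated by linear forms of $\overline S$, so $J_\A^H=P_Y$ for some subspace $Y\subseteq H$; here $Y$ is the common zero set of the $f_i$. In particular $J_\A^H$ is prime and radical.

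Next, identify $Y$. Pick $X\in\NFT_3(\A,H)$. By Corollary \ref{4.3}, $X\in\Xi(\A,H)$, so $J_\A^H\neq\overline S$ and $Y$ is a proper subspace. By Theorem \ref{thm:4.1},
\[
P_Y=\sqrt{J_\A^H}=\bigcap_{Z\in\Xi^{\max}(\A,H)}P_Z .
\]
Since $P_Y$ is prime, it must equal one of the $P_Z$ (this follows from the usual prime avoidance argument for a finite intersection of ideals). Hence $Y$ itself lies in $\Xi(\A,H)$ and is maximal there. Since $X\in\Xi(\A,H)$, we also get $P_Y\subseteq P_X$, that is, $X\subseteq Y$. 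Lemma \ref{lem:4.5} gives $\dim Y\le \ell-3=\dim X$, so $Y=X$. Therefore $J_\A^H=P_X$, an ideal of $\overline S$ generated by two independent linear forms.

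Finally, a complete intersection of two linear forms has the Koszul resolution $0\to\overline S\to\overline S^2\to P_X\to0$. Hence $\pd_{\overline S}(J_\A^H)=1$, and Theorem \ref{thm:3.4}(2) shows that $\A^H$ is free.

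The only step needing care is the identification $Y=X$. It combines primeness of the linear ideal, the description of the radical in Theorem \ref{thm:4.1}, and the dimension bound of Lemma \ref{lem:4.5}. The rest is bookkeeping.
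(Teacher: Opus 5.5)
Your proposal is correct and follows the same route as the paper. The paper likewise uses Lemma \ref{lem:3.3sononi} to get linear generators, writes $J_\A^H=P_Y$, invokes Theorem \ref{thm:4.1} to see that $Y$ is an $(\ell-3)$-dimensional subspace, concludes projective dimension $1$, and applies Theorem \ref{thm:3.4}(2). The only difference is that you spell out the identification $Y=X$ via primeness and Lemma \ref{lem:4.5}, which the paper leaves implicit.
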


\begin{proof}
It follows from Lemma \ref{lem:3.3sononi} and the assumption of the theorem that $J_\A^H$ is generated by linear forms.
Thus $J_\A^H=P_X$ for some subspace $X \subset H$.
Since $\NFT_3(\A,H) \ne \varnothing$,
it follows from Theorem \ref{thm:4.1} that $X$ has dimension $\ell-3$.
Thus $J_\A^H=P_X$ has projective dimension $1$ over $\overline S$.
\end{proof}

We note that the above proof 
is nothing but the method which we used to compute $J_\A^L$ in Example \ref{ex4.7}.

\section{Primary decompositions and associated primes}

In this section,
we study primary decompositions of residue ideals.

\subsection{Primary decompositions of residue ideals}

Our first goal is to complete the proof of Theorem \ref{thm1.3} in the introduction.
We begin with the following consequence of Theorem \ref{thm:4.1}.

\begin{lemma}
    \label{5.1}
    Let $\A$ be an arrangement in $V$ and $H \in \A$.
    \begin{itemize}
        \item [(1)] $\Min(\overline S/J_\A^H)=\{ P_X \mid X \in \Xi^{\max}(\A,H)\}$.
        \item[(2)] $\Ass(\overline S/J_\A^H) \subset \{ P_X \mid X \in \Xi(\A,H)\}$.
        \item[(3)] If $X \in \Xi^{\max}(\A,H)$ and $\A_X$ is free,  then $J_{\A_X}^H$ is a complete intersection.
    \end{itemize}
\end{lemma}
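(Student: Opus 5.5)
Parts (1) and (2) should come straight out of Theorem \ref{thm:4.1}, and part (3) out of the generator count in Lemma \ref{lem:3.3sononi} combined with a height computation; I treat them in that order.

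For (1), Theorem \ref{thm:4.1} gives $\sqrt{J_\A^H}=\bigcap_{X \in \Xi^{\max}(\A,H)} P_X$. Each $P_X$ is prime, being generated by linear forms. The minimal primes of $J_\A^H$ are the minimal primes of its radical, i.e.\ the minimal members of $\{P_X\mid X\in\Xi^{\max}(\A,H)\}$. Because $X\mapsto P_X$ reverses inclusion and distinct elements of $\Xi^{\max}$ are incomparable, no $P_X$ in this family contains another. Hence the decomposition is irredundant, and it equals $\Min(\overline S/J_\A^H)$.

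For (2), take $P\in\Ass(\overline S/J_\A^H)$. Then $P\supseteq J_\A^H$ and $(J_\A^H)_P\neq \overline S_P$. By Lemma \ref{2.12}, $(J_\A^H)_P=(J_{\A_P}^H)_P$, so $J_{\A_P}^H\neq\overline S$. Let $X=X_{\A_P}\in L(\A^H)$, so that $\A_P=\A_X$ and $X\in\Xi(\A,H)$. It remains to show $P=P_X$. By Corollary \ref{2.6}, $J_{\A_X}^H$ is extended from $\mathrm{sym}((H/X)^*)$. Every $\alpha_L$ with $X\not\subset L$ lies outside $P$, by the definition of $\A_P$, and is a nonzerodivisor modulo $J_{\A_X}^H$, as shown in the proof of Lemma \ref{2.8}.

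I expect this identification to be the main obstacle, since an associated prime need not a priori be linear. The plan is to localize the associated prime $P$: since $J_\A^H\subseteq J_{\A_X}^H$ by Lemma \ref{2.8}, and the two agree after localizing at $P$, we have $P\in\Ass((\overline S/J_{\A_X}^H)_P)$, hence $P\in\Ass(\overline S/J_{\A_X}^H)$. Since $J_{\A_X}^H$ is extended from the polynomial subring $B=\mathrm{sym}((H/X)^*)$ and $\overline S$ is a polynomial extension of $B$, its associated primes are extended from associated primes $\mathfrak q$ of $B/J$, that is, $P=\mathfrak q\overline S$. Now $P$ contains $P_X$, because the linear forms vanishing on $X$ are exactly those $\alpha$ which, by the definition of $\A_P$ and the choice of $X$, must lie in $P$. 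More carefully, $P\cap B\supseteq \mideal_B$ should follow from $P$ containing each $\alpha_L$, $L\in\A_X^H$, which span $(H/X)^*$. Hence $\mathfrak q=\mideal_B$ and $P=\mideal_B\overline S=P_X$.

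For (3), let $X\in\Xi^{\max}(\A,H)$ with $\A_X$ free, and write $k=\ell-\dim X$. Then $\A_X$ is essentially an arrangement in $V/X$ of dimension $k$, and by Corollary \ref{2.6} we may work there with ring $\bar B$ of dimension $k-1$. By Lemma \ref{lem:3.3sononi}, $J_{\A_X}^H$ is generated by $k-1$ polynomials. Every proper $Y\supsetneq X$ in $L(\A_X^H)$ has $(\A_X)_Y=\A_Y$, and $\A_Y$ is not in $\Xi$ by maximality. Applying (1) to $\A_X$, the radical of $J_{\A_X}^H$ is the maximal ideal, so the ideal has height $k-1$. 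An ideal with $k-1$ generators and height $k-1$ is a complete intersection.
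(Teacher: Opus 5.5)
Your proposal is correct and follows essentially the same route as the paper: (1) is read off from Theorem \ref{thm:4.1}; (2) uses Lemma \ref{2.12} to make $P$ an associated prime of $J_{\A_X}^H$, and then uses that this ideal is extended from $\mathrm{sym}((H/X)^*)$ to force $P=P_X$ (the paper phrases this as ``every associated prime of $J_{\A_X}^H$ lies in $P_X$'', while you phrase it as $P=\mathfrak q\overline S$ with $\mathfrak q\supseteq \mideal_B$); and (3) combines the generator count of Lemma \ref{lem:3.3sononi} with $\Xi^{\max}(\A_X,H)=\{X\}$. The only points you leave implicit are routine, namely $\mathfrak q\overline S\cap B=\mathfrak q$ and transferring the complete-intersection property from $\mathrm{sym}((H/X)^*)$ back to $\overline S$.
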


\begin{proof}
Statement (1) is an immediate consequence of Theorem \ref{thm:4.1}.
We prove (2).
Let $P \in \Ass(\overline S / J_\A^H)$
and let $X \in L(\A^H)$ be the subspace such that $\A_X=\A_P$.
Then $(J_{\A_X}^H)_P=(J_\A^H)_P$ by Lemma \ref{2.12} and $X \in \Xi(\A_X,H)$ since $(J_\A^H)_P \ne \overline S_P$.
Also, since every hyperplane $L \in \A_X$ belongs to $\tilde P$, where $\tilde P$ is the prime ideal of $S$ with $S/\tilde P=\overline S/P$,
and $X=\bigcap_{L \in \A_X}L$, we have $P_X \subset P$.
We prove $P\subset P_X$.

By Corollary \ref{2.6}
we have $J_{\A_X}^H=J_{(\A_X)^e}^{H/X}\overline S$.
Then since $J_{(\A_X)^e}^{H/X}$ is an ideal of $\mathrm{sym}((H/X)^*)$ and since a primary decomposition of $J_{(\A_X)^e}^{H/X}$ coincides with that of $J_{\A_X}^H$, any associated prime of $J_{\A_X}^H$ is contained in $P_X$.
On the other hand, 
since 
$(J_\A^H)_P=(J_{\A_X}^H)_P$
and since, for any ideal $J \in \overline S$,
there is a bijection between
the associated primes of $(\overline S/J)_P$
and the associated primes of $\overline S/J$
contained in $P$,
the ideal $P$ must be an associated prime of $J_{\A_X}^H$. 
Since any associated prime of $J_{\A_X}^H$ is contained in $P_X$,
this proves $P \subset P_X$ as desired.

(3)
Recall that an ideal $J$ of $\overline S$ is a complete intersection if it is generated by $k$ elements and $\overline S/J$ has Krull dimension $\ell-1-k$ for some $k$.
Note that, by Krull's height theorem 
an ideal $J \subset \overline S$ generated by at most $k$ elements satisfying $\dim (\overline S/J)=\ell-1-k$ is automatically a complete intersection.
Assume $X \in \Xi^{\max}(\A,H)$.
By Lemma \ref{lem:3.3sononi} and Corollary \ref{2.6}, the ideal $J_{\A_X}^H$ is generated by at most $\ell-1-\dim X$ polynomials.
On the other hand,
since
\[
\Xi^{\max}(\A_X,H)=\{L \in \Xi^{\max}(\A,H)\mid X \subset L\}=\{X\},\]
we have $\sqrt {J_{\A_X}^H}=P_X$ and  $\dim(\overline S/J_{\A_X}^H)=\dim X$.
These prove that $J_{\A_X}^H$ is a complete intersection.
\end{proof}

\begin{prop}
\label{5.2}
Let $\A$ be an arrangement in $V$, $H \in \A$ and $X \in L(\A^H)$.
Let $J_\A^H=\bigcap_{P_Y \in \mathrm{Ass}(\overline S/J_\A^H)} Q_Y$ be a canonical primary decomposition of $J_\A^H,$ where $Q_Y$ is a $P_Y$-primary ideal with $Y \in \Xi(\A,H)$.
Then 
\[ J_{\A_X}^H = \bigcap_{P_Y \in \mathrm{Ass}(\overline S/J_\A^H),\ \! Y \supset X}Q_Y.\]
In particular, if $X \in \Xi^{\max}(\A,H)$ then $J_{\A_X}^H$ is $P_X$-primary.
\end{prop}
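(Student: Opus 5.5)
The plan is to compare the two ideals locally at each associated prime. We set $I=J_{\A_X}^H$ and $I'=\bigcap_{P_Y \in \Ass(\overline S/J_\A^H),\, Y\supset X} Q_Y$, and we work in three steps: first pin down $\Ass(\overline S/I)$, then show $I$ and $I'$ agree after localizing at each of these primes, and finally conclude equality from the absence of other associated primes.

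\emph{Associated primes of $I$.} By Lemma \ref{5.1}(2) applied to $\A_X$, every associated prime of $I$ has the form $P_Y$ with $Y \in \Xi(\A_X,H)$. Such a $Y$ is an element of $L((\A_X)^H)$, so $Y\supset X$ and $Y \in L(\A^H)$. The same argument as in the proof of Lemma \ref{5.1}(2) shows the key locality fact: for $Y\supset X$ we have $(\A_X)_{P_Y}=\A_Y=\A_{P_Y}$. Hence Lemma \ref{2.12} gives
\[ I_{P_Y}=(J_{\A_Y}^H)_{P_Y}=(J_\A^H)_{P_Y}. \]
Since associated primes contained in $P_Y$ correspond to associated primes of the localization, this shows that $P_Y\in\Ass(I)$ if and only if $P_Y\in\Ass(J_\A^H)$, for all $Y\supset X$. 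Therefore $\Ass(\overline S/I)=\{P_Y\in\Ass(\overline S/J_\A^H): Y\supset X\}$.

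\emph{Local comparison.} Fix $P=P_Y$ with $Y\supset X$ and $P_Y$ associated. Localizing the canonical decomposition of $J_\A^H$ at $P$ kills every component $Q_Z$ with $P_Z\not\subset P_Y$, that is, every $Q_Z$ with $Z\not\supset Y$; such components become the unit ideal. The surviving components all have $Z\supset Y\supset X$, so they appear in $I'$. Localizing $I'$ at $P$ kills exactly the same components. Hence $I'_P=(J_\A^H)_P=I_P$ for every associated prime $P$ of $I$, and likewise for every associated prime of $I'$, since these are among the $P_Y$ with $Y\supset X$.

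\emph{Conclusion and main obstacle.} An ideal is recovered from its localizations at its associated primes: $I=\bigcap_{P\in\Ass I}(I_P\cap\overline S)$. Apply this to $I$ and to $I'$. Both have associated primes inside $\{P_Y\in\Ass(J_\A^H): Y\supset X\}$, and we have established equality of the localizations at all of these primes. It follows that $I\subseteq I'_P\cap \overline S$ for each such $P$, giving $I\subseteq I'$, and symmetrically $I'\subseteq I$.

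For the last claim, if $X\in\Xi^{\max}(\A,H)$ then the only $Y\in\Xi(\A,H)$ with $Y\supset X$ is $X$ itself, so $I=Q_X$, which is $P_X$-primary. Here $P_X$ is indeed associated because it is minimal, by Lemma \ref{5.1}(1).

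The main care point is the identification $(\A_X)_{P_Y}=\A_Y$, on which the localization step rests. This uses that $\tilde P_Y$ contains $\alpha_L$ exactly when $Y\subset L$, which holds because $P_Y$ is the ideal of the subspace $Y$.
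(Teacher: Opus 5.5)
Your proof is correct and follows essentially the same route as the paper: you compare $J_{\A_X}^H$ with the partial intersection of primary components after localizing, using Lemma \ref{2.12} together with the fact that every associated prime of $J_{\A_X}^H$ is some $P_Y$ with $Y\supset X$, hence contained in $P_X$. The paper is more economical because it localizes only once, at $P_X$. Since every $P_Y$ with $Y\supset X$ lies in $P_X$, both $\bigcap_{Y\supset X}Q_Y$ and $J_{\A_X}^H$ are contracted from their localizations at $P_X$ (the latter because it is extended from $\mathrm{sym}((H/X)^*)$ by Corollary \ref{2.6}), so the prime-by-prime comparison and the identification $(\A_X)_{P_Y}=\A_Y$ for each $Y$ are not needed.
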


\begin{proof}
We first note that for prime ideals $P' \supset P$ and a $P$-primary ideal $Q$, one has $Q=Q_{P'}\cap S$ since $Q_{P} \supset Q_{P'} \supset Q$ and since $Q_P \cap S=Q$ as $P$ is a minimal prime of $Q$.
Let $\mathrm{Ass}_X=\{P_Y \in \mathrm{Ass}(\overline S/J_\A^H) \mid Y \supset X\}.$
Then $Q_Y=S \cap (Q_Y)_{P_X}$ for any $P_Y \in \mathrm{Ass}_X$.
Hence by Lemma \ref{2.12} we have
\[ \bigcap_{P_Y \in \mathrm{Ass}_X} Q_Y=S \cap \left( \bigcap_{P_Y \in \mathrm{Ass}_X} Q_Y\right)_{P_X}=S \cap (J_\A^H)_{P_X} =S \cap (J_{\A_X}^H)_{P_X}.\]
Thus, to prove the desired equality, it remains to prove $(J_{\A_X}^H)_{P_X}\cap S=J_{\A_X}^H.$

Let $J_{\A_X}^H=Q_1 \cap \cdots \cap Q_m$ be a canonical primary decomposition of $J_{\A_X}^H$.
Since $J_{\A_X}^H=J_{\A^{V/X}_X}^{H/X} \overline S$
and $J_{\A^{V/X}_X}^{H/X}$ is an ideal of $\mathrm{sym}((H/X)^*)$,
each $Q_i$ must be contained in $P_X$.
Hence $(Q_i)_{P_X} \cap S=Q_i$ for all $i$ and we have
\[
(J_{\A_X}^H)_{P_X} \cap S=
\big( (Q_1)_{P_X}\cap S\big) \cap \cdots \cap \big( (Q_m)_{P_X}\cap S\big)=J_{\A_X}^H,
\]
as desired.
\end{proof}

The next corollary is an immediate consequence of Proposition \ref{5.2}.

\begin{cor}
    \label{cor:newprimarydecomp}
    Let $\A$ be an arrangement in $V$ and $H \in \A$.
    Then
\[ J_\A^H= \bigcap_{P_X \in \mathrm{Ass}(\overline S/J_\A^H)} J_{\A_X}^H.\]
In particular, if $\overline S/J_{\A}^H$ has no embedded primes, then the above presentation is a primary decomposition of $J_\A^H$.
\end{cor}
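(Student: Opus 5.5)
The plan is to derive the equality directly from Proposition \ref{5.2}, applied once for every associated prime. Write $J_\A^H=\bigcap_{P_Y\in\Ass(\overline S/J_\A^H)} Q_Y$ for a canonical primary decomposition. By Lemma \ref{5.1}(2), every associated prime has the form $P_Y$ with $Y\in\Xi(\A,H)\subset L(\A^H)$. So for each $P_X\in\Ass(\overline S/J_\A^H)$ the subspace $X$ lies in $L(\A^H)$, and Proposition \ref{5.2} can be applied to it. It gives
\[
J_{\A_X}^H=\bigcap_{P_Y\in \Ass(\overline S/J_\A^H),\ Y\supset X} Q_Y .
\]

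The first step is the inclusion $\bigcap_{P_X\in\Ass} J_{\A_X}^H\subset J_\A^H$. For each associated prime $P_X$, the index set on the right-hand side above contains $Y=X$. Hence $J_{\A_X}^H\subset Q_X$. Intersecting over all associated primes gives $\bigcap_X J_{\A_X}^H\subset\bigcap_X Q_X=J_\A^H$.

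The reverse inclusion comes from Lemma \ref{2.8}. It says $J_\A^H\subset J_{\A_X}^H$ for every $X\in L(\A^H)$, and in particular for every $X$ with $P_X$ associated. Together the two inclusions give the claimed equality. The only point needing care is that $X\mapsto P_X$ is injective on subspaces, so indexing by associated primes is the same as indexing by the corresponding subspaces. I expect no real obstacle here, since the substantive work is already contained in Proposition \ref{5.2}.

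For the ``in particular'' statement, suppose $\overline S/J_\A^H$ has no embedded primes. Then every associated prime $P_X$ is minimal, so by Lemma \ref{5.1}(1) we have $X\in\Xi^{\max}(\A,H)$. The last sentence of Proposition \ref{5.2} then says that $J_{\A_X}^H$ is $P_X$-primary. The primes $P_X$ are distinct, so the displayed intersection is a primary decomposition of $J_\A^H$. By the uniqueness of minimal components, it coincides with the canonical one.
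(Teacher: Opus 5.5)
Your proposal is correct and is essentially the paper's argument. The paper records the corollary as an immediate consequence of Proposition~\ref{5.2}, which is exactly what you unpack. Your appeal to Lemma~\ref{2.8} for $J_\A^H\subset J_{\A_X}^H$ could equally be read off from Proposition~\ref{5.2}, since each $J_{\A_X}^H$ is an intersection of some of the $Q_Y$. Your treatment of the ``in particular'' part, via Lemma~\ref{5.1}(1) and the last sentence of Proposition~\ref{5.2}, is also right.
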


We have already seen that if both $\A$ and $\A^H$ are free, then $J_\A^H$ is either trivial or a height two unmixed ideal. Thus, in this special case, Lemma \ref{5.1}(3) and Proposition \ref{5.2} give the following statement.

\begin{cor}
    \label{cor:newprimarydecompfree}
    Let $\A$ be a free arrangement in $V$ and $H \in \A$. If $\A^H$ is free, then
\[ J_\A^H= \bigcap_{X \in \Xi_3(\A,H)} J_{\A_X}^H\]
and each $J_{\A_X}^H$ with $X \in \Xi_3(\A,H)$ is a height two complete intersection.
\end{cor}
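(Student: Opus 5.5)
The plan is to combine Proposition \ref{prop:4.12} with Corollary \ref{cor:newprimarydecomp}, Lemma \ref{5.1} and Proposition \ref{5.2}.

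First we dispose of the trivial case. If $J_\A^H=\overline S$, then every $J_{\A_X}^H$ contains $J_\A^H$ by Lemma \ref{2.8}, so each of them is also $\overline S$. Hence $\Xi(\A,H)=\varnothing$, the intersection on the right is empty, and it equals $\overline S$ by convention. So the statement holds.

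Now assume $J_\A^H\ne\overline S$. Since $\A$ and $\A^H$ are both free, Proposition \ref{prop:4.12} says $\overline S/J_\A^H$ is Cohen--Macaulay of Krull dimension $\ell-3$. By Lemma \ref{lem:4.10}, $J_\A^H$ is then unmixed: every $P\in\Ass(\overline S/J_\A^H)$ satisfies $\dim \overline S/P=\ell-3$, and there are no embedded primes. Lemma \ref{5.1}(2) places $\Ass$ inside $\{P_X\mid X\in\Xi(\A,H)\}$, and $\dim \overline S/P_X=\dim X$. Hence
\[
\Ass(\overline S/J_\A^H)=\Min(\overline S/J_\A^H)\subset\{P_X\mid X\in\Xi_3(\A,H)\}.
\]
For the reverse inclusion, Lemma \ref{lem:4.5} says $\Xi$ contains no subspace of dimension $\ge \ell-2$, so any $X\in\Xi_3(\A,H)$ is maximal in $\Xi(\A,H)$. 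Lemma \ref{5.1}(1) then makes $P_X$ a minimal prime. Thus $\Ass(\overline S/J_\A^H)=\{P_X\mid X\in\Xi_3(\A,H)\}$, and Corollary \ref{cor:newprimarydecomp} gives
\[
J_\A^H=\bigcap_{X\in\Xi_3(\A,H)}J_{\A_X}^H .
\]

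It remains to see that each $J_{\A_X}^H$ with $X\in\Xi_3(\A,H)$ is a height two complete intersection. Such an $X$ lies in $\Xi^{\max}(\A,H)$, and $\A_X$ is free by Lemma \ref{localizationfree}. Lemma \ref{5.1}(3) therefore says $J_{\A_X}^H$ is a complete intersection. Its radical is $P_X$: this follows from Proposition \ref{5.2}, or directly from Theorem \ref{thm:4.1} applied to $\A_X$. Since $P_X$ has height $(\ell-1)-(\ell-3)=2$ in $\overline S$, the ideal $J_{\A_X}^H$ has height two. The only step needing care is matching $\dim X=\ell-3$ with height two in $\overline S$, which has dimension $\ell-1$; the rest is bookkeeping.
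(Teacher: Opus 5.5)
Your proof is correct and follows essentially the same route as the paper. Proposition \ref{prop:4.12} gives that $J_\A^H$ is height two Cohen--Macaulay, hence unmixed, so $\Ass(\overline S/J_\A^H)=\{P_X\mid X\in\Xi_3(\A,H)\}$; Proposition \ref{5.2} (through Corollary \ref{cor:newprimarydecomp}) then gives the intersection, and Lemma \ref{5.1}(3) gives the complete intersection property. You also make explicit two points the paper leaves implicit: the trivial case $J_\A^H=\overline S$, and the reverse inclusion of associated primes via Lemma \ref{lem:4.5}.
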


We finally note that to consider associated primes of $J_\A^H$, it is enough to consider subspaces in $L(\A_{\mathrm{core}}^H).$

\begin{cor}
    \label{cor:assprimecore}
    Let $\A$ be an arrangement in $V$ and $H \in \A$. One has
    \[ \mathrm{Ass}(\overline S/J_\A^H) \subset \{ P_X \mid X \in L(\A_{\mathrm{core}}^H)\}.\]
\end{cor}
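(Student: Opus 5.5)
We prove that every associated prime $P_X$ of $\overline S/J_\A^H$ has $X \in L(\A_{\mathrm{core}}^H)$. By Lemma \ref{5.1}(2), every associated prime is of the form $P_X$ with $X \in \Xi(\A,H) \subset L(\A^H)$. Fix such an $X$ and put $Y=\bigcap_{L \in (\A_X)^H_{\mathrm{core}}} L$, the intersection of those $L \in \A^H_{\mathrm{core}}$ with $X \subset L$. Then $Y \in L(\A_{\mathrm{core}}^H)$ and $X \subset Y$. Here we use that for $L \in \A^H$ with $X\subset L$ one has $\mathrm{mult}^{\A_X}(L)=\mathrm{mult}^{\A}(L)$, since every $H'$ with $H'\cap H=L$ contains $X$. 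Consequently $(\A_X)^H_{\mathrm{core}}=\{L\in\A^H_{\mathrm{core}} \mid X\subset L\}$. The goal is to show $X=Y$.

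We apply Proposition \ref{prop:2.10} to the arrangement $\A_X$ to get $J_{\A_X}^H=J_{(\A_X)_Y}^H$. Since $Y\supset X$, we have $(\A_X)_Y=\A_Y$, so $J_{\A_X}^H=J_{\A_Y}^H$. By Corollary \ref{2.6}, $J_{\A_Y}^H$ is extended from an ideal of $\mathrm{sym}((H/Y)^*)$. Every associated prime of $J_{\A_Y}^H$ is therefore extended from a prime of that polynomial subring, and so is contained in $P_Y$; this is the same argument used in the proof of Lemma \ref{5.1}(2).

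Next we transfer to $J_\A^H$. By Proposition \ref{5.2}, $J_{\A_X}^H=\bigcap_{P_Z\in\Ass,\ Z\supset X} Q_Z$, and this includes the component $Q_X$ because $P_X$ is associated. We then show that $P_X$ is an associated prime of $J_{\A_X}^H$. Localizing at $P_X$, Lemma \ref{2.12} gives $(J_\A^H)_{P_X}=(J_{\A_X}^H)_{P_X}$. Associated primes of $J_\A^H$ contained in $P_X$ correspond bijectively to associated primes of the localization, so $P_X$ is an associated prime of $J_{\A_X}^H=J_{\A_Y}^H$. Combining with the previous step gives $P_X\subset P_Y$, that is, $Y\subset X$. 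Together with $X\subset Y$ this yields $X=Y\in L(\A_{\mathrm{core}}^H)$.

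The main thing to check is the localization compatibility $(\A_X)_Y=\A_Y$ together with the multiplicity claim, which makes $X_{\mathrm{core}}^{(\A_X,H)}=Y$. Both follow directly from the definitions. The rest is the same bookkeeping on associated primes already used in the proof of Lemma \ref{5.1}(2).
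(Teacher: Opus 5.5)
Your proof is correct and follows the paper's argument. Both use the same $Y$, apply Proposition \ref{prop:2.10} to $\A_X$ to obtain $J_{\A_X}^H=J_{\A_Y}^H$, and then compare associated primes to force $X=Y$. The only difference is that the paper cites Proposition \ref{5.2} for that last step, whereas you unpack it using Lemma \ref{2.12} and the fact (from the proof of Lemma \ref{5.1}(2)) that every associated prime of $J_{\A_Y}^H$ lies in $P_Y$; this makes your mention of Proposition \ref{5.2} unnecessary.
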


\begin{proof}
Let $P_X \in \mathrm{Ass}(\overline S/J_\A^H)$ with $X \in L(\A^H)$.
Since
$ (\A_X)^H_{\mathrm{core}}=(\A_{\mathrm{core}}^H)_X$,
if we write $Y=\bigcap_{L \in (\A_X)^H_{\mathrm{core}}} L$, then it follows from Proposition \ref{prop:2.10} that
\[J_{\A_X}^H=J_{(\A_X)_Y}^H=J_{\A_Y}^H.\]
But since $P_X \in \mathrm{Ass}(\overline S/J_\A^H)$ the above equality and Proposition \ref{5.2} prove $X=Y$ which implies $X \in L(\A_{\mathrm{core}}^H).$
\end{proof}
\begin{rem}
The results in this section show that primary decompositions of residue ideals are similar to those of Jacobian ideals.
For example, about Proposition \ref{5.2} and Corollary \ref{cor:newprimarydecomp},
essentially the same statements for Jacobian ideals appear in \cite[Proposition 5.3 and Lemma 5.4]{DST}.
Also, a hyperplane arrangement $\A$ in $V$ is free if and only if its Jacobian $J_\A$ is a height two Cohen--Macaulay ideal, and in that case $J_\A= \bigcap_{X \in L_{2}(\A)} J_{\A_X}$ is an intersection of complete intersections where $L_k(\A)$ is the set of $(\ell-k)$-dimensional subspaces in $L(\A)$. See \cite[Proposition 4.3]{MN}.
\end{rem}

\subsection{Radicality of residue ideals}
Next, we study when $J_\A^H$ is radical.
We begin with the three dimensional case.
Recall that if $\A$ is a free arrangement in $\K^3$, 
then the only possible element of $\Xi(\A,H)$ is $\{\bm 0\}$.
Thus $J_\A^H$ is either $\overline S$ or $\mideal_{\overline S}$-primary by Lemma \ref{5.1},
where $\mideal_{\overline S} $ is the graded maximal ideal of $\overline S=\K[x_1,x_2,x_3]/(\alpha_H)$.

\begin{lemma}
    \label{5.10}
Let $\A \ne \varnothing$ be a free arrangement in $\K^3$ with exponents $(1,a,b)$ and $H \in \A.$
Assume that $\A \setminus H$ is not free. Then
\begin{itemize}
    \item [(1)] we have $\#\A^H \leq \min \{a,b\}$.
    In particular, $\# \A^H \leq \frac 1 2 (\#\A-1)$.
    \item [(2)] $J_\A^H$ is a prime ideal $\Leftrightarrow$ $\#\A^H=a=b$ $\Leftrightarrow$ $\#\A^H=\frac 1 2 (\#\A-1)$.
\end{itemize}
\end{lemma}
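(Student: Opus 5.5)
By Lemma \ref{lem:3.3sononi}, $J_\A^H=(f_2,f_3)$ with $\deg f_2=\#\A-\#\A^H-a$ and $\deg f_3=\#\A-\#\A^H-b$. Since $\A$ is free, $\#\A=1+a+b$ by Saito's criterion (Theorem \ref{Saitocriterion}). Hence $\deg f_2=1+b-\#\A^H$ and $\deg f_3=1+a-\#\A^H$. The argument rests on three facts about $J_\A^H$:
\begin{itemize}
\item[(i)] it is nonzero and $\mideal_{\overline S}$-primary. Indeed, $\A\setminus H$ is not free, so $J_\A^H\neq\overline S$ by Theorem \ref{thm:3.4}(1). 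Moreover $\Xi(\A,H)=\{\{\bm 0\}\}$ by Corollary \ref{4.3} and Lemma \ref{lem:4.5}, so $\sqrt{J_\A^H}=\mideal_{\overline S}$ by Theorem \ref{thm:4.1}.
\item[(ii)] $\overline S$ is a polynomial ring in two variables, and $\mideal_{\overline S}$ has height $2$.
\item[(iii)] $J_\A^H$ is generated by $f_2,f_3$.
\end{itemize}

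For (1), I argue that neither generator can vanish or be a unit. If one of $f_2,f_3$ were zero, $J_\A^H$ would be principal. A nonzero principal ideal has height $\le 1$, which contradicts $\sqrt{J_\A^H}=\mideal_{\overline S}$. So both $f_2,f_3$ are nonzero, and in fact form a regular sequence. Neither can have degree $0$: a nonzero constant generator would give $J_\A^H=\overline S$. Negative degrees are impossible for nonzero polynomials. Therefore $\deg f_2\ge1$ and $\deg f_3\ge1$, which reads $\#\A^H\le b$ and $\#\A^H\le a$. Averaging gives $\#\A^H\le (a+b)/2=\frac12(\#\A-1)$.

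For (2), an $\mideal_{\overline S}$-primary ideal is prime exactly when it equals $\mideal_{\overline S}$. Since $J_\A^H$ is homogeneous with $\mideal_{\overline S}$ generated by two linear forms, this happens exactly when both generators $f_2,f_3$ are linear and independent. There is one subtlety: the generators from Lemma \ref{lem:3.3sononi} need not be minimal. If, say, $\deg f_2\ge2$, then $J_\A^H$ contains no linear forms other than multiples of $f_3$, so $J_\A^H\neq\mideal_{\overline S}$. The same holds with the roles of $f_2,f_3$ swapped. Conversely, if both degrees are $1$, the generators form a regular sequence and are therefore linearly independent, so $J_\A^H=\mideal_{\overline S}$. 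Thus $J_\A^H$ is prime iff $\deg f_2=\deg f_3=1$, iff $\#\A^H=a=b$. Finally, $\#\A^H=\frac12(\#\A-1)=\frac{a+b}{2}$ together with $\#\A^H\le\min\{a,b\}$ forces $a=b=\#\A^H$, and the converse is immediate.

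The only delicate step is ensuring the generators are nonzero and nonconstant. This is handled by the height argument above; the remaining steps are formal.
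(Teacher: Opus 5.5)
Your proof is correct and follows the paper's argument. Lemma \ref{lem:3.3sononi} gives two homogeneous generators of degrees $1+a-\#\A^H$ and $1+b-\#\A^H$. Since $J_\A^H$ is $\mideal_{\overline S}$-primary of height two, both degrees are forced to be positive, and such an ideal is prime exactly when both degrees equal $1$. Your extra step about the generators possibly being non-minimal spells out a point the paper leaves implicit.
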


\begin{proof}
Statement (1) follows from Corollary 1.2 (1) in \cite{A} and Terao's deletion theorem. Here we give another proof using 
$J_\A^H$.
    Since $\A\setminus H$ is not free, $J_\A^H$ has height at least two by Lemma \ref{lem:4.5} and hence $J_\A^H$ is not a principal ideal. Then by Lemma \ref{lem:3.3sononi} the ideal $J_\A^H$ is generated by two polynomials of degrees $a+1-\#\A^H$ and $b+1-\#\A^H$, and these degrees must be positive. This proves $\#\A^H \leq \min \{a,b\}$.
    Also, since $\#\A=1+a+b$, we have  \[2\times (\#\A^H) \leq a+b = (\#\A-1),\]
    completing the proof of (1).
    Finally, since $\sqrt {J_\A^H}=\mideal_{\overline S}$, $J_\A^H$ is a prime ideal if and only if $a+1-\#\A^H=b+1-\#\A^H=1$, which proves (2).
\end{proof}

Using the previous lemma,
we prove the following sufficient condition for the radicality of $J_\A^H$.

\begin{theorem}
    \label{5.12}
    Let $\A$ be a free arrangement in $V$ and $H \in \A$. Assume 
    that 
    \[
    (\#) \hspace{50pt}
    \mathrm{mult}^\A(L) \leq 2 \mbox{ for any } L \in \A^H.\hspace{80pt}
    \]
Then
\begin{itemize}
    \item[(1)] if $X \in \Xi_3(\A,H)$ and $\A_X$ is free, then $J_{\A_X}^H$ is a prime ideal.
    \item[(2)]
    If $\A$ and $\A^H$ are free, then $J_\A^H$ is a radical ideal.
\end{itemize}\end{theorem}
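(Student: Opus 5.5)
The plan is to reduce part (1) to the three-dimensional case, where Lemma \ref{5.10} applies, and then to deduce part (2) from (1) and Corollary \ref{cor:newprimarydecompfree}.

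For (1), I would fix $X \in \Xi_3(\A,H)$ with $\A_X$ free. I would pass to $\B=(\A_X)^{V/X}$ and $\tilde H=H/X$.
\begin{itemize}
\item Since $\dim X=\ell-3$, $\B$ is an arrangement in the $3$-dimensional space $V/X$.
\item It is free, because $\Omega^1(\A_X)$ is obtained from $\Omega^1(\B)$ by adding the free summands $d\alpha_{m+1},\dots,d\alpha_\ell$ (Lemma \ref{lem:2.5}). Equivalently, freeness is preserved under essentialization.
\item By Corollary \ref{2.6}, $J_{\A_X}^H=J_{\B}^{\tilde H}\cdot \overline S$, where $J_\B^{\tilde H}\subset \mathrm{sym}((H/X)^*)$.
\item Since $X\in\Xi(\A,H)$ we have $J_{\A_X}^H\neq \overline S$, hence $J_\B^{\tilde H}\neq(1)$.
\item By Theorem \ref{thm:3.4}(1) it follows that $\B\setminus \tilde H$ is not free.
\end{itemize}
So $\B$ satisfies the hypotheses of Lemma \ref{5.10}.

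The key step is a counting argument showing $\#\B^{\tilde H}=\frac12(\#\B-1)$. Each hyperplane of $\B$ other than $\tilde H$ meets $\tilde H$ in some line $L\in\B^{\tilde H}$. The multiplicity of $L$ with respect to $\B$ equals the multiplicity of the corresponding element of $\A^H$ with respect to $\A_X$, which is at most its multiplicity with respect to $\A$. By $(\#)$ this is at most $2$, so
\[
\#\B-1=\sum_{L\in \B^{\tilde H}}\mathrm{mult}^\B(L)\le 2\,\#\B^{\tilde H}.
\]
Lemma \ref{5.10}(1) gives the reverse inequality $\#\B^{\tilde H}\le \frac12(\#\B-1)$, so equality holds. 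Lemma \ref{5.10}(2) then shows that $J_\B^{\tilde H}$ is a prime ideal of $\mathrm{sym}((H/X)^*)$.

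It remains to see that the extension $J_\B^{\tilde H}\cdot\overline S$ is prime. Here $\overline S$ is a polynomial ring over $\mathrm{sym}((H/X)^*)$ in $\dim X$ further variables. Hence $\overline S/J_\B^{\tilde H}\overline S$ is a polynomial ring over the domain $\mathrm{sym}((H/X)^*)/J_\B^{\tilde H}$, and so it is a domain. Thus $J_{\A_X}^H$ is prime. I expect the main point needing care to be this bookkeeping: checking that the multiplicities and the non-freeness transfer correctly through localization and essentialization.

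For (2), assume $\A$ and $\A^H$ are free. Corollary \ref{cor:newprimarydecompfree} gives $J_\A^H=\bigcap_{X\in\Xi_3(\A,H)}J_{\A_X}^H$, and this covers the case $J_\A^H=\overline S$ as the empty intersection. Each $\A_X$ is free by Lemma \ref{localizationfree}, so by (1) each $J_{\A_X}^H$ is prime. An intersection of prime ideals is radical, which proves (2).
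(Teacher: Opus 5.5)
Your proposal is correct and follows the paper's proof. Condition $(\#)$ gives $\#\A_X^H \geq \frac12(\#\A_X-1)$, so Lemma \ref{5.10} yields (1), and (2) follows from (1) together with Corollary \ref{cor:newprimarydecompfree}; the paper leaves implicit the passage to the essentialization in $V/X\cong\K^3$ and the fact that primality survives extension to $\overline S$, and you handle both correctly.
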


\begin{proof}
Let $X \in \Xi_3(\A,H)$ and assume that $\A_X$ is free.
Since the condition (\#) says $\#\A_X^H \geq \frac 1 2 (\# \A_X-1)$ for any $X \in L(\A^H)$,
the statement (1) follows from Lemma \ref{5.10}.
The statement (2) is an immediate consequence of (1) and Corollary \ref{cor:newprimarydecompfree}.
\end{proof}

Theorem \ref{5.12} explains why the ideal $J_\A^L$ is radical in Examples \ref{exdim4} and \ref{ERexample}.
Another class of examples that satisfies the condition $(\#)$ is given by graphic arrangements,
which we discuss in \S 7.

\subsection{Associated primes and height two unmixedness}

We see in \S 4.3 that,
if an arrangement $\A$ is free, then
one can show the non-freeness of $\A^H$ by checking that $J_\A^H$ is not a height two unmixed ideal.
In this subsection, 
we study associated primes of $J_\A^H$ and discuss when $J_\A^H$ is a height two unmixed ideal.

Let $\A$ be an arrangement in $V$.
As we see in Section 3,
$\Omega^1(\A)$ is a reflexive $S$-module. So $\pd_S(\Omega^1(\A))\leq \ell-2$.
More precisely, since $\Omega^1(\A)$ and $\Omega^1(\A^e)$ have the same projective dimension, we actually have
\[\textstyle
\pd_S(\Omega^1(\A))\leq \ell-2 -\dim X_\A,
\]
where $X_\A=\bigcap_{H \in \A} H$.
We say that $\Omega^1(\A)$ has the {\bf maximal projective dimension} if
$\pd_S(\Omega^1(\A))= \ell-2 -\dim X_\A$.
Thus, for an essential arrangement $\A$, we say that $\Omega^1(\A)$ has the maximal projective dimension if $\pd_S(\Omega^1(\A))=\ell-2$ and,
for a non-essential arrangement $\A$,
we say that $\Omega^1(\A)$ has the maximal projective dimension if $\Omega^1(\A^e)$ has the maximal projective dimension.

\begin{prop}
\label{5.5}
Let $\A$ be an arrangement in $V$, $H \in \A$ and 
 $X \in \Xi(\A,H)$.
\begin{itemize}
    \item[(1)] If $P_X \in \Ass(\overline S/J_\A^H)$, 
then $\Omega^1(\A_X\setminus H)$ has the maximal projective dimension.
\item[(2)] The converse of (1) holds if $\Omega^1(\A_X)$ does not have a maximal projective dimension.
\end{itemize}
In particular, if $\A$ is free then $P_X \in \Ass(\overline S/J_\A^H)$
if and only if $\Omega^1(\A_X \setminus H)$ has the maximal projective dimension.
\end{prop}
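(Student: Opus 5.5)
First reduce to the local, essential situation. By Proposition \ref{5.2}, $P_X \in \Ass(\overline S/J_\A^H)$ if and only if $P_X \in \Ass(\overline S/J_{\A_X}^H)$. Moreover, by Corollary \ref{2.6}, $J_{\A_X}^H$ is extended from the ideal $J' = J_{(\A_X)^e}^{H/X}$ of $R = \mathrm{sym}((H/X)^*)$, and $\overline S$ is a polynomial extension of $R$, hence flat. Under this extension $P_X$ corresponds to the graded maximal ideal $\mideal_R$ of $R$. Projective dimensions of $\Omega^1$ are unchanged by essentialization, by Lemma \ref{lem:2.5}(2). We may therefore replace $\A$ by $\B = (\A_X)^e$ in $W = V/X$, of dimension $m = \ell - \dim X \geq 3$, with $H$ replaced by $H/X$. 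The claim becomes:

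\[
\mideal_R \in \Ass(R/J_\B^H) \iff \depth(R/J_\B^H) = 0 .
\]

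Next, relate the depths. Work over $T = \mathrm{sym}(W^*)$, so that $R = T/(\alpha_H)$. The residue exact sequence is
\[
0 \to \Omega^1(\B\setminus H) \to \Omega^1(\B) \to J_\B^H \to 0 .
\]
Since $\B\setminus H$ may fail to be essential, note first that $\Omega^1(\B\setminus H)$ still has maximal projective dimension exactly when $\pd_T \Omega^1(\B\setminus H) = m-2$. Indeed, if $\B \setminus H$ is non-essential, then its essentialization lives in dimension $m-1$ and has $\pd \leq m-3$, so $\Omega^1(\B\setminus H)$ never has maximal projective dimension. Since $\depth_T$ of a graded $T$-module equals $\depth_R$ for an $R$-module, we have:

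\begin{itemize}
\item $\depth_R(R/J) = 0$ iff $\depth_R J = 1$;
\item $\depth_T J = 1$ iff $\pd_T J = m-1$.
\end{itemize}

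The long exact sequence of $\Ext_T(\K,-)$, equivalently the depth lemma, gives three comparisons.

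\begin{itemize}
\item If $\depth J = 1$, then since $\depth \Omega^1(\B) \geq 2$ (it is reflexive), we get $\depth \Omega^1(\B\setminus H) = 2$. Thus $\pd = m-2$, which is maximal. This proves (1).
\item Conversely, suppose $\depth \Omega^1(\B\setminus H) = 2$ and $\depth \Omega^1(\B) \geq 3$; the latter is exactly the hypothesis that $\Omega^1(\B)$ does not have maximal projective dimension. Then $\depth J = 1$. This proves (2).
\end{itemize}

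The step to check carefully is that the depth lemma gives $\depth \Omega^1(\B\setminus H) \le \depth J + 1$ without any assumption on $\Omega^1(\B)$ beyond depth $\geq 2$. This follows from the segment $\Ext^0(\K,J) \to \Ext^1(\K,\Omega^1(\B\setminus H)) \to \Ext^1(\K,\Omega^1(\B)) = 0$. With $\depth J = 1$ one has $\Ext^0(\K,J) = 0$ and $\Ext^1(\K,\Omega^1(\B)) = 0$, so $\Ext^1(\K,\Omega^1(\B\setminus H)) = 0$; the next segment then forces $\Ext^2(\K,\Omega^1(\B\setminus H)) \neq 0$, since $\Ext^1(\K,J) \neq 0$ maps into it with kernel the image of $\Ext^1(\K,\Omega^1(\B)) = 0$. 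The reverse direction works the same way, using $\Ext^2(\K,\Omega^1(\B)) = 0$.

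For the final assertion: if $\A$ is free, then $\A_X$ is free by Lemma \ref{localizationfree}. So $\Omega^1(\A_X)$ has projective dimension $0$, which is not maximal since $m-2 \geq 1$. Hence (2) applies and the equivalence follows.

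The main obstacle I expect is the bookkeeping in the reduction step, not the homological algebra. There are two points:

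\begin{itemize}
\item checking that $P_X$-associatedness transfers along the flat extension $R \to \overline S$ to $\mideal_R$-associatedness, for which I would use that the associated primes of $J'\overline S$ are the extensions of those of $J'$;
\item handling the case where $\B \setminus H$ is non-essential.
\end{itemize}
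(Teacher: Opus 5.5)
Your route is the paper's: localize at $P_X$, pass to the essentialization so that $P_X$ becomes the graded maximal ideal, and read off $\depth(R/J_\B^H)=0$ from the residue sequence. Using the depth lemma instead of projective dimensions is equivalent via Auslander--Buchsbaum, and your Ext bookkeeping is correct. Using Proposition~\ref{5.2} instead of Lemma~\ref{2.12} for the first reduction is also fine, since a sub-intersection of an irredundant primary decomposition is irredundant.

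The gap is in your treatment of the case where $\B\setminus H$ is not essential. You argue that its essentialization lives in dimension $m-1$ and has $\pd\le m-3$, ``so $\Omega^1(\B\setminus H)$ never has maximal projective dimension.'' That misreads the definition. For an essentialization in dimension $m-1$ the maximal value is exactly $(m-1)-2=m-3$, so in this case maximality means $\pd_T\Omega^1(\B\setminus H)=m-3$, which nothing in your argument excludes.

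Thus the translation ``maximal $\Leftrightarrow$ $\pd_T=m-2$,'' on which your proof of (2) rests, is unjustified. Part (1) survives, because $\pd_T=m-2$ by itself forces $\B\setminus H$ to be essential.

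The missing idea, which is how the paper's proof begins, is that the hypothesis $X\in\Xi(\A,H)$ rules this case out:
\begin{itemize}
\item[(a)] Since $X_{\A_X\setminus H}\cap H=X_{\A_X}=X$, if $X_{\A_X\setminus H}\ne X$ then $X_{\A_X\setminus H}\not\subset H$.
\item[(b)] Suppose two distinct $L_1,L_2\in\A_X\setminus H$ satisfied $L_1\cap H=L_2\cap H$. Comparing codimensions gives $L_1\cap L_2=L_1\cap H\subset H$. But $L_1\cap L_2\supseteq X_{\A_X\setminus H}$, contradicting (a).
\item[(c)] So every element of $(\A_X)^H$ has multiplicity one and $(\A_X)^H_{\mathrm{core}}=\varnothing$. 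Proposition~\ref{prop:2.10} then gives $J_{\A_X}^H=J_{\{H\}}^H=\overline S$, contradicting $X\in\Xi(\A,H)$.
\end{itemize}
Once $X_{\A_X\setminus H}=X$ is established, $\B\setminus H$ is essential, maximality does mean $\pd_T=m-2$, and the rest of your argument goes through.
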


\begin{proof}
If $X_{\A_X \setminus H} \ne X$, then $(\A_X)_{\mathrm{core}}^H = \varnothing$, implying $J_{\A_X}^H=\overline S$, 
a contradiction. Thus 
$X_{\A_X \setminus H}=X$.
Since Lemma \ref{2.12} says that $P_X \in \Ass(\overline S/J_\A^H)$ if and only if $P_X \in \Ass(\overline S/J_{\A_X}^H)$,
by Corollary \ref{2.6} it suffices to prove the statement when $\A$ is essential and $X=\bm 0$.
Under this assumption $P_X$ is the graded maximal ideal of $\overline S$, and we have $P_X \in \Ass(\overline S/J_\A^H)$ if and only if $\depth (\overline S/J_\A^H)=0$.
The latter condition is equivalent to
$\pd_{\overline S}(J_\A^H)=\dim \overline S-1=\ell-2$.
Consider the exact sequence
\[
0 \longrightarrow \Omega^1(\A \setminus H) \lhook\joinrel\longrightarrow \Omega^1(\A) \longrightarrow J_\A^H \longrightarrow 0.
\]
Since $\pd_S(\Omega^1(\A)) \leq \ell-2$ and $\pd_S(J_\A^H)=\pd_{\overline S}(J_\A^H)+1$,
if $\pd_{\overline S} (J_{\A}^H)=\ell-2$, then we must have $\pd_S(\Omega^1(\A \setminus H))=\ell-2$ by the above exact sequence,
proving (1).
Conversely,
if $\pd_S(\Omega^1(\A\setminus H))=\ell-2$ and $\pd_S(\Omega^1(\A))<\ell-2$, then again by the above exact sequence we have $\pd_S(J_\A^H)=\ell-1$, equivalently, $\pd_{\overline S}(J_\A^H)=\ell-2$. This proves (2).
\end{proof}

Now Lemma \ref{5.1}(2) and Proposition \ref{5.5} prove the following criterion.

\begin{cor}
Let $\A$ be a free arrangement in $V$ and $H \in \A$ such that 
$\A \setminus \{H\}$ is not free. Then 
the following conditions are equivalent.
\begin{itemize}
    \item[(1)] $J_\A^H$ is a height two unmixed ideal.
    \item[(2)] $\Omega^1(\A_X\setminus H)$ does not have a maximal projective dimension for all $X \in \Xi(\A,H)$ with $\dim X < \ell-3$.
\end{itemize}
\end{cor}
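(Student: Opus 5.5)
We combine Lemma \ref{5.1}(2) with Proposition \ref{5.5} and the characterization of $\Xi(\A,H)$ for free arrangements.

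First we record the height of $J_\A^H$. Since $\A\setminus H$ is not free, Theorem \ref{thm:3.4}(1) gives $J_\A^H\ne\overline S$, so $\Xi(\A,H)\ne\varnothing$. Every $X\in\Xi(\A,H)$ has $\dim X\le \ell-3$ by Lemma \ref{lem:4.5}. Hence every associated prime $P_X$ has height at least two in $\overline S$.

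Next we set up the translation between the two conditions. By definition, $J_\A^H$ is height two unmixed exactly when every $P\in\Ass(\overline S/J_\A^H)$ has height two. Since $\dim H=\ell-1$, this means every associated prime has the form $P_X$ with $\dim X=\ell-3$. By Lemma \ref{5.1}(2), every associated prime has the form $P_X$ with $X\in\Xi(\A,H)$. So (1) is equivalent to the statement that $P_X\notin\Ass(\overline S/J_\A^H)$ for every $X\in\Xi(\A,H)$ with $\dim X<\ell-3$. This also requires $\Ass$ to be nonempty, which holds because $J_\A^H$ is a proper ideal.

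Finally we apply Proposition \ref{5.5}. Because $\A$ is free, its final sentence says that for $X\in\Xi(\A,H)$, $P_X\in\Ass(\overline S/J_\A^H)$ if and only if $\Omega^1(\A_X\setminus H)$ has the maximal projective dimension. This works because $\A_X$ is free by Lemma \ref{localizationfree}, and a free module is not of maximal projective dimension once $\dim X\le\ell-3$. Substituting this into the reformulation above gives exactly (2).

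The only point needing care is the case $\dim X=\ell-3$, which must be excluded from both conditions. Height-two associated primes are allowed by (1), and (2) imposes nothing on those $X$, so the correspondence holds. We also check that the "maximal projective dimension" definition for non-essential $\A_X\setminus H$ matches the one used in Proposition \ref{5.5}. Both refer to the essentialization with $X_{\A_X\setminus H}=X$, as established in the proof of that proposition, so there is no mismatch.
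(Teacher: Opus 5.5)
Your proposal is correct and follows the paper's own route: the paper obtains this corollary by combining Lemma \ref{5.1}(2), which says every associated prime is some $P_X$ with $X\in\Xi(\A,H)$, with the free-arrangement case of Proposition \ref{5.5}. You also fill in the details the paper leaves implicit, and they are right: every $P_X$ has height at least two by Lemma \ref{lem:4.5}, $\mathrm{Ass}$ is nonempty because $J_\A^H\ne\overline S$, and a free $\Omega^1(\A_X)$ cannot have maximal projective dimension when $\dim X\le \ell-3$.
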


Our computational experiments tell us that it is hard to find arrangements $\A$ such that $\Omega^1(\A)$ has the maximal projective dimension unless $\A^e$ is an arrangement in $\K^3$.
Indeed, we do not know an essential arrangement $\A$ in $\K^\ell$ such that $\Omega^1(\A)$ has the maximal projective dimension for $\ell \geq 6$.
This fact and Proposition \ref{5.5} suggest that $J_\A^H$ is unlikely to have an associated prime of large height.
This also suggests that the decomposition in Corollary \ref{cor:newprimarydecomp} might be useful since it enables us to compute residue ideals from local information of $\A$ if all the associated primes have low height.
It would be interesting to find theoretical evidence supporting our experiments, or more strongly, to find a useful criterion for arrangements $\A$ such that $\Omega^1(\A)$ has the maximal projective dimension.
At present, this seems to be a difficult question, so we pose the following question.

\begin{question}
Can we find an essential arrangement $\A$ in $\K^\ell$ such that $\Omega^1(\A)$ has the maximal projective dimension for any $\ell \geq 6$?
\end{question}

\begin{example}
\label{ex:maximalPD}
It was proved in \cite[Theorem 6.5]{MS} that if an arrangement $\A$ is locally free and $\pd_S(D(\A))=1$, then $\Omega^1(\A)$ has the maximal projective dimension.
On the other hand, by the SPOG theorem in \cite{A5}, if $\A$ is free but $\A \setminus H$ is not free, then $\pd_S(D(\A\setminus H))=1$.
By these results,
if $\A$ is free and if $\A \setminus H$ is not free but is locally free, then $\Omega^1(\A\setminus H)$ must have the maximal projective dimension.
The arrangements $\A$ and $\A\setminus H$ in Examples \ref{exdim4} and \ref{ERexample} satisfy these conditions, so in these examples $\Omega^1(\A\setminus H)$ has the maximal projective dimension when $\ell=4,5$.
\end{example}

\begin{example}
\label{exDiPasquale}
It could happen that $J_\A^H$ is unmixed of height two but is not Cohen-Macaulay. Let $\A$ be the hyperplane arrangement in $\R^5$ with
{\Small
\[
Q(\A)=x_0(x_1^2-x_0^2)(x_2^2-x_0^2)(x_3^2-x_0^2)(x_4^2-x_0^2)(x_1-x_2)(x_2-x_3)(x_3-x_4)(x_1+x_4)
\]
}

\noindent
and let $H=H_{x_0}$. This arrangement $\A$ was considered by  DiPasquale and it was proved in \cite[Proposition 6.12]{Di} that $\A$ is free but $\A^H$ is not free.
A computation by Macaulay2 tells
\[J_\A^H=(x_2x_3,x_2x_5,x_3x_4,x_4x_5)=(x_2,x_4) \cap (x_3,x_5),\]
which is a height two unmixed ideal but has projective dimension $2$.
\end{example}

\begin{rem}
\label{5.8}
    Let $\A$ be a free arrangement in $\K^4$ and $H \in \A$.
    In this case, if $J_\A^H$ is a height two unmixed ideal, then $J_\A^H$ is Cohen--Macaulay and hence $\A^H$ is free.
This is because, for any homogeneous ideal $J$ of $\overline S$, we have $\pd_{\overline S} J \leq 2$ since $\overline S=S/(\alpha_H)$ has Krull dimension $3$, and hence
\[\pd_{\overline S} J=2
\Leftrightarrow \depth (\overline S/J)=0 \Leftrightarrow \mideal_{\overline S} \in \Ass(\overline S/J).\]   \end{rem}

\begin{example}
Residue ideals could have an embedded prime.
Let $\A$ be an arrangement in $\mathbb C^4$ with 
\[Q(\A)=(x^4-w^4)(y^4-w^4)(z^4-w^4)(x-iy)(x+z)(y+z)w,\]
where $i$ is the imaginary unit, and set $H=H_w$.
It was proved in \cite[Proposition 5.2]{DW} that $\A$ is free but $\A^H$ is not free.
In this case, it is not hard to see that 
\[
\Xi(\A,H)=\NFT(\A,H) = \{L_{xy},L_{xz},L_{yz},\bm 0\}
\]
where $L_{xy},L_{xz},L_{yz}$ are the lines in $H$ defined by $x=y=0$, $x=z=0$ and $y=z=0$ respectively. In particular, $\Min(\overline S/J_\A^H)=\{(x,y),(x,z),(y,z)\}$.
On the other hand,
since $\A^H$ is not free,  $(x,y,z) \subset \overline S$ is an associated prime of $J_\A^H$ (see Remark \ref{5.8}).
\end{example}

We do not know if there is an arrangement $\A$ in $\R^\ell$ such that $J_\A^H$ has an embedded prime for some $H \in \A$.

\section{Quick Introduction to Stanley--Reisner theory}

The second theme of this paper is the module $\Omega^1(\A_G)$ of logarithmic $1$-forms of a graphic arrangement $\A_G$.
It turns out that the module $\Omega^1(\A_G)$
is closely related to the Stanley--Reisner theory.
In this section, we introduce results in Stanley--Reisner theory that are required to discuss properties of $\Omega^1(\A_G)$.
For more information on Stanley--Reisner theory,
see the books \cite{HH} and \cite{Stanley}.

\subsection{Simplicial complexes and squarefree monomial ideals}
A {\bf simplicial complex} on $[\ell]$ is a collection $\Delta$ of subsets of $[\ell]$ satisfying that $F \in \Delta$ and $G \subset F$ implies $G \in \Delta$.
We consider that the empty set $\varnothing$ and the set $\{\varnothing\}$ of empty set are different simplicial complexes.
For a simplicial complex $\Delta$ on $[\ell]$ and $F \subset [\ell]$,
the simplicial complex
\[\lk_\Delta(F)=\{G\setminus F \mid F \subset G \in \Delta\}\]
is called the {\bf link of $F$ in $\Delta$}
and the simplicial complex
\[
\Delta[F]=\{G \subset F \mid G \in \Delta\}
\]
is called the {\bf induced subcomplex} of $\Delta$ on $F$,
where we consider that $\lk_\Delta(F)=\varnothing$ if $F \not \in \Delta$.

For a subset $F \subset [\ell]$,
let $x^F= \prod_{i \in F} x_i$, where $x^\varnothing=1$.
For a simplicial complex $\Delta$ on $[\ell]$,
its {\bf Stanley--Reisner ideal} is the squarefree monomial ideal in $S=\K[x_1,\ldots,x_\ell]$ defined by 
\[I_\Delta=(x^F \mid F \subset [\ell],\ F \not \in \Delta)\]
and its {\bf Stanley--Reisner ring} is the ring $\K[\Delta]=S/I_\Delta$.
We note that, for any squarefree monomial ideal $I\subset S$,
the set $\Delta(I)=\{F \subset [\ell] \mid x^F \not \in I\}$ forms a simplicial complex and one has $I_{\Delta(I)}=I$.
So the correspondence $\Delta \to I_\Delta$ gives a bijection between the set of simplicial complexes on $[\ell]$ and the set of squarefree monomial ideals in $S$.

We actually need a relative version of Stanley--Reisner rings.
Let $\Delta \supset \Gamma$ be simplicial complexes on $[\ell]$.
The {\bf Stanley--Reisner module} of the pair $(\Delta,\Gamma)$ is the $S$-module
\[\K[\Delta,\Gamma]=I_\Gamma/I_\Delta.\]
In other words, Stanley--Reisner modules are modules of the form $I/J$ for some squarefree monomial ideals $I \supset J$.
Note that $\K[\Delta,\varnothing]=\K[\Delta]$ since $I_\varnothing =S$.

\subsection{Algebraic invariants of Stanley--Reisner rings and modules}
Algebraic properties of Stanley--Reisner rings and modules can be often determined from combinatorial or topological information of simplicial complexes.
Let us introduce some known results on this direction.

For a graded $S$-module $M=\bigoplus_{i \in \Z} M_i$, where $M_i$ is the degree $i$ graded component of $M$,
its {\bf Hilbert series} is the formal power series
\[
\Hilb(M,t)=\sum_{i \in \Z} (\dim_\K M_i) t^i.
\]
The following combinatorial formula for the Hilbert series of a Stanley--Reisner module is known.
See \cite[II, 1.4 Theorem]{Stanley}.

\begin{lemma}
    \label{lem:SRHilb}
Let $I \supset J$ be squarefree monomial ideals in $S$.
Then the set
\[
\bigcup_{x^F \in I\setminus J}
\big\{ x^F \cdot m \mid m \text{ is a monomial in }\K[x_i \mid i \in F] \big\}
\]
is a $\K$-basis of $I/J$.
In particular, we have
\[\Hilb(I/J,t)=\sum_{x^F \in I \setminus J} \frac {t^{\#F}} {(1-t)^{\#F}}.\]
\end{lemma}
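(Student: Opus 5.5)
The statement to prove is Lemma \ref{lem:SRHilb}. The plan is to use the fine $\Z^\ell$-grading of $S$: since $I$ and $J$ are monomial ideals, $I/J$ is $\Z^\ell$-graded. Its graded piece in degree $a\in\Z_{\ge 0}^\ell$ is one-dimensional, spanned by $x^a$, if $x^a\in I\setminus J$, and zero otherwise. So it suffices to show that the monomials lying in $I\setminus J$ are exactly the elements of the displayed union, and that this union is disjoint.

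To organize the monomials, attach to each monomial $x^a$ its support $F=\{i \mid a_i>0\}$. Every such monomial can be written uniquely as $x^F\cdot m$ with $m$ a monomial in $\K[x_i\mid i\in F]$. The key fact is that a squarefree monomial ideal $I$ contains $x^a$ if and only if it contains $x^F$. One direction is immediate, since $x^F$ divides $x^a$. For the other, if $x^a\in I$ then some squarefree generator $x^G$ of $I$ divides $x^a$, which forces $G\subset F$, and hence $x^G$ divides $x^F$.

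Applying this fact to both $I$ and $J$ gives that $x^a\in I\setminus J$ if and only if $x^F\in I\setminus J$. This is the only step needing care, and it is exactly where squarefreeness of both ideals is used. Since the support $F$ is determined by $x^a$, the union over $F$ is disjoint, and the set is therefore a $\K$-basis of $I/J$.

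For the Hilbert series, the monomials of the form $x^F m$ with $m$ a monomial in the variables indexed by $F$ are precisely the monomials with support exactly $F$. Their generating function by total degree is $\prod_{i\in F} t/(1-t)=t^{\#F}/(1-t)^{\#F}$. Summing over all $F$ with $x^F\in I\setminus J$ yields the stated formula.
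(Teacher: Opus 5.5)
Your proof is correct. You sort the monomials of $I/J$ by support, use squarefreeness of both $I$ and $J$ to get $x^a\in I\setminus J \Leftrightarrow x^F\in I\setminus J$ for $F$ the support of $x^a$, and sum $(t/(1-t))^{\#F}$ over the disjoint support classes. The paper does not prove this lemma itself but cites Stanley's book (II, 1.4 Theorem), where the argument is this same support-based monomial basis, so your route is the standard one.
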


For a finitely generated graded $S$-module $M$,
let $H_{\mideal_S}^i(M)$ be the $i$th local cohomology module of $M$ with respect to the maximal ideal $\mideal_S$ of $S$ (see \cite[\S 3]{BH} and \cite{Hu} for basics on local cohomology).
We write $\widetilde H_i(\Delta)$ (resp.\ $\widetilde H_i(\Delta,\Gamma)$) for the $i$th reduced simplicial homology group of a simplicial complex $\Delta$ (resp.\ the pair $(\Delta,\Gamma)$).
The following formula for Hilbert series of local cohomology modules is known as Hochster's formula for local cohomology.
See \cite[II, 4.1 Theorem]{Stanley} (and \cite[Theorem 1.8]{AdiSan} for the relative version).

\begin{lemma}
    \label{lem:SRlocalcohomology}
Let $\Delta \supset \Gamma$ be simplicial complexes on $[\ell]$.
Then
\[
\Hilb\big(H^i_{\mideal_S}(I_\Gamma/I_\Delta)\big)=\sum_{F \in \Delta} \big(
\dim_\K \widetilde H_{i-1-\#F}(\lk_\Delta(F),\lk_\Gamma(F)) \big) \frac {t^{-\#F}} {(1-t)^{-\#F}}.
\]
\end{lemma}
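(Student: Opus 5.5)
The plan is to compute the local cohomology through the Čech complex, working in the fine $\Z^\ell$-grading, and then pass to the coarse Hilbert series at the end. Put $M=I_\Gamma/I_\Delta$. For each $U\subset[\ell]$ let $M_{x^U}$ denote the localization of $M$ at $x^U$. Then $H^i_{\mideal_S}(M)$ is the $i$th cohomology of the Čech complex
\[
0\to M\to \bigoplus_{\#U=1} M_{x^U}\to \bigoplus_{\#U=2} M_{x^U}\to\cdots,
\]
whose differentials are the natural maps, with the usual signs. Since $M$ is $\Z^\ell$-graded, the complex splits degreewise. It therefore suffices to fix $a\in\Z^\ell$ and compute the cohomology of its degree-$a$ strand.

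For this, let $F=\{j\mid a_j<0\}$ and $G=\{j\mid a_j>0\}$. First, using Lemma \ref{lem:SRHilb} (the monomial basis of $I/J$), I will check the following description of $(M_{x^U})_a$. It is $\K$ if $F\subset U$ and $U\cup G\in\Delta\setminus\Gamma$, and it is $0$ otherwise. The point is that in the localization every monomial $x^b$ with $\mathrm{supp}(b)\cup U=W$ behaves like $x^W$. Membership in $I_\Gamma\setminus I_\Delta$ then amounts to $W\in\Delta$ and $W\notin\Gamma$. Moreover, the Čech maps between nonzero components are $\pm$ identities.

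Next, suppose $G\not\subset F$. Since $F$ and $G$ are disjoint, this means $G\neq\varnothing$. Pick $j\in G$. The condition on $U$ does not change when $j$ is added to or removed from $U$, because $j\in G$ already. Hence the strand is the mapping cone of an identity map, via the pairing $U\leftrightarrow U\cup\{j\}$, and it is acyclic. So only degrees $a\le 0$ contribute, and for these $F=\mathrm{supp}(a)$ and $G=\varnothing$.

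In that case the strand has a copy of $\K$ in position $\#U$ for each $U\supseteq F$ with $U\in\Delta\setminus\Gamma$. Writing $U=F\sqcup W$, this is the simplicial cochain complex of the pair $(\lk_\Delta(F),\lk_\Gamma(F))$, shifted so that a face $W$ of dimension $\#W-1$ sits in cohomological position $\#F+\#W$. Here the empty face $W=\varnothing$ is allowed, which gives reduced cohomology. Thus the degree-$a$ part of $H^i$ is $\widetilde H^{\,i-1-\#F}(\lk_\Delta F,\lk_\Gamma F;\K)$. Over a field, this has the same dimension as the corresponding reduced homology. If $F\notin\Delta$, the link is empty by convention and the contribution vanishes.

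Finally, sum over all $a\le 0$ with support exactly $F$. These contribute $\sum_{a}t^{|a|}=\bigl(t^{-1}/(1-t^{-1})\bigr)^{\#F}$ to the series. This is the factor in the stated formula, read as a series in $t^{-1}$. Summing over $F\in\Delta$ then gives the lemma.

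I expect the main obstacle to be the bookkeeping in the identification of the strand with the relative cochain complex. This includes matching the Čech signs with the simplicial coboundary signs, and getting the shift $i-1-\#F$ exactly right. It also includes checking the boundary cases $W=\varnothing$ and $F\in\Gamma$ versus $F\in\Delta\setminus\Gamma$. To settle the indexing, I would first check it on $\Gamma=\varnothing$ against Hochster's classical formula, and then argue the relative case uniformly.
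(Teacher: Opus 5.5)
Your $\Z^\ell$-graded \v{C}ech computation is correct: you kill the strands with $G\neq\varnothing$ by toggling an index $j\in G$, and then identify the strand for $a\le 0$ with the relative augmented cochain complex of $(\lk_\Delta(F),\lk_\Gamma(F))$ after a sign twist. This is the standard proof of Hochster's formula carried over to pairs; the paper gives no argument of its own and simply cites Stanley and Adiprasito--Sanyal. The one thing to fix is your last remark: the factor you obtain, $\bigl(t^{-1}/(1-t^{-1})\bigr)^{\#F}$, does not match the printed $t^{-\#F}/(1-t)^{-\#F}=t^{-\#F}(1-t)^{\#F}$ under any reading. For example, with $\ell=1$, $\Delta=\{\varnothing,\{1\}\}$, $\Gamma=\varnothing$ and $i=1$, the printed right-hand side is $t^{-1}-1$, whereas $H^1_{\mideal_S}(\K[x_1])$ has series $t^{-1}/(1-t^{-1})$. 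So you should say explicitly that the lemma as printed contains a misprint, and that you are proving it with denominator $(1-t^{-1})^{\#F}$, which is the form the paper actually uses in \eqref{eq7-10-1}.
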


For a finitely generated graded $S$-module $M$, the number
\[\beta_{i,j}(M)=\dim_\K \Tor_i^S(M,\K)_j\]
is called the $(i,j)$th {\bf graded Betti number} of $M$,
and the number
\[\reg(M)=\max\{j \mid \beta_{i,i+j}(M) \ne 0 \text{ for some }j\}\]
is called the {\bf (Castelnuovo--Mumford) regularity} of $M$.
The next formula due to Hochster gives a combinatorial formula for graded Betti numbers and regularity of Stanley--Reisner ideals\footnote{The formula can be generalized to Stanley--Reisner modules but we do not need it in this paper.}.

\begin{lemma}
    \label{lem:SRbetti}
    For a squarefree monomial ideal $I=I_\Delta$ in $S$,
    one has
\[    \beta_{i,i+j}(I_\Delta)=\sum_{W \subset [n],\ \! \#W=i+j} \dim_\K \widetilde H_{j-2}(\Delta[W]).
\]
\end{lemma}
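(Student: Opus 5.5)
The plan is to compute $\Tor$ against $\K$ using the Koszul complex and the $\Z^\ell$-grading. Both $I_\Delta$ and $S/I_\Delta$ are $\Z^\ell$-graded, so $\Tor_i^S(I_\Delta,\K)$ is $\Z^\ell$-graded. The $\Z$-graded piece of degree $i+j$ is then the direct sum of the multidegree pieces $\mathbf a\in\Z_{\ge0}^\ell$ with $|\mathbf a|=i+j$. Throughout, the index set $[n]$ in the statement is read as $[\ell]$.

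First, the short exact sequence $0\to I_\Delta\to S\to S/I_\Delta\to 0$ gives $\Tor_i^S(I_\Delta,\K)\cong\Tor_{i+1}^S(S/I_\Delta,\K)$ for $i\ge 0$. This isomorphism is multigraded.

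Second, compute $\Tor_{i+1}^S(S/I_\Delta,\K)$ by resolving $\K$ instead. It is the homology of the Koszul complex $K_\bullet(x_1,\dots,x_\ell;S/I_\Delta)$. In homological degree $k$ this complex has basis elements $m\,e_F$, where $\#F=k$, $m$ is a monomial not in $I_\Delta$, and $\deg e_F=F$. Fix a multidegree $\mathbf a$.

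1. If some $a_u\ge2$, the strand in degree $\mathbf a$ is exact. Let $u$ be such a coordinate. Multiplication by $x_u$ and the contraction $\iota_u$ (removing $e_u$) give a contracting homotopy. The reason is that whether $m$ lies outside $I_\Delta$ depends only on the support of $m$, and this support always contains $u$ in these degrees.
2. If $\mathbf a$ is squarefree, say $\mathbf a=W$, the strand has basis $x^{W\setminus F}e_F$ with $F\subset W$ and $W\setminus F\in\Delta$. This is the same as $G=W\setminus F\in\Delta[W]$. The Koszul differential sends $e_F$ to a signed sum of the $e_{F\setminus\{v\}}$, which enlarges $G$ to $G\cup\{v\}$.
3. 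Hence the strand is, up to signs, the augmented simplicial cochain complex of $\Delta[W]$, reindexed by $\#G=\#W-k$. Homological degree $k$ corresponds to cochains in dimension $\#W-k-1$.

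The main obstacle is this identification. One has to check carefully that the Koszul signs agree with the simplicial coboundary signs, or at least differ by a consistent rescaling of basis elements. One also has to handle the empty face correctly: $G=\varnothing$ is the augmentation term, and $\Delta[W]=\{\varnothing\}$ is allowed.

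Third, put the pieces together. The identification gives
$$\Tor_{i+1}^S(S/I_\Delta,\K)_W\cong \widetilde H^{\#W-(i+1)-1}(\Delta[W];\K).$$
With $\#W=i+j$ the cohomological degree is $j-2$. Over a field, reduced cohomology and reduced homology have the same dimension, so this piece has dimension $\dim_\K\widetilde H_{j-2}(\Delta[W])$. Non-squarefree multidegrees contribute nothing by step 1. Summing over all $W\subset[\ell]$ with $\#W=i+j$ gives the formula for $\beta_{i,i+j}(I_\Delta)$.
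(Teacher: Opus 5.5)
The paper gives no proof of this lemma. It quotes it as Hochster's formula from the literature, so there is no internal argument to compare against. Your proposal is the standard textbook proof:
\begin{itemize}
\item shift to $\Tor_{i+1}^S(S/I_\Delta,\K)$;
\item compute this with the $\Z^\ell$-graded Koszul complex;
\item show that non-squarefree multidegrees contribute nothing;
\item identify the strand in degree $W$ with the augmented cochain complex of $\Delta[W]$.
\end{itemize}
Your indexing ($\#W-(i+1)-1=j-2$), your handling of the empty face, and the passage from cohomology to homology over a field are all correct. For $i=0$ the shift $\Tor_0(I_\Delta,\K)\cong\Tor_1(S/I_\Delta,\K)$ uses $I_\Delta\ne S$. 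The sign issue you flag is harmless. Send $x^{W\setminus F}e_F$ to $\epsilon(F)\,(W\setminus F)$ with $\epsilon(F)=(-1)^{\sum_{v\in F}\#\{w\in W\mid w<v\}}$; this rescaling turns the Koszul differential into the simplicial coboundary.

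There is one slip, in step 1. The contracting homotopy cannot be built from multiplication by $x_u$ followed by the contraction $\iota_u$. That composite lowers homological degree, and it is just the $u$-th summand of the Koszul differential. The homotopy goes the other way. On the $\mathbf a$-strand, set $h(m\,e_F)=(m/x_u)\,e_u\wedge e_F$ if $u\notin F$ and $h(m\,e_F)=0$ if $u\in F$. The identity $d(e_u\wedge\omega)=x_u\omega-e_u\wedge d\omega$ then gives $dh+hd=\mathrm{id}$ in both cases.

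Your support observation is exactly what makes $h$ descend to $S/I_\Delta$. If $u\notin F$, then $m$ has $x_u$-exponent $a_u\ge 2$, so $m/x_u$ has the same support as $m$. Hence $h$ preserves the subcomplex coming from $I_\Delta$.

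Another reading of your sentence would be: $x_u$ acts as zero on Koszul homology and maps strands bijectively. That version does not work. When $a_u=2$, the map $x_u$ from the $(\mathbf a-\mathbf e_u)$-strand to the $\mathbf a$-strand can fail to be injective on terms $m'e_F$ with $u\in F$. A simpler way to dispose of non-squarefree degrees altogether is to note that the Taylor resolution of a squarefree monomial ideal lives only in squarefree multidegrees.
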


\begin{cor}
    \label{cor:SRreg}
 For a squarefree monomial ideal $I=I_\Delta$ in $S$,
    one has
\[ \reg(I_\Delta) \leq k \ \Leftrightarrow \ \widetilde H_j\big(\Delta[W]\big)=0 \text{ for all }W \subset [\ell] \text{ and } j\geq k-1.\]
\end{cor}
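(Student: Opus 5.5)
The plan is to read the equivalence off directly from Hochster's formula in Lemma~\ref{lem:SRbetti}, after substituting $j' = j-2$.

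By definition, $\reg(I_\Delta)\le k$ means $\beta_{i,i+j}(I_\Delta)=0$ for all $i\ge 0$ and all $j>k$. By Lemma~\ref{lem:SRbetti}, each $\beta_{i,i+j}(I_\Delta)$ is a sum of the nonnegative integers $\dim_\K\widetilde H_{j-2}(\Delta[W])$ over the subsets $W$ with $\#W=i+j$. So $\beta_{i,i+j}(I_\Delta)=0$ holds exactly when every summand vanishes. Letting $i$ run over all $i\ge 0$, the condition $\reg(I_\Delta)\le k$ becomes
\[
\widetilde H_{j-2}(\Delta[W])=0 \quad\text{for all } j>k \text{ and all } W \text{ with } \#W\ge j .
\]
Writing $j'=j-2$, the condition $j>k$ becomes $j'\ge k-1$. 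This is the stated condition, except that it is only imposed on those $W$ with $\#W\ge j'+2$.

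So the remaining step is to show that the extra cases $\#W\le j'+1$ impose no additional constraint, i.e.\ that $\widetilde H_{j'}(\Delta[W])=0$ automatically there. I will split into cases.

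\emph{Case $\#W\le j'$.} Then $\Delta[W]$ has no faces of dimension $j'$, so $\widetilde H_{j'}(\Delta[W])=0$ for degree reasons.

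\emph{Case $\#W=j'+1$.} Either $W\notin\Delta$, in which case $\Delta[W]$ has no $j'$-face and again $\widetilde H_{j'}=0$; or $W\in\Delta$, in which case $\Delta[W]$ is the full simplex on $W$, which is acyclic when $W\ne\varnothing$.

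The only delicate point, which I expect to be the main obstacle, is the degenerate case $W=\varnothing$, $j'=-1$, where $\widetilde H_{-1}(\{\varnothing\})=\K\neq 0$. This case arises only when $k\le 0$. I would dispose of it by noting the implicit convention $k\ge 1$: a proper nonzero squarefree monomial ideal has $\reg(I_\Delta)\ge 1$ because it has generators in positive degree, and the trivial ideals have to be excluded or treated separately. With $k\ge 1$ we have $j'\ge 0$, so $W\neq\varnothing$ in the second case above, and both directions of the equivalence follow.
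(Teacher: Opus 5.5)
Your proof is correct and follows the paper's route: the paper states this corollary as an immediate consequence of Hochster's formula (Lemma \ref{lem:SRbetti}) without further argument, and your check that the cases $\#W\le j'+1$ impose no extra condition supplies exactly the bookkeeping the paper leaves implicit. One small refinement: for $k\le 0$ and $I_\Delta$ proper and nonzero you do not need any convention, because both sides fail (the right side since $\widetilde H_{-1}(\Delta[\varnothing])=\K\neq 0$); only the trivial ideals $0$ and $S$ need separate treatment.
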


The formula in
Lemma \ref{lem:SRbetti} is called Hochster's formula for graded Betti numbers.

\subsection{Cover ideals and edge ideals}
We next introduce cover ideals and edge ideals of graphs
and discuss their basic properties.
We will later see that these ideals have a close connection to modules of logarithmic $1$-forms of graphic arrangements.

We first recall some terminology on graph theory.
Recall that a (finite simple) graph $G=(V(G),E(G))$ is a pair of a finite set $V(G)$ and a family $E(G)$ of 2-element subsets of $V(G).$
An element of $V(G)$ is called a {\bf vertex} of $G$ and an element of $E(G)$ is called an {\bf edge} of $G$.
We say that $G$ is the empty graph if $E(G) = \varnothing.$
The {\bf neighbor} of a vertex $v \in V(G)$ in $G$ is the set $N_G(v)=\{u \in V(G) \mid \{u,v\} \in E(G)\}$.
The {\bf complementary graph} $G^c$ of a graph $G$ is the graph whose vertex set is $V(G)$ and whose edges are non-edges of $G$.
Thus $E(G^c)=\{\{u,v\} \subset V(G) \mid \{u,v\} \not \in E(G)\}$.
For a graph $G$ and $W \subset V(G)$,
the {\bf induced subgraph} $G[W]$ of $G$ on $W$ is the graph whose vertex set is $W$ and whose edge set is
$\{ \{u,v\} \in E(G) \mid u,v \in W\}$.
We note that $(G[W])^c=G^c[W]$.

For a graph $G$ with the vertex set $[\ell]$,
its {\bf cover ideal} $J(G)$ is the ideal of $S$ defined by
\[J(G)=\bigcap_{\{u,v\} \in E(G)} (x_u,x_v)\]
and its {\bf edge ideal} $I(G)$ is the ideal of $S$ defined by
\[I(G)= (x_ux_v \mid \{u,v\} \in E(G)).\]
There is a concrete description of generators of cover ideals.
We say that $F \subset V(G)$ is a {\bf vertex cover} of a graph $G$ if $\{u,v\} \cap F \ne \varnothing $ for every $\{u,v\} \in E(G)$.
Also, we say that $F \subset V(G)$ is a {\bf clique} of a graph $G$ if for all distinct $u,v \in F$ one has $\{u,v\} \in E(G)$.
It is not hard to see that $F$ is a vertex cover of $G$ if and only if 
$V(G)\setminus F$ is a clique of $G^c$ (see \cite[Lemma 1.17]{Va}).
The following fact easily follows from the definition of cover ideals (see \cite[Lemma 1.20]{Va}).

\begin{lemma}
    \label{lem:genscoverideal}
    With the same notation as above, the set of squarefree monomials contained in $J(G)$ is 
    \[\{x^F\mid F \mbox{ is a vertex cover of $G$}\}
    =\{x^{[\ell]\setminus F}\mid F \mbox{ is a clique of }G^c\}.\]
    In particular, the above set generates $J(G).$
\end{lemma}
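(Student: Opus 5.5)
The statement to prove is Lemma \ref{lem:genscoverideal}: the squarefree monomials in $J(G)$ are exactly the $x^F$ with $F$ a vertex cover, equivalently $x^{[\ell]\setminus F}$ with $F$ a clique of $G^c$, and these generate $J(G)$. The plan is to work edge by edge, using that each $(x_u,x_v)$ is a monomial prime ideal, and then pass to the intersection.

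First, for a single edge $\{u,v\}$: a monomial $m$ lies in $(x_u,x_v)$ if and only if $x_u \mid m$ or $x_v \mid m$. This holds because $(x_u,x_v)$ is a monomial ideal, so a monomial belongs to it exactly when some generator divides it. Applied to $m=x^F$, this says $x^F\in(x_u,x_v)$ if and only if $F\cap\{u,v\}\neq\varnothing$.

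Intersecting over all edges, $x^F\in J(G)$ if and only if $F$ meets every edge, that is, $F$ is a vertex cover. The second description follows from the cited fact \cite[Lemma 1.17]{Va}: $F$ is a vertex cover of $G$ if and only if $[\ell]\setminus F$ is a clique of $G^c$. Substituting $F\mapsto[\ell]\setminus F$ gives the equality of the two sets.

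For generation, note that an intersection of monomial ideals is again a monomial ideal, since the monomial ideals are exactly the $\Z^\ell$-graded ones and intersection preserves the grading. So $J(G)$ is spanned by the monomials it contains. Let $m$ be such a monomial and put $F=\mathrm{supp}(m)$. For each edge, $x_u\mid m$ or $x_v\mid m$, so $F$ is a vertex cover. Hence $x^F\in J(G)$, and $x^F$ divides $m$, so $m\in(x^F)$. Therefore the squarefree monomials in $J(G)$ generate it. The only step needing any care is this last reduction, that $J(G)$ is a monomial ideal spanned by its monomials; everything else is routine.
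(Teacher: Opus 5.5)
Your proof is correct and is essentially what the paper intends: the paper gives no argument beyond remarking that the lemma follows directly from the definition of $J(G)$ (citing \cite[Lemma 1.20]{Va}). Your edge-by-edge membership check, the complement step ($F$ meets every edge iff no edge of $G$ lies in $[\ell]\setminus F$), and the support argument for generation are exactly that verification written out.
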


Let $\Delta(G)$ be the set of all cliques of a graph $G$.
This set $\Delta(G)$ forms a simplicial complex, which is called the {\bf clique complex} of $G$.
It is known that the edge ideal of $G$ is nothing but the Stanley--Reisner ideal of $\Delta(G^c)$, that is,
\begin{align}
\label{edgeclique}
I_{\Delta(G^c)}=I(G).
\end{align}
See e.g., \cite[Lemma 1.32]{Va}.
Another basic result on edge ideals is the following result due to Fr\"oberg \cite{Fr}.
A graph $G$ is said to be {\bf chordal} if $G$ contains no induced  cycle of length $\geq 4$.

\begin{theorem}[Fr\"oberg]
\label{froberg}
Let $G$ be a non-empty graph with the vertex set $[\ell]$. Then one has $\reg(I(G))=2$ if and only if $G^c$ is a chordal graph.
\end{theorem}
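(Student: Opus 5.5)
The plan is to translate the regularity statement into a topological statement about clique complexes via Hochster's formula, and then verify that statement combinatorially. Put $H=G^c$. By \eqref{edgeclique} we have $I(G)=I_{\Delta(H)}$. Since $G$ is non-empty, $I(G)$ has a minimal generator of degree $2$, so $\beta_{0,2}(I(G))\neq 0$ and $\reg(I(G))\geq 2$ always holds. Thus it suffices to characterize when $\reg(I(G))\leq 2$. By Corollary \ref{cor:SRreg} with $k=2$, this holds if and only if $\widetilde H_j(\Delta(H)[W])=0$ for every $W\subset[\ell]$ and every $j\geq 1$. Moreover, the induced subcomplex of a clique complex is again a clique complex: $\Delta(H)[W]=\Delta(H[W])$. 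So the theorem reduces to the following claim: $H$ is chordal if and only if $\widetilde H_j(\Delta(H[W]))=0$ for all $W$ and all $j\geq 1$.

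For the direction ``not chordal $\Rightarrow$ $\reg>2$'', suppose $H$ has an induced cycle on a vertex set $W$ with $\#W=n\geq 4$. Then $H[W]$ is the $n$-cycle and contains no triangles. Hence $\Delta(H[W])$ is the cycle itself, viewed as a $1$-dimensional complex homeomorphic to a circle. Therefore $\widetilde H_1(\Delta(H[W]))\cong\K\neq 0$, which forces $\reg(I(G))\geq 3$.

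For the direction ``chordal $\Rightarrow$ $\reg=2$'', note that induced subgraphs of chordal graphs are chordal. It therefore suffices to show that $\widetilde H_j(\Delta(H))=0$ for $j\geq1$ whenever $H$ is chordal. I would argue by induction on the number of vertices, using Dirac's lemma: every chordal graph has a simplicial vertex $v$, meaning a vertex whose neighbourhood $N_H(v)$ is a clique. Given such a $v$, write
\[\Delta(H)=\Delta(H\setminus v)\cup \overline{\{v\}\cup N_H(v)},\]
where the second piece is the full simplex on $\{v\}\cup N_H(v)$; this works because every clique containing $v$ lies in $\{v\}\cup N_H(v)$. The two pieces intersect in the full simplex on $N_H(v)$. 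If $N_H(v)\neq\varnothing$, this intersection is acyclic, and the Mayer--Vietoris sequence gives $\widetilde H_j(\Delta(H))\cong\widetilde H_j(\Delta(H\setminus v))$ for all $j$. If $N_H(v)=\varnothing$, then $\Delta(H)$ is the disjoint union of $\Delta(H\setminus v)$ with a point, which changes only $\widetilde H_0$. In either case induction gives vanishing in degrees $j\geq 1$. The base case is a graph with a single vertex (or no vertices), where the statement is trivial.

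The main obstacle is Dirac's lemma on the existence of a simplicial vertex. I would either cite it as standard or sketch the usual proof: show by induction that every non-complete chordal graph has two non-adjacent simplicial vertices. To do this, take a minimal separator $S$ of two non-adjacent vertices. Chordality forces $S$ to be a clique, since otherwise two non-adjacent vertices of $S$, joined by shortest paths through the two components, would produce an induced cycle of length $\geq4$. Applying induction to each component together with $S$ then yields a simplicial vertex lying outside $S$ in each component. The remaining steps are routine bookkeeping.
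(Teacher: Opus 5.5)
Your proof is correct. The reduction through Corollary~\ref{cor:SRreg} to the vanishing of $\widetilde H_j(\Delta(G^c[W]))$ for $j\ge 1$ is sound, as are the induced-cycle computation and the simplicial-vertex/Mayer--Vietoris induction, and your sketch of Dirac's lemma is the standard one.

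The paper does not prove this statement; it only cites Fr\"oberg~\cite{Fr}. The one in-paper route is the remark after Theorem~\ref{thm:6.13}. Applying that theorem to $G^c$, which is not complete because $G$ is non-empty, gives $\reg(I(G))-2=\pd_S\Omega^1(\A_{G^c})$. Hence $\reg(I(G))=2$ iff $\A_{G^c}$ is free, iff $G^c$ is chordal by Stanley's criterion (Theorem~\ref{stanley}). Equivalently, Corollary~\ref{cor:formulaRegularityHomology} with $k=0$ identifies freeness of $\A_{G^c}$ with exactly the homological condition you check by hand.

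Your route is elementary and self-contained: it needs only Hochster's formula and Dirac's lemma, and it shows directly why chordality is the right condition. The arrangement route needs all of \S 7 together with Stanley's theorem. From \S 7 it uses the isomorphism $\Omega^1_R(\A_G)\cong J(G^c)/(x_1\cdots x_\ell)$ and either Terai's duality or Hochster's local cohomology formula. Stanley's theorem in turn rests on the same combinatorial input as your argument (perfect elimination orderings and induced cycles). What the arrangement route buys is the conceptual point that Fr\"oberg's and Stanley's theorems are one statement, seen through $\Omega^1(\A_{G^c})$.
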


\subsection{Alexander duality}
There is a duality between cover ideals and edge ideals called the Alexander duality for simplicial complexes.
For a simplicial complex $\Delta$ on $[\ell]$,
its Alexander dual $\Delta^*$ is the simplicial complex
\[\Delta^*=\{[\ell]\setminus F \mid F \subset [\ell],\ F \not \in \Delta\}.\]
Note that $(\Delta^*)^*=\Delta$.
We say that a squarefree monomial ideal $I$ is the \textbf{Alexander dual} of a squarefree monomial ideal $J=I_\Gamma$ if $I=I_{\Gamma^*}$.
The next fact (see \cite[Corollary 1.5.5]{HH}) shows that cover ideals are Alexander duals of edge ideals.

\begin{lemma}
    \label{lem:SRdual}
    For a squarefree monomial ideal $I=I_\Delta$ in $S$,
    one has
    \[I_{\Delta^*}=\bigcap_{F \subset [\ell],\ \! x^F\in I_\Delta}(x_i \mid i \in F).\]
\end{lemma}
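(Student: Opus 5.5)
The plan is to compare the two sides of the equality monomial by monomial, reducing everything to a purely combinatorial statement about faces and non-faces of $\Delta$.

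First I will check that both sides are squarefree monomial ideals. The left-hand side $I_{\Delta^*}$ is one by definition. For the right-hand side, each $(x_i \mid i \in F)$ is a monomial prime, and an intersection of monomial ideals is again a monomial ideal; its minimal generators are least common multiples of generators of the intersected ideals, so they are squarefree. For a squarefree monomial ideal, a monomial $m$ lies in the ideal if and only if $x^{\mathrm{supp}(m)}$ does. Hence it suffices to show that for every $G \subset [\ell]$,
\[
x^G \in I_{\Delta^*} \iff x^G \in \bigcap_{x^F \in I_\Delta} (x_i \mid i \in F).
\]

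Next I will translate each side into a condition on $G$.
\begin{itemize}
\item For the right-hand side, $x^G \in (x_i \mid i\in F)$ holds if and only if $G \cap F \neq \varnothing$. Moreover, $x^F \in I_\Delta$ holds exactly when $F \notin \Delta$, because $I_\Delta$ is generated by $x^F$ with $F\notin\Delta$ and the non-faces are closed upward. So $x^G$ lies in the right-hand side if and only if every non-face $F$ of $\Delta$ meets $G$.
\item For the left-hand side, $x^G \in I_{\Delta^*}$ if and only if $G \notin \Delta^*$. From the definition $\Delta^*=\{[\ell]\setminus F \mid F\notin\Delta\}$, we have $G \in \Delta^*$ if and only if $[\ell]\setminus G \notin \Delta$. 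Thus $x^G \in I_{\Delta^*}$ if and only if $[\ell]\setminus G \in \Delta$.
\end{itemize}

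It then remains to show that $[\ell]\setminus G\in\Delta$ if and only if every non-face of $\Delta$ meets $G$.
\begin{itemize}
\item Suppose $[\ell]\setminus G \in \Delta$ and some non-face $F$ is disjoint from $G$. Then $F \subset [\ell]\setminus G$, so $F$ is a face because $\Delta$ is closed under subsets, a contradiction.
\item Conversely, suppose $[\ell]\setminus G \notin \Delta$. Then $F=[\ell]\setminus G$ is itself a non-face disjoint from $G$.
\end{itemize}

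The only step needing care is the reduction to squarefree monomials on the right-hand side, namely that the intersection is generated by squarefree monomials. This is standard, and it can also be avoided by noting that membership of any monomial $m$ in each $(x_i\mid i\in F)$ depends only on $\mathrm{supp}(m)$.
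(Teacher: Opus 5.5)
Your proof is correct. Reducing to squarefree monomials $x^G$ is legitimate, because both sides are monomial ideals and membership of a monomial in either side depends only on its support. Both translations are right: $x^G\in I_{\Delta^*}\Leftrightarrow[\ell]\setminus G\in\Delta$, and $x^G$ lies in the right-hand side if and only if every non-face of $\Delta$ meets $G$. The final equivalence is then just the closure of $\Delta$ under subsets. Your argument also handles the degenerate cases without extra work. For $\Delta=\varnothing$ the index $F=\varnothing$ contributes the zero ideal, and for $\Delta=2^{[\ell]}$ the intersection is empty and equals $S$.

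The paper gives no argument of its own; it only cites \cite[Corollary 1.5.5]{HH}. The standard derivation behind that citation is structural rather than element-by-element. One first shows that every Stanley--Reisner ideal decomposes along facets, $I_\Gamma=\bigcap_{F\text{ facet of }\Gamma}(x_i\mid i\notin F)$. One then applies this to $\Gamma=\Delta^*$, whose facets are exactly the complements of the minimal non-faces of $\Delta$. That route directly produces the irredundant intersection over the minimal generators $x^F$ of $I_\Delta$, which is the actual primary decomposition of $I_{\Delta^*}$. This is the form one wants when reading off, e.g., $J(G)=\bigcap_{\{u,v\}\in E(G)}(x_u,x_v)$ as the Alexander dual of $I(G)$. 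Your proof is more elementary and self-contained, and it proves the statement exactly as written, redundant index set included. The irredundant version follows at once, since $(x_i\mid i\in F)\supseteq(x_i\mid i\in F')$ whenever $F'\subseteq F$.
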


Following nice relations between $I_\Delta$ and $I_{\Delta^*}$ were proved by Eagon--Reiner \cite{ER98} and Terai \cite{Terai}.

\begin{lemma}[Eagon--Reiner]
\label{lem:SReagonReiner}
Let $I=I_\Delta$ be a squarefree monomial ideal in $S$ generated in degree $k$. Then
$\reg(I_\Delta)=k$ $\Leftrightarrow$ $S/I_{\Delta^*}$ is Cohen--Macaulay.
\end{lemma}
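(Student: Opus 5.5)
The plan is to translate both sides of the equivalence into vanishing of reduced homology of certain simplicial complexes, and then match the two conditions by combinatorial Alexander duality.

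First, the regularity side. Since $I_\Delta$ is generated in degree $k$, we have $\beta_{0,k}(I_\Delta)\neq 0$, so $\reg(I_\Delta)\ge k$ always. Hence $\reg(I_\Delta)=k$ is equivalent to $\reg(I_\Delta)\le k$. By Corollary \ref{cor:SRreg}, this holds if and only if
\[\widetilde H_j(\Delta[W];\K)=0 \quad\text{for all } W\subset[\ell] \text{ and all } j\ge k-1.\]

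Second, the Cohen--Macaulay side, which requires a short combinatorial description of $\Delta^*$. The faces of $\Delta^*$ are the complements of nonfaces of $\Delta$. Its facets are therefore the complements $[\ell]\setminus N$ of the minimal nonfaces $N$ of $\Delta$, that is, of the supports of the minimal generators of $I_\Delta$. By hypothesis all of these have size $k$, so $\Delta^*$ is pure of dimension $\ell-k-1$ and $\dim S/I_{\Delta^*}=\ell-k$. Moreover, for every face $F\in\Delta^*$ the link $\lk_{\Delta^*}(F)$ is pure of dimension $\ell-k-1-\#F$.

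Apply Hochster's formula (Lemma \ref{lem:SRlocalcohomology} with $\Gamma=\varnothing$, so that $I_\Gamma/I_\Delta=\K[\Delta^*]$ after replacing $\Delta$ by $\Delta^*$). It gives $H^i_{\mideal_S}(\K[\Delta^*])=0$ for all $i<\ell-k$ if and only if
\[\widetilde H_{i-1-\#F}(\lk_{\Delta^*}F)=0 \quad\text{for all } F\in\Delta^* \text{ and all } i<\ell-k.\]
Equivalently, $\widetilde H_m(\lk_{\Delta^*}F)=0$ for all $F\in\Delta^*$ and all $m<\dim \lk_{\Delta^*}F=\ell-k-1-\#F$; this is Reisner's criterion. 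Since depth equals the least $i$ with $H^i_{\mideal_S}\neq 0$, this says exactly that $S/I_{\Delta^*}$ is Cohen--Macaulay.

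Third, the key step: link these two families of complexes. Set $W=[\ell]\setminus F$. I claim that $\lk_{\Delta^*}(F)$, viewed as a complex on the vertex set $W$, is precisely the Alexander dual of $\Delta[W]$ taken inside $W$. To check this, note that $G\subset W$ lies in $\lk_{\Delta^*}(F)$ if and only if $F\cup G\in\Delta^*$. This holds if and only if $[\ell]\setminus(F\cup G)=W\setminus G\notin\Delta$, which is exactly the condition that $G$ lies in the Alexander dual of $\Delta[W]$ relative to $W$.

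Combinatorial Alexander duality over the field $\K$ then gives
\[\widetilde H_m(\lk_{\Delta^*}F)\cong \widetilde H_{\#W-m-3}(\Delta[W]).\]
Put $j=\#W-m-3$. The inequality $m<\#W-k-1$ is equivalent to $j>k-2$, that is, to $j\ge k-1$. Consequently, the Reisner condition for the face $F$ is identical to the regularity condition for the subset $W=[\ell]\setminus F$.

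Fourth, handle the subsets $W$ that are not of the form $[\ell]\setminus F$ with $F\in\Delta^*$. If $F\notin\Delta^*$, then $W=[\ell]\setminus F\in\Delta$. In that case $\Delta[W]$ is a full simplex, hence acyclic, and it imposes no condition. Thus the two families of conditions coincide exactly, which proves the equivalence.

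The main obstacle I expect is bookkeeping at the extreme cases rather than any conceptual difficulty. One must keep straight the convention that $\{\varnothing\}$ has $\widetilde H_{-1}=\K$ while $\varnothing$ has no homology. One must also check the duality isomorphism on degenerate complexes: when $\Delta[W]$ is the boundary of a simplex, and when $\#W$ is small, for instance $W=\varnothing$. I would verify these edge cases directly from the definitions before invoking the general duality isomorphism.
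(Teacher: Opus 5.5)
Your proof is correct. The paper gives no proof of this lemma; it quotes it from Eagon--Reiner \cite{ER98}. Your argument is essentially their original proof:
\begin{itemize}
\item Hochster's Betti-number formula (Corollary \ref{cor:SRreg}) handles the regularity side.
\item Hochster's local cohomology formula, in the form of Reisner's criterion, handles the Cohen--Macaulay side, using that $\Delta^*$ is pure of dimension $\ell-k-1$.
\item The two sides are matched through the identification $\lk_{\Delta^*}(F)=(\Delta[[\ell]\setminus F])^{*}$ together with combinatorial Alexander duality. The paper itself uses the same link/restriction duality in the proof of Lemma \ref{localcohomology}.
\end{itemize}

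The only place where your edge-case bookkeeping actually bites is $W=\varnothing$ in your fourth step. There $\Delta[\varnothing]=\{\varnothing\}$ is not acyclic, since $\widetilde H_{-1}=\K$. However, the regularity condition only involves $j\ge k-1$, so this is harmless for $k\ge 1$. The case $k=0$ means $I_\Delta=S$, where both sides of the equivalence hold trivially.
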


\begin{lemma}[Terai]
\label{lem:SRTerai}
For a squarefree monomial ideal $I=I_\Delta$ in $S$,
one has $\reg I_{\Delta}=\pd_S(S/I_{\Delta^*})$.
\end{lemma}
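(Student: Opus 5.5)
We plan to reduce both sides to reduced homology of simplicial complexes, using Hochster's formulas from \S6.2, and then match them through combinatorial Alexander duality. For the right-hand side we use the Auslander--Buchsbaum formula,
\[
\pd_S(S/I_{\Delta^*})=\ell-\depth(S/I_{\Delta^*}).
\]
We combine it with the fact that $\depth(S/I_{\Delta^*})=\min\{i \mid H^i_{\mideal_S}(S/I_{\Delta^*})\ne 0\}$. By Lemma \ref{lem:SRlocalcohomology}, applied to the pair $(\Delta^*,\varnothing)$, we have $H^i_{\mideal_S}(\K[\Delta^*])\neq 0$ if and only if there is $F\in\Delta^*$ with $\widetilde H_{i-1-\#F}(\lk_{\Delta^*}(F))\ne 0$. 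For the left-hand side, Lemma \ref{lem:SRbetti} gives
\[
\reg(I_\Delta)=\max\{\,j \mid \widetilde H_{j-2}(\Delta[W])\ne 0 \text{ for some } W\subset[\ell]\,\}.
\]

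The key combinatorial step is to identify, for $W\subset[\ell]$ and $F=[\ell]\setminus W$, the link $\lk_{\Delta^*}(F)$ with the Alexander dual of $\Delta[W]$ taken inside the vertex set $W$. Indeed, for $G\subset W$ one has $G\cup F\in\Delta^*$ if and only if $[\ell]\setminus(G\cup F)=W\setminus G\notin\Delta$, and this is exactly the defining condition of the dual of $\Delta[W]$ on $W$. Combinatorial Alexander duality over the field $\K$ then gives
\[
\widetilde H_{k}(\lk_{\Delta^*}(F))\cong \widetilde H_{\#W-k-3}(\Delta[W]).
\]
Substituting $k=i-1-\#F=i-1-\ell+\#W$, we get $\#W-k-3=\ell-i-2$. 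Hence $H^i_{\mideal_S}(\K[\Delta^*])\ne0$ if and only if $\widetilde H_{(\ell-i)-2}(\Delta[W])\ne0$ for some $W$. Writing $j=\ell-i$, the maximal such $j$ corresponds to the minimal such $i$. This gives $\reg(I_\Delta)=\ell-\depth(S/I_{\Delta^*})=\pd_S(S/I_{\Delta^*})$.

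The main obstacle is bookkeeping in the degenerate cases. First, Lemma \ref{lem:SRlocalcohomology} only ranges over faces $F\in\Delta^*$, whereas $W$ ranges over all subsets. If $F\notin\Delta^*$, then $W\in\Delta$, so $\Delta[W]$ is a full simplex. It is acyclic unless $W=\varnothing$, in which case $\Delta[W]=\{\varnothing\}$ and $\widetilde H_{-1}\ne 0$. We must check that this case corresponds consistently to $F=[\ell]$, with the conventions $\lk_{\Delta^*}([\ell])=\varnothing$ or $\{\varnothing\}$ according to whether $[\ell]\in\Delta^*$. Second, we must distinguish the void complex from $\{\varnothing\}$ in the duality isomorphism. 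We would also handle separately the trivial cases $\Delta$ a simplex, where $I_\Delta=0$, and $I_\Delta=S$, noting that the conventions for $\reg$ and $\pd$ must agree there. Finally, we would note that duality is applied with field coefficients, so homology and cohomology dimensions coincide.
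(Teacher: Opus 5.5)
The paper does not prove this lemma. It is quoted from Terai \cite{Terai} and used as a black box in the proof of Theorem \ref{thm:6.13}, so there is no internal argument to compare against. Your proposal gives a correct, self-contained proof along the standard lines, using the same machinery the paper uses elsewhere:
\begin{itemize}
\item the identification $\lk_{\Delta^*}([\ell]\setminus W)=(\Delta[W])^*$ (dual taken on $W$), combined with combinatorial Alexander duality, is exactly the step in the proof of Lemma \ref{localcohomology} via \cite[Lemma 5.5.3]{BH};
\item the remark opening the proof of Theorem \ref{thm:6.13}, that the result follows from Corollaries \ref{cor:SRreg} and \ref{cor:formulaRegularityHomology}, is essentially your argument specialised to $\Delta=\Delta(G)$.
\end{itemize}
Your index bookkeeping is right: $k=i-1-\#F$ gives $\#W-k-3=\ell-i-2$, and then $j=\ell-i$. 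The trivial cases $I_\Delta=S$ (where both sides are $0$) and $I_\Delta=0$ (where the right side is the zero module) are as you say.

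The case $W=\varnothing$, which you flagged as needing a consistency check, should be settled explicitly, because your displayed isomorphism is false there. Combinatorial Alexander duality needs a nonempty vertex set. For nonvoid $\Delta$ one has $[\ell]\notin\Delta^*$, so $\lk_{\Delta^*}([\ell])$ is the void complex, while $\Delta[\varnothing]=\{\varnothing\}$ has $\widetilde H_{-1}\neq 0$. Read literally, your ``if and only if'' asserts $H^{\ell-1}_{\mideal_S}(\K[\Delta^*])\neq 0$ for every nonvoid $\Delta$. This fails already for $I_\Delta=(x_1x_2x_3)$, where $\K[\Delta^*]=\K$.

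The clean fix is to discard $W=\varnothing$ on both sides.
\begin{itemize}
\item In Lemma \ref{lem:SRbetti}, $W=\varnothing$ would need Betti index $i=-1$, so it never contributes to $\reg(I_\Delta)$. For $W\neq\varnothing$, $\widetilde H_{j-2}(\Delta[W])\neq 0$ forces $\#W\geq j$: if $\#W=j-1$, then $W\in\Delta$ and $\Delta[W]$ is an acyclic simplex. So the index $\#W-j$ is automatically $\geq 0$, and your formula for $\reg$ is valid with $W$ ranging over nonempty sets.
\item In Hochster's local cohomology formula only faces $F\in\Delta^*$ occur. This forces $W\notin\Delta$, hence $W\neq\varnothing$. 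For nonempty $W$ and nonvoid $\Delta[W]$, the duality holds in every degree, including the case $W\in\Delta$, where both sides vanish.
\end{itemize}
Alternatively, keep $W=\varnothing$ on both sides. The two spurious contributions, $j=1$ and $i=\ell-1$, correspond under $j=\ell-i$, and $j=1$ cannot raise $\reg(I_\Delta)$ since $I_\Delta\neq 0,S$ is generated in positive degrees. Either way you get $\depth(\K[\Delta^*])=\ell-\reg(I_\Delta)$, and Auslander--Buchsbaum finishes the proof.
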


Combining Fr\"oberg's theorem and Eagon--Reiner theorem,
we get the following criterion of the Cohen--Macaulay property of cover ideals.

\begin{cor}
    \label{cor:CMcover}
    Let $G$ be a non-empty graph with the vertex set $[\ell]$.
    Then
    $S/J(G)$ is Cohen--Macaulay if and only if $G^c$ is a chordal graph.
\end{cor}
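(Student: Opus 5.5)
The plan is to chain the two quoted results. Write $J(G)=I_{\Delta^*}$, where $\Delta^*$ is the Alexander dual of the complex $\Delta=\Delta(G^c)$ whose Stanley--Reisner ideal is $I(G)$. This identification holds by \eqref{edgeclique} together with Lemma \ref{lem:SRdual}: the minimal nonfaces of $\Delta(G^c)$ are exactly the edges $\{u,v\}$ of $G$, and intersecting the primes $(x_u,x_v)$ over all squarefree monomials in $I(G)$ reduces to intersecting over the generators $x_ux_v$. So Lemma \ref{lem:SRdual} gives $I_{\Delta(G^c)^*}=\bigcap_{\{u,v\}\in E(G)}(x_u,x_v)=J(G)$.

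Next, since $G$ is non-empty, $I(G)$ is a nonzero squarefree monomial ideal generated in degree $2$. The Eagon--Reiner theorem (Lemma \ref{lem:SReagonReiner}) with $k=2$ then says that $S/J(G)=S/I_{\Delta^*}$ is Cohen--Macaulay if and only if $\reg(I(G))=2$.

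Finally, Fr\"oberg's theorem (Theorem \ref{froberg}) says that $\reg(I(G))=2$ if and only if $G^c$ is chordal. Combining the two equivalences proves the corollary.

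The only point needing care is the first step, checking that $J(G)$ really is the Alexander dual of $I(G)$. One has to make sure that the intersection in Lemma \ref{lem:SRdual}, which runs over all squarefree monomials $x^F\in I(G)$, agrees with the intersection over edges only. This holds because if $x^F\in I(G)$ then $F$ contains some edge $\{u,v\}$, so $(x_i\mid i\in F)\supseteq (x_u,x_v)$, and these larger primes do not change the intersection. The remaining steps are direct citations.
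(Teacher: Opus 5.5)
Your proposal is correct and is exactly the paper's argument: identify $J(G)$ as the Alexander dual of $I(G)=I_{\Delta(G^c)}$ via \eqref{edgeclique} and Lemma~\ref{lem:SRdual}, then chain Eagon--Reiner (with $k=2$) and Fr\"oberg's theorem. Your check that the intersection in Lemma~\ref{lem:SRdual} reduces to one over the edges of $G$ is right; the paper leaves that step implicit.
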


\subsection{Possible connection to the freeness of $\A^H$}

We close this section with a possible application of Corollary \ref{cor:CMcover} to the freeness of $\A^H$.
We explained in \S 3.3 that if $\A$ is a free arrangement and $H \in \A$ then $\A^H$ is free if and only if either the residue ideal $J_\A^H$ is trivial or a height two Cohen--Macaulay ideal.
In general, proving a Cohen--Macaulay property of an ideal is not easy, but if $J_\A^H$ happens to be a squarefree monomial ideal then one can use Corollary \ref{cor:CMcover} to check the height two Cohen--Macaulay property of $J_\A^H$.
The proposition below gives one case where we can apply this idea.
We say that an arrangement $\A=\{H_{\alpha_1},\dots,H_{\alpha_m}\}$ is Boolean if $\alpha_1,\dots,\alpha_m$ are linearly independent.

\begin{prop}
\label{7.11}
Let $\A$ be a free arrangement in $V$ and $H \in \A$.
Suppose that 
\begin{itemize}
    \item $\A_{\mathrm{core}}^H=\{H_{y_1},\dots,H_{y_m}\},$ where $y_1,\dots,y_m \in H^*$, is a Boolean arrangement,
    \item $\mathrm{mult}^\A(H_{y_k}) \leq 2$ for $k=1,2,\dots,m$.
\end{itemize}
Then $\A^H$ is free if and only if $J_\A^H=\overline S$ or $J_\A^H$ is the cover ideal of the complement of a chordal graph in variables $y_1,\dots,y_m$.
\end{prop}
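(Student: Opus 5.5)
We reduce to Proposition \ref{prop:4.12} and Corollary \ref{cor:CMcover}. By Proposition \ref{prop:4.12}, $\A^H$ is free if and only if $J_\A^H=\overline S$ or $J_\A^H$ is a height two Cohen--Macaulay ideal. So we may assume $J_\A^H\ne\overline S$. The whole task is then to show the following. Under the two hypotheses, $J_\A^H$ is a height two Cohen--Macaulay ideal exactly when it is the cover ideal of the complement of a chordal graph in $y_1,\dots,y_m$.

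The direction ``cover ideal $\Rightarrow$ free'' is quick. Since $y_1,\dots,y_m$ are linearly independent, extend them to a basis of $H^*$. Then $\overline S=\K[y_1,\dots,y_m]\otimes\K[\text{other variables}]$, and adjoining free variables preserves both height and the Cohen--Macaulay property. If $J_\A^H=J(G)$ with $G^c$ chordal, then $J(G)$ is Cohen--Macaulay by Corollary \ref{cor:CMcover}. It has height two, since it is an intersection of ideals $(y_u,y_v)$ and $G$ is non-empty because $J_\A^H\ne\overline S$. Hence $\A^H$ is free.

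For ``free $\Rightarrow$ cover ideal'', assume $\A^H$ is free. Corollary \ref{cor:newprimarydecompfree} gives
\[
J_\A^H=\bigcap_{X\in\Xi_3(\A,H)}J_{\A_X}^H .
\]
By Corollary \ref{cor:assprimecore} (or Lemma \ref{5.1}(1) together with Lemma \ref{lem:4.6}), every such $X$ lies in $L(\A^H_{\mathrm{core}})$. Because the core is Boolean, each $X$ of codimension two in $H$ is $H_{y_u}\cap H_{y_v}$ for some $u\ne v$, so $P_X=(y_u,y_v)$. By Theorem \ref{5.12}(1), whose hypothesis $(\#)$ is needed only at the core hyperplanes (multiplicity $1$ elsewhere), $J_{\A_X}^H$ is prime. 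Here $\A_X$ is free by Lemma \ref{localizationfree}. Since $J_{\A_X}^H$ is $P_X$-primary by Proposition \ref{5.2}, it equals $(y_u,y_v)$.

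Let $G$ be the graph on $[m]$ whose edges are the pairs $\{u,v\}$ arising in this way. Then $J_\A^H=J(G)$, viewed in $\overline S$. Freeness makes it Cohen--Macaulay, and restricting back to $\K[y_1,\dots,y_m]$ preserves this. Corollary \ref{cor:CMcover} then shows $G^c$ is chordal.

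The main obstacle is applying Theorem \ref{5.12}(1) here. Condition $(\#)$ is assumed only on the core hyperplanes, not on all of $\A^H$. But every $L\in\A^H$ outside the core has multiplicity $1$ by definition, so $(\#)$ holds globally. One also has to check that the chordality statement transfers correctly once the extra polynomial variables are added.
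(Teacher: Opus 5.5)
Your proposal is correct and follows essentially the same route as the paper: the \emph{if} direction via Corollary \ref{cor:CMcover} and Proposition \ref{prop:4.12}, and the \emph{only if} direction via Corollary \ref{cor:assprimecore} and Theorem \ref{5.12}. The one difference is that you unpack the radicality in Theorem \ref{5.12}(2) into Corollary \ref{cor:newprimarydecompfree} together with primeness of each $J_{\A_X}^H$, and you make explicit two details the paper leaves implicit: that $(\#)$ holds globally because non-core hyperplanes have multiplicity $1$, and that height and the Cohen--Macaulay property transfer between $\K[y_1,\dots,y_m]$ and $\overline S$.
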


\begin{proof}
Since a cover ideal of $G$ is Cohen--Macaulay if and only if $G^c$ is chordal, the if part follows from Proposition \ref{prop:4.12}.
We prove the only if part.
By Corollary \ref{cor:assprimecore}
every minimal prime of $J_\A^H$ is generated by subsets of $y_1,\dots,y_m$.
Then, since $y_1,\dots,y_m$ are linearly independent, by Theorem \ref{5.12}, if $\A^H$ is free then $J_\A^H$ must be a Cohen--Macaulay cover ideal in variables $y_1,\dots,y_m$ unless $J_\A^H=\overline S$.
\end{proof}

\begin{example}
\label{ex:DiPasquale}
Let $\ell \geq 3$.
Recently, DiPasquale \cite[Proposition 6.12]{Di} proved that the arrangement $\A$ in $\R^{\ell+1}$ defined by
\begin{align}
\label{dipasqual}
\textstyle Q(\A)=x_0 \left(\prod_{k=1}^\ell (x_k^2-x_0^2) \right)
(x_1+x_\ell)\left( \prod_{k=1}^{\ell-1} (x_k-x_{k+1})\right)
\end{align} 
is free but its restriction to $H=H_{x_0}$ is not free.
Let us explain the non-freeness of $\A^H$ assuming the freeness of $\A$ for this arrangement using the residue ideal $J_\A^H$.

Let $S=\K[x_0,x_1,\dots,x_\ell]$ and $y_k=\overline x_k \in \overline S=S/(x_0)$ for $k=1,\dots,\ell$.
For this arrangement $\A$, one can see that
\[\A_{\mathrm{core}}^H=\{H\cap H_{x_k-x_0}=H_{y_k} \mid k=1,2,\dots,\ell\}\]
is a Boolean arrangement and $\mathrm{mult}^\A(L)\leq 2$ for any $L \in \A^H$.
Thus we can apply Proposition \ref{7.11}.
Let
\[X_{i,j}=\{\bm x \in \R^{\ell+1} \mid x_0=x_i=x_j=0\}
\ \ \text{for }1 \leq i < j \leq \ell.\]
Then $J_\A^H$ must have a primary decomposition of the form
\[
J_\A^H=
\left(  \bigcap_{1 \leq i <j \leq n,\ \! X_{i,j} \in \NFT(\A,H)} J_{\A_{X_{i,j}}}^H
\right)
\bigcap
\left(
\bigcap_{P \in \mathrm{Ass}(\overline S/J_{\A}^H),\ \! \mathrm{height}(P)\geq 3} Q_P
\right),
\]
where $Q_P$ is a $P$-primary ideal.
Let $C_\ell$ be the cycle with edges $\{1,2\},\dots,\{\ell-1,\ell\},\{1,\ell\}$.
It is not hard to see that $X_{i,j}\in \NFT(\A,H)$
if and only if $\{i,j\}$ is not an edge of $C_\ell$.
(Indeed, the arrangement defined by $(x_i^2-x_0^2)(x_j^2-x_0^2)=0$ is not free
while the arrangement defined by $(x_i^2-x_0^2)(x_j^2-x_0^2)(x_i\pm x_j)=0$ is free.)
Hence
\begin{align}
\label{dipasqualeexample}
J_\A^H=
\left(  \bigcap_{\{i,j\} \not \in E(C_\ell)} (y_i,y_j)
\right)
\bigcap
\left(
\bigcap_{P \in \mathrm{Ass}(\overline S/J_{\A}^H),\ \! \mathrm{height}(P)\geq 3} Q_P
\right).
\end{align}
If $J_\A^H$ is a height two Cohen--Macaulay ideal,
then it cannot have an associated prime of height $\geq 3$ and the height two part of \eqref{dipasqualeexample} must be the cover ideal of a complement of a chordal graph.
If $\ell\geq 4$ then the latter condition is clearly not satisfied since the height two part of \eqref{dipasqualeexample} is the cover ideal of a complement of a cycle of length $\geq 4$.
On the other hand, if $\ell=3$ then $\A$ is linearly isomorphic to the arrangement in Example \ref{exdim4} by the correspondence $x\to x_0, y\to x_1, z \to -x_2,w \to x_3$,
so $J_\A^H=(y_1,y_2,y_3)$ and it has height $3$.
In both cases, $J_\A^H$ is not a height two Cohen--Macaulay ideal so $\A^H$ is not free.
\end{example}

\section{Residue ideals of graphic arrangements}

In this section, we study modules of 
logarithmic differential $1$-forms of graphic arrangements using residue ideals.
Since this section is relatively long, let us first give a summary of the results of this section.
\begin{itemize}
    \item In \S 7.1 and \S 7.2, we give a concrete presentation of the residue ideal $J_{\A_G}^H$ as well as a generating set of $\Omega^1(\A_G)$.
    \item In \S 7.3, we prove that the module $\Omega^1_R(\A_G)$ is isomorphic to $J(G^c)/(x_1\cdots x_\ell)$.
    \item In \S 7.4, we discuss local cohomology modules of $\Omega^1(\A_G)$ and give a closed formula for their Hilbert series. This in particular gives a closed formula for the Hilbert series of $D(\A_G)$.
    \item In \S 7.5, we show that the projective dimension of $\Omega^1(\A_G)$ coincides with the regularity of the edge ideal $I(G^c)$.
\end{itemize}

\subsection{Residue ideals of graphic arrangements}
We first discuss residue ideals of graphic arrangements.
Let $G$ be a graph with the vertex set $[\ell]=\{1,2,\dots,\ell\}$.
Recall that the {\bf graphic arrangement $\A_G$} of $G$ is the arrangement in $\K^\ell$ defined by
\[
\A_G=\{H_{x_u-x_v} \mid \{u,v\} \in E(G)\}.
\]
We now state our first result.
Let $H=H_{x_i-x_j} \in \A_G$,
\[
W=N_G(i) \cap N_G(j)=\{ u \in V(G) \mid \{u,i\} \in E(G) \mbox{ and } \{u,j\} \in E(G)\}
\]
and let $y_u=\overline{x_u-x_i} \in S/(\alpha_H)=S/(x_i-x_j)$ for $u \in W$.
Then it is easy to see that
\[(\A_G)^H_{\mathrm{core}}=\{ H \cap H_{x_u-x_i} \mid u \in W\}=\{H_{y_u} \mid u \in W\}.\]
From this computation and Proposition \ref{prop:2.10},
the residue ideal $J_{\A_G}^H$ must be generated by polynomials in $\{y_u\mid u \in W\}$.
This residue ideal actually has the following concrete presentation.

\begin{theorem}
    \label{thm:RDforGraph}
Let $G$ be a graph with the vertex set $[\ell]$ and $H=H_{x_i-x_j} \in \A_G$.
Let $W=N_G(i) \cap N_G(j)$. Then
\[
J_{\A_G}^H= \bigcap_{\{u,v\}  \subset W,\ \{u,v\} \not \in  E(G)} (y_u,y_v)
=\bigcap_{\{u,v\} \in E(G^c[W])}(y_u,y_v).
\]
\end{theorem}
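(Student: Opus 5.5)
The plan is to reduce to a small graph, prove ``$\subset$'' by localizing at codimension-two flats, and prove ``$\supset$'' by writing down explicit logarithmic forms whose residues are the monomial generators of the cover ideal.

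\emph{Reduction.} I would first use Proposition \ref{prop:2.10} with $X=X_{\mathrm{core}}^{(\A_G,H)}=\{x_i=x_j=x_u \ (u\in W)\}$. The localization $(\A_G)_X$ is the graphic arrangement of the graph $G'$ on $\{i,j\}\cup W$. Its edges are $\{i,j\}$, all $\{i,u\},\{j,u\}$ with $u\in W$, and the edges of $G[W]$. By Corollary \ref{2.6}, coordinates outside $\{i,j\}\cup W$ play no role. So we may assume $V(G)=\{i,j\}\cup W$ and that $i,j$ are adjacent to each other and to every vertex of $W$. Both inclusions are then proved for this $G$.

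\emph{Inclusion ``$\subset$''.} Fix a non-edge $\{u,v\}\subset W$ and let $Y=H\cap H_{y_u}\cap H_{y_v}\in L(\A^H)$. By Lemma \ref{2.8}, $J_\A^H\subset J_{\A_Y}^H$, so it suffices to show $J_{\A_Y}^H=(y_u,y_v)$. Here $\A_Y$ is the graphic arrangement of $K_4$ minus the edge $\{u,v\}$, on the vertices $\{i,j,u,v\}$.
\begin{itemize}
\item This graph is chordal, so $\A_Y$ is free. Its essentialization has exponents $(1,2,2)$ and $\#\A_Y=5$.
\item The deletion $\A_Y\setminus H$ is the $4$-cycle arrangement, which is not free. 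Hence $J_{\A_Y}^H\neq\overline S$ by Theorem \ref{thm:3.4}.
\item We have $\#\A_Y^H=2$, the two restricted lines being $y_u=0$ and $y_v=0$, each of multiplicity $2$. Lemma \ref{lem:3.3sononi} (with Corollary \ref{2.6}) then says $J_{\A_Y}^H$ is generated by two linear forms in $y_u,y_v$.
\item By Theorem \ref{thm:4.1} and Lemma \ref{lem:4.5}, $\sqrt{J_{\A_Y}^H}=P_Y=(y_u,y_v)$.
\end{itemize}
An ideal generated by linear forms with radical $(y_u,y_v)$ equals $(y_u,y_v)$, which gives ``$\subset$''.

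\emph{Inclusion ``$\supset$''.} By Lemma \ref{lem:genscoverideal}, the right-hand side is generated by the monomials $y^{W\setminus F}=\prod_{u\in W\setminus F}y_u$, where $F$ runs over the cliques of $G[W]$. For such $F$, the set $C=F\cup\{i,j\}$ is a clique of $G$, and I would show that $\gamma_C\in\Omega^1(\A_G)$ has residue $\pm y^{W\setminus F}$.
\begin{itemize}
\item \textbf{$\gamma_C$ is logarithmic.} I would check directly from the definition, or by the partial-fraction identity, that $\gamma_C$ lies in $\Omega^1$ of the complete-graph arrangement on $C$. Since that arrangement is contained in $\A_G$, this gives $\gamma_C\in\Omega^1(\A_G)$.
\item \textbf{The residue.} Write the $dx_i,dx_j$ part as $a\,dx_i+b\,dx_j=a\,d(x_i-x_j)+(a+b)\,dx_j$. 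Then $\res^H(\gamma_C)=\overline{Q(\A)a}/Q(\A^H)$ with $a=1/\prod_{v\in C\setminus i}(x_v-x_i)$.
\item \textbf{The monomial count.} Setting $x_j=x_i$, the factor $Q(\A)$ contributes $\prod_{u\in W}y_u^2$ and the factors $(y_u-y_v)$ for the edges $\{u,v\}$ of $G[W]$. The factor $Q(\A^H)$ contributes $\prod_{u\in W}y_u$ and the same edge factors, and $a$ removes the factors $y_u$ for $u\in F$. What remains is $\pm\prod_{u\in W\setminus F}y_u$.
\end{itemize}

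The main obstacle is this last computation: tracking exactly which linear factors cancel, including the signs and the factor $x_j-x_i$, and verifying that $\gamma_C$ really satisfies the logarithmic condition along every $H_{x_u-x_v}$ with $u,v\in C$. If the direct check of logarithmicity is messy, I would instead verify that $Q\gamma_C\wedge d(x_u-x_v)$ vanishes modulo $x_u-x_v$ by pairing the $u$- and $v$-terms of $\gamma_C$.
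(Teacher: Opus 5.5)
Your proposal is correct and follows the paper's proof. The inclusion $\supset$ is exactly the paper's argument via the residues of $\gamma_{F\cup\{i,j\}}$, and the inclusion $\subset$ uses the same key input: the localization at $Y$ is the chordal graph $K_4$ minus an edge, whose deletion of $H$ is the non-free $4$-cycle, so $Y\in\Xi(\A_G,H)$.

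The differences are only detours. The reduction via Proposition \ref{prop:2.10} is unnecessary. You also need not compute $J_{\A_Y}^H=(y_u,y_v)$ exactly, because Theorem \ref{thm:4.1} already gives $J_{\A_G}^H\subset\sqrt{J_{\A_G}^H}\subset P_Y=(y_u,y_v)$ once $Y\in\Xi(\A_G,H)$.
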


As a byproduct of the proof of Theorem \ref{thm:RDforGraph},
we also find a generating set of $\Omega ^1(\A_G).$
Recall that for a subset $F\ne \varnothing$ of $[\ell]$, we define the element $\gamma_F \in \frac{1}{Q(\A)}\Omega^1_V$ by
\[
\gamma_F= \sum_{u \in F} \frac 1 {\prod_{v \in F,v \ne u} (x_v-x_u)} dx_u,
\]
where we consider that $\gamma_{\{p\}}=dx_p$ for each $p \in [\ell]$.

\begin{theorem}
    \label{thm:GensOmega}
Let $G$ be a graph with the vertex set $[\ell]$. Then the following set generates $\Omega^1(\A_G)$ as an $S$-module:
\[
\{\gamma_F \mid F \mbox{ is a clique of $G$ with $F \ne \varnothing$} 
\}.\]
\end{theorem}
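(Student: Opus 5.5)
The plan is induction on the number of edges of $G$, using the exact sequence \eqref{0-2} together with Theorem \ref{thm:RDforGraph}. We may take Theorem \ref{thm:RDforGraph} as given; alternatively it is proved jointly, with the residue computation below serving both purposes.

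First, each $\gamma_F$ with $F$ a clique lies in $\Omega^1(\A_G)$. By Lemma \ref{lem:deletionIAH}-type monotonicity, $\Omega^1(\A_{G[F]})\subset\Omega^1(\A_G)$, so it suffices to treat $G$ complete on $F$. There one checks directly that $Q\gamma_F\wedge d(x_u-x_v)\in(x_u-x_v)\Omega^2_V$. Only the $dx_u,dx_v$ coefficients have $x_u-x_v$ in the denominator. Their residues along $x_u=x_v$ are $\pm\frac{1}{\prod_{w\ne u,v}(x_w-x_u)}$ with opposite signs, so the combination is $c\,(dx_u-dx_v)/(x_u-x_v)$ plus a regular part. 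Wedging with $dx_u-dx_v$ kills the pole. The base case is $E(G)=\varnothing$, where $\Omega^1=\Omega^1_V$ is generated by the $\gamma_{\{p\}}=dx_p$.

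For the inductive step, pick an edge $\{i,j\}$ and let $H=H_{x_i-x_j}$. By \eqref{0-2} and induction, $\Omega^1(\A_G\setminus H)=\Omega^1(\A_{G\setminus\{i,j\}})$ is generated by the $\gamma_F$ with $F$ a clique of $G\setminus\{i,j\}$. These are cliques of $G$ as well. It then suffices to show that the residues $\res^H(\gamma_F)$, for cliques $F\supseteq\{i,j\}$, generate $J_{\A_G}^H$.

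We compute $\res^H(\gamma_{F})$ for $F=\{i,j\}\cup T$ with $T\subset W$ a clique. Write $\gamma_F$ in a basis containing $d(x_i-x_j)$. The coefficient of the pole along $x_i-x_j$ is, up to sign, $\frac{1}{(x_i-x_j)\prod_{w\in T}(x_w-x_i)}$. Multiplying by $\overline{Q(\A)}/Q(\A^H)$ and reducing mod $x_i-x_j$, the factors $x_w-x_i$ and $x_w-x_j$ (for $w\in W$) both become $y_w$. One occurrence cancels against $Q(\A^H)$, leaving
\[
\res^H(\gamma_F)=\pm\prod_{w\in W\setminus T} y_w.
\]
This step needs care with the multiplicity-two factors, but the bookkeeping is routine. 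By Lemma \ref{lem:genscoverideal}, the monomials $y^{W\setminus T}$ with $T$ a clique of $G[W]$ (that is, a clique of the complement of $G^c[W]$) generate the cover ideal $\bigcap_{\{u,v\}\in E(G^c[W])}(y_u,y_v)$. By Theorem \ref{thm:RDforGraph}, this ideal is exactly $J_{\A_G}^H$. Hence for any $\omega\in\Omega^1(\A_G)$ we can subtract an $S$-combination of the $\gamma_F$ having the same residue; lifting coefficients from $\overline S$ to $S$ is harmless. The difference lies in $\Omega^1(\A_G\setminus H)$, and we conclude by induction.

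The main obstacle is the residue computation in the previous paragraph, in particular getting the signs and the cancellation with $Q(\A^H)$ right. If Theorem \ref{thm:RDforGraph} is not yet available, it must be proved jointly. The inclusion $\supseteq$ follows from the residues above. For $\subseteq$ one uses Proposition \ref{prop:2.10} and Lemma \ref{2.8} to localize at $X=\{y_u=y_v=0\}$ for a non-edge $\{u,v\}$ of $G[W]$. There the local arrangement is the non-free $4$-cycle situation, so $J^H_{\A_X}=(y_u,y_v)$ by a degree count using Lemma \ref{lem:3.3sononi}.
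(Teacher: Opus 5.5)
Your proposal is correct and is essentially the paper's own proof. The paper also argues by induction on $\#\A_G$ through the exact sequence \eqref{0-2}. Membership $\gamma_F\in\Omega^1(\A_G)$ is Lemma \ref{lem:gensOmega}(2), and the residue computation $\res^H(\gamma_{\{i,j\}\cup T})=\pm\prod_{w\in W\setminus T}y_w$ is \eqref{66}. Together with Lemma \ref{lem:genscoverideal} and Theorem \ref{thm:RDforGraph}, this shows that these residues generate $J_{\A_G}^H$, as recorded in Remark \ref{remgenerate}.

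The paper proves Theorem \ref{thm:RDforGraph} first and independently of this statement, so you can simply cite it; the joint proof you sketch is not needed.
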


\subsection{Proof of Theorems \ref{thm:RDforGraph} and \ref{thm:GensOmega}}

In this subsection,
we prove Theorems \ref{thm:RDforGraph} and \ref{thm:GensOmega}.
We recall the following freeness criterion for graphic arrangements due to Stanley (see \cite[Theorem 3.3]{ER94}).

\begin{theorem}[Stanley]
\label{stanley}
A graphic arrangement $\A_G$ is free if and only if $G$ is a chordal graph.
\end{theorem}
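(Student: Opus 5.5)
The statement is the classical criterion of Stanley, which the paper cites from \cite{ER94}. If I were to prove it from scratch, I would treat the two directions separately: the addition theorem for ``chordal $\Rightarrow$ free'', and localization plus Terao's factorization theorem for ``free $\Rightarrow$ chordal''. A graph with no edges gives the empty arrangement, which is trivially free, so I assume $E(G)\neq\varnothing$ where needed.

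\emph{Chordal $\Rightarrow$ free.} I would induct on $\ell$ using a perfect elimination ordering. A chordal graph has a simplicial vertex $p$, meaning $K=N_G(p)$ is a clique. The induction hypothesis then gives that $\A_{G\setminus p}$, viewed in $\K^\ell$, is free. Write $K=\{u_1,\dots,u_k\}$ and let $G_m$ be $G\setminus p$ together with the edges $\{p,u_1\},\dots,\{p,u_m\}$. I would show by induction on $m$ that $\A_{G_m}$ is free and that its exponents are those of $\A_{G\setminus p}$ with one exponent $0$ replaced by $m$. For the step $m-1\to m$, apply Terao's addition theorem with $H=H_{x_p-x_{u_m}}$. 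On $H$ the hyperplanes $x_p=x_{u_s}$ with $s<m$ become $x_{u_m}=x_{u_s}$, and these already lie in $\A_{G\setminus p}$ because $K$ is a clique. Hence the restriction $\A_{G_m}^H$ is isomorphic to $\A_{G\setminus p}$ in $\K^{\ell-1}$. Its exponents are those of $\A_{G\setminus p}$ with one $0$ removed, which is exactly the exponent multiset the addition theorem requires. This gives freeness of $\A_{G_m}$, and $G_k=G$.

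\emph{Free $\Rightarrow$ chordal.} Suppose $G$ is not chordal, so it contains an induced cycle $C_n$ with $n\geq 4$ on a vertex set $U$. Let $X=\{x_u=x_v \mbox{ for all } u,v\in U\}$, which lies in $L(\A_G)$. A hyperplane $x_a=x_b$ contains $X$ exactly when $a,b\in U$, and since the cycle is induced, this gives $(\A_G)_X=\A_{C_n}$, up to extra coordinates. If $\A_G$ were free, Lemma \ref{localizationfree} would make $\A_{C_n}$ free. By Terao's factorization theorem, its characteristic polynomial would then factor over $\Z_{\geq 0}$. The chromatic polynomial of $C_n$ is $(t-1)^n+(-1)^n(t-1)$, so after removing the factors $t$ and $(t-1)$ the remaining factor is $\bigl((t-1)^{n-1}-(-1)^{n-1}\bigr)/t$. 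For $n\geq 4$ this factor has non-real roots, because the roots of $(t-1)^{n-1}=(-1)^{n-1}$ lie at the vertices of a regular $(n-1)$-gon centred at $1$. This is a contradiction.

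\emph{Main obstacle.} The main point needing care is the restriction computation in the addition step. I must check that no other hyperplane of $\A_{G_m}$ collapses onto an unexpected new hyperplane on $H$, so that $\A_{G_m}^H\cong\A_{G\setminus p}$ exactly, with the correct number of hyperplanes. Given that, the exponent bookkeeping, namely $\#\A_{G_m}=\#\A_{G_{m-1}}+1$ matching $\sum$ exponents, is routine.
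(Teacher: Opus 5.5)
Your proof is correct. The paper itself does not prove this statement. It quotes it from \cite{ER94} and uses it only inside the proof of Theorem \ref{thm:RDforGraph}, namely that $\A_{G_0}$ ($K_4$ minus an edge) is free and that the arrangement of the $4$-cycle is not.

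What you wrote is the classical argument in streamlined form. The usual proof of chordal $\Rightarrow$ free goes through Stanley's theorem that chordal bond lattices are supersolvable, together with the Jambu--Terao theorem. Unwinding the latter gives exactly your addition-theorem induction along a perfect elimination ordering. The converse, by localizing at an induced cycle and applying Terao's factorization theorem, is the standard one.

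The step you single out as the main obstacle is already settled by your own observation. The forms $x_a-x_b$ with $a,b\neq p$ restrict to pairwise distinct forms on $H\cong\K^{[\ell]\setminus\{p\}}$. Each $x_p-x_{u_s}$ with $s<m$ restricts onto one of them. So $\A_{G_m}^H=\A_{G\setminus p}$ exactly. You also use tacitly that $\chi(\A_G,t)$ is the chromatic polynomial over every field. This holds because the forms $x_i-x_j$ realize the graphic matroid over any field.

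The paper does point to a genuinely different route, in the remark after Theorem \ref{thm:6.13}: $\A_G$ is free iff $\pd_S\Omega^1(\A_G)=0$ iff $\reg I(G^c)=2$ iff $G$ is chordal, the last step being Fr\"oberg's Theorem \ref{froberg}.

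\textbf{What the paper's route buys.} It replaces the addition and factorization theorems with Stanley--Reisner theory (Terai's $\reg I_\Delta=\pd_S(S/I_{\Delta^*})$, or Hochster's formula) plus Fr\"oberg. It also gives more than a yes/no answer: it computes $\pd_S\Omega^1(\A_G)$ for every $G$.

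\textbf{Why it is not independent of yours.} The chain Theorem \ref{thm:RDforGraph} $\to$ Theorem \ref{thm:GensOmega} $\to$ Theorem \ref{thm:6.6} $\to$ Theorem \ref{thm:6.13} needs two inputs. One is precisely the case $n=4$ of your converse. The other is the freeness of $\A_{G_0}$, which is an instance of your addition argument or a direct Saito-criterion check.

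\textbf{What your route buys.} Your argument produces the exponents directly: they are the numbers of earlier neighbours along the reverse of a perfect elimination ordering. The homological route yields them only indirectly, through the Hilbert series in Theorem \ref{hilb:omega}.
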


We also need the following technical statement on $\gamma_F$.

\begin{lemma}
\label{lem:gensOmega}
Let $G$ be a graph with the vertex set $[\ell]$ and let $ \varnothing \ne F \subset [\ell]$.
\begin{itemize}
\item[(1)]
If $|F| \geq 2$, then for any $p \in F$ one has
\[
\gamma_F=
\sum_{u \in F, u \not = p} \frac 1 {\prod_{v \in F,v \ne u} (x_v-x_u)} (dx_u-dx_p).\]
\item[(2)]
If $F$ is a clique of $G$, then $\gamma_F \in \Omega^1(\A_G)$.
\end{itemize}
\end{lemma}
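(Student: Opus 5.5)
Part (1) is the classical Lagrange interpolation identity, and I will derive it first. The key identity is
\[
\sum_{u \in F} \frac{1}{\prod_{v \in F, v \ne u}(x_v-x_u)}=0 \qquad (|F|\geq 2).
\]
This is the leading coefficient, in degree $|F|-1$, of the Lagrange interpolation of the constant polynomial $1$ at the nodes $x_u$, $u\in F$. Equivalently, it is the divided difference of order $|F|-1$ of a constant function. Concretely, I would write it as the sum of the residues of the rational function $1/\prod_{v\in F}(x_v-t)$ in $t$. This function vanishes at infinity to order $|F|\ge 2$, so the residue at infinity is $0$, and hence the sum of the finite residues is $0$. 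Multiplying the identity by $dx_p$ and subtracting it from the definition of $\gamma_F$ gives the formula in (1), because the $u=p$ term becomes $dx_p-dx_p=0$.

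For (2), let $F$ be a clique. The coefficient denominators of $\gamma_F$ are products of $x_v-x_u$ with $\{u,v\}\subset F$, all of which are edges of $G$. Hence $Q(\A_G)\gamma_F$ is a polynomial form, and $\gamma_F\in \frac1{Q(\A_G)}\Omega^1_V$. It remains to check that for each edge $\{a,b\}\in E(G)$, $Q(\A_G)\gamma_F\wedge d(x_a-x_b)$ lies in $(x_a-x_b)\Omega^2_V$. If $\{a,b\}\not\subset F$, then $x_a-x_b$ appears in $Q(\A_G)$ but in no denominator of $\gamma_F$. So $Q(\A_G)\gamma_F$ is already divisible by $x_a-x_b$, and the condition holds.

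The main case is $\{a,b\}\subset F$. I apply (1) with $p=a$. Then $\gamma_F=\sum_{u\ne a} c_u(dx_u-dx_a)$, and the only coefficient with a pole along $x_a=x_b$ is $c_b=1/\prod_{v\ne b}(x_v-x_b)$. For $u\ne a,b$, the coefficient $c_u$ has no factor $x_a-x_b$ in its denominator, so $Q(\A_G)c_u$ is divisible by $x_a-x_b$. The remaining term is
\[
Q(\A_G)\,c_b\,(dx_b-dx_a)\wedge (dx_a-dx_b)=0.
\]
So every term of $Q(\A_G)\gamma_F\wedge d(x_a-x_b)$ lies in $(x_a-x_b)\Omega^2_V$, and $\gamma_F\in\Omega^1(\A_G)$. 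The only real obstacle is choosing the right basepoint $p=a$ in (1) so that the singular term becomes parallel to $d\alpha_H$; once that choice is made, the rest is bookkeeping of which linear forms occur in the denominators.
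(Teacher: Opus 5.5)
Your proof is correct. For (1) you prove the vanishing of $\sum_{u\in F}1/\prod_{v\ne u}(x_v-x_u)$ via partial fractions, residues or Lagrange interpolation, while the paper reads it off from a Vandermonde matrix times its inverse; that difference is immaterial.

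For (2) your case split matches the paper's, but the main case is argued differently. The paper does not use (1) there. It keeps the original expression of $\gamma_F$ and notes that only the $dx_a$ and $dx_b$ terms carry the pole along $x_a=x_b$. It wedges these two terms with $dx_a-dx_b$ and checks, by substituting $x_b=x_a$, that the resulting polynomial coefficient of $dx_a\wedge dx_b$ vanishes, so it is divisible by $x_a-x_b$.

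You instead use (1) with $p=a$ to write $\gamma_F$ in terms of $\{dx_u-dx_a\}_{u\ne a}$. Then the whole pole along $x_a=x_b$ sits in the coefficient of $dx_b-dx_a=-d(x_a-x_b)$, and the wedge kills it exactly, with no substitution or divisibility check. This links the two parts of the lemma and is in the spirit of Lemma \ref{2.1}: in a basis containing $d\alpha_H$, only the other coefficients need to become regular along $\alpha_H$ after multiplying by $Q$. It is also exactly the rewriting the paper later uses to compute $\res^H(\gamma_E)$ in \eqref{66}. The paper's route has the minor advantage of keeping (2) independent of the identity in (1).
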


\begin{proof}
(1) is an immediate consequence of the equation
\[
\sum_{i=1}^\ell \frac 1 {\prod_{1 \leq j\leq \ell, j \ne i} (x_j-x_i)} =0,
\]
which holds since the LHS is the product of the first row of the Vandermonde matrix and the last column of the inverse of the Vandermonde matrix.
\vspace{4pt}

We prove (ii).
Let $\{p,q\} \in E(G)$.
What we must prove is $Q(\A_G) \gamma_F \wedge (dx_p-dx_q) \in (x_p-x_q) \Omega^2_V$.
If $\{p,q\} \not \subset F$,
then $Q(\A_G) \gamma_F$ is already contained in $(x_p-x_q) \Omega^1_V$ so the claim is obvious.
Thus we may assume $F=\{1,2,\dots,m\}$ with $m \geq 2$ and $(p,q)=(1,2)$.
Since $Q(\A_G) \frac 1 {\prod_{v \in F,v \ne u} (x_v-x_u)}$ is divisible by $x_1-x_2$ when $u \not \in \{1,2\}$,
it suffices to prove that
\vspace{-6pt}

{\scriptsize
\begin{align}
    \label{eq:6-1}
    Q(\A_G) \left( \frac 1 {(x_2\!-\!x_1)(x_3\!-\!x_1)\cdots (x_m\!-\!x_1)}dx_1 + \frac 1 {(x_1\!-\!x_2)(x_3\!-\!x_2)\cdots(x_m\!-\!x_2)}dx_2\right)\wedge (dx_1-dx_2)
\end{align}
}

\noindent
is contained in $(x_1-x_2)\Omega_V^2$.
The element \eqref{eq:6-1} can be written as
\[
    \frac {Q(\A_G)}{(x_2-x_1)} \left(- \frac 1 {(x_3-x_1)\cdots (x_m-x_1)} + \frac 1 {(x_3-x_2)\cdots(x_m-x_2)}\right) dx_1 \wedge dx_2.
\]
Since we get zero if we substitute $x_2=x_1$ to the coefficient polynomial of $dx_1\wedge dx_2$ in the above element,
it follows that the element \eqref{eq:6-1} is  contained in $(x_2-x_1)\Omega_V^2$ as desired.
\end{proof}

\begin{proof}[Proof of Theorem \ref{thm:RDforGraph}]
Recall that $H=H_{x_i-x_j}$ and $W=N_G(i) \cap N_G(j)$.
Let
\[J= \bigcap_{\{u,v\} \subset W,\ \{u,v\} \not \in E(G)} (y_u,y_v)=\bigcap_{\{u,v\} \in E(G^c[W])} (y_u,y_v).\]
We first prove $J_{\A_G}^H \subset J$.
By Theorem \ref{thm:4.1} we have $J_{\A_G}^H \subset \bigcap_{X \in \Xi(\A_G,H)} P_X$.
Let $\{u,v\} \subset W$
with $\{u,v \} \not \in E(G)$.
Consider the subspace 
\[
X=\{(x_1,\dots,x_\ell) \in \mathbb K^\ell \mid x_i=x_j=x_u=x_v\}.
\]
Since $P_X=(y_u,y_v)$, to prove the desired inclusion, it suffices to prove that $X$ is contained in $\Xi(\A_G,H)=\{X \in L(\A_G^H)\mid J_{(\A_G)_X}^H \ne \overline S\}$.
Observe that $(\A_G)_X$ is the graphic arrangement of the following graph $G_0$ in Figure \ref{fig}.
By Stanley's criterion (Theorem \ref{stanley})
the arrangement $(\A_G)_X$ is free but $(\A_G)_X \setminus H$ is not free,
so by Theorem \ref{thm:3.4}(1) we get $X \in \Xi(\A,H)$ as desired.
\begin{figure}[h]
    \centering
   \includegraphics[width=0.15\linewidth]{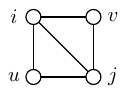}
    \caption{The graph $G_0$.}
    \label{fig}
\end{figure}

We next prove $J_{\A_G}^H \supset J$.
Let $F$ be a clique of $G[W]$.
By Lemma \ref{lem:genscoverideal},
to prove the desired inclusion,
it suffices to prove that $\prod_{p \in W \setminus F} y_p \in J_{\A_G}^H$.
Let $E=F \cup \{i,j\}$.
Then $E$ is a clique of $G$
and we have $\gamma_E \in \Omega ^1(\A_G)$ by Lemma \ref{lem:gensOmega}.
Using the fact that $Q(\A_G)$ and $Q(\A_G^H)$ can be written as
\[
Q(\A_G)= (x_j-x_i) \left(\prod_{p \in W} (x_p-x_i)(x_p-x_j)\right) \cdot g\]
and
\[
Q(\A^H_G)= \left(\prod_{p \in W} \overline{(x_p-x_i)}\right) \cdot \overline g\]
for some polynomial $g$,
we have
\begin{align}
    \label{66}
\mathrm{res}^H(\gamma_E)=
\frac 1 {Q(\A_G^H)} \overline{ \left(Q(\A_G) \frac 1 {(x_j-x_i)\prod_{v \in F}(x_v-x_i)} \right)}
=\prod_{p \in W \setminus F}y_p,
\end{align}
where we use Lemma \ref{lem:gensOmega}(1) for the first equation.
Hence $\prod_{p \in W \setminus F} y_p \in J_{\A_G}^H$ as desired.
\end{proof}

\begin{rem}
The proof of Theorem \ref{thm:RDforGraph} says that the images of $\gamma_F$ under the map $\mathrm{res}^H$ for all (maximal) cliques $F$ containing $\{i,j\}$ generates $J_{\A_G}^H$.
\label{remgenerate}
\end{rem}

\begin{proof}[Proof of Theorem \ref{thm:GensOmega}]
We prove that $\{\gamma_F \mid F \mbox{ is a clique of $G$ with $F \ne \varnothing $}\}$ generates $\Omega^1(\A_G)$ using induction on $\# \A_G$.
The statement is clear when $G$ is the empty graph.
Assume that $\A_G \ne \varnothing$ and take $H=H_{x_i-x_j} \in \A_G$.
Let $G'$ be the graph obtained from $G$ by deleting the edge $\{i,j\}$.
Consider the exact sequence which defined the residue ideal:
\[
0 \longrightarrow \Omega^1(\A_{G'})=\Omega^1(\A_G \setminus H)
\lhook\joinrel\longrightarrow \Omega^1(\A_G)
\stackrel{\mathrm{res}^H} \longrightarrow
J_{\A_G}^H \longrightarrow 0.
\]
The module $\Omega^1(\A_G)$ is generated by generators of $\Omega^1(\A_G \setminus H)$ together with elements of $\Omega^1(\A_G)$ whose images by the map $\mathrm{res}^H$ generate $J_{\A_G}^H$.
Since the former is generated by 
\[
\{\gamma_F \mid F \mbox{ is a clique of $G'$ with $F \ne \varnothing$} \}\]
by the induction hypothesis
and since $J_{\A_G}^H$ is generated by 
\[
\{\mathrm{res}^H(\gamma_F) \mid F \mbox{ is a clique of $G$ that contains } \{i,j\}\}\]
by Remark \ref{remgenerate},
we get the desired statement.
\end{proof}

Francisco, H\`a and Van Tuyl \cite{FHV10,FHV11} (see also \cite[\S 3]{Va}) proved that powers of cover ideals are related to odd holes and chromatic numbers of graphs.
Since cover ideals are residue ideals of graphic arrangements,
it might be interesting to study if powers of residue ideals have any combinatorial meaning.

\subsection{A direct relation to cover ideals}
We see in Theorem \ref{thm:RDforGraph} that the residue ideal of a graphic arrangement $\A_G$ is the cover ideal of a certain induced subgraph of $G^c$.
In this subsection,
we show that the module $\Omega^1(\A_G)$ itself directly relates to the cover ideal $J(G^c)$.
Recall that any graphic arrangement is not essential since the hyperplane $H_{x_i-x_j}$ contains $\K\cdot\bm 1$, the $1$-dimensional vector space spanned by the all $1$ vector $\bm 1=(1,\ldots,1)$.
Let
\[
R=
\mathrm{sym}\big(\big(V/(\K\cdot \bm 1)\big)^*\big)=\K[x_2-x_1,\dots,x_\ell-x_1]\]
and for a graph $G$ with the vertex set $[\ell]$, let
\[
\Omega_R^1(\A_G)=\Omega^1 (\A_G) \cap \left( \bigoplus_{i=0}^\ell R\cdot \frac 1 {Q(\A)}dx_i\right).
\]
The modules $\Omega_R^1(\A_G)$
and $\Omega^1(\A_G)$ have the same information.
Indeed,
by Lemma \ref{lem:2.5}, the module $\Omega^1(\A_G)$ is generated by elements of $\Omega^1_R(\A_G)$ and we have
\[\Omega^1(\A_G) \cong \Omega^1_R(\A_G)\otimes_R S.\]
We also note that $\Omega_R^1(\A_G)$ somehow sits between $\Omega^1(\A_G)$ and $\Omega^1(\A_G^{V/(\K\cdot \bm 1)})$ in the sense that
\[
\Omega^1_R(\A_G)=\Omega^1\left(\A_G^{V/(\K\cdot \bm 1)} \right) \oplus R \cdot dx_1.
\]

\begin{theorem}
    \label{thm:6.6}
    Let $G$ be a graph with the vertex set $[\ell]$.
    Then one has
    \[ \Omega^1_R(\A_G) \cong J(G^c)/(x_1x_2\cdots x_{\ell})\]
    as $R$-modules.
\end{theorem}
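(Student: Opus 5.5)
The plan is to prove the isomorphism by induction on $\#E(G)$. I will compare the residue exact sequence of \S 2 with a parallel exact sequence of Stanley--Reisner modules coming from the cover ideals. The $\K$-bases come from Theorem \ref{thm:GensOmega} and Lemma \ref{lem:SRHilb}.

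\emph{Base case and setup.} If $G$ is the empty graph, then $\Omega^1_R(\A_G)$ is the free $R$-module on $dx_1,\dots,dx_\ell$. On the other side, $G^c$ is the complete graph, so $J(G^c)$ is generated by the $x^{[\ell]\setminus\{u\}}$. Modulo $x_1\cdots x_\ell$, the summand generated by $x^{[\ell]\setminus\{u\}}$ is a polynomial ring in the $\ell-1$ variables $x_v$ with $v\ne u$. I will fix, once and for all, an identification of $R$ with a polynomial ring in $\ell-1$ variables, chosen so that this summand matches the summand $R\,dx_u$ (equivalently $R\,\gamma_{\{u\}}$). The degrees are consistent after a shift by $\ell-1$. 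More generally, for a clique $C$ of $G$ the generator $\gamma_C$ has degree $-(\#C-1)$, while the corresponding cover $F=[\ell]\setminus C$ of $G^c$ gives $x^F$ of degree $\ell-\#C$. So the intended correspondence is $\gamma_C \leftrightarrow x^{[\ell]\setminus C}$, which is exactly the dictionary of Lemma \ref{lem:genscoverideal}.

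\emph{Inductive step.} Take $H=H_{x_i-x_j}\in\A_G$, let $G'=G\setminus\{i,j\}$ and $W=N_G(i)\cap N_G(j)$. Restricting to $R$-coefficients, the sequence defining the residue ideal gives
\[
0\to\Omega^1_R(\A_{G'})\to\Omega^1_R(\A_G)\to J^H_{\A_G}\cap R/(x_i-x_j)\to 0 .
\]
By Theorem \ref{thm:RDforGraph}, the right-hand term is the cover ideal of $G^c[W]$ in the variables $y_u$ ($u\in W$), tensored with a polynomial ring in the remaining $\ell-2-\#W$ variables. On the cover-ideal side, $G'^c=G^c\cup\{\{i,j\}\}$, so $J(G'^c)\subset J(G^c)$. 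A squarefree monomial $x^F$ lies in $J(G^c)\setminus J(G'^c)$ exactly when $i,j\notin F$. Since $F$ covers $G^c$, this forces $F\supseteq[\ell]\setminus(W\cup\{i,j\})$, and then $F\cap W$ is a vertex cover of $G^c[W]$. By Lemma \ref{lem:SRHilb}, the quotient $J(G^c)/J(G'^c)$ is therefore isomorphic to $x^{[\ell]\setminus(W\cup\{i,j\})}\cdot J(G^c[W])$, up to a polynomial extension in the variables indexed by $F$. This has the same shape as the residue ideal, with $y_u\leftrightarrow x_u$. So both modules sit in short exact sequences whose outer terms are isomorphic, by induction and by Theorem \ref{thm:RDforGraph}. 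In particular, their Hilbert series agree.

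\emph{Main obstacle.} Matching outer terms does not by itself produce an isomorphism of the middle terms; the extensions must also agree. I would therefore aim for an explicit map. Send $x^{[\ell]\setminus C}\cdot m$, with $m$ a monomial in the variables indexed by $[\ell]\setminus C$, to $\gamma_C$ multiplied by the corresponding monomial in the linear forms $x_u-x_{c}$, where $c\in C$ is a chosen base vertex. The first thing to check is well-definedness and $R$-linearity. This amounts to showing that, for $u\notin C$ with $C\cup\{u\}$ a clique, the relation $(x_u-x_c)\gamma_C\equiv\gamma_{C\cup\{u\}}$-multiples, which comes from the partial-fraction identity behind Lemma \ref{lem:gensOmega}(1), matches multiplication $x_u\cdot x^{[\ell]\setminus C}=x_u^2\,x^{[\ell]\setminus(C\cup\{u\})}$ in the Stanley--Reisner module. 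Once the map is a well-defined homomorphism, compatibility with $\mathrm{res}^H$ follows from equation \eqref{66}. The five lemma then gives bijectivity.
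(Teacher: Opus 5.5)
Your proposal has a genuine gap: the explicit map it relies on is not $R$-linear. The induction on edges only yields matching outer terms and equal Hilbert series, so, as you say yourself, everything rests on that map.

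\textbf{A counterexample.} Take $\ell=3$ and $G=K_3$, so $J(G^c)=S$. Take the clique $C=\{2,3\}$ with base vertex $c=2$. Your recipe sends
\begin{itemize}
\item $x_1=x^{[3]\setminus C}\mapsto\gamma_{\{2,3\}}$,
\item $x_1^2=x^{[3]\setminus C}\cdot x_1\mapsto(x_1-x_2)\gamma_{\{2,3\}}$,
\item $x_1x_3=x^{[3]\setminus\{2\}}\mapsto dx_2$.
\end{itemize}
For $r=x_1-x_3\in R$ we have $r\cdot x_1=x_1^2-x_1x_3$, whose image is $(x_1-x_2)\gamma_{\{2,3\}}-dx_2$. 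On the other hand, $r\gamma_{\{2,3\}}=(x_1-x_2)\gamma_{\{2,3\}}+(x_2-x_3)\gamma_{\{2,3\}}=(x_1-x_2)\gamma_{\{2,3\}}+dx_3-dx_2$. The two differ by $dx_3$. Choosing $c=3$ fails in the same way with $r=x_1-x_2$.

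\textbf{Why it fails.} Your substitution $x_u\mapsto x_u-x_c$ does invert the $R$-action for singleton cliques, because $x_c$ kills $x^{[\ell]\setminus\{c\}}$ in $A=S/(x_1\cdots x_\ell)$. That is why the base case looks fine. For $\#C\ge 2$, however, $x_c\,x^{[\ell]\setminus C}=x^{([\ell]\setminus C)\cup\{c\}}\neq 0$ in $A$. The true preimage therefore carries correction terms from smaller cliques; for example, $x_1^2$ must go to $(x_1-x_2)\gamma_{\{2,3\}}+dx_3$. The relation you planned to check is also misstated. The correct identity is Lemma~\ref{lem:6.8}, $(x_i-x_j)\gamma_F=\gamma_{F\setminus\{i\}}-\gamma_{F\setminus\{j\}}$. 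It is mirrored exactly by $(x_i-x_j)x^{[\ell]\setminus F}=x^{[\ell]\setminus(F\setminus\{i\})}-x^{[\ell]\setminus(F\setminus\{j\})}$.

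\textbf{The missing idea.} Avoid verifying relations altogether by defining a single map for the complete graph, where both sides are free $R$-modules.
\begin{itemize}
\item By Saito's criterion, $\Omega^1_R(\A_{K_\ell})$ has basis $\gamma_{\{k+1,\dots,\ell\}}$ for $0\le k\le\ell-1$.
\item $A$ is Cohen--Macaulay with system of parameters $x_2-x_1,\dots,x_\ell-x_1$, so it has $R$-basis $1,x_1,x_1x_2,\dots,x_1\cdots x_{\ell-1}$.
\item The $R$-isomorphism $\Psi$ matching these bases satisfies $\Psi(\gamma_F)=x^{[\ell]\setminus F}$ for every nonempty $F$. This follows by downward induction on $\#F$ using the two identities above.
\end{itemize}
Restricting $\Psi$ to $\Omega^1_R(\A_G)$ gives an injective $R$-map, automatically compatible across all subgraphs. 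By Theorem~\ref{thm:GensOmega}, its image is the $R$-span of the $x^{[\ell]\setminus C}$ over nonempty cliques $C$. The only remaining point is that this $R$-span already equals the $S$-module $J(G^c)/(x_1\cdots x_\ell)$. This follows by induction on $\deg m$ from $x^Kx_sm'=(x_s-x_t)x^Km'+x^{K\cup\{t\}}m'$ with $t\notin K$. With this construction, the edge induction and the five lemma become unnecessary.
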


We need a few lemmas to prove Theorem \ref{thm:6.6}.
The next lemma follows from the definition of $\gamma_F$ by a straightforward computation.

\begin{lemma}
    \label{lem:6.8}
    For $F \subset [\ell]$ with $|F|\geq 2$ and $i,j \in F$, one has 
    \[(x_i-x_j) \gamma_F= \gamma_{F \setminus \{i\}} -\gamma_{F \setminus \{j\}}.\]
\end{lemma}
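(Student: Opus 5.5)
The identity is linear in the coefficients, so I would compare the coefficient of each $dx_u$ on both sides, using only the definition
\[
\gamma_F=\sum_{u\in F}\frac{1}{\prod_{v\in F,\ v\neq u}(x_v-x_u)}\,dx_u .
\]
Write $F_i=F\setminus\{i\}$ and $F_j=F\setminus\{j\}$. Throughout I treat the case $i\neq j$; if $i=j$ both sides vanish trivially. Every $u\in F$ falls into exactly one of three cases: $u=i$, $u=j$, or $u\in F\setminus\{i,j\}$.

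\emph{Case $u\in F\setminus\{i,j\}$.} The coefficient of $dx_u$ on the left is $(x_i-x_j)/\prod_{v\in F,\,v\ne u}(x_v-x_u)$. On the right it is
\[
\frac{1}{\prod_{v\in F_i,\,v\neq u}(x_v-x_u)}-\frac{1}{\prod_{v\in F_j,\,v\neq u}(x_v-x_u)} .
\]
Put both fractions over the common denominator $\prod_{v\in F,\,v\ne u}(x_v-x_u)$. The first fraction gains the factor $(x_i-x_u)$ and the second gains $(x_j-x_u)$. The numerator is then $(x_i-x_u)-(x_j-x_u)=x_i-x_j$, which matches the left side.

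\emph{Case $u=i$.} The form $\gamma_{F_i}$ has no $dx_i$ term. The coefficient of $dx_i$ in $-\gamma_{F_j}$ is $-1/\prod_{v\in F_j,\,v\ne i}(x_v-x_i)$. On the left, the coefficient is $(x_i-x_j)/\prod_{v\in F,\,v\ne i}(x_v-x_i)$. The denominator here contains the factor $(x_j-x_i)$, which cancels against the numerator with a sign $-1$. What remains is exactly $-1/\prod_{v\in F_j,\,v\ne i}(x_v-x_i)$, so the two sides agree.

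\emph{Case $u=j$.} This is symmetric. Only $\gamma_{F_i}$ contributes, and its coefficient is $1/\prod_{v\in F_i,\,v\ne j}(x_v-x_j)$. On the left, $(x_i-x_j)$ cancels the factor $(x_i-x_j)$ in the denominator with sign $+1$, giving the same expression.

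\emph{The case $|F|=2$.} Say $F=\{i,j\}$. Then $\gamma_{F_i}=dx_j$ and $\gamma_{F_j}=dx_i$, using the convention $\gamma_{\{p\}}=dx_p$. This is consistent with the computation above, where the empty products equal $1$. It can also be checked directly: $(x_i-x_j)\gamma_F=-dx_i+dx_j$.

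There is no real obstacle in this argument. The only point that needs care is tracking the signs when $(x_i-x_j)$ cancels against $(x_j-x_i)$ in the case $u=i$, and against $(x_i-x_j)$ in the case $u=j$.
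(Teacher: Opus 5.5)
Your proof is correct and is essentially the paper's argument. The paper also compares the coefficient of each $dx_u$, after relabeling to $F=\{1,\dots,m\}$, $i=1$, $j=2$. For $u=1,2$ it cancels $(x_1-x_2)$ against the matching factor in the denominator, and for the remaining $u=k$ it splits $x_1-x_2=(x_1-x_k)-(x_2-x_k)$, exactly as in your three cases.
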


\begin{proof}
We may assume $F=\{1,2,\dots,m\}$, $i=1$ and $j=2$. Then we have
\begin{align*}
    &(x_1-x_2) \gamma_F\\
    &= \frac {-1} {\prod_{p=3}^m (x_p-x_1)}dx_1 + 
    \frac 1 {\prod_{p=3}^m (x_p-x_2)}dx_2 + \sum_{k=3}^m {\frac {(x_1-x_k)-(x_2-x_k)} {\prod_{1 \leq p \leq m, p \ne k} (x_p-x_k)}}dx_k\\
    &= \left( \frac {-1} {\prod_{p=3}^m (x_p-x_1)}dx_1
    - \sum_{k=3}^m {\frac {1} {\prod_{1 \leq p \leq m, p \ne 2,k} (x_p-x_k)}} dx_k\right) \\
    &\hspace{12pt} + 
    \left( \frac 1 {\prod_{p=3}^m (x_p-x_2)} dx_2 + \sum_{k=3}^m {\frac 1 {\prod_{1 \leq p \leq m, p \ne 1,k} (x_p-x_k)}} dx_k \right)\\
    &= -\gamma_{F \setminus \{2\}} + \gamma_{F \setminus \{1\}},
\end{align*}
as desired.
\end{proof}

Let $K_\ell$ be the complete graph with its vertex set $[\ell]$.
Thus $K_\ell$ is the graph with $V(K_\ell)=[\ell]$ and $E(K_\ell)=\{\{i,j\} \mid 1 \leq i < j \leq \ell \}$.
The next statement is an immediate consequence of Lemma \ref{lem:gensOmega}(2) and  Saito's criterion.

\begin{lemma}
\label{lem:6.9}
The module $\Omega^1_R(\A_{K_\ell})$ is a free $R$-module with a basis 
\begin{align}
\label{genscomplete}    
x\gamma_{\{1,2,\dots,\ell\}},
\gamma_{\{2,\dots,\ell\}},\dots,\gamma_{\{\ell-1,\ell\}},\gamma_{\{\ell\}}.
\end{align}
\end{lemma}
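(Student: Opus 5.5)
Throughout, the only reading of the factor $x$ in front of $\gamma_{\{1,2,\dots,\ell\}}$ under which the statement holds is that $x$ is a nonzero scalar, i.e.\ a unit of $R$. A degree count confirms this: the listed elements $\gamma_{\{k,\dots,\ell\}}$ ($k=1,\dots,\ell$) have degrees $-(\ell-1),\dots,-1,0$, and these already sum to $-\binom{\ell}{2}=-\#\A_{K_\ell}$, the value forced by Saito's criterion. Any non-unit factor would push the sum below this and destroy the basis property. So the plan is to prove that $\gamma_{\{1,\dots,\ell\}},\gamma_{\{2,\dots,\ell\}},\dots,\gamma_{\{\ell\}}$ is an $R$-basis of $\Omega^1_R(\A_{K_\ell})$. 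Rescaling the first element by the unit $x$ then gives exactly the basis \eqref{genscomplete}.

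\emph{Step 1: the listed elements lie in $\Omega^1_R(\A_{K_\ell})$.} Every nonempty subset of $[\ell]$ is a clique of $K_\ell$, so Lemma \ref{lem:gensOmega}(2) gives $\gamma_{\{k,\dots,\ell\}}\in\Omega^1(\A_{K_\ell})$. Each coefficient of $\gamma_F$ is $1/\prod(x_v-x_u)$, and multiplying by $Q(\A_{K_\ell})=\prod_{i<j}(x_i-x_j)$ turns it into a product of differences $x_a-x_b$, which lies in $R$. Hence each element lies in $\bigoplus_i R\cdot\frac{1}{Q}dx_i$, and so in $\Omega^1_R(\A_{K_\ell})$.

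\emph{Step 2: Saito's criterion over $S$.} The form $\gamma_{\{k,\dots,\ell\}}$ involves only $dx_k,\dots,dx_\ell$, and its $dx_k$-coefficient is $1/\prod_{v>k}(x_v-x_k)$. So the coefficient matrix with respect to $dx_1,\dots,dx_\ell$ is triangular, with determinant $\prod_k\prod_{v>k}(x_v-x_k)^{-1}=\pm 1/Q(\A_{K_\ell})$. This shows the forms are $S$-independent, and together with the degree count above, Theorem \ref{Saitocriterion} (the $1$-form version) shows they form an $S$-basis of $\Omega^1(\A_{K_\ell})$.

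\emph{Step 3: descent from $S$ to $R$, the main point to check.} Take $\omega\in\Omega^1_R(\A_{K_\ell})$. By Step 2, $\omega=\sum_k c_k\gamma_{\{k,\dots,\ell\}}$ with $c_k\in S$ uniquely determined. Cramer's rule applied to the triangular system expresses each $c_k$ in terms of the coefficients of $\omega$ and of the $\gamma$'s, all of which lie in $\mathrm{Frac}(R)$. Hence $c_k\in \mathrm{Frac}(R)\cap S$. Since $S=R[x_1]$ is a polynomial ring over $R$, we have $\mathrm{Frac}(R)\cap S=R$, so $c_k\in R$. Independence over $R$ follows from independence over $S$. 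Therefore the elements form an $R$-basis of $\Omega^1_R(\A_{K_\ell})$, and multiplying the first one by the unit $x$ yields the basis \eqref{genscomplete} as stated.
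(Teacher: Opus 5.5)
Your proof is correct and follows the same route as the paper: membership of the $\gamma_{\{k,\dots,\ell\}}$, $S$-independence from the triangular shape, the degree sum $-\binom{\ell}{2}=-\#\A_{K_\ell}$, and Saito's criterion. Your Step 3 (passing from an $S$-basis to an $R$-basis via $\mathrm{Frac}(R)\cap S=R$) and your check that $Q(\A_{K_\ell})$ times each coefficient lies in $R$ spell out what the paper leaves implicit. You are also right that the factor $x$ is a typo: the paper later refers to this basis as $\gamma_{[\ell]},\gamma_{[\ell]\setminus[1]},\dots,\gamma_{[\ell]\setminus[\ell-1]}$. One small slip: a positive-degree factor would push the degree sum above $-\binom{\ell}{2}$, not below it, though your conclusion is unaffected.
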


\begin{proof}
Since elements in \eqref{genscomplete} are contained in $\Omega_R^1(\A_{K_\ell})$ by Theorem \ref{thm:GensOmega},
it is enough to check two conditions in Theorem \ref{Saitocriterion}. 
Note that  elements in \eqref{genscomplete}
are $S$-independent since $\gamma_{\{i,i+1,\dots,\ell\}}$ 
is an $S$-linear combinations of $dx_i,dx_{i+1},\dots,dx_\ell$. Since $\deg 
(\gamma_{\{i,\ldots,\ell\}})=-\ell+i$ for $i=1,\ldots,\ell$, 
the sum of their degrees is $-\ell(\ell-1)/2=-|\A|$. So Theorem \ref{Saitocriterion} completes the proof.
\end{proof}

Consider the ring
\[A=S/(x_1x_2\cdots x_{\ell}).\]
This is a Cohen--Macaulay ring of Krull dimension $\ell-1$ with a system of parameters $z_2=x_2-x_1,\dots,z_\ell=x_\ell-x_1$.
Also, 
\[
S/(x_1x_2\cdots x_{\ell},z_2,\dots,z_\ell)=S/(x_1^{\ell},z_2,\dots,z_\ell)
\]
has a $\K$-basis $1,x_1,\dots,x_1^{\ell-2},x_1^{\ell-1}$
and
\[x_1 \cdots x_k=x_1^k \ \ \ \mbox{ mod }(z_2,\dots,z_\ell)\ \ \ \mbox{ for }k=1,2,\dots,\ell.\]
These facts tell us that $A$ is a free $R$-module with $R$-basis
\[
1,x_1,x_1x_2,\dots,x_1\cdots x_{\ell-1}.
\]
(see \cite[I,\ 5.10 Theorem]{Stanley}).
We see in Lemma \ref{lem:6.9} that $\Omega^1_R(\A_{K_\ell})$ is a free $R$-module with basis 
$\gamma_{[\ell]},\gamma_{[\ell]\setminus [1]},\dots,\gamma_{[\ell]\setminus [\ell-2]},\gamma_{[\ell]\setminus [\ell-1]}$.
Let
\[\Psi:\Omega^1_R(\A_{K_\ell}) \to A\]
be the isomorphism of free $R$-modules that sends each $\gamma_{[\ell]\setminus [k]}$ to $x^{[k]}=x_1\cdots x_k$, where $\Psi(\gamma_{[\ell]})=1$.
This map satisfies the following condition.

\begin{lemma}
\label{lem:6.10}
For any non-empty subset $F$ of $[\ell]$, one has $\Psi(\gamma_F)=x^{[\ell]\setminus F}.$
\end{lemma}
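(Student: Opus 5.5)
Since $\Psi$ is $R$-linear and the basis elements $\gamma_{[\ell]\setminus[k]}$ with $k=0,\dots,\ell-1$ are mapped to $x^{[k]}$ by definition, the statement holds for all $F$ of the form $[\ell]\setminus[k]$, i.e.\ for the ``final segments'' $\{k+1,\dots,\ell\}$. The plan is to reach an arbitrary non-empty $F$ from final segments using the relation of Lemma \ref{lem:6.8},
\[(x_i-x_j)\gamma_{E}=\gamma_{E\setminus\{i\}}-\gamma_{E\setminus\{j\}},\qquad i,j\in E,\]
together with the analogous monomial identity in $A$:
\[(x_i-x_j)\,x^{[\ell]\setminus E}= x^{[\ell]\setminus (E\setminus\{i\})}-x^{[\ell]\setminus(E\setminus\{j\})},\]
which is trivial since $x^{[\ell]\setminus(E\setminus\{i\})}=x_i x^{[\ell]\setminus E}$. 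Note $x_i-x_j\in R$, so applying $\Psi$ to Lemma \ref{lem:6.8} is legitimate.

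I would argue by induction. Define for $F$ the defect $\delta(F)=\sum_{u\in F}u-\sum_{u=\ell-|F|+1}^{\ell}u\le 0$, which vanishes iff $F$ is a final segment. Given $F$ not a final segment, pick $i\notin F$ and $j\in F$ with $i>j$, and set $E=F\cup\{i\}$, $F'=(F\setminus\{j\})\cup\{i\}=E\setminus\{j\}$. Then $F=E\setminus\{i\}$, so Lemma \ref{lem:6.8} gives $\gamma_F=\gamma_{F'}+(x_i-x_j)\gamma_E$. Here $|F'|=|F|$ with strictly larger defect, and $|E|=|F|+1$. Hence I induct on $|F|$ descending from $\ell$ (where $F=[\ell]$ is a final segment), and for fixed size on the defect ascending. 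Assuming the claim for $E$ and $F'$, we get $\Psi(\gamma_F)=x^{[\ell]\setminus F'}+(x_i-x_j)x^{[\ell]\setminus E}$, and by the monomial identity with $E\setminus\{j\}=F'$, $E\setminus\{i\}=F$ this equals $x^{[\ell]\setminus F}$.

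The only subtlety, which I expect to be the main (though mild) obstacle, is bookkeeping: ensuring $\gamma_E$ lies in $\Omega^1_R(\A_{K_\ell})$ (it does, by Lemma \ref{lem:gensOmega}(2), every subset being a clique of $K_\ell$, and its coefficients are rational in the $x_v-x_u$ so lie in $\frac1{Q}R$) and that the induction order is well-founded, which the lexicographic order on $(\ell-|F|,-\delta(F))$ provides. Also check the sign conventions of Lemma \ref{lem:6.8} match the monomial identity: both give ``$E\setminus\{i\}$ term minus $E\setminus\{j\}$ term'' with coefficient $x_i-x_j$, so they agree.
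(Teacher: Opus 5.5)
Your proof is correct and is essentially the paper's argument: both use downward induction on $\#F$, together with Lemma \ref{lem:6.8} and the matching monomial identity in $A$, to move $F$ toward the final segment $[\ell]\setminus[\ell-\#F]$ by single exchanges. The only difference is bookkeeping: the paper telescopes along a whole exchange path $F=F_0,\dots,F_q$ at once, so it needs only the induction on size, whereas you make one exchange at a time and therefore add a secondary induction on your defect $\delta(F)$.
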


\begin{proof}
We prove the statement by downward induction on $\# F$.
The case $\# F=\ell$ follows from the definition of $\Psi.$
Assume $\#F =k < \ell$ and the statement holds for all subsets of size $\geq k+1$.
Clearly, we can take a sequence
\[
F=F_0,F_1,\dots,F_q=[\ell]\setminus [\ell-k]
\]
of $k$-subsets of $[\ell]$ such that $F_{i+1}=(F_i\setminus \{s_i\}) \cup \{t_i\}$ for some $s_i,t_i \in [\ell]$, 
$s_i \in F_i,\ t_i \not \in F_i$ 
for all $i=0,1,\dots,q-1$.
Set $E_i=F_i \cup F_{i+1} =(F_i\cap F_{i+1})\cup \{s_i,t_i\}$ for $i=0,1,\dots,q-1$.
By Lemma \ref{lem:6.8}
\begin{align*}
    \gamma_F&=(\gamma_{F_0}-\gamma_{F_1})+
    (\gamma_{F_1}-\gamma_{F_2})+ \cdots +(\gamma_{F_{q-1}}-\gamma_{F_q})+\gamma_{F_q}\\
    &=\sum_{i=0}^{q-1} \big((x_{t_i}-x_{s_i})\gamma_{E_i} \big) +\gamma_{[\ell]\setminus [\ell-k]}
\end{align*}
and by the induction hypothesis we have
\begin{align*}
    \Psi(\gamma_F) &= \left(\sum_{i=0}^{q-1} (x_{t_i}-x_{s_i})x^{[\ell]\setminus E_i} \right) + x^{[\ell-k]}\\
&= (x^{[\ell]\setminus F}-x^{[\ell]\setminus F_1})+ \cdots +(x^{[\ell]\setminus F_{q-1}}-x^{[\ell-k]})+x^{[\ell-k]}=x^{[\ell]\setminus F},
\end{align*}
as desired.
\end{proof}

\begin{proof}[Proof of Theorem \ref{thm:6.6}]
Consider the composition
\[\Psi_G :
\Omega_R^1( \A_G) 
\lhook\joinrel\longrightarrow
\Omega_R^1( \A_{K_\ell}) \stackrel \Psi \longrightarrow A.
\]
Since $\Psi$ is injective, to prove the theorem it suffices to prove 
\[\mathrm{Im}\ \! \Psi_G= J(G^c)/(x_1x_2\cdots x_{\ell}).\]
By Theorem \ref{thm:GensOmega} and Lemma \ref{lem:6.10}, $\mathrm{Im}\ \! \Psi_G$ is the $R$-submodule of $A$ generated by
\[
\mathcal G=\{x^{[\ell]\setminus F} \mid F \mbox{ is a clique of $G$ with $F \ne \varnothing$}\}.\]
On the other hand,
Lemma \ref{lem:genscoverideal} says that $J(G^c)/(x_1x_2\dots x_\ell)$ is the $S$-submodule of $A$ generated by $\mathcal G$.
Thus we have
\begin{align}
\label{eq:6-inclusion}
\mathrm{Im}\ \! \Psi_G \subset J(G^c)/(x_1x_2\cdots x_\ell).
\end{align}

It remains to prove the converse inclusion of \eqref{eq:6-inclusion}.
By Lemmas \ref{lem:SRHilb} and \ref{lem:genscoverideal},
we know that
\[
\bigcup_{x^K \in \mathcal G} \big\{ x^K \cdot m \mid m \text{ is a monomial in }\K[x_i \mid i \in K]\big\}
\]
is a $\K$-basis of $J(G^c)/(x_1\cdots x_\ell)$.
Thus, to prove the converse inclusion of \eqref{eq:6-inclusion} it suffices to prove $x^K m \in \mathrm{Im}\ \! \Psi_G$ for a fixed $x^K \in \mathcal G$ and a monomial $m \in \K[x_v \mid v \in K]$.
We prove this by induction on $\deg m$.
The assertion is obvious when $\deg m=0$ since $\mathrm{Im}\ \!\Psi_G$ contains $x^K$.
Suppose $\deg m>0$.
Take variables $x_s,x_t$ such that $x_s$ divides $m$ and $x_t$ does not divide $x^K$.
Write $m=x_s m'$.
Then
\[ x^Km = x^Kx_s m'= x^Km'(x_s-x_t)+x^{K\cup \{t\}} m'.\]
Since $x^Km' \in \mathrm{Im}\ \!\Psi_G$ by the induction hypothesis and $x_s-x_t \in R$ we have $x^Km'(x_s-x_t) \in \mathrm{Im}\ \!\Psi_G$.
Also, since $x^{K\cup \{t\}} \in \G$ or $x^{K\cup \{t\}}=0$ in $A$,
again by the induction hypothesis we have 
$x^{K\cup \{t\}} m'\in \mathrm{Im}\ \!\Psi_G$.
Hence $x^K m \in \mathrm{Im}\ \!\Psi_G$ as desired.
\end{proof}

\begin{rem}
The isomorphism from $\Omega_R^1(\A_G)$ to $J(G^c)/(x_1\cdots x_\ell)$ induces an $S$-module structure to $\Omega_R^1(\A_G)$. One can check that this module structure is given by
\[
\textstyle
x_i \cdot \big(\sum_{k=1}^\ell f_k (dx_k)\big)= \sum_{k=1}^\ell (x_i-x_k)f_k (dx_k).
\]
\end{rem}

As an immediate consequence of Theorem \ref{thm:6.6},
we have the following combinatorial formula of the Hilbert series of $\Omega^1(\A_G)$.
For a graded module $N=\bigoplus_{i\in\Z}N_i$, let $N(k)$ denote the module $N$ with the grading $N(k)_i=N_{i+k}$.

\begin{theorem}
\label{hilb:omega}
    Let $G$ be a graph with the vertex set $[\ell]$. Then
\[
\Hilb(\Omega^1(\A_G),t)=
\sum_{ \varnothing \ne F \in \Delta(G)}
\frac {t^{1-\#F}} {(1-t)^{\ell-\#F+1}}.
\]
\end{theorem}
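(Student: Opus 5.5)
The plan is to compute the Hilbert series of $\Omega^1_R(\A_G)$ from Theorem \ref{thm:6.6}, and then pass to $\Omega^1(\A_G)$ by the base change $\Omega^1(\A_G)\cong \Omega^1_R(\A_G)\otimes_R S$. Since $S=R[x_1]$ is a polynomial ring in one more variable over $R$, this base change multiplies the Hilbert series by $1/(1-t)$.

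The first step is to fix the gradings. The isomorphism $\Psi$ of Lemma \ref{lem:6.10} sends $\gamma_F$, which has degree $1-\#F$ (the coefficients have degree $-(\#F-1)$ and $\deg dx_u=0$), to the monomial $x^{[\ell]\setminus F}$, which has degree $\ell-\#F$. Hence $\Psi$ is homogeneous of degree $\ell-1$. Concretely, $\Omega^1_R(\A_G)\cong \big(J(G^c)/(x_1\cdots x_\ell)\big)(\ell-1)$ as graded modules, and the Hilbert series picks up a factor $t^{-(\ell-1)}$.

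The second step is to compute $\Hilb(J(G^c)/(x_1\cdots x_\ell),t)$. The isomorphism of Theorem \ref{thm:6.6} is only $R$-linear, but Hilbert series depend only on the graded $\K$-vector space structure, so Lemma \ref{lem:SRHilb} applies directly. By Lemma \ref{lem:genscoverideal}, the squarefree monomials in $J(G^c)$ not lying in $(x_1\cdots x_\ell)$ are exactly $x^{[\ell]\setminus F}$ with $F$ a nonempty clique of $G$. (A clique of $(G^c)^c=G$; the monomial $x^{[\ell]}$ corresponds to $F=\varnothing$ and is excluded.) Lemma \ref{lem:SRHilb} then gives
\[
\Hilb\big(J(G^c)/(x_1\cdots x_\ell),t\big)=\sum_{\varnothing\ne F\in\Delta(G)}\frac{t^{\ell-\#F}}{(1-t)^{\ell-\#F}}.
\]

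Combining the three factors yields
\[
\Hilb(\Omega^1(\A_G),t)=\frac{1}{1-t}\cdot t^{-(\ell-1)}\sum_{\varnothing\ne F\in\Delta(G)}\frac{t^{\ell-\#F}}{(1-t)^{\ell-\#F}}=\sum_{\varnothing\ne F\in\Delta(G)}\frac{t^{1-\#F}}{(1-t)^{\ell-\#F+1}},
\]
which is the claimed formula.

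The main obstacle is only the bookkeeping of gradings: checking that $\Psi$ is homogeneous of a single fixed degree shift. This holds because $\Psi$ is defined on the homogeneous $R$-basis of Lemma \ref{lem:6.9} with the uniform shift $\ell-1$ (each basis element $\gamma_{[\ell]\setminus[k]}$ of degree $1-(\ell-k)$ maps to $x^{[k]}$ of degree $k$), and $R$ acts with its standard grading on both sides. A second point to record is that $S\cong R\otimes_\K \K[x_1]$ is free over $R$, so tensoring with $S$ multiplies the Hilbert series by $1/(1-t)$.
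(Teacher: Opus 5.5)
Your proposal is correct and follows essentially the same route as the paper. You use the graded isomorphism $\Omega^1_R(\A_G)\cong\big(J(G^c)/(x_1\cdots x_\ell)\big)(\ell-1)$ from Theorem~\ref{thm:6.6}, compute the Hilbert series of the Stanley--Reisner module via Lemmas~\ref{lem:SRHilb} and~\ref{lem:genscoverideal}, and pick up the factor $1/(1-t)$ from $\Omega^1(\A_G)\cong\Omega^1_R(\A_G)\otimes_R S$; your explicit check that $\Psi$ is homogeneous of degree $\ell-1$ usefully spells out the grading shift the paper only asserts.
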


\begin{proof}
Theorem \ref{thm:6.6} says that
$\Omega_R^1(\A_G)(-(\ell-1))$ is isomorphic to $J(G^c)/(x_1\dots x_\ell)$ as graded $R$-modules.
Since Lemma \ref{lem:genscoverideal} says $x^F \in J(G^c) $ if and only if $[\ell]\setminus F \in \Delta(G)$ for $F \subset [\ell]$,
it follows from Lemma \ref{lem:SRHilb} that
\[
\Hilb(J(G^c)/(x_1\cdots x_\ell),t)=
\sum_{ \varnothing \ne F \in \Delta(G)}
\frac {t^{\#([\ell]\setminus F)}} {(1-t)^{\#([\ell]\setminus F)}}.
\]
Since $\Hilb(\Omega^1(\A_G),t)=\frac 1 {1-t} \Hilb(\Omega^1_R(\A_G),t)$,
the desired formula follows from the above equation and the isomorphism
$\Omega^1_R(\A_G)(-(\ell-1))\cong J(G^c)/(x_1\cdots x_\ell)$.
\end{proof}

\subsection{local cohomology}
In this subsection,
we discuss local cohomology modules of $\Omega^1(\A_G)$.

Let $G$ be a graph with the vertex set $[\ell]$
and let $M(G)=J(G^c)/(x_1\cdots x_\ell)$.
The $R$-module $H_{\mideal_R}^{i}(M(G))$
and the $A$-module $H_{\mideal_RA}^{i}(M(G))$
are isomorphic as $R$-modules,
where $A=S/(x_1\cdots x_\ell)$,
and the $S$-module $H_{\mideal_S}^{i}(M(G))$ and the $A$-module $H_{\mideal_SA}^{i}(M(G))$ are isomorphic as $S$-modules
(see \cite[Proposition 2.14(2)]{Hu}).
Also,
the radical of the ideal $\mideal_RA$ equals to $\mideal_SA$,
so the $A$-module $H_{\mideal_RA}^{i}(M(G))$
and 
the $A$-module $H_{\mideal_S A}^{i}(M(G))$
are also isomorphic (see \cite[Proposition 2.13]{Hu}).
Thus we have
\[
H_{\mideal_R}^{i}\big(M(G)\big) \cong
H_{\mideal_RA}^{i}\big(M(G)\big) 
\cong
H_{\mideal_S}^{i}\big(M(G)\big)
\]
for all $i$ as graded $R$-modules.
Then, since $\Omega_R^1(\A_G) (\ell-1)\cong M(G)$ as graded $R$-modules,
it follows that
\begin{align}
\label{cohomologyiso}
H_{\mideal_R}^i\big(\Omega^1_R(\A_G)\big) (\ell-1)\cong 
H_{\mideal_R}^i\big(M(G)\big) 
\cong
H_{\mideal_S}^i \big(M(G)\big)
\end{align}
for all $i$ as graded $R$-modules.
Since 
\[
\Omega^1(\A_G) \cong \Omega^1_R(\A_G)\otimes_R S \cong \Omega_R^1(\A_G) \otimes_\K \K[x_1],\]
by the K\"unneth formula we have
\begin{align}
\label{formulacohomology2}    
H^i_{\mideal_S}\big(\Omega^1(\A_G)\big)
\cong
H^{i-1}_{\mideal_R}\big(\Omega^1_R(\A_G)\big) \otimes_\K H^1_{(x_1)}(\K[x_1])
\end{align}
for all $i$.
The isormorphisms \eqref{cohomologyiso} and \eqref{formulacohomology2} say that
knowing the Hilbert seris of $H^i_{\mideal_S}(\Omega^1(\A_G))$ is equivalent to knowing the Hilbert sereis of $H_{\mideal_S}^{i-1}(M(G))$.
By using Hochster's formula (Lemma \ref{lem:SRlocalcohomology}),
we can compute the Hilbert series of $H_{\mideal_S}^{i-1}(M(G))$.
For a simplicial complex $\Delta$,
we write $H_i(\Delta)=\widetilde H_i( \Delta,\{\varnothing\})$ for the ordinal $i$th homology of $\Delta$ with coefficients in $\K$.

\begin{lemma}
    \label{localcohomology}
Let $G$ be a graph with the vertex set $[\ell]$.
Then
\[
\Hilb\big(H_{\mideal_S}^{i-1}\big(M(G)\big) ,t\big)
= \sum_{\varnothing \ne W \subset [\ell]} 
\big(\dim_\K H_{\ell-i}\big(\Delta(G[W])\big)\big)
\frac {t^{-(\ell-\# W)}} {(1-t^{-1})^{\ell-\# W}}
\]
for all $i$.
\end{lemma}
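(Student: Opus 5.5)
The plan is to apply the relative Hochster formula, Lemma \ref{lem:SRlocalcohomology}, directly to $M(G)=J(G^c)/(x_1\cdots x_\ell)$, written as a Stanley--Reisner module $I_\Gamma/I_\Delta$, and then convert the resulting relative homology groups into homology of clique complexes by combinatorial Alexander duality.

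\textbf{Step 1: identify the complexes.} Take $\Delta=\partial\sigma$ to be the boundary of the full simplex on $[\ell]$, so that $I_\Delta=(x_1\cdots x_\ell)$. By Lemma \ref{lem:genscoverideal}, $x^F\in J(G^c)$ if and only if $[\ell]\setminus F$ is a clique of $G$. Hence $J(G^c)=I_\Gamma$ with
\[
\Gamma=\{F\subset[\ell] \mid [\ell]\setminus F\notin\Delta(G)\}=\Delta(G)^*.
\]
In Lemma \ref{lem:SRlocalcohomology} the faces $F\in\Delta$ are exactly the proper subsets of $[\ell]$. I will index them by $W=[\ell]\setminus F\neq\varnothing$, so that $\#F=\ell-\#W$. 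This already produces the factor $t^{-(\ell-\#W)}$ and the denominator $(1-t^{-1})^{\ell-\#W}$; I read the paper's normalisation of the denominator in Lemma \ref{lem:SRlocalcohomology} in the standard way, as $(1-t^{-1})^{\#F}$.

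\textbf{Step 2: compute the links.} The link $\lk_\Delta(F)$ is the boundary $\partial\sigma_W$ of the simplex on $W$, a sphere of dimension $\#W-2$. The link $\lk_\Gamma(F)$ consists of the sets $E\subset W$ with $E\cup F\in\Gamma$, i.e.\ those $E$ for which $W\setminus E$ is not a clique of $G[W]$. This is precisely the Alexander dual of $\Delta(G[W])$ taken inside the vertex set $W$. The degenerate cases need separate attention. If $W$ itself is a clique, then $F\notin\Gamma$, so $\lk_\Gamma(F)=\varnothing$. The cases where $\lk_\Gamma(F)$ is $\{\varnothing\}$ should be handled the same way. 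These boundary cases are exactly why ordinary (unreduced) homology $H_{\ell-i}$ rather than reduced homology appears in the final formula.

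\textbf{Step 3: apply duality.} Lemma \ref{lem:SRlocalcohomology} with $i-1$ in place of $i$ requires
\[
\widetilde H_{k}\big(\partial\sigma_W,\ \Delta(G[W])^{*}_W\big),\qquad k=(i-1)-1-\#F=i-2-\ell+\#W .
\]
I would use the relative form of Alexander duality: for a subcomplex $\Sigma$ of $\partial\sigma_W$ with $n=\#W$,
\[
\widetilde H_k(\partial\sigma_W,\Sigma^*)\cong \widetilde H^{\,n-2-k}(\Sigma,\varnothing).
\]
One can derive this from the long exact sequence of the pair together with ordinary Alexander duality $\widetilde H_j(\Sigma^*)\cong\widetilde H^{n-j-3}(\Sigma)$. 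Here $\widetilde H(\Sigma,\varnothing)$ is the relative homology with respect to the empty complex, which is ordinary homology. With $\Sigma=\Delta(G[W])$ we get $n-2-k=\ell-i$. Since the coefficients lie in a field, $\dim_\K \widetilde H^{\ell-i}(\Sigma,\varnothing)=\dim_\K H_{\ell-i}(\Delta(G[W]))$. Summing over $W$ then gives the stated formula.

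The main obstacle is Step 3. I need to pin down the exact indexing of the relative Alexander duality and to check carefully that it remains valid in the degenerate situations, namely $\Sigma$ the full simplex on $W$, $\Sigma^*=\varnothing$ versus $\{\varnothing\}$, and $\#W=1$. In each of these I would verify directly that both sides agree: the sphere relative to the empty complex on one side, and $H_0$ of a point on the other.
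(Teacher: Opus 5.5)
Your proposal is correct and follows the paper's proof: you write $M(G)=I_\Gamma/I_\Sigma$ with $\Sigma=\partial\sigma$ and $\Gamma^*=\Delta(G)$, apply the relative Hochster formula, identify $\lk_\Gamma(F)$ with the Alexander dual of $\Delta(G[W])$ inside $W$, and use relative Alexander duality $\widetilde H_k(\Delta,\Delta')\cong\widetilde H_{n-2-k}((\Delta')^*,\Delta^*)$ with $(\partial\sigma_W)^*=\{\varnothing\}$ to land on ordinary homology $H_{\ell-i}(\Delta(G[W]))$; the paper does the same by citing this duality together with the fact that the dual of a link is the induced subcomplex of the dual. One clarification: the passage to unreduced homology comes from $(\partial\sigma_W)^*=\{\varnothing\}$ for every $W$ (the top class of the sphere $\partial\sigma_W$ adds one to $H_0$), not only from the degenerate cases, and your ``relative to the empty complex'' must be read as relative to $\{\varnothing\}$ rather than the void complex, as your Step~3 in fact does.
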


\begin{proof}
Let $\Gamma$ and $\Sigma$ be simplicial complexes with
$I_\Gamma=J(G^c)$ and $I_\Sigma=(x_1\cdots x_\ell)$.
Thus $M(G)=I_\Gamma/I_\Sigma$ and $\Sigma=\{F \subset [\ell] \mid F \ne [\ell]\}$.
Also, the Alexander dual $\Gamma^*$ of $\Gamma$ is nothing but the flag complex $\Delta(G)$ since $I_{\Gamma^*}=I(G^c)=I_{\Delta(G)}$ by \eqref{edgeclique}.
By Lemma \ref{lem:SRlocalcohomology}
we have
\begin{align}
\label{eq7-10-1}    
H\big(H_{\mideal_S}^{i-1}\big(M(G)\big) ,t\big)= \sum_{ W \in \Sigma} \big(\dim_\K \widetilde H_{i-2-\#W} \big(\mathrm{lk}_\Sigma(W),\mathrm{lk}_\Gamma(W)\big) \big)\frac {t^{-\# W}} {(1-t^{-1})^{\# W}}.
\end{align}
On the other hand, since $\widetilde H_i(\Delta,\Delta')\cong \widetilde H_{\ell-2-i}((\Delta')^*,\Delta^*)$ for simplicial complexes $\Delta\supset\Delta'$ on $[\ell]$ (see \cite[Lemma 5.5.3]{BH}) we have
\begin{align}
\label{eq7-10-2}    
\widetilde H_{i-2-\#W} \big(\mathrm{lk}_\Sigma(W),\mathrm{lk}_\Gamma(W)\big)
\cong 
\widetilde H_{\ell-i}\big(\Gamma^{*}\big[[\ell]\setminus W\big],\Sigma^*\big[[\ell]\setminus W\big]\big),
\end{align}
where we use the fact that the Alexander dual of $\mathrm{lk}_\Delta(F)$ (on the vertex set $[\ell]\setminus F$) is nothing but $\Delta^*[[\ell]\setminus F]$.
Since $\Sigma^*=\{\varnothing\}$ and $\Gamma^*=\Delta(G)$,
the reduced homology $\widetilde H_{\ell-i}\big(\Gamma^{*}[W],\Sigma^*[W])$ is nothing but the ordinary homology $H_{\ell-i}\big(\Delta(G)[W])$.
Then the desired formula follows from \eqref{eq7-10-1} and \eqref{eq7-10-2}
\end{proof}

Since $H^1_{(x_1)}(\K[x_1]) \cong \K[x_1^{-1}](+1)$,
we have $\Hilb\big (H^1_{(x_1)}(\K[x_1]),t\big)=\frac {t^{-1}} {1-t^{-1}}$.
Then the above lemma together with \eqref{cohomologyiso} and \eqref{formulacohomology2} give the following formula for the Hilbert series of local cohomology modules of $\Omega^1(\A_G).$

\begin{cor}
    \label{cor:HilbLocalcohomologyOmega}
Let $G$ be a graph with the vertex set $[\ell]$. Then
\[
\Hilb\big( H^i_{\mideal_S}\big(\Omega^1(\A_G)\big),t\big)
= \sum_{\varnothing \ne W \subset [\ell]} 
\big(\dim_\K H_{\ell-i}\big(\Delta(G[W])\big)\big)
\frac {t^{-(2\ell-\# W)}} {(1-t^{-1})^{\ell-\# W+1}}
\]
for all $i$.
\end{cor}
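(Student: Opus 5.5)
The corollary follows by combining the isomorphisms \eqref{cohomologyiso} and \eqref{formulacohomology2} with Lemma \ref{localcohomology}; the work is a bookkeeping of gradings. First, \eqref{formulacohomology2} expresses $H^i_{\mideal_S}(\Omega^1(\A_G))$ as $H^{i-1}_{\mideal_R}(\Omega^1_R(\A_G)) \otimes_\K H^1_{(x_1)}(\K[x_1])$. Hilbert series are multiplicative under tensor products over $\K$ of graded vector spaces with finite-dimensional graded pieces. Here the second factor lives in degrees $\le -1$, and the first is bounded above in degree. So we get
\[
\Hilb\big(H^i_{\mideal_S}(\Omega^1(\A_G)),t\big)=\Hilb\big(H^{i-1}_{\mideal_R}(\Omega^1_R(\A_G)),t\big)\cdot \frac{t^{-1}}{1-t^{-1}} .
\]

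Second, \eqref{cohomologyiso} says $H^{i-1}_{\mideal_R}(\Omega^1_R(\A_G))(\ell-1)\cong H^{i-1}_{\mideal_S}(M(G))$. With the convention $N(k)_j=N_{j+k}$, this gives $\Hilb(N(k),t)=t^{-k}\Hilb(N,t)$. Hence
\[
\Hilb\big(H^{i-1}_{\mideal_R}(\Omega^1_R(\A_G)),t\big)=t^{-(\ell-1)}\,\Hilb\big(H^{i-1}_{\mideal_S}(M(G)),t\big).
\]
I will double-check the direction of this shift against the proof of Theorem \ref{hilb:omega}. There the isomorphism $\Omega^1_R(\A_G)(-(\ell-1))\cong M(G)$ is written with the opposite sign, so I will settle it by comparing degrees: $\gamma_{[\ell]}$ has degree $-(\ell-1)$ and maps to $1$, which has degree $0$. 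The formula is to be stated consistently with this.

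Third, I substitute Lemma \ref{localcohomology}. Each summand has the factor $\frac{t^{-(\ell-\#W)}}{(1-t^{-1})^{\ell-\#W}}$. Multiplying by $t^{-(\ell-1)}\cdot\frac{t^{-1}}{1-t^{-1}}$ gives exactly $\frac{t^{-(2\ell-\#W)}}{(1-t^{-1})^{\ell-\#W+1}}$, with the coefficient $\dim_\K H_{\ell-i}(\Delta(G[W]))$ unchanged. Here $\Delta(G)[W]=\Delta(G[W])$, since the clique complex of an induced subgraph is the induced subcomplex.

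The only delicate step is the sign and direction of the grading shifts: the $(\ell-1)$ shift and the $+1$ shift in $H^1_{(x_1)}(\K[x_1])\cong \K[x_1^{-1}](+1)$. I would verify both on the trivial case $G$ empty, $\ell=1$. There $\Omega^1(\A_G)=S\,dx_1$, whose $H^1$ has series $\frac{t^{-1}}{1-t^{-1}}$, and this must match the formula with $W=\{1\}$, $i=1$, $H_0(\text{pt})=\K$.
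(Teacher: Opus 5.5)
Your proposal is correct and is exactly the paper's argument: combine \eqref{formulacohomology2}, \eqref{cohomologyiso} and Lemma \ref{localcohomology} with $\Hilb(H^1_{(x_1)}(\K[x_1]),t)=\frac{t^{-1}}{1-t^{-1}}$, and the exponents then add up as you compute.

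One point on the $(\ell-1)$ shift: your degree check is right, and it is the printed \eqref{cohomologyiso} that is off by a sign. Since $\deg\gamma_F=1-\#F$ and $\deg x^{[\ell]\setminus F}=\ell-\#F$, one has $\Omega^1_R(\A_G)(-(\ell-1))\cong M(G)$, as in the proof of Theorem \ref{hilb:omega}. This gives your factor $t^{-(\ell-1)}$. Your ``Hence'', however, deduces it from the printed $(\ell-1)$ shift, which would literally give $t^{\ell-1}$. Note also that your $\ell=1$ test cannot detect this shift. The case $\ell=2$ with $G$ empty does detect it, since there $H^2_{\mideal_S}(S^2)$ has series $2t^{-2}/(1-t^{-1})^2$.
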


The $i=\ell$ case of the above corollary gives a combinatorial formula for the Hilbert series of $D(\A_G)$,
since by the local duality (see \cite[Theorem 3.6.19]{BH}) we have
\[
D(\A_G)(-\ell) \cong \Hom_S\big(\Omega^1(\A_G),S(-\ell)\big)
\cong H_{\mideal_S}^{\ell}\big(\Omega^1(\A_G)\big)^\vee,
\]
where $M^\vee$ denotes the graded Matlis dual of $M$ (see \cite[\S 3.6]{BH}).
Let $c(G)=\dim_\K H_0(\Delta(G))$ be the number of connected components of a graph $G$.
Since $(M^\vee)_i\cong \Hom_\K(M_{-i},\K)$ for any graded $S$-module $M$,
by considering the case $i=\ell$ in Corollary \ref{cor:HilbLocalcohomologyOmega} we have
\begin{align}
    \label{HilbformD}
    \Hilb \big(D(\A_G),t\big)=\sum_{ \varnothing \ne  W \subset [\ell]} c(G[W]) \frac {t^{\ell-\#W }} {(1-t)^{\ell-\# W+1}}.
\end{align}
We slightly reformulate \eqref{HilbformD}.
For a graph $G$ with the vertex set $[\ell]$
we define
\[f_i(G)=\sum_{ W \subset [\ell], \#W=\ell-i} c(G[W])\]
for $i=0,1,\dots,\ell-1.$
Then \eqref{HilbformD} can be written as follows.

\begin{cor}
\label{cor:HilbD}
Let $G$ be a graph with the vertex set $[\ell]$.
Then
\[\Hilb\big(D(\A_G),t\big)= \sum_{i=0}^{\ell-1} f_i(G) \frac {t^i} {(1-t)^{i+1}}.\]
\end{cor}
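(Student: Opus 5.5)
This is a direct reindexing of \eqref{HilbformD}, so the plan is to substitute $i=\ell-\#W$ there and group the terms by $i$. I take \eqref{HilbformD} as established: it came from the case $i=\ell$ of Corollary \ref{cor:HilbLocalcohomologyOmega} together with local duality, using that $H_0(\Delta(G[W]))$ has dimension equal to $c(G[W])$, the number of connected components of $G[W]$ (the clique complex has the same $1$-skeleton as $G[W]$). No further algebraic input is needed.

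For a nonempty $W \subset [\ell]$, put $i=\ell-\#W$. As $W$ runs over nonempty subsets, $\#W$ runs over $1,\dots,\ell$, so $i$ runs over $0,1,\dots,\ell-1$. The summand of \eqref{HilbformD} indexed by $W$ then becomes
\[
c(G[W]) \frac{t^{i}}{(1-t)^{i+1}},
\]
and the rational factor depends only on $i$. Grouping all $W$ with $\#W=\ell-i$ gives
\[
\Hilb\big(D(\A_G),t\big)=\sum_{i=0}^{\ell-1}\Big(\sum_{\#W=\ell-i} c(G[W])\Big)\frac{t^i}{(1-t)^{i+1}}=\sum_{i=0}^{\ell-1} f_i(G)\frac{t^i}{(1-t)^{i+1}},
\]
which is the claimed formula by the definition of $f_i(G)$. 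The empty set is excluded both in \eqref{HilbformD} and in the range $i\le \ell-1$, so the two index sets match exactly.

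The only place that needs care is \eqref{HilbformD} itself, specifically the Matlis duality step. There one replaces $t$ by $t^{-1}$ in the $i=\ell$ local cohomology series and shifts by $\ell$, via $D(\A_G)(-\ell)\cong H^\ell_{\mideal_S}(\Omega^1(\A_G))^\vee$. The resulting exponent is $t^{-\ell+(2\ell-\#W)}\cdot t^{-(\ell-\#W+1)}\cdot (-1)^{\ell-\#W+1}$ after rewriting $(1-t)^{-k}$ in the form $(t^{-1}-1)$, which should produce $t^{\ell-\#W}/(1-t)^{\ell-\#W+1}$. If I had to reverify anything, I would redo this sign and degree bookkeeping: check a single term, confirm the signs cancel because Matlis duality reverses degrees rather than rational expressions, and sanity-check with $G$ empty on $\ell=1$, where the formula gives $1/(1-t)$, the Hilbert series of $S=\K[x_1]$.
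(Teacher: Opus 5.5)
Your proof is correct and is exactly the paper's argument: Corollary~\ref{cor:HilbD} is just \eqref{HilbformD} with the terms regrouped by $i=\ell-\#W$, and the inner sums are $f_i(G)$ by definition. Your side remark on the duality step is muddled, since no sign bookkeeping is needed there: the graded Matlis dual simply substitutes $t\mapsto t^{-1}$ in the series, turning $t^{-(2\ell-\#W)}/(1-t^{-1})^{\ell-\#W+1}$ into $t^{2\ell-\#W}/(1-t)^{\ell-\#W+1}$, and undoing the twist $(-\ell)$ divides by $t^{\ell}$; that step is not part of what this corollary asks you to prove anyway.
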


\begin{example}
Let $G$ be a $4$-cycle
and $f_i=f_i(G)$ for $i=0,1,2,3$.
Then $f_0=1,f_1=4,f_2=8$ and $f_3=4$. Thus Corollary \ref{cor:HilbD} says
\begin{align*}
\Hilb(D(\A_G),t)&
=f_0\frac 1 {(1-t)}+f_1\frac t {(1-t)^2}+f_2\frac {t^2} {(1-t)^3}+f_3 \frac {t^3} {(1-t)^4}\\
&=\frac {(1-t)^3+4t(1-t)^2+8t^2(1-t)+4t^3} {(1-t)^4}\\
&=
\frac {1+t+3t^2-t^3} {(1-t)^4}.
\end{align*}
\end{example}

\begin{rem}
It is possible to prove Corollary \ref{cor:HilbD} inductively using the short exact sequence \eqref{exactTideal} in \S 8.4 without using local duality.
But we do not do that since it is not easy to guess this formula from that short exact sequence and an inductive proof may make the meaning of the formula unclear.
\end{rem}

The formula in Corollary \ref{cor:HilbD} has an algebraic meaning.
Indeed, inspired by this subsection,
we find an analogue of Theorem \ref{thm:6.6} for $D(\A_G)$ in \cite{AbeMurai2}.
There we show that the above $f_i(G)$ is the number of $(i-1)$-dimensional faces of a certain regular CW-complex when $G$ is connected.

\subsection{Projective dimension}
Corollary 
\ref{cor:HilbLocalcohomologyOmega} gives a formula of the projective dimension of $\Omega^1(\A_G)$ in terms of homologies of clique complexes.

\begin{cor}
\label{cor:formulaRegularityHomology}
If $G$ is a graph with the vertex set $[\ell]$, then
\[\pd_S\!\big(\Omega^1(\A_G)\big) \leq k
\ \ \Leftrightarrow \ \ H_j(\Delta(G[W]))= 0
\text{ for all $W \subset [\ell]$ and $j \geq k+1$}.\]
\end{cor}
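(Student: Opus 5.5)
The plan is to read the projective dimension off the vanishing of local cohomology and then plug in Corollary \ref{cor:HilbLocalcohomologyOmega}.

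Since $\Omega^1(\A_G)$ is a finitely generated graded $S$-module and $\dim S=\ell$, the Auslander--Buchsbaum formula gives
\[
\pd_S\big(\Omega^1(\A_G)\big)=\ell-\depth\big(\Omega^1(\A_G)\big).
\]
Grothendieck's vanishing theorem identifies the depth as the smallest $i$ with $H^i_{\mideal_S}(\Omega^1(\A_G))\ne 0$. Hence $\pd_S(\Omega^1(\A_G))\le k$ holds if and only if $H^i_{\mideal_S}(\Omega^1(\A_G))=0$ for all $i<\ell-k$. For a nonzero module the module $H^\ell$ is nonzero, so this is a clean equivalence, and the trivial case of the zero module does not arise.

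Next I translate this vanishing into homology using Corollary \ref{cor:HilbLocalcohomologyOmega}. A graded module with finite-dimensional graded pieces vanishes exactly when its Hilbert series vanishes. In the formula of that corollary, every summand is a series of the form
\[
\frac{t^{-(2\ell-\#W)}}{(1-t^{-1})^{\ell-\#W+1}}
\]
multiplied by a nonnegative integer coefficient. Each such series is nonzero and has nonnegative coefficients, so no cancellation can occur between summands. Therefore $H^i_{\mideal_S}(\Omega^1(\A_G))=0$ if and only if $H_{\ell-i}(\Delta(G[W]))=0$ for every nonempty $W\subset[\ell]$.

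Finally I reindex with $j=\ell-i$. The condition $i<\ell-k$ becomes $j>k$, that is $j\ge k+1$, which is exactly the stated condition. The case $W=\varnothing$ is harmless: the ordinary homology of $\{\varnothing\}$ vanishes, since $H_i(\Delta)=\widetilde H_i(\Delta,\{\varnothing\})$ is the relative homology of $\{\varnothing\}$ with itself, which is zero. The only points needing care are the non-cancellation argument for Hilbert series with nonnegative coefficients and the correct index shift; there is no serious obstacle beyond these.
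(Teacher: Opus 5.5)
Your proposal is correct and follows the same route as the paper. Both arguments read off the depth of $\Omega^1(\A_G)$ from the Hilbert series formula in Corollary~\ref{cor:HilbLocalcohomologyOmega}, with the nonnegative summands ruling out cancellation, then apply the Auslander--Buchsbaum formula with the reindexing $j=\ell-i$. Your one imprecise remark, that $H^\ell_{\mideal_S}$ of any nonzero module is nonzero, is true here only because $\Omega^1(\A_G)$ is torsion-free of rank $\ell$ and hence of Krull dimension $\ell$, and the equivalence does not actually depend on it.
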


\begin{proof}
By Corollary \ref{cor:HilbLocalcohomologyOmega},  the depth of $\Omega^1(\A_G)$ equals the smallest integer $k$ such that $H_{\ell-k}(\Delta(G[W]))\ne 0$ for some $\varnothing \ne W \subset [\ell]$. Then the desired statement follows from the Auslander--Buchsbaum formula.
\end{proof}

\begin{example}
Suppose that $\ell$ is even and let $G$ be the graph with $V(G)=[\ell]$ and
\begin{align*}
E(G)=E(K_\ell) \setminus \{\{1,2\},\{3,4\},\dots,\{\ell-1,\ell\}\}.
\end{align*}
Then $\Delta(G)$ is the boundary complex of the $\frac \ell 2$-dimensional cross polytope, so we have $H_{\frac \ell 2-1}(\Delta(G)) \cong \K$
and $H_k(\Delta(G[W]))=0$ for all $k> \frac \ell 2 -1$ and $W \subset [\ell]$. Thus
\[
\textstyle \pd_S(\Omega^1(G)) = \frac \ell 2 -1.
\]
\end{example}

On the projective dimension of $\Omega^1(\A_G)$,
we also have the following formula, 
which connects the projective dimension of $\Omega^1(\A_G)$
with the Castelnuovo--Mumford regularity of the edge ideal $I(G^c)$.

\begin{theorem}
    \label{thm:6.13}
For a graph $G \ne K_\ell$ with the vertex set $[\ell]$, one has
\[
\pd_S(\Omega^1(\A_G))= \reg(I(G^c))-2.
\]
\end{theorem}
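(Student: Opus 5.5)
The plan is to compare the homological criterion of Corollary \ref{cor:formulaRegularityHomology} with Hochster's formula for the regularity of $I(G^c)$ (Corollary \ref{cor:SRreg}). By \eqref{edgeclique} we have $I(G^c)=I_{\Delta(G)}$, where $\Delta(G)$ is the clique complex of $G$. Corollary \ref{cor:SRreg} then gives, for every $k$,
\[
\reg(I(G^c))\le k \iff \widetilde H_j(\Delta(G)[W])=0 \text{ for all } W\subset[\ell] \text{ and } j\ge k-1 .
\]
Moreover $\Delta(G)[W]=\Delta(G[W])$. Applying Corollary \ref{cor:formulaRegularityHomology} with $k$ replaced by $k-2$ gives
\[
\pd_S(\Omega^1(\A_G))\le k-2 \iff H_j(\Delta(G[W]))=0 \text{ for all } W \text{ and } j\ge k-1 .
\]
So the two conditions agree, except that one uses reduced homology and the other uses ordinary homology.

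The key step is to show that this difference does not matter in the relevant range. Reduced and ordinary homology coincide in degrees $j\ge 1$. They differ only in degree $0$, where $\dim H_0=\dim\widetilde H_0+1$ for nonempty complexes, and in degree $-1$, where $\widetilde H_{-1}(\{\varnothing\})\neq 0$ for $W=\varnothing$. I will therefore restrict to $k\ge 2$ and use the hypothesis $G\ne K_\ell$. Since $G\ne K_\ell$, the graph $G^c$ has an edge $\{u,v\}$, so $I(G^c)$ is a nonzero ideal generated in degree $2$ and $\reg(I(G^c))\ge 2$. In Hochster's formula this corresponds to $W=\{u,v\}$: the complex $\Delta(G[W])$ consists of two isolated points, so $\widetilde H_0\ne 0$.

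For $k\ge 3$ the degrees involved satisfy $j\ge k-1\ge 2$, so the two criteria are literally the same. It remains to check the boundary case $k=2$, which is equivalent to $\pd_S(\Omega^1(\A_G))=0$. On the Hochster side, $\reg\le 2$ requires $\widetilde H_j=0$ for $j\ge 1$, with degree $0$ unconstrained. On the $\pd$ side, the condition must be read from the depth computation in Corollary \ref{cor:formulaRegularityHomology}, which ranges over nonempty $W$ only. There, depth $\ell$ means no $H_{\ell-i}\neq 0$ for $i<\ell$, that is, $H_j=0$ for $j\ge 1$; the degree-$0$ terms, which are always nonzero, contribute exactly at $i=\ell$.

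The main obstacle is this index bookkeeping. I have to confirm that the $k$ in Corollary \ref{cor:formulaRegularityHomology} corresponds to the vanishing range $j\ge k+1$, so that the ever-present $H_0$ classes contribute only to $H^\ell_{\mideal_S}$ and never force a smaller depth. I will recheck this directly from Corollary \ref{cor:HilbLocalcohomologyOmega} and the Auslander--Buchsbaum formula $\pd=\ell-\depth$.

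Granting this, for every $k\ge 2$ we get
\[
\reg(I(G^c))\le k \iff \pd_S(\Omega^1(\A_G))\le k-2 .
\]
Since $\reg(I(G^c))\ge 2$, taking the minimal such $k$ yields $\pd_S(\Omega^1(\A_G))=\reg(I(G^c))-2$.
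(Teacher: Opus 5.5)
Your argument is correct. For every $k\ge 2$, the two vanishing conditions involve only homology in degrees $j\ge 1$, where reduced and ordinary homology agree. The first is Corollary~\ref{cor:SRreg} applied to $I(G^c)=I_{\Delta(G)}$, and the second is Corollary~\ref{cor:formulaRegularityHomology} with $k-2$ in place of $k$. Together with $\reg(I(G^c))\ge 2$ (from $G\ne K_\ell$) and $\pd\ge 0$, this gives the equality.

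Your separate treatment of $k=2$ is unnecessary for exactly this reason. The index check you defer is immediate: Corollary~\ref{cor:HilbLocalcohomologyOmega} gives $\pd_S\Omega^1(\A_G)=\max\{j : H_j(\Delta(G[W]))\ne 0 \text{ for some } W\ne\varnothing\}$.

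The paper mentions your route in one line, but the proof it actually writes out is different. It starts from the module isomorphism $\Omega^1_R(\A_G)\cong M(G)=J(G^c)/(x_1\cdots x_\ell)$ of Theorem~\ref{thm:6.6}. From the sequence $0\to(x_1\cdots x_\ell)\to J(G^c)\to M(G)\to 0$ it gets $\pd_S J(G^c)=\pd_S M(G)$; this is where $G\ne K_\ell$ enters, through $\pd_S J(G^c)\neq 0$. Comparing depths over $R$ and $S$ and applying Auslander--Buchsbaum gives $\pd_S J(G^c)=1+\pd_S\Omega^1(\A_G)$. Terai's theorem (Lemma~\ref{lem:SRTerai}) finishes the proof.

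What each route costs and buys:
\begin{itemize}
\item Your route relies on the full local cohomology computation and Hochster's Betti formula, plus the small reduced/unreduced homology bookkeeping.
\item The paper's route avoids local cohomology and simplicial homology entirely. It shows the theorem is simply Terai's Alexander-duality statement $\reg I_\Delta=\pd S/I_{\Delta^*}$, transported through the isomorphism with $M(G)$.
\end{itemize}
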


\begin{proof}
The theorem easily follows from Corollaries \ref{cor:SRreg} and \ref{cor:formulaRegularityHomology} together with the fact that $I(G^c)=I_{\Delta(G)}$, but we give a more direct proof using $\Omega^1_R(\A_G) \cong M(G)$.

We first note that $\pd_S(\Omega^1(\A_G))=\pd_{R}(\Omega^1_R(\A_G))$ since $\Omega^1(\A_G)$ is generated by
elements of $\Omega^1_R(\A_G)$.
Since $G^c$ is not empty by the assumption we have $\pd_S(J(G^c)) \ne 0$.
Then the exact sequence
\[0 \longrightarrow (x_1 \cdots x_\ell)
\longrightarrow J(G^c) \longrightarrow M(G)=J(G^c)/(x_1\cdots x_\ell ) \longrightarrow
0 \]
proves
\[ \pd_S\!\big(J(G^c)\big)=\pd_S\!\big(M(G)\big).\]
Since $M(G) \cong \Omega_R^1(\A_G)$ is a finitely generated $R$-module,
we have
\[
\depth_S\!\big(M(G) \big)=\depth_R\!\big(M(G)\big)=\depth_R\!\big(\Omega^1_R(\A_G)\big).
\]
See e.g.\ \cite[Exercise 1.2.26]{BH} for the first equality.
Hence by the Auslander--Buchsbaum formula we have
\[ \pd_S\!\big(J(G^c)\big)=1+\pd_R\!\big(\Omega^1_R(\A_G)\big)
=1+\pd_S\!\big(\Omega^1(\A_G)\big).\]
Now the statement follows from Lemma \ref{lem:SRTerai}
which says that 
$\pd(J(G^c))$ is equal to $\reg(I(G^c))$ minus one.
\end{proof}

\begin{rem}
Theorem \ref{thm:6.13} shows that Theorem \ref{froberg} due to Fr\"oberg and Theorem \ref{stanley} due to Stanley are actually equivalent.
\end{rem}

The regularity of edge ideals is a well-studied topic in combinatorial commutative algebra.
By Theorem \ref{thm:6.13} we can directly apply results on $\reg(I(G))$ to $\pd_S(\Omega^1(\A_G))$.
Below we introduce one simple consequence.
We refer the readers to the survey \cite{BBH,MV} for more results on the regularity of edge ideals.

For any arrangement $\A$,
since $\Omega^1(\A)$ is reflexive, the projective dimension of $\Omega^1(\A)$ is bounded above by $\ell-2$. On the other hand, the regularity of the edge ideal of a graph with $\ell$ vertices is bounded above by $\frac \ell 2 +1$ (see \cite[Theorem 4.3]{BBH}).
Hence, for graphic arrangements, we get the following bound.

\begin{cor}
    \label{graphpd}
    Let $\ell \geq 2$. For a graph $G$ with the vertex set $[\ell]$, one has $\pd_S \Omega^1(\A_G) \leq \frac \ell 2 -1.$
\end{cor}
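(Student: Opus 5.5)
The plan is to combine Theorem \ref{thm:6.13} with the known general upper bound on the regularity of edge ideals. There are two cases.

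\emph{Case $G=K_\ell$.} By Stanley's criterion (Theorem \ref{stanley}) the arrangement $\A_{K_\ell}$ is free; Lemma \ref{lem:6.9} even exhibits a basis. Hence $\pd_S\Omega^1(\A_{K_\ell})=0\le \frac{\ell}{2}-1$, since $\ell\ge 2$.

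\emph{Case $G\ne K_\ell$.} Then $G^c$ is a non-empty graph on $\ell$ vertices, and Theorem \ref{thm:6.13} gives
\[
\pd_S\Omega^1(\A_G)=\reg(I(G^c))-2 .
\]
It therefore suffices to show that $\reg(I(H))\le \frac{\ell}{2}+1$ for every graph $H$ on $\ell$ vertices with at least one edge. This is the bound quoted just before the statement from \cite[Theorem 4.3]{BBH}, so the corollary follows immediately.

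If one prefers to avoid citing it, the bound can be proved directly through the classical inequality $\reg(I(H))\le \nu(H)+1$, where $\nu(H)$ is the matching number; this is itself a standard result in the edge ideal literature. A matching uses disjoint pairs of vertices, so $\nu(H)\le \lfloor \ell/2\rfloor$, and the bound follows.

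A more self-contained alternative uses Corollary \ref{cor:formulaRegularityHomology}. One would show that $H_j(\Delta(G[W]))=0$ for all $j\ge \ell/2$ and all $W\subset[\ell]$. The idea is that a clique complex on $w$ vertices whose homology is nonzero in degree $j$ needs at least $2(j+1)$ vertices, with the cross polytope boundary as the extremal case. This would yield $\pd\le \lfloor \ell/2\rfloor-1\le \frac{\ell}{2}-1$.

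The only genuine content is this regularity or homology bound, which is the main obstacle. I would rely on the cited result from \cite{BBH} rather than reprove it; the rest is bookkeeping.
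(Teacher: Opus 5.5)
Your proposal is correct and follows the paper's argument: Theorem \ref{thm:6.13} combined with the edge-ideal bound $\reg(I(G^c)) \le \frac{\ell}{2}+1$ from \cite[Theorem 4.3]{BBH}. Your separate treatment of $G=K_\ell$ (free, so projective dimension $0$) covers the case excluded from Theorem \ref{thm:6.13}, which the paper leaves implicit.
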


Note that Corollary \ref{graphpd} holds only for graphic arrangements, as the following examples show.

\begin{example}
(1)\,\,
Let $\A$ be an arrangement in $\R^4$ in Example \ref{exdim4} and $H=H_x$.
We see in Example \ref{ex:maximalPD} that $\Omega^1(\A\setminus H)$ has the maximal projective dimension.
Thus $\pd_S \Omega^1(\A\setminus H)=2 >\frac{4}{2}-1=1$. Clearly the inequality in Corollary \ref{graphpd} does not hold.

(2)\,\,
Let $\A$ be the Edelman--Reiner arrangement in Example \ref{ERexample} and let $H=H_{x_1+x_2+x_3+x_4+x_5}$.
We see in Example \ref{ex:maximalPD} that $\Omega^1(\A\setminus H)$ also has the maximal projective dimension, so $\pd_S \Omega^1(\A\setminus H)=3 > \frac{5}{2}-1$.
\label{exmaxpd}
\end{example}

We close this section with one question.
We show that Hochster's formula for local cohomology gives a formula for the Hilbert series of local cohomology modules of $\Omega^1(\A_G)$.
As we mentioned in \S 6,
there is also a combinatorial formula for the graded Betti numbers of $M(G)$ as $S$-modules,
known as Hochster's formula for graded Betti numbers (see Lemma \ref{lem:SRbetti}).
Unfortunately, this formula does not give a formula of the graded Betti numbers of the $S$-module $\Omega^1(\A_G)$ since they are equal to the graded Betti numbers of $\Omega^1_R(\A_G) \cong M(G)$ as $R$-modules not the graded Betti numbers of $M(G)$ as $S$-modules.
We pose the following problem.

\begin{problem}
\label{findBettiFormula}
Find a combinatorial formula for $\beta_{i,j}(\Omega^1(\A_G))$, equivalently a combinatorial formula for $\dim_\K \Tor^R_i(M(G),\K)_j$.
\end{problem}

\section{Concluding remarks, questions and related problems}

In this section, 
we collect several possible directions suggested by residue ideals.

\subsection*{Hilbert--Burch theorem}
Our original motivation is to develop a tool to study the freeness of  restrictions of a free arrangement.
By Proposition \ref{prop:4.12} this problem is reduced to the problem of the height two Cohen--Macaulay property of the corresponding residue ideal $J_\A^H$.
There is a well-known criterion for height two Cohen--Macaulay ideals known as Hilbert--Burch theorem \cite[Theorem 1.4.17]{BH}. It states that $I$ is a height two Cohen--Macaulay ideal of a polynomial ring $S$ if and only if $I$ has height $\geq 2$ and is generated by the maximal minors of $\ell \times (\ell+1)$ matrix for some $\ell.$
For example,
if we consider the ideal
\vspace{-10pt}

{\Small
\[
J_\A^L=\big( (x_2^2\!-\!x_3^2)(x_2^2\!+\!x_3^2\!-\!x_4^2\!-\!x_5^2),
(x_2^2\!-\!x_4^2)(x_2^2\!-\!x_3^2\!+\!x_4^2\!-\!x_5^2),
(x_2^2\!-\!x_5^2)(x_2^2\!-\!x_3^2\!-\!x_4^2\!+\!x_5^2) \big)
\]
}

\noindent
in Example \ref{ERexample}, which is a height two Cohen--Macaulay ideal,
then  it is the ideal of 2-minors of the following $2 \times 3$ matrix
\[
\begin{pmatrix}
    x_3^2-x_4^2 & x_2^2-x_5^2 & x_2^2-x_5^2 \\
    x_2^2-x_4^2 & x_3^2-x_5^2 & x_2^2-x_4^2 
\end{pmatrix}.
\]
Considering that the residue ideal $J_\A^H$ is an ideal of minors times $\frac 1 {Q(\A^H)}$ when $\A$ is free,
Hilbert--Burch theorem fits the situation of $J_\A^H$ but we could not find an interesting application of it.

\subsection*{Possible applications to codimension 2 subspace arrangements}
Theorem \ref{thm:RDforGraph} says that every (Cohen--Macaulay) codimension $2$ coordinate subspace arrangement can appear as the residue ideal of a (free) graphic arrangement.
Motivated by this fact we ask

\begin{question}
For any (Cohen--Macaulay) codimension $2$ subspace arrangement $\B$,
can we find a (free) arrangement $\A$ and $H \in \A$ such that $J_\A^H$ gives the defining ideal of $\B$? 
\end{question}

\begin{example}[Shi arrangement and Vandermonde determinant ideal]
We give one more example of a Cohen--Macaulay subspace arrangement that appears as a residue ideal.

Let $J \subset S=\K[x_1,\dots,x_\ell]$ be the ideal of the subspace arrangement consisting of all subspaces defined by the equations
\begin{align}
    \label{6-1}
x_i=x_j=x_k
\end{align}
as well as those defined by
\begin{align}
    \label{6-2}
x_i=x_j \mbox{ and }x_s=x_t.
\end{align}
Thus
\[J= \left(\bigcap_{\{i,j,k\} \subset [\ell]} (x_i-x_j,x_i-x_k) \right) \cap \left( \bigcap_{\{i,j\},\{s,t\} \subset [\ell] \atop \{i,j\} \cap \{s,t\} =\varnothing} (x_i-x_j,x_s-x_t)\right).\]
This ideal is a special case of the ideal studied by Lov\'asz \cite{Lo}.
Its minimal generators are given by maximal minors of $(\ell +1)\times \ell$ Vandermonde matrix and the ideal is known to be Cohen--Macaulay (see \cite{WY}).

We show that the ideal $J$ is a residue ideal of the Shi arrangement.
Let $\A$ be the (type A) Shi arrangement in $\K^{\ell+1}$. Thus $\A$ is the arrangement in $\K^{\ell+1}$ 
with
\[
Q(\A)= z \prod_{1 \leq i < j \leq \ell} (x_i-x_j)(x_i-x_j+z).
\]
Let $H=H_z$.
It is known that $\A$ and $\A^H$ are free but $\A\setminus H$ is not free. Thus $J_\A^H$ is a height two Cohen--Macaulay ideal. Moreover, the pair $(\A,H)$ satisfies the condition (\#) in Corollary \ref{5.12},
so $J_\A^H \subset S=\K[x_1,\dots,x_n] $ is the ideal of an $(\ell-2)$ dimensional subspace arrangement $\NFT_{3}(\A,H)$.
It is not hard to see that
\[
\NFT_{3}(\A,H)=\{X \in L(\A^H)\mid \dim X=\ell-2\}
\]
and it consists of all subspaces of the form \eqref{6-1} and \eqref{6-2}.
Thus $J_\A^H=J$.
\end{example}

\subsection*{Generators of $\Omega^1(\A)$}
Descriptions of $\Omega^1(\A_G)$ in Theorems \ref{thm:GensOmega} and \ref{thm:6.6} are unexpected applications of residue ideals for us.
We are not sure if there is a description of the module $\Omega^1(\A)$ like Theorem \ref{thm:6.6} outside of the class of graphic arrangements.
On the other hand,
we think that it might be possible to obtain generators of $\Omega^1(\A)$ like Theorem \ref{thm:GensOmega} for some other classes of arrangements using similar ideas.
Since graphic arrangements are subarrangements of Weyl arrangements of type $A$,
the following question would be natural.

\begin{question}
Is it possible to extend Theorem \ref{thm:GensOmega} to subarrangements of the Weyl arrangement of type B or type D?
\end{question}

We hope that such a study is useful to study freeness of subarrangements of Weyl arrangements of type B or type D.
A closely related problem would be the following.

\begin{problem}
Determine residue ideals of subarrangements of Weyl arrangements.
Also, determine when they are height two Cohen--Macaulay ideals.
\end{problem}

\subsection*{Vector fields}

We can also consider a dual version of residue ideals.
We briefly discuss it here.
For an arrangement $\A$ in $V$ and $H \in \A$, if we replace $\Omega^{\ell-1}(-)$ with $D^1(-)$ in the dual Euler sequence \eqref{3-2} we get the exact sequence
\begin{align}
    0 \longrightarrow D(\A) \lhook\joinrel\longrightarrow  D(\A\setminus H) 
    \stackrel{\rho_H^\A} \longrightarrow \overline S,
\end{align}
where the last map $\rho_H=\rho_H^{\A}$ is given by $\rho_H(\theta)=\overline {\theta\cdot \alpha_H}$ for $\theta \in D(\A \setminus H)$.
We define 
\[T_\A^H=\mathrm{Im}\ \! \rho_H^\A \subset \overline S.\]

\begin{example}
\label{ex:restrictionideal}
Let $\A=\A_G$ be the graphic arrangement of the cycle $G=C_\ell$ with $E(C_\ell)=\{\{1,2\},\{2,3\},\dots,\{\ell-1,\ell\},\{1,\ell\}\}$ and let $H=H_{x_1-x_\ell}$. In this case, $D(\A_G \setminus H)$ is a free module generated by $\sum_{i=1}^\ell \partial_{x_i}$ and
$\theta_k=(x_k-x_{k+1})(\sum_{i=1}^k \partial_{x_i})$ for $k=1,2,\dots,\ell-1$.
Thus we have
\[T_\A^H=(\rho_H^\A(\theta_1),\dots,\rho_H^\A(\theta_{\ell-1}))=(x_1-x_2,x_2-x_3,\dots,x_{\ell-1}-x_\ell).\]
\end{example}

Many of the basic statements proved for residue ideals $J_\A^H$ in this paper can be proved for $T_\A^H$  in the same way as for $J_\A^H$.
For example,
\begin{itemize}
    \item $T_\A^H=T^{H/X}_{\A^{V/X}} \overline S$ for any subspace $X$ of $X_\A=\bigcap_{L \in \A} L$.
    \item $T_{\A_X}^H \supset T^H_\A$ for any $X \in L(\A^H)$.
    \item if we define
    $\Lambda(\A,H)=\{ X \in L(\A^H) \mid T_\A^H \ne \overline S\}$
    then $\sqrt {T_\A^H} = \bigcap_{X \in \Lambda(\A,H)} P_X$.
    \item $T_\A^H=\bigcap_{P_X \in \Ass(\overline S/T_\A^H)} T_{\A_X}^H$.
\end{itemize}
The verification is left to the reader.

Like residue ideals, it is also possible to compute the ideals $T_\A^H$ for graphic arrangements.
In fact, the ideal $T_{\A_G}^H$ was implicitly computed in a recent work of M\"uhlherr \cite{Mu} who find a generating set of $D(\A_G)$.
Let us explain this.
Let $G$ be a graph on $[\ell]$ and $u,v \in [\ell]$ with $\{u,v\} \not \in G$.
The {\bf $u,v$-path} in $G$ is a sequence of distinct vertices $u=u_0,u_1,\dots,u_m,u_{m+1}=v$ with $\{u_i,u_{i+1}\} \in E(G)$ for all $i$.
Also, a {\bf $u,v$-separator} in $G$ is a subset $T=\{w_1,\dots,w_\ell\} \subset [\ell]\setminus\{u,v\}$ such that $u$ and $v$ are disconnected in $G[[\ell]\setminus T]$.
We define the {\bf $u,v$-separator ideal} $T_{u,v}(G)$ of $G$ as the monomial ideal
\[
T_{u,v}(G)=( x^T \mid T \text{ is a $u,v$-separator in $G$} ).
\]
It is not difficult to see that
\[
T_{u,v}(G)= \bigcap_{u,u_1,\dots,u_m,v \text{ is a }u,v\text{-}\text{path of }G}
(x_{u_1},\dots,x_{u_m}).
\]
The following result is implicitly proved in \cite{Mu}.

\begin{theorem}[M\"uhlherr]
Let $G$ be a graph with the vertex set $[\ell]$,
$\{u,v\} \in E(G)$
and let $G'$ be the ideal obtained from $G$ by removing the edge $\{u,v\}$. Let $y_i=x_i-x_u$ for all $i\ne u$.
Then
\begin{align}
    \label{764}
T_{\A_G}^{H_{x_u-x_v}}= (y_{t_1} \cdots y_{t_m} \mid \{t_1,\dots,t_m\} \text{ is a $u,v$-separator in $G'$}).
\end{align}
\end{theorem}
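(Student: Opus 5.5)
We prove the two inclusions in \eqref{764} separately, working directly with the definition $T_{\A_G}^{H}=\mathrm{Im}\,\rho_H$. Here $H=H_{x_u-x_v}$, $\A_G\setminus H=\A_{G'}$ and $\rho_H(\theta)=\overline{\theta(x_u-x_v)}\in\overline S=S/(x_u-x_v)$. In $\overline S$ we have $y_v=\overline{x_v-x_u}=0$, and the elements $y_i$ with $i\ne u,v$ are linearly independent. So $\overline S$ is a polynomial ring in these $y_i$ (and $x_u$), and ideals generated by monomials in the $y_i$ behave exactly like squarefree monomial ideals. In particular, the right-hand side of \eqref{764} equals $\bigcap_{\text{paths}}(y_{u_1},\dots,y_{u_m})$, where the intersection runs over all $u,v$-paths $u,u_1,\dots,u_m,v$ in $G'$. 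This is the identity for $T_{u,v}(G')$ stated before the theorem, transported to the variables $y$. It holds because a subset of $[\ell]\setminus\{u,v\}$ meets every $u,v$-path exactly when it is a $u,v$-separator; the equality of the monomial ideal with the intersection of these primes is then the standard transversal description, as in Lemma \ref{lem:SRdual}.

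\emph{Inclusion $\subseteq$.} Let $\theta=\sum_i f_i\partial_{x_i}\in D(\A_{G'})$ and fix a $u,v$-path $u=u_0,u_1,\dots,u_m,u_{m+1}=v$ in $G'$. Each $\{u_k,u_{k+1}\}$ is an edge of $G'$, so $f_{u_k}-f_{u_{k+1}}=\theta(x_{u_k}-x_{u_{k+1}})\in(x_{u_k}-x_{u_{k+1}})$. Telescoping gives
\[
\theta(x_u-x_v)=f_u-f_v\in(x_{u_0}-x_{u_1},\dots,x_{u_m}-x_{u_{m+1}}).
\]
Modulo $x_u-x_v$ we have $\overline{x_{u_k}-x_{u_{k+1}}}=y_{u_k}-y_{u_{k+1}}$ with $y_{u_0}=y_{u_{m+1}}=0$. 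Hence $\rho_H(\theta)\in(y_{u_1},\dots,y_{u_m})$. Since the path was arbitrary, $\rho_H(\theta)$ lies in the intersection over all paths, which is the right-hand side of \eqref{764}.

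\emph{Inclusion $\supseteq$.} Let $T$ be a $u,v$-separator in $G'$, and let $C$ be the vertex set of the connected component of $G'[[\ell]\setminus T]$ containing $u$. Then $v\notin C$. Define
\[
\theta_T=\sum_{i\in C}\Bigl(\prod_{t\in T}(x_t-x_i)\Bigr)\partial_{x_i}.
\]
We check that $\theta_T\in D(\A_{G'})$ by going through the edges $\{a,b\}$ of $G'$:
\begin{itemize}
\item If $a,b\notin C$, then $\theta_T(x_a-x_b)=0$.
\item If $a,b\in C$, then $\theta_T(x_a-x_b)=\prod_t(x_t-x_a)-\prod_t(x_t-x_b)$. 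This vanishes when $x_a=x_b$, so it is divisible by $x_a-x_b$.
\item If $a\in C$ and $b\notin C$, then $b\in T$, because $C$ is a connected component of the graph obtained by deleting $T$. Then $\theta_T(x_a-x_b)=\prod_{t\in T}(x_t-x_a)$ contains the factor $x_b-x_a$.
\end{itemize}
Finally, since $u\in C$ and $v\notin C$, we get $\rho_H(\theta_T)=\overline{\prod_{t\in T}(x_t-x_u)}=\prod_{t\in T}y_t$. So every generator on the right-hand side of \eqref{764} lies in $T_{\A_G}^H$.

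The only delicate point is the bookkeeping in the first step. The passage to $\overline S$ kills $y_v$, and we must use the linear independence of the remaining $y_i$ to treat the right-hand side as a monomial ideal whose primary decomposition is the intersection over paths. Once that identification is in place, both inclusions are the direct computations above.
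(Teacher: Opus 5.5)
Your proof is correct. The inclusion $\supseteq$ is the paper's argument: you use the same derivation $\theta_T$ built from the component $C$ of $u$. The only difference there is that you check $\theta_T\in D(\A_{G'})$ edge by edge, whereas the paper cites M\"uhlherr's Lemma 3.4.

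The inclusion $\subseteq$ is where your route genuinely differs.
\begin{itemize}
\item \textbf{The paper's route.} It restricts to \emph{induced} $u,v$-paths. For each one it localizes at the flat $X$ given by $x_u=x_v=x_{u_1}=\cdots=x_{u_m}$, so that $(\A_G)_X$ is the graphic arrangement of a cycle. It then combines the inclusion $T_{\A_G}^H\subset T_{(\A_G)_X}^H$ with the cycle computation of Example \ref{ex:restrictionideal}. That example relies on the path arrangement being free and on an explicit basis of its module of derivations.
\item \textbf{Your route.} The telescoping identity $f_u-f_v=\sum_k (f_{u_k}-f_{u_{k+1}})$ uses only the defining conditions of $D(\A_{G'})$ along the edges of one path. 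It works for arbitrary paths, not just induced ones, so no reduction to induced paths is needed. It also needs no freeness or basis input. In effect, it is the direct argument behind the containment $T_{(\A_G)_X}^H\subset (y_{u_1},\dots,y_{u_m})$ that the paper obtains from the example.
\end{itemize}

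What the paper's route buys is conceptual. It shows each prime component $(y_{u_1},\dots,y_{u_m})$ arising from the localization at the flat $X$, which matches the local-to-global description developed for residue ideals in \S 4--5. Your route is shorter and fully self-contained.
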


\begin{proof}
We sketch the proof since this essentially appears in the proof of \cite[Proposition 3.7]{Mu}.
To simplify notation let $\A=\A_G$ and $H=H_{x_u-x_v}$.
We first prove that the left hand side contains the right hand side in \eqref{764}.
Let $\{t_1,\dots,t_m\}$ be a $u,v$-separator of $G'$.
We prove $y_{t_1} \cdots y_{t_m} \in T^H_{\A}$.
Let $C$ be the set of vertices that belong to the same connected component of $G'[[\ell]\setminus T]$ as $u$.
Then the element
\[\theta= \sum_{i \in C} \prod_{t \in T} (x_t-x_i) \partial_{x_i} \]
is contained in $D(\A_{G'})$ (see \cite[Lemma 3.4]{Mu}), and we have
\[y_{t_1} \cdots y_{t_m}= \overline {\theta \cdot (x_u-x_v)} \in T_{\A}^H,\]
as desired.

We next prove that the RHS contains the LHS in \eqref{764}. To prove this it suffices to prove that, for any induced path $u,u_1,u_2,\dots,u_m,v$ in $G'$, one has
\[T^H_\A \subset (y_{u_1},y_{u_2},\dots,y_{u_m}).\]
Let $X$ be the subspace of $H$ defined by the equations $x_u=x_v=x_{u_1}=\cdots=x_{u_m}$.
Then the arrangement $\A_X$ is the graphic arrangement of the graph whose edge set is the cycle with vertices $u,u_1,\dots,u_m,v$.
Then by the computation given in Example \ref{ex:restrictionideal} we have
\[ T^H_\A \subset T^H_{\A_X}=(x_u-x_{u_1},x_{u_1}-x_{u_2},\dots,x_{u_m}-x_v)
=(y_{u_1},\dots,y_{u_m}),\]
as desired.
\end{proof}

Consider the exact sequence
\begin{align}
\label{exactTideal}
0 \longrightarrow D(\A_G) \lhook\joinrel\longrightarrow  D(\A_G\setminus H) 
    \longrightarrow T_{\A_G}^{H} \longrightarrow 0.
\end{align}
By the above theorem of M\"uhlherr,
the ideal $T_{\A_G}^H \subset \overline S$ is the $u,v$-separator ideal of $G'$,
so it might be possible to obtain interesting algebraic consequences on $D(\A_G)$ by studying algebraic properties of $u,v$-separator ideals.
In the study of squarefree monomial ideals,
it sometimes happens that the problem becomes easier if one works with Alexander duals.
So, it might be also interesting to study the Alexander dual of $T_{u,v}(G)$ which is the ideal
\[
(x_{u_1}\cdots x_{u_m} \mid u,u_1,\dots,u_m,v \text{ is a }u,v\text{-}\text{path in }G).
\]

\subsection*{MAT-free graphic arrangements}
MAT-free arrangements form a strict subclass of free arrangements with a very good property. Let us explain.

We say that an essential arrangement $\A$ in $\K^\ell$ is \textbf{MAT-free} if there is a partition 
$$
\A=\A_1 \uplus \A_2 \uplus \cdots \uplus \A_n
$$
such that 
\begin{itemize}
\item[(1)]
 $\codim \bigcap_{H \in \A_i} H=|\A_i|$ for $i=1,\ldots,n$,
\item[(2)]
$X_i \not \subset H$ for all $H \in \B_{i-1}:=\bigcup_{j=1}^{i-1} \A_j$ for $i=2,\ldots,n$, and 
\item[(3)]
$|\B_{i-1}|-|(\B_{i-1} \cup H)^H|=i-1$ for $H \in \A_i$ and $i=2,\ldots,n$.
\end{itemize}
Then the multiple addition theorem in \cite{ABCHT} shows that $\A$ is free with $\exp(\A)=(d_1,\ldots,d_\ell)$, where for $i_j:=|\A_j|$, 
$$
\exp(\A)=((0)^{\ell-i_1},(1)^{i_1-i_2},\ldots,(n-1)^{i_{n-1}-i_n},(n)^{i_n}).
$$
Here $(a)^b$ implies that the integer $a$ appears exactly $b$-times.

This was originally proved for ideal subarrangements in \cite{ABCHT}, and the concept of MAT-free arrangements was introduced and studied in \cite{CM}. What is interesting is that 
a MAT-free arrangement enables us to read the exponents by the dual partition of the height distribution, whose origin is nothing but in the root system. MAT-freeness is a generalization of Weyl arrangements to a general arrangement from this point of view. Surprisingly, in \cite{TranT}, it was shown that the MAT-free property for the graphic arrangement corresponds to the strongly chordal property of the graphs. So we can ask the following question.

\begin{problem}
Does the MAT-free property of $\A_G$, equivalently the strong chordality of $G$,
have any algebraic meaning for $J(G^c)$ (or for $J(G^c)/(x_1\cdots x_\ell)$)?
\end{problem}

It is known that clique complexes of strongly chordal graphs
are precisely the simplicial complexes called forests.
This fact gives a combinatorial criterion of the minimal monomial generators of $J(G^c)$ for strongly chordal graphs $G$.
See \cite[\S 4]{HHTZ}.

\subsection*{Multi-graphic arrangements}

A pair $(\A,m)$ of an arrangement $\A$ and 
a multiplicity $m:\A \rightarrow \Z_{\ge 0}$ is called a \textbf{multiarrangement}. To $(\A,m)$ we can define the logarithmic modules as follows:
\begin{eqnarray*}
D(\A,m):&=&\{\theta \in \Der S \mid \theta(\alpha_H) \in S \alpha_H^{m(H)} \ (\forall H \in \A)\},\\
\Omega^1(\A,m):&=& \{\omega \in 
\frac{1}{Q(\A,m)} \Omega^1_V \mid Q(\A,m) \omega \wedge d\alpha_H \in \alpha_H^{m(H)} \Omega^2_V\ (\forall H \in \A)\}.
\end{eqnarray*}
Here $Q(\A,m):=\prod_{H \in \A} \alpha_H^{m(H)}$. They are also dual $S$-modules, hence reflexive and their projective dimension is at most $\ell-2$. We can define, by using the $C$-sequence in 
\cite{A16}, the residue ideal $J_{(\A,m)}^H$ in exactly the same manner, and almost all results in this paper can be proved for $J_{(\A,m)}^H$. However, right now it is difficult to obtain the multi-version of the results in \S 7. For example, Theorem \ref{pdupper} is no more true for multi-case 
as follows.

\begin{prop}
$\pd_S \Omega^1(\A_G,m)$ can attain all the values from $0$ to $\ell-3$.    
\label{allattain}
\end{prop}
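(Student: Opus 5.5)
The plan is to produce, for each $k$ with $0 \le k \le \ell-3$, a multiplicity $m$ on some graph $G$ (one fixed $G$ will do; the complete graph $K_\ell$ is the natural choice) with $\pd_S \Omega^1(\A_G,m)=k$. Since the arrangement is non-essential with one-dimensional center $\K\cdot\bm 1$, the bound $\ell-3$ is the reflexive bound $\ell-2-\dim X_\A$ for the essentialization in $\K^{\ell-1}$. So the task is to realize every projective dimension in the range allowed by reflexivity.

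The value $0$ comes from free multiplicities. For example, $m\equiv 1$ on $K_\ell$ is free by Stanley's criterion (Theorem \ref{stanley}), or one may use the constant multiplicity $m\equiv 2$ on the braid arrangement, which is free by Terao's result on Coxeter multiarrangements.

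The intermediate values should be reached by localization or product constructions. The key tool is that for a product multiarrangement $(\A_1\times\A_2,m)$ the module $\Omega^1$ decomposes as a direct sum of pullbacks. Hence its projective dimension equals the maximum of the projective dimensions of the factors. Concretely, I would take $G$ to be a disjoint union of a graph $G_1$ on $k+3$ vertices and isolated vertices, or a graph joined by a cut vertex (a cone over a cut vertex also splits as a product up to essentialization). I would place on $G_1$ a multiplicity achieving the maximal value $\pd=k$ in its own ambient dimension $k+3$. This reduces the problem to the \emph{maximal} case: for every $n\ge 3$, find a multigraphic arrangement in $\K^n$ with $\pd_S\Omega^1=n-3$.

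The main obstacle is this maximal case. By local duality (as used for Corollary \ref{cor:HilbLocalcohomologyOmega}), maximal projective dimension of $\Omega^1$ of the essential part means $\depth=2$, that is, $H^2_{\mideal}(\Omega^1(\A,m))\ne 0$. My first attempt would use the criterion cited in Example \ref{ex:maximalPD} (from \cite{MS}), in its multiarrangement version: if $(\A,m)$ is locally free along the essentialization minus the origin and $\pd_S D(\A,m)=1$, then $\Omega^1$ has maximal projective dimension. Since every multiarrangement in rank $\le 2$ is free, I would take $G=K_n$ with a generic or "almost constant" multiplicity. An example is $m\equiv 2$ except one edge of multiplicity $1$ or $3$, obtained by deleting or adding from a free Coxeter multiplicity. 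I would then use the addition–deletion theory for multiarrangements (Euler multiplicity, \cite{A16}) to show both $\pd D=1$ (a plus-one generated / SPOG-type statement) and local freeness at all lower-rank flats. Local freeness is the delicate part. It holds inductively if the same construction restricted to $K_j$, $j<n$, is free, which I would arrange by choosing the perturbed edge so that every proper localization is either a Coxeter constant multiplicity or a deletion/addition of one that remains free. Verifying these freeness claims via Euler multiplicities is where I expect the real work to lie.
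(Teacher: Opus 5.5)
Your handling of the value $0$ and your product reduction to the maximal case are fine. The gap is the maximal case itself, which carries all the content, and the witness you propose for it cannot work. For $\A=\A_{K_n}$, $H\in\A$ and $\delta_H$ the multiplicity equal to $1$ on $H$ and $0$ elsewhere, the multiarrangements $(\A,2\pm\delta_H)$ are \emph{free}. By the quasi-constant isomorphisms of \cite{AY0} used in the paper, $\Omega^1(\A,2+\delta_H)\cong\Omega^1(\A,\delta_H)=\Omega^1(\{H\})$ and $\Omega^1(\A,2-\delta_H)\cong D(\A,\delta_H)=D(\{H\})$. So $\pd_S D(\A,m)=0$, not $1$. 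Your Musta\c{t}\u{a}--Schenck-type criterion therefore never applies, and these examples have projective dimension $0$ instead of $n-3$. The ``$+$'' direction fails in general: by Proposition \ref{2k-m}, $\pd_S\Omega^1(\A,2k+m)\le\frac n2-1$ for every $0/1$-valued $m$. Your planned verification of ``$\pd D=1$ plus local freeness via Euler multiplicities'' is aimed at a false statement.

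The missing idea is to dualize the quasi-constant isomorphism, which turns the problem into one about logarithmic vector fields of a \emph{simple} graphic arrangement. For a subgraph $G\subset K_\ell$, let $m_G$ be the $0/1$ multiplicity supported on $\{H_{x_u-x_v}\mid\{u,v\}\in E(G)\}$. Then \cite{AY0} gives $\Omega^1(\A_{K_\ell},2k-m_G)\cong D(\A_{K_\ell},m_G)=D(\A_G)$, so $\pd_S\Omega^1(\A_{K_\ell},2k-m_G)=\pd_S D(\A_G)$. Take $G=C_r$, an $r$-cycle plus $\ell-r$ isolated vertices. Then $\pd_S D(\A_{C_r})=r-3$: deleting one edge leaves a free tree arrangement, and the cokernel in \eqref{exactTideal} is generated by $r-2$ independent linear forms of $\overline S$ (Example \ref{ex:restrictionideal}). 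Letting $r=3,\dots,\ell$ yields every value $0,\dots,\ell-3$ at once, with no product reduction and no local-freeness check; this is the paper's argument.

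If you want to keep your route, the correct maximal witness for $n\ge 4$ is $m=2-m_{C_n}$: remove a whole Hamiltonian cycle, not one edge. Then $D(\A_{K_n},m)\cong\Omega^1(\A_{C_n})$ has projective dimension $1$ by Corollary \ref{cor:formulaRegularityHomology}. Every proper localization is free, because $C_n$ restricted to a proper vertex subset is a forest. However, verifying this already requires the same isomorphism, after which the direct argument is shorter.
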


\begin{proof}
Let $\A:=\A_{K_\ell}$. 
Recall the isomorphism
$$
D(\A,m) \rightarrow \Omega^1(\A,2k-m)
$$
for a constant $k \in \Z_{>0}$ and $m:\A \rightarrow \{0,1\} $ shown in \cite{AY0}.
Thus 
$$
\pd_S \Omega^1(\A,2k-m)=\pd_S D(\A,m).
$$
Since the $r$-cycle $C_r$ makes $\pd_S D(\A_{C_r})=r-3$, we complete the proof.
\end{proof}

Thus for example, a multiarrangement $(\A,m)$ defined by 
$$
\displaystyle \frac{\prod_{1 \le i < j \le \ell}(x_i-x_j)^{2k}}{(x_1-x_\ell)\prod_{i=1}^{\ell-1}(x_i-x_{i+1})}=0
$$
for $k>0$ satisfies that 
$$
\pd_S \Omega^1(\A,m)=\ell-3
$$
for $\ell \ge 4$. On the other hand, by \cite{AY}, we can show the following.

\begin{prop}
\label{2k-m}
Let $\A:=\A_{K_\ell}$, $k>0$ and $m:\A \rightarrow \{0,1\}$. Then 
$\Omega^1(\A,2k+m)$ is generated by 
$$
\{\nabla_{I^*(\gamma_F)} \nabla_{D}^{k-1} dP_1\mid F\ \mbox{is a clique of}\ G\},
$$
where $D$ is K. Saito's primitive derivation, 
$P_1=x_1^2+\cdots+x_\ell^2$, 
$I$ is the $S_\ell$-invariant inner product and $I^*$ gives an identification between $(\Omega^1_V)_{(0)}$ and $(\Der S)_{(0)}$, 
and $\nabla$ is the affine connection. See \cite{Sa} for details. Also,
$$
\pd_S \Omega^1(\A,2k+m) \le \frac{\ell}{2}-1.
$$
\end{prop}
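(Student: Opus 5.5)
The plan is to reduce both claims to the simple graphic case through an isomorphism of modules. Here $G$ is the graph on $[\ell]$ whose edges are the pairs $\{i,j\}$ with $m(H_{x_i-x_j})=1$. The multiarrangement $(\A,m)$ with $m\colon \A\to\{0,1\}$ is then just $\A_G$, since hyperplanes of multiplicity $0$ impose no condition. Hence $\Omega^1(\A,m)=\Omega^1(\A_G)$, directly from the definitions of $\Omega^1(\A,m)$ and $Q(\A,m)$.

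The key input I would take from \cite{AY} is the following. For the Coxeter arrangement $\A=\A_{K_\ell}$ and any $m\colon\A\to\{0,1\}$, the map
\[
\Phi_k\colon \Omega^1(\A,m)\longrightarrow \Omega^1(\A,2k+m),\qquad \omega\longmapsto \nabla_{I^*(\omega)}\nabla_D^{k-1} dP_1,
\]
is an $S$-module isomorphism up to a degree shift. This is the differential-form counterpart of the Abe--Yoshinaga isomorphisms built from the primitive derivation. I need to confirm three things.
\begin{itemize}
\item[(i)] $\Phi_k$ is $S$-linear. This holds because $\nabla$ is $S$-linear in its lower argument and $I^*$ is $S$-linear on $(\Omega^1_V)_{(0)}$, extended to rational forms.
\item[(ii)] $\Phi_k$ sends $\Omega^1(\A,m)$ into $\Omega^1(\A,2k+m)$.
\item[(iii)] $\Phi_k$ is bijective.
\end{itemize}
Items (ii) and (iii) are precisely the content of \cite{AY}. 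If that paper states the result for $D(\A,m)$ rather than for $\Omega^1$, I would transport it through the $S$-duality of Lemma \ref{P3} together with $I^*$.

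With the isomorphism available, the generating set follows at once. Theorem \ref{thm:GensOmega} states that $\{\gamma_F\mid F\ne\varnothing \text{ a clique of } G\}$ generates $\Omega^1(\A_G)=\Omega^1(\A,m)$. Applying $\Phi_k$ to these generators gives generators $\nabla_{I^*(\gamma_F)}\nabla_D^{k-1}dP_1$ of $\Omega^1(\A,2k+m)$, which is the first claim.

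For the bound, an isomorphism of graded $S$-modules up to shift preserves projective dimension. Therefore $\pd_S\Omega^1(\A,2k+m)=\pd_S\Omega^1(\A_G)$, and Corollary \ref{graphpd} gives $\pd_S\Omega^1(\A,2k+m)\le \frac{\ell}{2}-1$.

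The main obstacle is making sure the Abe--Yoshinaga isomorphism is available in exactly this form: for differential forms, with $m$ taking values in $\{0,1\}$, and with the precise formula $\omega\mapsto\nabla_{I^*(\omega)}\nabla_D^{k-1}dP_1$ rather than a variant involving $\nabla_D^{-k}$. If only the vector-field version $D(\A,m)\cong D(\A,2k+m)$ (or a $\Omega\leftrightarrow D$ version, as in the proof of Proposition \ref{allattain}) is stated, I would first check $S$-linearity and injectivity of $\Phi_k$ directly. Injectivity holds because $\nabla_D$ is invertible on the relevant rational modules. I would then obtain surjectivity by comparing Hilbert series, using the formula of Theorem \ref{hilb:omega} on one side and the corresponding known Hilbert series on the other.
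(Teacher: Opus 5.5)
Your proposal is correct and follows the paper's proof. The paper identifies $(\A,m)$ with $\A_G$ and invokes the Abe--Yoshinaga isomorphism $\Omega^1(\A,m)\to\Omega^1(\A,2k+m)$, $\omega\mapsto\nabla_{I^*(\omega)}\nabla_D^{k-1}dP_1$, citing \cite{AY0}, where it is stated directly for $1$-forms, so your Hilbert-series fallback is not needed; it then transports Theorem \ref{thm:GensOmega} and Corollary \ref{graphpd} across this isomorphism, exactly as you do.
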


\begin{proof}
Recall the isomorphism
$$
\Omega^1(\A,m) \rightarrow \Omega^1(\A,2k+m)
$$
for a constant $k \in \Z_{>0}$ and $m:\A \rightarrow \{0,1\} $ shown in \cite{AY0} given by 
$$
\Omega^1(\A,m) \ni \omega \mapsto 
\nabla_{I^*(\omega) } \nabla_D^{k-1} dP_1.
$$
Thus 
$$
\pd_S \Omega^1(\A,2k+m)=\pd_S \Omega^1(\A,m).
$$
So our results in the previous sections complete the proof. 
\end{proof}

So the problem is for multiplicity $m$ on $\A=\A_{K_\ell}$ with 
$H,L \in \A$ such that $|m(L)-m(H)| \ge 2$. 

\begin{problem}
Can we describe the generators, projective dimension, and residue ideals for $(\A_{K_\ell},m)$ in general?
\end{problem}


\begin{thebibliography}{ABCHT}

\bibitem{A}
T. Abe, 
Roots of characteristic polynomials 
and intersection points of line arrangements. 
\textit{J. Singularities}, 
\textbf{8} (2014), 100--117.


\bibitem{A4}
T. Abe, 
Deletion theorem and combinatorics of hyperplane arrangements.
\textit{Math. Ann}.
\textbf{373} (2019), issue 1--2, 581--595. 


\bibitem{A5}
T. Abe, 
Plus-one generated and next to free arrangements of hyperplanes. 
\textit{Int. Math. Res. Not.} \textbf{2021} (2021), no. 12, 9233--9261.


\bibitem{A9}
T. Abe, 
Projective dimensions of hyperplane arrangements.
\textit{Trans. Amer. Math. Soc.} \textbf{377} (2024), no. 11, 7793--7827.



\bibitem{A14}
T. Abe,
Addition-deletion theorems for the Solomon-Terao polynomials and $B$-sequences of hyperplane arrangements.  
\textit{Math. Z.} \textbf{306} (2024), no. 2, 25. 

\bibitem{A16}
T. Abe,
Tame arrangements.
arXiv:2504.14902.


\bibitem{ABCHT}
T. Abe, M. Barakat, M. Cuntz, T. Hoge, H. Terao,
The freeness of ideal subarrangements of Weyl arrangements. 
\textit{J. Eur. Math. Soc}. 
\textbf{18} (2016), no. 6, 1339--1348.



\bibitem{AD}
T. Abe and G. Denham, 
Deletion-Restriction for Logarithmic Forms on Multiarrangements. 
\textit{Adv. applied Math}. \textbf{179} (2026).

\bibitem{AHMMS}
T. Abe, T. Horiguchi,  M. Masuda, S. Murai and T. Sato, 
Hessenberg varieties and hyperplane arrangements. 
\textit{J. Reine Angew. Math.} 
\textbf{764} (2020), 241--286.


\bibitem{AbeMurai2}
T. Abe and S. Murai,
Vector fields of graphic arrangements and face rings of simplicial posets,
arXiv:2608.06767.

\bibitem{AY0}
T. Abe and M. Yoshinaga,
Coxeter multiarrangements with quasi-constant
multiplicities.
\textit{J. Algebra} \textbf{322} (2009), 2839--2847.


\bibitem{AY}
T. Abe and M. Yoshinaga, 
Free arrangements and coefficients of characteristic polynomials. 
\textit{Math. Z}., 
\textbf{275} (2013), Issue 3, 911--919.


\bibitem{AdiSan}
K. Adiprasito and R. Sanyal,
Relative Stanley--Reisner theory and upper bound theorems for Minkowski sums.
\textit{Publ. Math. Inst. Hautes \'Etudes Sci.} \textbf{124} (2016), 99--163.


\bibitem{AM}
M. F. Atiyah and I. G. Macdonald,
\textit{Introduction to commutative algebra}.
Addison-Wesley Publishing Co., 1969. 


\bibitem{BBH}
A. Banerjee, S.K. Beyarslan, H. Huy T\`ai,
Regularity of edge ideals and their powers.
Springer Proc. Math. Stat., \textbf{277}
Springer, Cham, 2019, 17--52.

\bibitem{BH}
W. Bruns and J. Herzog, 
\textit{Cohen-Macaulay Rings 2nd ed}. 
Cambridge University Press. 

\bibitem{CM}
M. Cuntz and P. M\"ucksch, 
MAT-free reflection arrangements.
\textit{Electron. J. Combin}. \textbf{27} (2020) no. 1, Paper no. 1.28, 28 pages.

\bibitem{DSSWW}
G. Denham, H. Schenck, M. Schulze, M. Wakefield 
and U. Walther, 
Local cohomology of logarithmic forms.
\textit{Ann. Inst. Fourier} (Grenoble) 
\textbf{63} (2013), no. 3, 1177--1203.

\bibitem{Di}
M. DiPasquale,
A homological characterization for freeness of multi-arrangements.
\textit{Math. Ann.} {\bf 385} (2023), 745--786.

\bibitem{DW}
M. DiPasquale and M. Wakefield,
Free multiplicities on the moduli of $X_3$.
\textit{J. Pure Appl. Algebra} \textbf{222} (2018), no. 11, 3345--3359.

\bibitem{DST}
M. DiPasquale, J. Sidman and W. Traves,
Geometric aspects of the Jacobian of a hyperplane arrangement.
\textit{Int. Math. Res. Not. IMRN} \textbf{2025}, no. 13, Paper No. rnaf172, 33 pp.


\bibitem{ER98}
J.A. Eagon and V. Reiner,
Resolutions of Stanley-Reisner rings and Alexander duality.
\textit{J. Pure Appl. Algebra} \textbf{130} (1998), no. 3, 265--275.


\bibitem{ER}
P.H. Edelman and V. Reiner, 
A counterexample to Orlik's conjecture.
\textit{Proc. Amer. Math. Soc}., \textbf{118} (1993), 
927--929.

\bibitem{ER94}
P.H. Edelman and V. Reiner,
Free hyperplane arrangements between $A_{n-1}$ and $B_n$.
\textit{Math. Z.} \textbf{215} (1994), no. 3, 347--365.


\bibitem{Ei}
D. Eisenbud,
Commutative algebra: with a view toward algebraic geometry, Graduate Texts in Mathematics, vol.\ 150, Springer-Verlag, New York, 1995.

\bibitem{FHV10}
C.~A. Francisco, H.~T. H\`a{} and A. Van~Tuyl, Associated primes of monomial ideals and odd holes in graphs, \text{J. Algebraic Combin.} {\bf 32} (2010), no.~2, 287--301.

\bibitem{FHV11}
C. Francisco, H.~T. H\`a{} and A. Van~Tuyl, 
Colorings of hypergraphs, perfect graphs, and associated primes of powers of monomial ideals,
\textit{J. Algebra} \textbf{331} (2011), 224--242.

\bibitem{Fr}
R. Fr\"oberg,
On Stanley–Reisner Rings.
In: Topics in Algebra, Part 2 (Warsaw, 1988) Banach Center Publication, vol. 26, pp. 57–70. PWN, Warsaw (1990).


\bibitem{HH}
J. Herzog and T. Hibi,
Monomial Ideals, Graduate Texts in Mathematics, vol. 260, Springer--Verlag, 2011.


\bibitem{HHTZ}
J. Herzog, T. Hibi, N.V. Trung, and X. Zheng,
Standard graded vertex cover algebras, cycles and leaves, \textit{Trans. Amer. Math. Soc.} {\bf 360} (2008), no.~12, 6231--6249.



\bibitem{Hu}
C. Huneke, Lectures on local cohomology. In Interactions between homotopy theory and algebra, Contemp. Math., vol. 436, pp. 51–99, Appendix 1 by Amelia Taylor.


\bibitem{M2}
D.R. Grayson and M.E. Stillman. Macaulay2, a software system for research in algebraic geometry. Available at {\tt http://www.math.uiuc.edu/Macaulay2/}



\bibitem{Lo} L. Lov\'asz,
Stable sets and polynomials.
\textit{Discrete Math.} \textbf{124} (1994), 137--153.


\bibitem{Mu}
Leonie M\"uhlherr,
Separator-based derivations of graphic arrangements.
arXiv:2504.19893.



\bibitem{MS}
M. Musta\c{t}\u{a} and H. Schenck, 
The module of logarithmic $p$-forms of a locally free arrangement.
\textit{J. Algebra} \textbf{241} (2) (2001), 699--719.


\bibitem{MN}
J. Migliore and U. Nagel,
Jacobian Ideals of Hyperplane Arrangements and their Graded Betti Numbers.
arXiv:2508.12113.

\bibitem{MV}
S.E. Morey and R.H. Villarreal, Edge ideals: algebraic and combinatorial properties, in {\it Progress in commutative algebra 1}, 85--126.

\bibitem{OT} P. Orlik and H. Terao, \textit{Arrangements of hyperplanes}.
Grundlehren der Mathematischen Wissenschaften, 
\textbf{300}. Springer-Verlag, Berlin, 1992.

\bibitem{Sa}
K. Saito, 
Theory of logarithmic differential forms and logarithmic vector fields.
\textit{J. Fac. Sci. Univ. Tokyo} \textbf{27} (1980), 265--291.   


\bibitem{Stanley}
R.P. Stanley,
Combinatorics and commutative algebra, Second edition,
Progr. Math., vol. 41,
Birkh\"auser, Boston, 1996.


\bibitem{Terai}
N. Terai, Alexander duality theorem and Stanley-Reisner rings. S\=urikaisekikenky\=usho K\=oky\=uroku, No. 1078 (1999), 174--184.

\bibitem{T1}
H. Terao, 
Arrangements of hyperplanes and their freeness I, II. 
\textit{J. Fac. Sci. Univ. Tokyo} \textbf{27} (1980), 293--320.   

\bibitem{T2}
H. Terao, 
Generalized exponents of a free arrangement of hyperplanes and
Shephard-Todd-Brieskorn formula. \textit{Invent. math}. 
\textbf{63}  (1981),
159--179.

\bibitem{TranT}
T. N. Tran and S. Tsujie, 
MAT-free graphic arrangements and a characterization of strongly chordal graphs by edge-labeling.
\textit{Alg. Combin}. \textbf{6} (2023),  (6), 1447--1467.

\bibitem{Va}
A. Van Tuyl,
A Beginner's Guide to Edge and Cover Ideals,
Monomial Ideals, Computations and Applications,
63--94,
Lecture Notes in Math. \textbf{2083},
Springer, 2013.

\bibitem{Yuz}
S. Yuzvinsky, On generators of the module of logarithmic $1$-forms with poles along an arrangement. \textit{J. Algebraic Combin.} {\bf 4} (1995), no.~3, 253--269.

\bibitem{WY}
J. Watanabe and K. Yanagawa,
Vandermonde determinantal ideals.
Math. Scand. 125 (2019), no. 2, 179–184.
	
\end{thebibliography}
\end{document}